\newcolumntype{M}[1]{>{\centering\arraybackslash}m{#1}}
\newcolumntype{N}{@{}m{0pt}@{}}
\newcommand{\changeoperator}[1]{%
  \csletcs{#1@saved}{#1@}%
  \csdef{#1@}{\changed@operator{#1}}%
}
\newcommand{\changed@operator}[1]{%
  \mathop{%
    \mathchoice{\textstyle\csuse{#1@saved}}
               {\csuse{#1@saved}}
               {\csuse{#1@saved}}
               {\csuse{#1@saved}}%
  }%
}
\newcommand{\e}{\varepsilon}
\newcommand{\eps}{\varepsilon}
\newcommand{\be}{\begin{equation}}
\newcommand{\ba}{\begin{aligned}}
\newcommand{\bee}{\begin{equation*}}
\newcommand{\ee}{\end{equation}}
\newcommand{\ea}{\end{aligned}}
\newcommand{\eee}{\end{equation*}}
\newcommand{\bea}{\begin{equation} \begin{aligned} }
\newcommand{\eea}{\end{aligned}\end{equation} }
\newtheorem{theorem}{Theorem}[section]
\newtheorem{proposition}[theorem]{Proposition}
\newtheorem{lemma}[theorem]{Lemma}
\newtheorem{corollary}[theorem]{Corollary}
\newtheorem{question}[theorem]{Question}
\newtheorem{example}[theorem]{Example}
\theoremstyle{remark}
\newtheorem{remark}[theorem]{Remark}
\theoremstyle{definition}
\newtheorem{definition}[theorem]{Definition}
\numberwithin{equation}{section}
\title{Thom's gradient conjecture for nonlinear evolution equations}
\author{Beomjun Choi}
\address{Department of Mathematics, Pohang University of Science and Technology, Pohang, 37673, Republic of Korea}
\email{bchoi@postech.ac.kr}
\author{Pei-Ken Hung}
\address{Department of Mathematics, University of Illinois Urbana-Champaign, IL 61801, USA}
\email{pkhung@illinois.edu}
\thanks{
This research has been supported by NRF of Korea grant No. 2022R1C1C1013511, POSTECH Basic Science Research Institute grant No. 2021R1A6A1A10042944, and  Samsung Science \& Technology Foundation grant SSTF-BA2302-02.}
\begin{document}

\begin{abstract} 
R. Thom's gradient conjecture states that if a gradient flow of an analytic function converges to a limit, it does so along a unique limiting direction. In this paper, we extend and settle this conjecture in the context of infinite dimensional problems. Building on the foundational works of {\L}ojasiewicz, L. Simon, and the resolution of the conjecture for finite dimensional cases by Kurdyka-Mostowski-Parusinski, we focus on nonlinear evolutions on Riemannian manifolds as studied by L. Simon. This framework includes geometric PDEs such as minimal surface, harmonic map, mean curvature flow, and normalized Yamabe flow. Our main result not only confirms the uniqueness of the limiting direction but also characterizes the rate of convergence and possible limiting directions for both classical and infinite dimensional settings.
\end{abstract}

\maketitle

\section{Introduction}
Gradient flow, as an optimization process, is ubiquitous in various fields across analysis, geometry, mathematical physics, modeling, and machine learning. Given a potential function $f$ defined on a Euclidean space, the gradient flow is the ODE:
\begin{equation}\label{equ:gradient}
\dot{x}(t)=-\nabla f(x(t)).
\end{equation}
It is a classical problem to understand the large-time asymptotics for $x(t)$. Suppose there exists a sequence of times $t_i\to \infty$ such that $\lim_{i\to\infty}x(t_i)=x_\infty$. A natural question is if the convergence of  $x(t)$ to $x_\infty$ holds along \textbf{any} sequence of times diverging to infinity, or equivalently if $x(t)$ converges to $x_\infty$ as $t$ goes to infinity. For smooth potential functions, if no further assumption is made, the answer is negative and there are counterexamples known as Mexican hats (or goat's tracks).\footnote{The potential $f$ and explicit solution are presented in  \cite{doi:10.1137/040605266}, but these examples are previously known, for instance, in  \cite{MR0669541}.} Nevertheless, the answer is positive provided the potential function is real analytic. This is a fundamental result due to {\L}ojasiewicz \cite{Loj,lojasiewicz1965ensembles,Loj3} and is referred to as {\L}ojasiewicz's theorem.

\bigskip

In \cite{S0,S}, Leon Simon made a crucial observation that {\L}ojasiewicz gradient inequality can be generalized to a wide range of variational problems. The equations under consideration have the forms
\begin{equation}\label{equ:main}
{\ddot u}-m {\dot u}+\mathcal{M}_\Sigma u=N_1(u),  
\end{equation} for a non-zero constant $m$, and\begin{equation}\label{equ:parabolic}
{\dot u}-\mathcal{M}_\Sigma u=N_2(u).
\end{equation}
These two types of equations include notable geometric problems such as minimal surfaces and mean curvature flows. We refer readers to Section~\ref{sec:2} for the precise definitions of \eqref{equ:main} and \eqref{equ:parabolic} and to Section~\ref{sec:B} for more examples covered by these equations. Simon \cite{S0,S} developed the {\L}ojasiewicz-Simon gradient inequality and proved that a solution to \eqref{equ:main} or \eqref{equ:parabolic} has a unique limit point if it exists. Equivalently, suppose there exists a sequence of times $t_i\to\infty$ such that $\lim_{i\to\infty} u(t_i)=u_\infty$, then $\lim_{t\to\infty}u(t)=u_\infty$. We will refer this result as Simon's theorem. In the context of the minimal surfaces, Simon's theorem implies the uniqueness of the tangent cone provided one tangent cone has a smooth and compact link. {Since then, the \L ojasiewicz-Simon inequality and Simon's theorem have been central methods for studying the asymptotic convergence and uniqueness of blowups for equations in geometry \cite{MR2495079,MR3921325,CCR15,MR4332673,MR3211041,CM2015,MR4218365,lee2023uniqueness}, mathematical physics \cite{MR4199212,MR2020554}, and PDEs with gradient-like structures \cite{MR4101738,MR4129355,MR1800136,MR3926130}. The uniqueness of limits has made profound implications in structural theory about singular set: in the mean curvature flow, for example, see this in \cite{MR3381496,MR3489702}.} We note that {\L}ojasiewicz-Simon theory has been generalized to include the presence of constraints \cite{MR4129083,MR4701428} and to wave equations \cite{MR1616964,MR1714129}.

\bigskip

Simon's theorem provides the first-order asymptotics of a solution to \eqref{equ:main} and \eqref{equ:parabolic}. A natural subsequent question to investigate the next-order asymptotics. We consider the following question:
\begin{question}\label{que:1}
Find a function $r(t)$ in time and a function $u^{(1)}_\infty$ in space such that $$u(t)-u_\infty=r(t)u^{(1)}_\infty+o(r(t)).$$ 
\end{question}
\noindent The functions $r(t)$ and $u^{(1)}_\infty$ describe the {\it rate} and the {\it direction of convergence}, respectively. The next-order behavior is a crucial starting point for further analysis. For example, recent progresses on the classification of ancient solutions to geometric flows \cite{ADS,MR4666631,CDDHS,ChoiMantoulidis} and complete non-compact solutions to minimal surface \cite{SS} are based on next-order asymptotics and its improvement.  For the singularity formation in parabolic problem, the next-order asymptotics at a singularity gives structural results on the singularity set in a neighborhood \cite{SunXue,Gang}. As the uniqueness of blow-ups implies the second differentiability of arrival time for the mean curvature flow \cite{CM2015,CM2018}, the next-order asymptotics and the convergence rate have a strong relation to further regularity of arrival time \cite{MR2200259,MR2383930}. 

In the context of finite-dimensional analytic gradient flows \eqref{equ:gradient}, the next-order asymptotics is closely related to the gradient conjecture of Ren\'e Thom {\cite{MR1067383,MR0713046,Loj2}}. Thom conjectured that the secant $ \frac {x(t)-x_\infty }{|x(t)-x_\infty|}$ converges to a limit when $t\to \infty$. Equivalently, the conjecture concerns the direction of convergence $x^{(1)}_\infty$. Kurdyka-Mostowski-Parusi\'nski \cite{KMP} settled this conjecture by showing a stronger result that the secant has a finite length on the unit sphere. However, a stronger conjecture by Arnold \cite{MR2078115}, concerning the convergence of the gradient $\frac{\dot x(t)}{|\dot x(t)|}$, remains open. {Note that the conjectures of Thom and Arnold both fail to hold without analyticity assumption \cite{MR4453693}.}

\bigskip

In this paper, we give a complete answer to Question~\ref{que:1} for the elliptic equation \eqref{equ:main} in Theorems~\ref{thm:general_e} and \ref{thm:general_exponential_e}, and for the parabolic equations \eqref{equ:parabolic} in Theorems~\ref{thm:general_p} and \ref{thm:general_exponential_p}. The theorems are applicable to  ODEs (see Remark \ref{rmk:cases} (2)) and it is noteworthy that identifying the convergence rate $r(t)$ is novel even for finite-dimensional gradient flows \eqref{equ:gradient}. In the following paragraphs, we discuss our main results in more detail.

By re-centering, we may and will assume $u_{\infty}=0$. The behavior of the solution's asymptotics differs significantly depending on whether it decays exponentially, \textit{i.e.} $\Vert u(t)\Vert_{L^2} \lesssim e^{-\delta t}$ for some $\delta>0$, or not. Our main contribution is to establish the direction and the rate of convergence for slowly (non-exponentially) decaying solutions. {Let $u(t)$ be a solution to \eqref{equ:main} or \eqref{equ:parabolic} that slowly converges to zero. In Theorems \ref{thm:general_p} and \ref{thm:general_e}, we show that there exist $r(t)=(\alpha_0\ell_*(\ell_*-2)t)^{-1/(\ell_*-2)}$ for some $\alpha_0>0$ and $\ell_*\geq 3$, and $u^{(1)}_\infty$ with $\|u^{(1)}_\infty\|_{L^2}=1$ such that
\begin{align*}
u(t)=r(t)u^{(1)}_\infty+o(r(t)).
\end{align*}
To explain the roles of $\alpha_0$ and $\ell_*$, let us consider the gradient flow \eqref{equ:gradient} with $f$ given by a homogeneous polynomial of degree $p\geq 3$ and set $\hat{f}$ to be the restriction of $f$ on the unit sphere. Suppose a unit vector $x^{(1)}_\infty$  is a critical point of $\hat{f}$ with $\beta_0=\hat{f}(x^{(1)}_\infty)>0$. Then it is straightforward to check that 
\begin{equation}\label{equ:ansatz}
x(t)=(\beta_0p(p-2)t)^{-1/(p-2)}x_\infty^{(1)}
\end{equation}
is a solution to \eqref{equ:gradient}. Hence $\alpha_0$ and $\ell_*$ could be viewed as generalizations of critical values and degrees respectively. Investigating possible $u^{(1)}_\infty$ and $r(t)$ for slowly decaying solutions, we provide a partial characterization which is closely related to the Adams-Simon condition. In \cite{AS}, Adams-Simon discovered a sufficient condition (later called the Adams-Simon positivity condition or simply $AS_p$ condition) which allows them to construct a slowly converging solution to \eqref{equ:main} starting with an ansatz of type \eqref{equ:ansatz}. Here, $p\in \mathbb{N}_{\ge 3}$ is so-called the order of integrability and it is an integer solely determined by the energy functional. Later, Carlotto-Chodosh-Rubinstein \cite{CCR15} found explicit examples of critical points of normalized Yamabe functional with the $AS_p$ condition and constructed normalized Yamabe flows converging slowly. As questioned in \cite[p.1533]{CCR15}, it is of interest to understand whether such a slowly converging solution must satisfy the Adams-Simon positivity condition and whether the next-order asymptotics follows the ansatz of type \eqref{equ:ansatz}. Our partial characterization shows that $\ell_*\ge p$ and the Adams-Simon `non-negativity' condition is necessary for slowly converging solutions to exist.}

The next-order asymptotics of fast (exponentially) decaying solutions are discussed in Theorems \ref{thm:general_exponential_p} and \ref{thm:general_exponential_e}. In this case, \eqref{equ:main} and \eqref{equ:parabolic} are well-approximated by the linearized equations. The possible asymptotics are then determined by the non-zero eigenvalues and eigendirections of $\mathcal{L}_\Sigma$, the linearization of $\mathcal{M}_\Sigma$. Note also that $-\mathcal{L}_\Sigma$ is the second variation of the energy functional. 
 Results of this kind have been anticipated by experts and have been obtained for specific problems, such as those in \cite{SS} and \cite{ChoiMantoulidis}. Nevertheless, we could not find proper literature covering the general forms \eqref{equ:main} and \eqref{equ:parabolic}, so we provide a proof in Section~\ref{sec:exp}. The linearization of the elliptic equation \eqref{equ:main} is the second-order equation ${\ddot u}-m{\dot u}+\mathcal{L}_\Sigma u=0$. If $\mathcal{L}_\Sigma$ has an eigenvalue larger than $4^{-1}m^2$, there exists an underdamped solution which oscillates while decaying exponentially as $t$ goes to infinity or minus infinity. This type of solutions has been constructed for minimal graphs over Simons cones in $\mathbb{R}^4$ and $\mathbb{R}^6$: see \cite[Remark 1.21]{MR3838575} and \cite{BGG} for more details. For this reason, the original form of Thom's gradient conjecture is not true in the elliptic problem. Moreover, if $4^{-1}m^2$ is an eigenvalue of $\mathcal{L}_\Sigma$ and $m<0$, then a resonance possibly resulting in a solution that decays at a rate $t e^{2^{-1}m t}$. For instance, this possibility was explored for minimal graphs over stable (but not strictly stable) minimal cones in \cite{HS}. 

\bigskip
            
This paper is organized as follows. In Section \ref{sec:2}, we introduce the notation, condition, spectral property of linearized operator $\mathcal{L}_\Sigma$, and main theorems. In Section \ref{sec:1storderode}, we set up the elliptic problem \eqref{equ:main} as a first order ODE system on function spaces. In Section \ref{sec:exp}, Theorem \ref{thm:general_exponential_p} and Theorem \ref{thm:general_exponential_e} for exponentially decaying solutions to elliptic and parabolic equations are proved, respectively. In Section \ref{sec:elliptic}, we show in Proposition~\ref{prop-neutral-dynamics} that slowly converging solutions to \eqref{equ:main} are governed by a finite-dimensional gradient flow with a small perturbation. The parabolic analogue, Proposition~\ref{prop-neutral-dynamics-p1}, is proved in Section~\ref{sec:slowparabolic}. In Section~\ref{sec:7}, we complete the proofs for Theorems~\ref{thm:general_p} and \ref{thm:general_e} by analyzing a finite-dimensional gradient flow with a small perturbation, using a version of the {\L}ojasiewicz argument motivated by \cite{KMP}. In Section~\ref{sec:B}, we provide some notable examples, including (rescaled) mean curvature flow, to which the main results apply. Appendix~\ref{sec:A} contains auxiliary tools we need in the paper.  

\begin{table}
\begin{center}
\begin{tabular}{M{4cm}|M{3.8cm}|M{3.8cm} N}

 & parabolic PDE &  gradient flow  &  \\[3pt]  
 & $\dot{u}-\mathcal{M}_\Sigma u=N_2(u)$ &$\dot{x}(t)=-\nabla f(x(t))$ &\\ [3pt]
 \hhline{=|=|=}
\raisebox{-4pt}{uniqueness of limit} &  \raisebox{-4pt}{Simon's Theorem}  &   \raisebox{-4pt}{\L ojasiewicz's Theorem} & \\[10pt] 
\hline 
\raisebox{-4pt}{direction of convergence} &\raisebox{-4pt}{Theorems~\ref{thm:general_p} and \ref{thm:general_exponential_p}} & \raisebox{-4pt}{Thom's conjecture }& \\ [10pt] 
\hline 
\raisebox{-4pt}{rate of convergence} & \raisebox{-4pt}{Theorems~\ref{thm:general_p} and \ref{thm:general_exponential_p}} & \raisebox{-4pt}{Theorem~\ref{thm:gradflow}} & \\ [10pt] 
\end{tabular}
\end{center}
\caption{comparison between results for \eqref{equ:parabolic} and \eqref{equ:gradient}}
\end{table}

\section{Preliminary}\label{sec:2}
Let us introduce our setting to study \eqref{equ:main} and \eqref{equ:parabolic} and state the main results. Let $\Sigma$ be a closed $n$-dimensional Riemannian manifold and $d\mu$ be a smooth volume form on $\Sigma$ which is mutually absolutely continuous with respect to the volume form induced by the metric. Let $\mathbf{V}\to \Sigma$ be a smooth vector bundle equipped with a smooth inner product $\left\langle\cdot,\cdot\right\rangle$. For $u\in\mathbf{V}$, we write $|u|^2:=\left\langle u,u \right\rangle$. For $\omega\in\Sigma$, we denote by $\mathbf{V}_\omega$ the fiber over $\omega$. Let $\slashed{\nabla}$ be a connection on $\mathbf{V}$ which is compatible with the inner product.

We denote by $L^2(\Sigma;\mathbf{V})$ the space of $L^2$-sections of $\mathbf{V}$ with respect to $d\mu$. Namely, a section $u$ belongs to $L^2(\Sigma;\mathbf{V})$ provided
\begin{align*}
\|u\|^2_{L^2}:=\int_\Sigma |u|^2\, d\mu <\infty.
\end{align*}
For $\ell \in\mathbb{N}_0$, we denote by $H^\ell(\Sigma;\mathbf{V})$ the collection of sections that satisfy
\begin{align*}
\|u\|^2_{H^\ell}:= \sum_{i=0}^\ell \int_\Sigma \big |\slashed{\nabla}^i u  \big|^2\, d\mu  <\infty.
\end{align*}
For $-\infty< a<b\leq \infty$, we define $Q_{a,b}:=\Sigma\times [a,b)$ and equip $Q_{a,b}$ with the product metric. We denote by $\widetilde{\mathbf{V}}$ the pull back bundle of $\mathbf{V}$ through the projection $  Q_{a,b}\to \Sigma$. For $u\in C^s(Q_{a,b};\widetilde{\mathbf{V}})$ and $t\in [a,b)$, $u(t)\in C^s(\Sigma;\mathbf{V})$ is defined through $u(t)(\omega):=u(\omega,t)$. Its $H^\ell$-norm is denoted by
\begin{equation*}
\|u\|_{H^\ell}(t):= \|u(t) \|_{H^\ell}.
\end{equation*} 
Also, it is convenient to define 
\begin{align*}
\|u\|_{C^s}(t):=\sup_{\omega\in\Sigma}\sup_{k+\ell\leq s}\left| \frac{\partial^k}{\partial t^k}\slashed{\nabla}^\ell u(\omega,t) \right|.
\end{align*}
Here, note that {\it the norms of time derivatives} are also included  in the definition of $\Vert \cdot \Vert _{C^s}  (t)$.
Let ${\dot u}$ and ${\ddot u}$ denote $\frac{\partial u }{\partial t}$ and $\frac{\partial^2 u }{\partial t^2}$, respectively.

\vspace*{0.3cm}

Our assumption on $\mathcal{M}_\Sigma$ is almost identical to the one in \cite{S0}. The difference is that the volume form, $d\mu$, is not necessarily the one induced from the Riemannian metric. Let $\mathcal{F}_\Sigma$ be a functional on $u\in C^{1}(\Sigma;\mathbf{V})$
\begin{align}\label{eq-F}
\mathcal{F}_\Sigma(u) :=\int_{\Sigma} F(\omega,u,\slashed{\nabla}u)\, d\mu.
\end{align}
Here the integrand $F$ satisfies the following assumptions:
\begin{enumerate}[$(1)$]
\item $F=F(\omega,z,p)$ is a smooth function defined on an open set of $\mathbf{V}\times (T\Sigma\otimes \mathbf{V})$ that contains the zero section.
\item For each $\omega\in\Sigma$, $F(\omega,\cdot,\cdot)$ is analytic on $\mathbf{V}_\omega\times (T_\omega\Sigma\otimes \mathbf{V}_\omega)$.
\item $F$ satisfies the Legendre-Hadamard ellipticity condition
\begin{align*}
D^2_pF(\omega,0,0)[\eta\otimes\xi,\eta\otimes\xi]\geq c|\eta|^2|\xi|^2,
\end{align*}
for $c>0$ independent of $\omega\in\Sigma$, $\eta\in T_\omega\Sigma$ and $\xi\in \mathbf{V}_\omega$.
\end{enumerate}
$\mathcal{M}_{\Sigma}$ is defined to be the negative Euler-Lagrange operator of $\mathcal{F}_\Sigma$. Namely, for any $\zeta\in C^{\infty}(\Sigma;\mathbf{V})$,
\begin{align}\label{eq-M}
\int_\Sigma \left\langle \mathcal{M}_\Sigma( u),\zeta \right\rangle \, d\mu=-\frac{d}{ds}\mathcal{F}_\Sigma(u+s\zeta)|_{s=0}.
\end{align}
We further assume $0$ is a critical point of $\mathcal{F}_\Sigma$. Namely,  $\mathcal{M}_\Sigma(0)=0$. We denote by $\mathcal{L}_{\Sigma}$ the linearization of $\mathcal{M}_\Sigma$ at $0$. It is clear that the difference between $\mathcal{M}_{\Sigma} (u)$ and $\mathcal{L}_{\Sigma} u$ is a quadratic term of the form 
\begin{equation}\label{equ:quasilinear}
 \mathcal{M}_\Sigma u-\mathcal{L}_{\Sigma} u =\sum_{j=0}^2 c_j\cdot\slashed{\nabla}^j u,
\end{equation} 
where $c_j=c_j(\omega, u,\slashed{\nabla}u)$ are smooth with $c_j(\omega,0,0)=0$. The Legendre-Hadamard condition implies $\mathcal{L}_{\Sigma}$ is elliptic. There exists a constant $C>0$ such that 
\begin{align}\label{equ:H1}
\int_\Sigma -\left\langle \mathcal{L}_{\Sigma} u,u \right\rangle+C|u|^2 \, d\mu \asymp  \|u\|^2_{H^1}.
\end{align}
Here, and in the rest of the paper, we use the notation $\asymp$ to indicate that two non-negative quantities are comparable. In other words, we write $A\asymp B$ provided $C^{-1}B\le A\leq CB$ for some $C>0$ independent of $A$ and $B$. $\mathcal{L}_{\Sigma}$ is self-adjoint with respect to $d\mu$. This can be seen from
\begin{align} \label{equ:hessianF}
\int_{\Sigma} \left\langle \mathcal{L}_{\Sigma} u, v \right\rangle\, d\mu=-\frac{\partial^2}{\partial s_1\partial s_2}\mathcal{F}_\Sigma (s_1u+s_2v)\bigg|_{s_1=s_2=0}. 
\end{align}

 We assume $N_1(u)$ and $N_2(u)$ in \eqref{equ:main} and \eqref{equ:parabolic} are of the form
\begin{equation}\label{equ:N}
\begin{split}
N_1(u)=&a_1\cdot D {\dot u}+a_2\cdot {\dot u}+a_3\cdot\mathcal{M}_\Sigma (u),\\
 N_2(u)= & b_1\cdot\mathcal{M}_\Sigma (u). 
\end{split}
\end{equation}
{Here $D=\left\{ \frac{\partial}{\partial t},\slashed{\nabla} \right\}$; $a_1,a_3$ depend smoothly on $(\omega,u,Du)$ with $a_1(\omega,0,0)\equiv 0$ and $a_3(\omega,0,0)\equiv 0$; $a_2$ depends smoothly on $(\omega,u,Du,D^2 u)$ with $a_2(\omega,0,0,0)\equiv 0$; $b_1$ depends smoothly on $(\omega,u,\slashed{\nabla}u)$ with $b_1(\omega,0,0)\equiv 0$. The structure \eqref{equ:N} allows us to apply results in \cite{S0} and is slightly more general than the one in \cite{S,AS}.}

{In view of the elliptic energy estimate and \eqref{equ:hessianF},  $\mathcal{L}_\Sigma -\lambda I$, for large $\lambda<\infty$, has a well-defined inverse which is a compact self-adjoint operator on $L^2(\Sigma;\mathbf{V})$. By the spectral decomposition, there exist non-increasing eigenvalues $\lambda_1 \ge \lambda_2 \ge \ldots$ of $\mathcal{L}_\Sigma$ and corresponding eigensections $\varphi_1$, $\varphi_2$, $\ldots$ which form a complete orthonormal basis of $L^2(\Sigma;\mathbf{V})$. Note that $\lambda_i\to -\infty$.}
We denote by $\Pi^T$ and $\Pi^\perp$ the orthogonal projection of $L^2(\Sigma;\mathbf{V})$ to the $0$-eigenspace $\ker \mathcal{L}_{\Sigma}$ and its orthogonal complement $\left(\ker \mathcal{L}_{\Sigma}\right)^\perp$ respectively.  The following is a version of the implicit function theorem. See \cite[\S 3]{S1}.
\begin{proposition} [Lyapunov--Schmidt reduction]\label{pro:AS} 
Let $B_\rho$ be the open ball of radius $\rho$ in $L^2(\Sigma;\mathbf{V})$. There exist $\rho>0$ and a map \[H: \ker \mathcal{L}_{\Sigma} \cap B_\rho \rightarrow C^\infty (\Sigma;\mathbf{V} )\cap (\ker \mathcal{L}_{\Sigma}) ^\perp \] such that the following statements hold. First, for any $k\geq 2$ and $\alpha\in (0,1)$, $H$ is an analytic map from $\ker \mathcal{L}_{\Sigma} \cap B_\rho$ to $C^{k,\alpha}(\Sigma;\mathbf{V})$. Second,  
\[H(0)=0,\, DH(0)=0. \]
Lastly,
\[\begin{cases} \begin{aligned} \Pi ^\perp   \mathcal{M}_{\Sigma} (v+H(v) ) &= 0,\\ 
\Pi ^T \mathcal{M}_{\Sigma} (v+H(v) )& = -\nabla f (v).
\end{aligned} \end{cases}\]
Here $f:\ker \mathcal{L}_{\Sigma}  \cap B_\rho \to \mathbb{R}$ is given by $f(v):= \mathcal{F}_\Sigma (v+H(v)).$ 
\end{proposition}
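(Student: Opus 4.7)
The plan is to produce $H$ via the analytic implicit function theorem applied to the equation $\Pi^\perp\mathcal{M}_\Sigma(v+w)=0$, and then to derive the gradient identity by a clean chain-rule computation exploiting the orthogonality of the splitting. Concretely, for $k\ge 2$ and $\alpha\in(0,1)$, I would consider the map
\begin{equation*}
\Phi:\bigl(\ker\mathcal{L}_\Sigma\cap B_\rho\bigr)\times\bigl((\ker\mathcal{L}_\Sigma)^\perp\cap C^{k,\alpha}\cap B_{\rho'}\bigr)\to (\ker\mathcal{L}_\Sigma)^\perp\cap C^{k-2,\alpha},\quad \Phi(v,w):=\Pi^\perp\mathcal{M}_\Sigma(v+w).
\end{equation*}
Since $\mathcal{M}_\Sigma(0)=0$ we have $\Phi(0,0)=0$, and the partial derivative $D_w\Phi(0,0)$ is the restriction of $\Pi^\perp\mathcal{L}_\Sigma$ to $(\ker\mathcal{L}_\Sigma)^\perp\cap C^{k,\alpha}$. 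Self-adjointness of $\mathcal{L}_\Sigma$ shows it preserves this splitting, and Fredholm theory together with the Schauder estimates implied by \eqref{equ:H1} makes that restriction an isomorphism onto its codomain. Because $F$ is analytic in $(z,p)$, the analytic implicit function theorem then produces a unique solution $w=H(v)$ depending analytically on $v$ in the $C^{k,\alpha}$ topology.

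To promote $H(v)$ to $C^\infty$ as a spatial section, I would invoke the local uniqueness clause of the implicit function theorem: the solutions $H_k$ obtained in different $C^{k,\alpha}$ must agree where defined, so $H(v)$ lies in $\bigcap_k C^{k,\alpha}=C^\infty$. Equivalently, one may observe that the full equation for $v+H(v)$ reads $\mathcal{M}_\Sigma(v+H(v))=\Pi^T\mathcal{M}_\Sigma(v+H(v))\in\ker\mathcal{L}_\Sigma$; since $\ker\mathcal{L}_\Sigma$ is a finite-dimensional space of smooth sections, a standard elliptic bootstrap applied to the quasilinear structure \eqref{equ:quasilinear} yields $v+H(v)\in C^\infty$.

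For the two vanishing conditions, $H(0)=0$ is immediate from uniqueness. Differentiating the identity $\Pi^\perp\mathcal{M}_\Sigma(v+H(v))\equiv 0$ at $v=0$ in the direction $\zeta\in\ker\mathcal{L}_\Sigma$ gives $\Pi^\perp\mathcal{L}_\Sigma(\zeta+DH(0)[\zeta])=0$; since $\mathcal{L}_\Sigma\zeta=0$ and $\mathcal{L}_\Sigma$ is injective on $(\ker\mathcal{L}_\Sigma)^\perp$, one obtains $DH(0)[\zeta]=0$. For the gradient identity I would set $f(v):=\mathcal{F}_\Sigma(v+H(v))$ and differentiate using \eqref{eq-M}:
\begin{equation*}
Df(v)[\zeta]=-\int_\Sigma\left\langle\mathcal{M}_\Sigma(v+H(v)),\zeta+DH(v)[\zeta]\right\rangle d\mu.
\end{equation*}
The decisive cancellation is that $DH(v)[\zeta]\in(\ker\mathcal{L}_\Sigma)^\perp$ by construction, while $\mathcal{M}_\Sigma(v+H(v))\in\ker\mathcal{L}_\Sigma$ by the $\Pi^\perp$ equation, so the cross term vanishes. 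Hence $Df(v)[\zeta]=-\int_\Sigma\left\langle\Pi^T\mathcal{M}_\Sigma(v+H(v)),\zeta\right\rangle d\mu$ for $\zeta\in\ker\mathcal{L}_\Sigma$, which is precisely the identity $\Pi^T\mathcal{M}_\Sigma(v+H(v))=-\nabla f(v)$ with respect to the induced $L^2$ inner product on the finite-dimensional space $\ker\mathcal{L}_\Sigma$.

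The only genuinely delicate point is arranging the function-space setup so that the analytic implicit function theorem applies; once the Fredholm and spectral facts recalled immediately before the proposition are in place, the remaining steps amount to direct algebra. This Lyapunov--Schmidt reduction is standard and due to Simon, which is why the statement simply refers the reader to \cite[\S 3]{S1}.
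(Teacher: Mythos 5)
Your proposal is correct and is precisely the standard Lyapunov--Schmidt argument that the paper itself does not reproduce but delegates to \cite[\S 3]{S1}: the analytic implicit function theorem applied to $\Pi^\perp\mathcal{M}_\Sigma(v+w)=0$ in H\"older spaces, elliptic bootstrap for smoothness, and the orthogonality cancellation between $\mathcal{M}_\Sigma(v+H(v))\in\ker\mathcal{L}_\Sigma$ and $DH(v)[\zeta]\in(\ker\mathcal{L}_\Sigma)^\perp$ for the gradient identity. All steps check out, including the two vanishing conditions and the identification of $\Pi^T\mathcal{M}_\Sigma(v+H(v))$ with $-\nabla f(v)$ via the $L^2(d\mu)$ pairing on the finite-dimensional kernel.
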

For solutions $u(t)$ whose trajectories are asymptotic to $\ker \mathcal{L}_\Sigma$, the function $f$ plays a crucial role and we will call it \textbf{reduced functional}. The reduced functional $f$ is real analytic with $\nabla f(0)=0$ and $\nabla^2 f(0)=0$. Let $J$ be the dimension of $\ker \mathcal{L}_{\Sigma}$ and $\ker \mathcal{L}_{\Sigma}=\textup{span}\{\varphi_{\iota+1},\varphi_{\iota+2},\dots, \varphi_{\iota+J}\}$ .
We may view $f$ as an analytic function defined on an open ball in $\mathbb{R}^J$ through the identification
\begin{align}\label{equ:identification}
(x_1,x_2,\dots, x^J)\mapsto \sum_{j=1}^J x_j\varphi_{\iota+j}.
\end{align} 

Let us review the integrability condition. The kernel $\ker \mathcal{L}_{\Sigma}$ is called {\it integrable} if for any $v\in\ker \mathcal{L}_{\Sigma}$, there exists a family $\{v_s\}_{s\in (0,1)}\subset C^2(\Sigma;\mathbf{V})$ such that $v_s\to 0$ in $C^2(\Sigma;\mathbf{V})$, $\mathcal{M}_\Sigma (v_s)\equiv 0$ and $\lim_{s\to 0} v_s/s=v$ in $L^2(\Sigma;\mathbf{V})$. It is well-known \cite[Lemma 1]{AS} that $\ker \mathcal{L}_{\Sigma}$ is integrable if and only if the reduced functional $f$ is a constant. Moreover, if integrable condition is satisfied, any decaying solution to \eqref{equ:main} or \eqref{equ:parabolic} decays exponentially \cite{MR0607893,AS,CCR15}. For this reason, whenever non-exponentially decaying solution is considered, the integrable condition should necessarily \textbf{fails}. Namely, the reduced functional $f$ is not a constant function. In particular, there exists an integer $p\geq 3$ such that
\begin{equation}\label{def:fp}
f=f(0)+\sum_{j\geq p} f_j,
\end{equation}
where $f_j$ are homogeneous polynomials with degree $j$ and $f_p\not\equiv 0$. Following \cite{CCR15}, we call this integer the order of integrability.

As suggested  in \eqref{equ:ansatz}, the gradient flow of $f_p$ has a dominant role in the asymptotic behavior. Let $\hat{f}_p$ be the restriction of $f_p$ on $\{ w\in \ker \mathcal{L}_{\Sigma}  \, :\, \Vert w\Vert _{L^2}=1\} \approx \mathbb{S}^{J-1}$. Consider the critical points of $\hat{f}_p$:
\begin{equation}\label{def:C}
\mathbf{C}:=\left\{ w\in \ker \mathcal{L}_{\Sigma}\, :\, \Vert w\Vert_{L^2}=1\ \textup{and}\ w\ \textup{is a critical point of}\ \hat{f}_p \right\}.
\end{equation}
If $w\in\mathbf{C}$ satisfies $ f_p(w)>0$, then $x(t)=   [ p(p-2)f_p(w) t ] ^{-\frac{1}{p-2}} w$ becomes a radial solution to the flow $x'= -\nabla f_p(x)$. The higher order asymptotics in Theorem \ref{thm:general_e} and \ref{thm:general_p} will be modeled on such solutions.

\begin{definition}[Adams-Simon conditions. \textit{c.f.} (4.1) in \cite{AS}] We say $\Sigma$ satisfies the Adams-Simon non-negativity condition for the parabolic problem  \eqref{equ:parabolic} if there exists $w\in\mathbf{C}$ such that  ${f}_p(w)\geq 0$. 
We say $\Sigma$ satisfies the Adams-Simon non-negativity condition for the elliptic problem \eqref{equ:main} if there exists $w\in\mathbf{C}$ such that  $m^{-1} {f}_p(w)\geq 0$. 

In both equations, the Adams-Simons positivity conditions are defined similarly by requiring that {there exists a critical point of positive critical value.} 
\end{definition}

Let us state main results concerning the asymptotic behavior and the convergence rate of decaying solutions. {We begin with the parabolic equation~\eqref{equ:parabolic}. The behavior can be naturally divided into mutually exclusive cases according to the convergence rates, or equivalently to the subspaces of $L^2(\Sigma;\mathbf{V})$ to which solutions are asymptotic. We state them separately. 
\begin{theorem}[slow decay in parabolic equation]\label{thm:general_p} { There exists a finite set $\mathcal{Z}=\mathcal{Z}(\mathcal{F}_\Sigma) \subset \mathbb{Q}_{\ge3}\times \mathbb{R}_{>0}$ which has the following significance: For a given solution  $u\in C^\infty(Q_{0,\infty},\widetilde{ \mathbf{V}})$ to the equation \eqref{equ:parabolic}, if $\|u\|_{H^{n+4}}(t)=o(1)$ but it does not decay exponentially as $t\to \infty$, then there exists $(\ell_*,\alpha_0)\in \mathcal{Z}$ such that}  
\begin{align*}
\lim_{t\to\infty} t^{1/(\ell_* -2)}\Vert u(t)\Vert_{L^2}=(\alpha_0\ell_*(\ell_*-2))^{-1/(\ell_*-2)}.  
\end{align*}
Moreover,
\begin{align*}
\lim_{t\to \infty  }{u(t)}/{\Vert u(t)\Vert _{L^2}}=w\ \textup{in}\ C^\infty(\Sigma;\mathbf{V}),
\end{align*}
for some $w \in \mathbf{C}\subset \ker \mathcal{L}_\Sigma $ with $ \hat f_p (w)\geq 0 $. In particular, $\Sigma$ satisfies the Adams-Simon non-negativity condition for \eqref{equ:parabolic}. 
\end{theorem}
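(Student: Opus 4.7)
The plan is to reduce \eqref{equ:parabolic} to a small perturbation of a finite-dimensional analytic gradient flow on $\ker\mathcal{L}_\Sigma$, and then adapt the Kurdyka-Mostowski-Parusi\'nski finite-length argument to this perturbed setting. First, decompose $u(t)=v(t)+h(t)$ with $v=\Pi^T u$ and $h=\Pi^\perp u$. Since $\mathcal{L}_\Sigma$ has a positive spectral gap on $(\ker\mathcal{L}_\Sigma)^\perp$, applying $\Pi^\perp$ to \eqref{equ:parabolic} and bootstrapping against $\|v\|_{L^2}$ (using the $H^{n+4}$-smallness assumption to control the quadratic structure of \eqref{equ:quasilinear} and of $N_2$) shows that $h$ is graphical over $v$ in the sense $\|h-H(v)\|_{L^2}\lesssim \|v\|_{L^2}^N$ for arbitrarily large $N$. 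Inserting this into \eqref{equ:parabolic} and projecting with $\Pi^T$, the identity $\Pi^T\mathcal{M}_\Sigma(v+H(v))=-\nabla f(v)$ from Proposition~\ref{pro:AS} yields the reduced equation
\begin{equation*}
\dot v(t)=-\nabla f(v(t))+E(t),\qquad \|E(t)\|_{L^2}=o(\|v(t)\|_{L^2}^{p-1}),
\end{equation*}
which is the content of the parabolic neutral-dynamics statement (Proposition~\ref{prop-neutral-dynamics-p1}).

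Next, write $v=r\sigma$ with $r=\|v\|_{L^2}$ and $\sigma\in\mathbb{S}^{J-1}\subset\ker\mathcal{L}_\Sigma$, expand $f=f(0)+f_p+\cdots$, and apply the Euler relation for homogeneous polynomials to obtain the radial/spherical split
\begin{align*}
\dot r &= -p\hat{f}_p(\sigma)\,r^{p-1}+O(r^p)+\langle E,\sigma\rangle,\\
r\dot\sigma &= -r^{p-1}\nabla_{\mathbb{S}}\hat{f}_p(\sigma)+O(r^p)+E-\langle E,\sigma\rangle\sigma.
\end{align*}
Rescaling time via $d\tau=r^{p-2}\,dt$ turns the spherical equation into a perturbation of the analytic gradient flow of $\hat{f}_p$ on $\mathbb{S}^{J-1}$. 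A \L ojasiewicz gradient inequality for $\hat{f}_p$, combined with a self-improving bootstrap on the smallness of $E$ relative to $\nabla_{\mathbb{S}}\hat{f}_p(\sigma)$, gives $\int|\dot\sigma|_{\mathbb{S}}\,d\tau<\infty$ and hence $\sigma(t)\to w$ for a single $w\in\mathbf{C}$. When $\hat{f}_p(w)=0$, the effective driver is instead the lowest-degree homogeneous piece $\hat{f}_{\ell_*}$ that does not vanish at $w$; rational values $\ell_*\in\mathbb{Q}_{\ge 3}$ arise when $w$ sits on a positive-dimensional critical stratum whose \L ojasiewicz exponent is non-integer, and the finiteness of $\mathcal{Z}$ follows from the analytic stratification of $\{\nabla f=0\}$.

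Once $\sigma\to w$ is established, the radial equation simplifies to $\dot r=-\ell_*\alpha_0 r^{\ell_*-1}(1+o(1))$ with $\alpha_0=\hat{f}_{\ell_*}(w)>0$, which integrates to the stated rate $t^{1/(\ell_*-2)}r(t)\to(\alpha_0\ell_*(\ell_*-2))^{-1/(\ell_*-2)}$. The monotonicity $\tfrac{d}{dt}\mathcal{F}_\Sigma(u(t))\le 0$ along the parabolic flow, combined with $\mathcal{F}_\Sigma(u(t))-f(0)=f_p(v(t))+o(\|v\|_{L^2}^p)\to 0^+$, forces $f_p(w)\ge 0$, giving the Adams-Simon non-negativity, while $C^\infty$-convergence of $u/\|u\|_{L^2}$ follows from upgrading $L^2$-convergence via parabolic Schauder estimates together with the faster decay of $h$. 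The principal obstacle lies in the sphere analysis: the error $E$ is small relative to $\nabla f(v)$ but a priori comparable to $\nabla_{\mathbb{S}}\hat{f}_p(\sigma)$ when $\sigma$ approaches $\mathbf{C}$, so the KMP finite-length argument must be carried out with a bootstrap on the effective \L ojasiewicz exponent along the trajectory, which is the analysis deferred to Section~\ref{sec:7}.
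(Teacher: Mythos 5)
Your overall architecture matches the paper's: reduce \eqref{equ:parabolic} to a perturbed finite-dimensional gradient flow on $\ker\mathcal{L}_\Sigma$ via the Lyapunov--Schmidt map $H$ (this is Proposition~\ref{prop-neutral-dynamics-p1}, proved in Section~\ref{sec:slowparabolic} using the Merle--Zaag ODE lemma to show the neutral mode dominates), and then run a KMP-style finite-length argument (Section~\ref{sec:7}). However, there are two substantive gaps. First, the error bound you assert for the reduced equation, $\|E(t)\|_{L^2}=o(\|v(t)\|_{L^2}^{p-1})$, is too weak. An additive bound of this form is only useful when $|\nabla f(v)|\asymp\|v\|^{p-1}$, which fails exactly in the degenerate cases that make the theorem hard, namely when the secant approaches a point of $\mathbf{C}$ where $\hat f_p$ vanishes and $\ell_*>p$. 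What the paper proves (and needs) is the multiplicative-plus-arbitrarily-high-order form $|x'(t)-\nabla f(x(t))|\le C(|x(t)|^{\rho}|\nabla f(x(t))|+|x(t)|^{N})$ for every $N$, together with the two-sided bound $D_1t^{-1}\le|x(t)|\le D_2t^{-\alpha_2}$; the additive $|x|^N$ term is then absorbed into the multiplicative one only after establishing the existence of the characteristic exponent $\ell_*$ (Proposition~\ref{thm:perturb}), which is itself a nontrivial step of the argument, not an input to it.

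Second, your identification of $\ell_*$ as ``the lowest-degree homogeneous piece $\hat f_{\ell_*}$ that does not vanish at $w$'' is not the mechanism, and would in particular force $\ell_*$ to be an integer. In the paper, $\ell_*$ is a \emph{characteristic exponent} of $g=f-f(0)$ at the origin: a rational number arising from the subanalytic decomposition $W(\varepsilon,r_1)=\bigsqcup_{\ell\in L}W(\varepsilon,r_1,\omega_1,\ell)$ of Proposition~\ref{pro:1} (KMP's curve selection lemma), selected by showing that $G_{\ell_*}(y(t))=g(y(t))/|y(t)|^{\ell_*}$ stays in a compact interval of $(0,\infty)$ for all large $t$ (Lemmas~\ref{lem:7}--\ref{lem:9}); $\alpha_0$ is then the limit of $G_{\ell_*}(y(t))$, shown to be an asymptotic critical value of $G_{\ell_*}$, and the finiteness of $\mathcal{Z}$ comes from the finiteness of $L$ and of the set of asymptotic critical values, not from a stratification of $\{\nabla f=0\}$ near $w$. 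Relatedly, the radial ODE $\dot r=-\ell_*\alpha_0 r^{\ell_*-1}(1+o(1))$ does not simply ``integrate'': the trajectory leaves the good region $V(\varepsilon_*)$ where $\partial_r g$ controls $\nabla'g$, and the paper must show (Lemma~\ref{lem:bad} and the dyadic argument via Lemma~\ref{lem-crutialconvrate}) that the time spent outside contributes negligibly to the radial displacement. Your Adams--Simon conclusion via monotonicity of $\mathcal{F}_\Sigma$ is a reasonable alternative to the paper's route (which reads off $\hat f_p(w)=\lim|y(t)|^{\ell_*-p}\alpha_0\ge0$ from $\alpha_0>0$ and $\ell_*\ge p$), but the core of the theorem --- the existence and rationality of $\ell_*$, the positivity of $\alpha_0$, and the finite-length estimate for the secant via the control function $h(t)=(G_*(y(t))-\alpha_0)+|y(t)|^{\omega_*/2}$ --- is precisely what your sketch defers, so as written the proposal does not close the argument.
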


{
The definitions of $\mathcal{Z}$ and the corresponding one for the elliptic problem in Theorem~\ref{thm:general_e} are postponed to Section~\ref{sec:7}. Indeed, we prove a stronger result that $\ell_*$ cannot be smaller than the order of integrability $p \geq 3$. See \cite{CCR15} for examples of slowly decaying solutions with $\ell_* = p \geq 4$. According to our proof, a slow decay solution with $\ell_*>p$ may exist only if there is $w\in \mathcal{C}$ with $\hat f_p(w)=0$. For a heuristic explanation of $\ell_*$ and $\alpha_0$, we refer readers to the paragraph around \eqref{equ:ansatz}.}

\begin{theorem}[fast decay in parabolic equation]\label{thm:general_exponential_p}
Let $u\in C^\infty(Q_{0,\infty},\widetilde{ \mathbf{V}})$ be a non-zero solution to \eqref{equ:parabolic} with $\|u\|_{H^{n+4}}(t)=O(e^{-\varepsilon t})$ for some $\varepsilon>0$ as $t\to \infty$. Then there exists a negative eigenvalue $\lambda$ of $\mathcal{L}_{\Sigma}$ such that 
\begin{align*}
\lim_{t\to \infty} e^{-\lambda t} \Vert u(t)\Vert _{L^2}\in (0,\infty).
\end{align*}
Moreover, for some eigensection $w$ with $\mathcal{L}_{\Sigma} w =\lambda w$, 
\begin{align*}
\lim_{t\to \infty  }{u(t)}/{\Vert u(t)\Vert _{L^2}}=w\ \textup{in}\ C^\infty(\Sigma;\mathbf{V}).
\end{align*} 
\end{theorem}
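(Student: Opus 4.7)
The plan is to reformulate \eqref{equ:parabolic} as a perturbation of the linear equation $\dot u = \mathcal{L}_\Sigma u$ and exploit the spectral decomposition of the self-adjoint operator $\mathcal{L}_\Sigma$. Setting $Q(u) := (\mathcal{M}_\Sigma - \mathcal{L}_\Sigma)u + N_2(u)$, the equation becomes $\dot u = \mathcal{L}_\Sigma u + Q(u)$. The quadratic structure of $Q$, visible from \eqref{equ:quasilinear} and $b_1(\omega,0,0)\equiv 0$ in \eqref{equ:N}, combined with Sobolev embedding on the compact manifold $\Sigma$, yields $\|Q(u)(t)\|_{L^2}\lesssim \|u(t)\|_{H^{n+4}}\|u(t)\|_{H^{2}} = O(e^{-2\varepsilon t})$, with analogous bounds in higher Sobolev norms.

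The core step is identifying the optimal rate $\lambda := \limsup_{t\to\infty} t^{-1}\log \|u(t)\|_{L^2}\in[-\infty,-\varepsilon]$ as a negative eigenvalue of $\mathcal{L}_\Sigma$. To rule out $\lambda = -\infty$: a super-exponentially decaying nontrivial solution, after being expanded in the eigenbasis and treated by Duhamel's formula, would force every eigencoefficient to vanish. To show $\lambda\in\sigma(\mathcal{L}_\Sigma)$, I argue by contradiction. Since $\sigma(\mathcal{L}_\Sigma)$ is discrete with $-\infty$ as its only accumulation point, $\lambda\notin \sigma(\mathcal{L}_\Sigma)$ would yield a gap $(\lambda-\rho,\lambda+\rho)\cap \sigma(\mathcal{L}_\Sigma) = \emptyset$. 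Split $u = u_+ + u_-$ by the spectral projections onto $\{\lambda_i \ge \lambda+\rho\}$ and $\{\lambda_i \le \lambda-\rho\}$, and apply Duhamel's formula: backward from $\infty$ for $u_+$ (valid since $u\to 0$) and forward from a fixed time for $u_-$. The operator-norm bounds on $e^{(t-s)\mathcal{L}_\Sigma}$ restricted to $E_\pm$, combined with $\|Q(u)\|_{L^2}\le Ce^{2(\lambda+\delta)s}$, give $\|u(t)\|_{L^2}\le Ce^{(\lambda-\rho/2)t}$ for $\delta$ sufficiently small, contradicting the definition of $\lambda$.

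Having fixed $\lambda\in\sigma(\mathcal{L}_\Sigma)\cap(-\infty,0)$, I next refine the splitting to $u = u_> + u_\lambda + u_<$ aligned with $\{\lambda_i > \lambda\}$, $\{\lambda_i = \lambda\}$, and $\{\lambda_i < \lambda\}$. A second pass first upgrades the bound to $\|u\|_{L^2}\le Ce^{\lambda t}$ (so $\|Q(u)\|_{L^2}\le Ce^{2\lambda t}$) and then shows that $\|u_>(t)\|_{L^2}$ and $\|u_<(t)\|_{L^2}$ are both $o(e^{\lambda t})$ via the same Duhamel estimates. The remaining finite-dimensional ODE $\dot u_\lambda = \lambda u_\lambda + \Pi_\lambda Q(u)$ integrates to $u_\lambda(t) = e^{\lambda t}w_0 + O(e^{2\lambda t})$ with a well-defined limit $w_0 \in \ker(\mathcal{L}_\Sigma - \lambda I)$. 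Nonvanishing of $w_0$ is forced: otherwise all three pieces decay strictly faster than $e^{\lambda t}$, contradicting $\limsup = \lambda$. This yields $e^{-\lambda t}u(t) \to w_0 \ne 0$ in $L^2$, hence both conclusions of the theorem at the $L^2$ level. Upgrading to $C^\infty$ follows from parabolic Schauder/energy estimates applied to $\widetilde u(t):= e^{-\lambda t} u(t)$, which satisfies a quasilinear parabolic equation with uniformly bounded coefficients and exponentially decaying inhomogeneity; bootstrapping through the Sobolev scale gives uniform $H^\ell$ bounds for every $\ell$.

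The main obstacle is the bookkeeping of the Duhamel passes: at each step I must verify that the doubly-decaying nonlinearity $Q(u)$ is genuinely lower order than the leading eigenmode, and that this improvement can be iterated uniformly regardless of the relative position of $2\lambda$ and the neighboring spectral points. Once the splittings and the exponents are fixed, the remaining estimates reduce to standard parabolic perturbation theory.
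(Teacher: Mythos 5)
Your proposal is correct and follows essentially the same route as the paper: the paper omits the parabolic proof, but its Section~\ref{sec:exp} argument for the elliptic case (spectral decomposition, exclusion of super-exponential decay via unique continuation at infinity, identification of the optimal rate with an eigenvalue through a spectral-gap contradiction, and extraction of a nonzero leading eigenmode) specializes to exactly your scheme, with your Duhamel integrals playing the role of the paper's equivalent differential inequalities for the projections $X_\pm$. Two places where your sketch is thin but repairable: ruling out $\lambda=-\infty$ cannot be done eigencoefficient-by-eigencoefficient, since $Q(u)$ couples all modes — one needs the uniform-in-$\gamma$ absorption estimate of Proposition~\ref{pro:infinitydecay} (choose $t_1$ so that $C\int_{t_1}^\infty \Vert u\Vert_{C^2}\,ds\le 1/2$, deduce $\sup_{\tau\ge t_1}e^{-\gamma(\tau-t_1)}\Vert u(\tau)\Vert\le 2\Vert \Pi_\gamma u(t_1)\Vert$ with constant independent of $\gamma$, then let $\gamma\to-\infty$); and the higher-norm bootstrap requires controlling $E_2(u)$ in $H^\ell$ by differentiating the equation, as in Lemma~\ref{lem:error_p} and the remark following Proposition~\ref{pro:infinitydecay}.
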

 In the parabolic case, the linearized equation around the zero section, $\partial_t u = \mathcal{L}_{\Sigma}u$, has solutions (Jacobi fields) $e^{\lambda_i t} \varphi_i$ for $i\ge 0$ and they model the exponential decay in Theorem \ref{thm:general_exponential_p}. Two theorems have the following consequence. \begin{corollary}[Thom's gradient conjecture for parabolic system]\label{cor:parabolic_thom}  
Let $u$ be a decaying solution to the parabolic equation \eqref{equ:parabolic}. Then the secant has a unique limit\[ \lim_{t\to \infty} u(t)/ \Vert u(t)\Vert_{L^2} = w \text { in } C^\infty (\Sigma, \mathbf{V}) .\]
The limit $w$ is an eigensection $\mathcal{L}_{\Sigma}w=\lambda w$ for some non-positive eigenvalue $\lambda$. 

\end{corollary}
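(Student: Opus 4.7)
The plan is to derive the corollary as a direct combination of the two preceding theorems. By hypothesis $u$ is a decaying solution to \eqref{equ:parabolic}, so $u(t) \to u_\infty$ along some sequence $t_i \to \infty$. Simon's theorem then guarantees that $u(t) \to u_\infty$ along every sequence, and standard parabolic regularity for the quasi-linear equation \eqref{equ:parabolic} upgrades this to $C^\infty$-convergence. In particular, after the re-centering $u_\infty = 0$ (as per the paper's convention), we have $\|u\|_{H^{n+4}}(t) = o(1)$ as $t \to \infty$, which is precisely the hypothesis shared by Theorems \ref{thm:general_p} and \ref{thm:general_exponential_p}.

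I would then split into two mutually exclusive cases according to the decay rate. If there exists $\varepsilon > 0$ such that $\|u\|_{H^{n+4}}(t) = O(e^{-\varepsilon t})$, then Theorem \ref{thm:general_exponential_p} applies and produces a negative eigenvalue $\lambda$ of $\mathcal{L}_\Sigma$ together with an eigensection $w$ satisfying $\mathcal{L}_\Sigma w = \lambda w$ and $u(t)/\|u(t)\|_{L^2} \to w$ in $C^\infty(\Sigma;\mathbf{V})$. Otherwise $u$ does not decay exponentially and Theorem \ref{thm:general_p} applies, giving $u(t)/\|u(t)\|_{L^2} \to w$ in $C^\infty(\Sigma;\mathbf{V})$ for some $w \in \mathbf{C} \subset \ker \mathcal{L}_\Sigma$, i.e., an eigensection for the eigenvalue $\lambda = 0$. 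In either case the secant converges in $C^\infty(\Sigma;\mathbf{V})$ and the limit is an eigensection of $\mathcal{L}_\Sigma$ with non-positive eigenvalue $\lambda$, which is exactly the conclusion of the corollary.

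Since the two invoked theorems carry all the analytical weight, no new estimates are required here. The only step that needs verification is that any decaying solution indeed satisfies $\|u\|_{H^{n+4}}(t) = o(1)$, which follows from Simon's theorem and parabolic smoothing. I do not anticipate a genuine obstacle in this corollary; the real difficulty lies in the proofs of Theorems \ref{thm:general_p} and \ref{thm:general_exponential_p} themselves, which are addressed elsewhere in the paper.
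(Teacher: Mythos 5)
Your proposal is correct and matches the paper's intended argument exactly: the paper introduces the corollary with the remark that the two preceding theorems answer it, which is precisely the dichotomy (exponential decay via Theorem~\ref{thm:general_exponential_p} giving $\lambda<0$, non-exponential decay via Theorem~\ref{thm:general_p} giving $w\in\mathbf{C}\subset\ker\mathcal{L}_\Sigma$, i.e.\ $\lambda=0$) that you spell out. Your added observation that Simon's theorem plus parabolic regularity supplies the shared hypothesis $\|u\|_{H^{n+4}}(t)=o(1)$ is the right way to make the reduction explicit.
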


\bigskip

Let us discuss the elliptic equation \eqref{equ:main}. In this case, the linearized equation  $\ddot u -m \dot u  +\mathcal{L}_\Sigma u =0$ has solutions $e^{\gamma_i^+ t}\varphi_i$ and $e^{\gamma_i^- t}\varphi_i$ where $\gamma_i^\pm  =2^{-1}m\pm\sqrt{4^{-1}m^2-\lambda_i}$. According to $\gamma_i^\pm$, we separate the set of indices $\mathbb{N}$ into four indices.
\begin{equation}\label{def:I123}
\begin{split}
I_1:= &\{i\in\mathbb{N}\, :\, \lambda_i>4^{-1}m^2\},\ 
I_2:=  \{i\in\mathbb{N}\, :\, \lambda_i=4^{-1}m^2\},\\ 
I_3:= & \{i\in\mathbb{N}\, :\, \lambda_i=0\},\
I_4:= \mathbb{N}\setminus (I_1\cup I_2 \cup I_3).
\end{split}
\end{equation} 
$I_1$,$I_2$,$I_3$ correspond to the indices whose characteristic equations having complex roots, repeated roots, and (one) zero root, respectively. Note that $I_1$,$I_2$,$I_3$ are (possibly empty) finite sets and $\ker \mathcal{L}_{\Sigma}$ is spanned by $\{\varphi_i\}_{i\in I_3}$.
As $J=\dim \ker \mathcal{L}_\Sigma$, this implies $I_3=\{\iota+ 1,\iota+2,\dots, \iota+J\}$ for some $\iota\in\mathbb{N}_0$.

\begin{theorem} [slow decay in elliptic equation]  \label{thm:general_e} {There exists a finite set $\mathcal{Z}=\mathcal{Z}(\mathcal{F}_\Sigma,m ) \subset \mathbb{Q}_{\ge 3}\times \mathbb{R}_{>0}$ with the following significance: For a given solution  $u\in C^\infty(Q_{0,\infty},\widetilde{ \mathbf{V}})$  to the equation \eqref{equ:main}, if  $\|u\|_{C^1}(t)=o(1)$ but it  does not decay exponentially as $t\to\infty$, then there is $(\ell_*,\alpha_0 )\in \mathcal{Z}$ such that }
\begin{align*}
\lim_{t\to\infty} t^{1/(\ell_* -2)}\Vert u(t)\Vert_{L^2}=(\alpha_0\ell_*(\ell_*-2))^{-1/(\ell_*-2)}.  
\end{align*}
Moreover,
\begin{align*}
\lim_{t\to \infty  }{u(t)}/{\Vert u(t)\Vert _{L^2}}=w\ \textup{in}\ C^\infty(\Sigma;\mathbf{V}),
\end{align*}
for some $w \in \mathbf{C}\subset \ker \mathcal{L}_{\Sigma}$ with $m^{-1} \hat f_p (w)\geq 0 $. In particular, $\Sigma$ satisfies the Adams-Simon non-negativity condition for \eqref{equ:main}.
\end{theorem}

\begin{remark}
As \cite{S0,S}, the condition $m\neq 0$ is crucial. For slow decay, an intermediate step is to show $\ddot u$ is negligible, letting us to treat the equation roughly as a gradient flow. This idea was used in constructing such a solution \cite{AS}.\end{remark}

\begin{theorem}[fast decay in elliptic equation]\label{thm:general_exponential_e}
Let $u\in C^\infty(Q_{0,\infty},\widetilde{ \mathbf{V}})$ be a non-zero solution to \eqref{equ:main} with $\|u\|_{C^1}(t)=O(e^{-\varepsilon t})$ for some $\varepsilon>0$ as $t\to \infty$. Then one of the following alternatives holds: 
\begin{enumerate}[$(1)$]
\item  There exists an eigenvalue $\lambda<4^{-1}m^2$ of $\mathcal{L}_{\Sigma}$ such that 
\begin{align*}
&\gamma^+<0\ \textup{and}\  \lim_{t\to \infty} e^{-\gamma^+t} \Vert u(t)\Vert _{L^2}\in (0,\infty), \text{ or}\\ 
&\gamma^-<0\ \textup{and}\ \lim_{t\to \infty} e^{-\gamma^-t} \Vert u(t)\Vert _{L^2}\in (0,\infty) .
\end{align*}
Here $\gamma^\pm=2^{-1}m\pm\sqrt{4^{-1}m^2-\lambda}. $ 
Moreover, for some eigensection $w$ with $\mathcal{L}_{\Sigma} w =\lambda w$,  
\begin{align*}
\lim_{t\to \infty  }{u(t)}/{\Vert u(t)\Vert _{L^2}}=w\ \textup{in}\ C^\infty(\Sigma;\mathbf{V}).
\end{align*} 

\item There holds $m<0$ and $\lim_{t\to \infty}  t^{-1}e^{-2^{-1}m t} \Vert u(t)\Vert _{L^2}\in (0,\infty)$.
Moreover, for some eigensection $w$ with $\mathcal{L}_{\Sigma} w =4^{-1}m^2 w$,  
\begin{align*}
\lim_{t\to \infty  }{u(t)}/{\Vert u(t)\Vert _{L^2}}=w\ \textup{in}\ C^\infty(\Sigma;\mathbf{V}).
\end{align*}
\item There holds $m<0$ and $\limsup_{t\to \infty} e^{-2^{-1}m t} \Vert u(t)\Vert _{L^2} \in (0,\infty)$.
Moreover, there exist $w_i\in\mathbb{C}$ for $i\in I_1$ and $c_{i}\in\mathbb{R}$ for $i\in  I_2$ such that
\begin{align*}
\lim_{t\to \infty  }\bigg ( e^{-2^{-1}mt}u(t) -\sum_{i\in I_1}\textup{Re} \left(w_ie^{\mathbf{i}\beta_i t}  \right)\varphi_i-\sum_{i\in I_2}c_i \varphi_i\bigg ) =0\   \textup{in}\ C^\infty(\Sigma;\mathbf{V}).
\end{align*}
Here $\beta_i=\sqrt{\lambda_i-4^{-1}m^2}$ and $\mathbf{i}=\sqrt{-1}$. 
\end{enumerate}

\end{theorem}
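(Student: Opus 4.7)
The plan is to pass to the spectral basis of $\mathcal{L}_\Sigma$ and analyze each mode as a forced scalar second-order ODE. Setting $u_i(t) := \langle u(t),\varphi_i\rangle_{L^2}$ and $g_i(t) := \langle N_1(u),\varphi_i\rangle_{L^2}$, the equation \eqref{equ:main} projects to
\begin{equation*}
\ddot u_i - m\dot u_i + \lambda_i u_i = g_i(t).
\end{equation*}
Since $N_1$ vanishes quadratically at the origin in $(u,Du,D^2 u)$, an elliptic-regularity bootstrap using the first-order reformulation of Section~\ref{sec:1storderode} upgrades the hypothesis $\|u\|_{C^1}(t) = O(e^{-\varepsilon t})$ to $\|u\|_{C^k}(t) = O(e^{-\varepsilon t})$ for every $k$, whence $|g_i(t)| \lesssim e^{-2\varepsilon t}$. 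The Jacobi fields $\phi_i^\pm$ correspond to the characteristic roots $\gamma_i^\pm = m/2 \pm \sqrt{m^2/4 - \lambda_i}$; variation of parameters with Duhamel integration from $\infty$ backwards to $t$ in the unstable directions forces the coefficients of any Jacobi field of positive real part to equal a specific integral of $g_i$, so $u$ lies asymptotically on a stable set of finite codimension.

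Define the critical rate $\delta^* := \sup\{\delta > 0 : \|u(t)\|_{L^2} = O(e^{-\delta t})\}$. Because $\lambda_i \to -\infty$, only finitely many $i$ satisfy $|\mathrm{Re}\,\gamma_i^\pm| \leq \delta^*$, so only a finite-dimensional spectral slice controls the leading asymptotics. A standard bootstrap shows $\delta^*$ must coincide with some $-\mathrm{Re}\,\gamma_i^\pm$: otherwise the Duhamel term $\int G_i(t,s)\, g_i(s)\,ds$ decays strictly faster than $e^{-\delta^* t}$, while the surviving Jacobi-field coefficients can be separated into strictly-faster-decaying (which must vanish) and strictly-slower-decaying (which do not exist by the definition of $\delta^*$) pieces, improving $\delta^*$. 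The three cases of the theorem then correspond to whether $\delta^*$ is realized by (i) a real root $\gamma_i^\pm$ with $\lambda_i < m^2/4$, (ii) the repeated-root Jacobi field $t e^{mt/2}$ at $\lambda_i = m^2/4$ with $m<0$, or (iii) oscillatory Jacobi fields $e^{mt/2}\cos(\beta_i t),\, e^{mt/2}\sin(\beta_i t)$ for $\lambda_i > m^2/4$ with $m<0$. Schauder estimates for the second-order equation together with interpolation against uniform $C^k$ bounds upgrade the $L^2$ identification of the leading term to $C^\infty(\Sigma;\mathbf{V})$ convergence.

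The hard part will be Case~(3), where several oscillatory eigenspaces share the critical rate $e^{mt/2}$ with distinct frequencies $\beta_i$, and one must produce
\begin{equation*}
e^{-mt/2} u(t) - \sum_{i \in I_1}\mathrm{Re}\!\left(w_i e^{\mathbf{i}\beta_i t}\right)\varphi_i - \sum_{i \in I_2} c_i \varphi_i \to 0
\end{equation*}
in $C^\infty$. This requires a preliminary strict improvement $\|u\|_{L^2}(t) \lesssim e^{(m/2 + \eta)t}$ for some $\eta > 0$, which ensures that the forcing $g_i \lesssim e^{(m + 2\eta)t}$ is strictly subleading to the critical Jacobi fields; the coefficients $w_i, c_i$ are then extracted mode by mode via Duhamel integrals whose oscillatory tails vanish by a Riemann--Lebesgue-type cancellation against the critical frequencies. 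Obtaining this strict gap $\eta > 0$, when the bootstrap of the previous paragraph only places $\delta^* = -m/2$ on the boundary, is the technical crux, and will be handled by iterating the variation-of-parameters analysis on the finite set of surviving modes until the nonlinear term is strictly beaten.
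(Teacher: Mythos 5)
Your overall strategy—spectral decomposition into forced scalar ODEs, a critical decay rate $\delta^*$, a dichotomy showing $\delta^*$ must be a spectral value, and mode-by-mode Duhamel extraction of the leading coefficients—is essentially the route the paper takes (it works with the first-order system $q'=\mathbf{L}q+\mathcal{E}$ and the basis $\mathcal{B}$ rather than scalar second-order equations, but this is cosmetic). However, there is one genuine gap: you never establish that $\delta^*<\infty$, i.e.\ that a non-zero solution cannot decay faster than every exponential. Every one of the theorem's conclusions asserts a limit (or $\limsup$) lying in $(0,\infty)$, and the strict positivity is exactly the statement that the decay rate is attained at a finite $\gamma_*$. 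Your bootstrap ("improving $\delta^*$") cannot supply this: since $\gamma_i^-\to-\infty$ as $\lambda_i\to-\infty$, the set of candidate rates is unbounded, so the improvement could a priori iterate forever, leaving open the possibility of super-exponential decay with all leading coefficients vanishing at every stage. The paper closes this with a separate unique continuation result at infinity (Proposition~\ref{pro:infinitydecay}, following Strehlke): the inequality $e^{-\gamma(t_2-t_1)}\Vert q(t_2)\Vert_G\le \Vert\Pi_\gamma q(t_1)\Vert_G+\int_{t_1}^\infty e^{-\gamma(s-t_1)}\Vert\mathcal{E}(s)\Vert_G\,ds$ together with smallness of $\int\Vert u\Vert_{C^2}$ gives $M_\gamma(t_1)\le 2\Vert\Pi_\gamma q(t_1)\Vert_G$, and letting $\gamma\to-\infty$ forces $q\equiv 0$, which is then propagated back to $t=0$. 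Without this ingredient your argument only yields the asymptotic expansions conditionally on non-degeneracy of the leading term.

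Two smaller remarks. First, what you call the "technical crux" in Case~(3) is actually automatic: once $\gamma_*=2^{-1}m$ is finite, the quadratic structure of $E_1$ gives $|E_1(u)|\lesssim e^{-2\varepsilon_0 t}\,e^{(\gamma_*+\varepsilon_0)t}=e^{(\gamma_*-\varepsilon_0)t}$, so the Duhamel integrands $e^{-(\gamma_*+\mathbf{i}\beta_i)s}\mathcal{E}_i(s)$ are absolutely integrable; no Riemann--Lebesgue-type oscillatory cancellation is needed, and the coefficients $w_i$, $c_i$ come out directly as convergent improper integrals (this is the paper's Lemma~\ref{lem:gamma_star_m}). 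Second, your phrase "strict improvement $\Vert u\Vert_{L^2}\lesssim e^{(m/2+\eta)t}$" is backwards as written—that bound is weaker than $e^{mt/2}$ and is immediate from the definition of $\delta^*$; the estimate you actually need is the strictly subcritical bound on the forcing just described.
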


Theorem \ref{thm:general_p} and \ref{thm:general_e} provide a necessary condition for the existence of solution that does not decay exponentially. As previously mentioned, if $\ker  \mathcal{L}_\Sigma$ is integrable (or equivalently, $f$ is constant around the origin), then every decaying solution converges exponentially. For the parabolic equation \eqref{equ:parabolic}, Theorem \ref{thm:general_p} shows the same result holds if the $\hat{f}_p$ has only negative critical values. A corresponding result holds for the elliptic equation \eqref{equ:main} with $\hat{f}_p$ replaced by $m^{-1}\hat{f}_p$. Heuristically, this non-existence is motivated by a simple observation that one-dimensional gradient flow $\dot x=-(a x^p)'$ has no decaying solution if $a<0$ and $p$ is an even integer. Indeed, this phenomenon is not exclusive to gradient flows. In \cite{choi2021translating,choisunclassification}, slowly decaying solutions are ruled out in the context of sub-critical Gauss curvature flow and curve shortening flow for similar reasons: for a slow decaying solution to exist, the modulus of the projection onto the kernel satisfies an ODE of form $\dot x \approx - (ax^4)'$ some $a<0$ and this readily gives a contradiction. To our knowledge, those equations in \cite{choi2021translating,choisunclassification} are not written in the form \eqref{equ:main} or \eqref{equ:parabolic}.

\bigskip
\begin{remark}\label{rmk:cases}
The main results can be generalized to cover other cases.

\begin{enumerate}[$(1)$]
\item  A solution is called \textit{ancient} if it is defined for $t \in (-\infty, 0]$. The main theorems also provide corresponding results on the next-order asymptotics of ancient solutions that decay to zero as $t$ approaches minus infinity. In the elliptic case \eqref{equ:main}, simply performing a change of variable $t \mapsto -t$ is enough: the equation transforms to another equation of the same form \eqref{equ:main} with a different sign in $m$. For the parabolic case \eqref{equ:parabolic}, however, the same change $t\mapsto -t$ transforms the gradient descending flow to a gradient ascending flow. The finite-dimensional problem  does not distinguish between two: the same result holds after trivial changes in the signs within the statement, such as using positive eigenvalues instead of negative eigenvalues, and non-positive critical values instead of non-negative critical values. For the parabolic PDE \eqref{equ:parabolic}, the difference occurs because of the parabolic regularity (Lemma~\ref{lem:regularity}). Since the parabolic regularity (Lemma~\ref{lem:regularity}) actually holds backward-in-time, with minor modifications, the proofs of Theorems~\ref{thm:general_p} and \ref{thm:general_exponential_p} hold \textit{mutatis mutandis} for ancient solutions.

\item   Theorems are new even for finite-dimensional problems. The main results apply for gradient systems on $\mathbb{R}^n$ by choosing $\Sigma$ to be a point, $d\mu$ a Dirac mass, and $V$ a trivial $\mathbb{R}^n$ bundle. Suppressing $N_1$ and $N_2$, the equations \eqref{equ:parabolic} and \eqref{equ:main} become $\dot x +\nabla f(x)=0$ and $\ddot x -m \dot x -\nabla f(x)=0$ on $x(t)\in \mathbb{R}^n$ for analytic function $f:\mathbb{R}^n\to \mathbb{R}$, respectively. Note the second order system when $m<0$ describes Newton's second law for a unit-mass-particle under a potential force $\nabla f$ and a frictional force inverse proportional to the velocity, so-called the heavy ball ODE in the optimization and the machine learning literature. Thom's {conjecture} for (finite-dimensional) gradient flow was proved by \cite{KMP}. Our contribution lies in establishing the exact polynomial convergence rate.

\item  Due to volume normalization, the normalized Yamabe flow (NYF) involves a non-local term,  the total scalar curvature, in its speed. Nevertheless, our proof for Theorem~\ref{thm:general_p} can be modified to cover the NYF. Specifically, all parts of the proof, except for Lemma~\ref{lem:error_p}, remain the same. Lemma~\ref{lem:error_p} can be easily established with the explicit forms of the NYF.

\end{enumerate}

\end{remark}

}


\bigskip

We finish this section with two lemmas related to the Lyapunov-Schmidt reduction map $H$ given in Proposition~\ref{pro:AS}. We decompose $u$ as follows: let $u^T=\Pi^T u$ and define $\tilde{u}^\perp$ by
\begin{align}\label{def:ut}
u=u^T+H(u^T)+\tilde{u}^\perp.
\end{align}
Since $u^T\in\ker \mathcal{L}_{\Sigma}$, we may write $u^T$ as a linear combination of $\{\varphi_{\iota+j}\}_{1\leq j\leq  J}$
\begin{equation}\label{def:xj}
u^T= \sum_{j=1}^J x_j\varphi_{\iota+j}.
\end{equation}

\begin{lemma}\label{lem:MSigma}
Let $\rho>0$ be the constant given in Proposition~\ref{pro:AS} and $u\in C^{2,\alpha}(\Sigma;\mathbf{V})$ be a section with $\| u^T \|_{L^2}\leq 2^{-1}\rho$ and $\|\tilde{u}^\perp\|_{C^{2,\alpha}(\Sigma)}\leq M<\infty$. Let $x=(x_1,x_2,\dots,x_J)$ be given by \eqref{def:xj}. Then for any $\alpha\in (0,1)$, there exists a positive constant $C=C(\mathcal{M}_\Sigma,M,\alpha)$ such that
\begin{equation} 
\left| \mathcal{M}_\Sigma (u)+\nabla f(x)-\mathcal{L}_{\Sigma} \tilde{u}^\perp \right|\leq C \Vert u \Vert_{C^{2,\alpha}(\Sigma)}\Vert \tilde{u}^\perp  \Vert_{C^{2,\alpha}(\Sigma)} .
\end{equation} 
\begin{proof}
In the proof we use $C$ to represent a positive constant that depends on $\mathcal{M}_\Sigma$, $M$, $\alpha$, and its value may vary from one line to another.
Let $\bar{\mathcal{L}}_\Sigma$ be the linearization of $\mathcal{M}_\Sigma$ at $u^T+H(u^T)$. From Proposition~\ref{pro:AS} (Lyapunov-Schmidt reduction) and \eqref{def:ut}, $\left| \mathcal{M}_\Sigma (u)+\nabla f(x)-\mathcal{L}_{\Sigma} \tilde{u}^\perp \right|$ becomes
\begin{align*}
 &\left|    \mathcal{M}_{\Sigma}(u^T +H(u^T)+ \tilde u^\perp)-\mathcal{M}_{\Sigma} (u^T +H(u^T)) -\mathcal{L}_{\Sigma} \tilde{u}^\perp\right|\\
 &\leq |\bar{\mathcal{L}}_\Sigma  \tilde{u}^\perp -\mathcal{L}_{\Sigma} \tilde{u}^\perp  |+ |\mathcal{M}_{\Sigma}(u^T +H(u^T)+ \tilde u^\perp)-\mathcal{M}_{\Sigma} (u^T +H(u^T))-\bar{\mathcal{L}}_\Sigma  \tilde{u}^\perp   |. 
\end{align*}
Since $\mathcal{M}_\Sigma: C^{2,\alpha}(\Sigma;\mathbf{V})\to C^{\alpha}(\Sigma;\mathbf{V})$ is analytic, the above is bounded by  
\begin{align*}
C  \Vert u^T+H(u^T)\Vert_{C^{2,\alpha}(\Sigma)}\Vert \tilde u^\perp \Vert_{C^{2,\alpha}(\Sigma)}+C\Vert \tilde u^\perp \Vert^2_{C^{2,\alpha}(\Sigma)}
\end{align*}
From \eqref{def:xj} and the analyticity of $H$,   $\Vert u^T+H(u^T)\Vert_{C^{2,\alpha}(\Sigma)}\leq C |x|\leq C \Vert u \Vert_{C^{2,\alpha}(\Sigma)}.$ 	Together with \eqref{def:ut}, $\Vert \tilde{u}^\perp \Vert_{C^{2,\alpha}(\Sigma)} \leq C  \Vert u \Vert_{C^{2,\alpha}(\Sigma)}.$ Hence the assertion holds.

\end{proof}
\end{lemma}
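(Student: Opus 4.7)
The plan is to exploit the Lyapunov--Schmidt identity together with a Taylor expansion of $\mathcal{M}_\Sigma$ around the ``slow'' base point $u^T+H(u^T)$. By Proposition~\ref{pro:AS}, combining the $\Pi^T$ and $\Pi^\perp$ equations yields
\[\mathcal{M}_\Sigma\bigl(u^T+H(u^T)\bigr)=-\nabla f(x),\]
after identifying $u^T$ with $x\in\mathbb{R}^J$ via \eqref{equ:identification}. Thus, writing $u=u^T+H(u^T)+\tilde u^\perp$, the quantity to be estimated becomes
\[\mathcal{M}_\Sigma(u)+\nabla f(x)-\mathcal{L}_\Sigma\tilde u^\perp=\bigl[\mathcal{M}_\Sigma(u^T+H(u^T)+\tilde u^\perp)-\mathcal{M}_\Sigma(u^T+H(u^T))\bigr]-\mathcal{L}_\Sigma\tilde u^\perp.\]
The first task is then to expand the bracket.

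Let $\bar{\mathcal{L}}_\Sigma$ denote the linearization of $\mathcal{M}_\Sigma$ at the base point $u^T+H(u^T)$. I would split the error into two pieces:
\[\bigl(\bar{\mathcal{L}}_\Sigma-\mathcal{L}_\Sigma\bigr)\tilde u^\perp\quad\text{and}\quad\mathcal{M}_\Sigma(u^T+H(u^T)+\tilde u^\perp)-\mathcal{M}_\Sigma(u^T+H(u^T))-\bar{\mathcal{L}}_\Sigma\tilde u^\perp.\]
Because $\mathcal{M}_\Sigma$ is an analytic (in fact quasilinear second-order) map from $C^{2,\alpha}(\Sigma;\mathbf{V})$ to $C^{\alpha}(\Sigma;\mathbf{V})$ by \eqref{equ:quasilinear} and the smoothness hypotheses on $F$, the first piece is controlled by $C\,\|u^T+H(u^T)\|_{C^{2,\alpha}}\|\tilde u^\perp\|_{C^{2,\alpha}}$ (linearizations at nearby base points differ by the size of the base point), and the second piece, being the Taylor remainder past the linear term, is $O(\|\tilde u^\perp\|_{C^{2,\alpha}}^2)$, where the implicit constants depend on the $C^{2,\alpha}$-bound $M$ on $\tilde u^\perp$.

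To finish, I would use the analyticity of $H$ together with $H(0)=0$ and $DH(0)=0$ to conclude $\|H(u^T)\|_{C^{2,\alpha}}\lesssim|x|^2\lesssim|x|$ in the small-$x$ regime, and hence $\|u^T+H(u^T)\|_{C^{2,\alpha}}\lesssim|x|\lesssim\|u\|_{C^{2,\alpha}}$. Combined with the trivial bound $\|\tilde u^\perp\|_{C^{2,\alpha}}\lesssim\|u\|_{C^{2,\alpha}}$, which lets me absorb the quadratic term into $\|u\|_{C^{2,\alpha}}\|\tilde u^\perp\|_{C^{2,\alpha}}$, both pieces are controlled by the claimed right-hand side. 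The only subtle point, and where I would be careful, is extracting the precise analytic dependence of $\mathcal{M}_\Sigma$ on $u$ in $C^{2,\alpha}$ so that the Taylor remainder at order two and the first-order Lipschitz bound on linearizations both hold uniformly on the ball of radius $M$; this rests on the quasilinear structure \eqref{equ:quasilinear} and the analyticity hypothesis on $F$, so no genuine obstacle is anticipated.
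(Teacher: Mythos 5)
Your proposal is correct and follows essentially the same route as the paper's proof: the identity $\mathcal{M}_\Sigma(u^T+H(u^T))=-\nabla f(x)$ from the Lyapunov--Schmidt reduction, the splitting into $(\bar{\mathcal{L}}_\Sigma-\mathcal{L}_\Sigma)\tilde u^\perp$ plus the second-order Taylor remainder, and the bounds $\Vert u^T+H(u^T)\Vert_{C^{2,\alpha}}\leq C|x|\leq C\Vert u\Vert_{C^{2,\alpha}}$ and $\Vert\tilde u^\perp\Vert_{C^{2,\alpha}}\leq C\Vert u\Vert_{C^{2,\alpha}}$ to close the estimate. The "subtle point" you flag about uniform analytic dependence of $\mathcal{M}_\Sigma$ on $C^{2,\alpha}$ balls is handled in the paper exactly as you anticipate, via the quasilinear structure and analyticity of $F$.
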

 
\begin{lemma}[boundedness of decomposition]\label{lem:utup} 
Let $\rho>0$ be the constant from Proposition~\ref{pro:AS} and $u\in C^\infty (Q_{a,b};\widetilde{\mathbf{V}})$ be a smooth section with $\|u\|_{C^s}(t) \le M <\infty $. If $\|u^T\|_{L^2}(t)\leq 2^{-1}\rho$, then there is $C=C(\mathcal{M}_\Sigma, M,s)<\infty $ such that
\begin{align*}
\|u^T\|_{C^s}(t) +\|H(u^T)\|_{C^s}(t) + \|\tilde{u}^\perp\|_{C^s}(t) \le C  \|u\|_{C^s}(t).
\end{align*}
\begin{proof}
In the proof, the constant $C$ may vary from one line to another. Let $x(t)=(x_1(t),\dots, x_J(t))$ be the coefficients given by \eqref{def:xj} for $u^T(t)$. Then
\begin{align*}
\|u^T\|_{C^s}(t)\leq C \sum_{k=0}^s \left| \tfrac{d^k}{dt^k} x(t) \right|\leq C   \|u\|_{C^s}(t).
\end{align*}
By \eqref{equ:identification}, let us view $H$ as a map from an open ball in $\mathbb{R}^J$ to $C^\infty(\Sigma;\mathbf{V})$. We abuse the notation and write $H(x(t))$ for $H(u^T)$. For $1\leq k\leq s$, 
\begin{align*}
\frac{\partial^k}{\partial t^k} H(x(t))=\sum_{i=1}^k\sum_{\substack{k_1+\dots+k_i=k\\k_j\geq 1}}  D^iH(x(t)) \left[  \tfrac{d^{k_1} x (t)}{dt^{k_1} },  \dots  ,\tfrac{d^{k_i} x (t)}{dt^{k_i} } \right].
\end{align*}

Let $\mathcal{B}_{i,\ell,\alpha}= \mathcal{L}(\left(\mathbb{R}^J\right)^{\otimes i}, C^{\ell,\alpha}(\Sigma;\mathbf{V}))$ be the Banach space of bounded linear maps from $\left(\mathbb{R}^J\right)^{\otimes i}$ to $ C^{\ell,\alpha}(\Sigma;\mathbf{V})$ equipped with the operator norm. From Proposition~\ref{pro:AS}, $D^iH$ is an analytic map from $B_\rho(0)\subset \mathbb{R}^J$ to $\mathcal{B}_{i,\ell,\alpha}$. In particular, the operator norm of $D^iH$ is bounded in $B_{\rho/2}(0)$. Therefore, provided $|x(t)|\leq \rho/2$,
\begin{align*}
\left| \slashed{\nabla}^\ell\left[ D^iH(x(t))\left [ \tfrac{d^{k_1} x (t)}{dt^{k_1} },  \dots ,  \tfrac{d^{k_i} x (t)}{dt^{k_i} } \right ]\right]\right|\leq C \left|\tfrac{d^{k_1} x (t)}{dt^{k_1} } \right|  \dots  \left| \tfrac{d^{k_i} x (t)}{dt^{k_i} } 	\right| 
 \leq C   \|u\|_{C^s}(t).
\end{align*} 
We used the assumption $\|u\|_{C^s}(t)\le M$ in the second inequality. This ensures $ \|H(u^T)\|_{C^s}(t) \leq C  \|u\|_{C^s}(t).$ Then because of \eqref{def:ut}, $ \|\tilde{u}^\perp\|_{C^s}(t) \leq C \|u\|_{C^s}(t)$ holds.  
\end{proof}
\end{lemma}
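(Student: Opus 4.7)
The plan is to estimate the three pieces of the decomposition \eqref{def:ut} separately. For $u^T$, I would exploit the fact that $\ker\mathcal{L}_\Sigma$ is finite-dimensional and spanned by fixed smooth eigensections, so $C^s$ control reduces to bounding finitely many $L^2$ pairings. For $H(u^T)$, I would combine the analyticity of $H$ from Proposition~\ref{pro:AS} with the chain rule for higher derivatives (Fa\`a di Bruno), using the $L^2$ smallness hypothesis to keep the trajectory $x(t)$ in a fixed compact neighborhood of the origin in $\mathbb{R}^J$ where every Fr\'echet derivative $D^iH$ has bounded operator norm. Finally, $\tilde u^\perp = u - u^T - H(u^T)$ is handled by the triangle inequality.

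For the first piece, writing $u^T(t) = \sum_{j=1}^J x_j(t)\varphi_{\iota+j}$ as in \eqref{def:xj}, the coefficients satisfy $x_j(t) = \langle u(t),\varphi_{\iota+j}\rangle_{L^2}$, so differentiating under the integral gives
\begin{align*}
\frac{d^k x_j}{dt^k}(t) = \left\langle \frac{\partial^k u}{\partial t^k}(t),\varphi_{\iota+j}\right\rangle_{L^2}.
\end{align*}
By Cauchy--Schwarz each of these is bounded by a constant (depending on $\|\varphi_{\iota+j}\|_{L^2}$ and $\mu(\Sigma)$) times $\|u\|_{C^s}(t)$. Because the basis sections $\varphi_{\iota+1},\ldots,\varphi_{\iota+J}$ are fixed smooth sections on the closed manifold $\Sigma$, every mixed space-time derivative of $u^T$ up to order $s$ inherits the same bound, yielding $\|u^T\|_{C^s}(t) \le C\|u\|_{C^s}(t)$ with $C = C(\mathcal{M}_\Sigma, s)$.

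For the second piece I would view $H$ through the identification \eqref{equ:identification} as an analytic map from an open ball in $\mathbb{R}^J$ into $C^s(\Sigma;\mathbf{V})$ and apply Fa\`a di Bruno:
\begin{align*}
\frac{\partial^k}{\partial t^k}H(x(t)) = \sum_{i=1}^k \sum_{\substack{k_1+\cdots+k_i=k \\ k_j \ge 1}} D^iH(x(t))\left[\frac{d^{k_1}x(t)}{dt^{k_1}},\ldots,\frac{d^{k_i}x(t)}{dt^{k_i}}\right].
\end{align*}
The hypothesis $\|u^T\|_{L^2}(t) \le \rho/2$ gives $|x(t)| \le \rho/2$ by orthonormality of the $\varphi_j$, so $x(t)$ stays in a fixed compact ball on which each $D^iH$ is a bounded analytic map into the Banach space of $i$-multilinear operators $(\mathbb{R}^J)^{\otimes i} \to C^s(\Sigma;\mathbf{V})$. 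The first step bounds each factor $\left|\tfrac{d^{k_j}x(t)}{dt^{k_j}}\right| \le C\|u\|_{C^s}(t)$; combined with the uniform bound $\|u\|_{C^s}(t) \le M$, the product of $i$ such factors is at most $CM^{i-1}\|u\|_{C^s}(t)$. Summing the finite Fa\`a di Bruno expansion gives $\|H(u^T)\|_{C^s}(t) \le C\|u\|_{C^s}(t)$ with $C = C(\mathcal{M}_\Sigma, M, s)$, after which the triangle inequality applied to \eqref{def:ut} yields the corresponding bound on $\tilde u^\perp$.

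The main subtlety is keeping the bound on $H(u^T)$ linear in $\|u\|_{C^s}(t)$ rather than polynomial: this is exactly where the a priori bound $\|u\|_{C^s}(t) \le M$ is needed, allowing each Fa\`a di Bruno term with $i \ge 2$ derivative factors to absorb $i-1$ of them into a multiplicative constant depending on $M$. Everything else is routine bookkeeping given the regularity of $H$ and the finite-dimensionality of $\ker\mathcal{L}_\Sigma$.
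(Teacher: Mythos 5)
Your proposal is correct and follows essentially the same route as the paper: bound the coefficients $x(t)$ and their time derivatives to control $u^T$, apply the Fa\`a di Bruno expansion together with the analyticity of $H$ (and the uniform bound on $D^iH$ over $B_{\rho/2}(0)$) to control $H(u^T)$ linearly in $\|u\|_{C^s}(t)$ by absorbing extra factors via $\|u\|_{C^s}(t)\le M$, and finish with the triangle inequality for $\tilde u^\perp$. The only cosmetic difference is that you target $C^s(\Sigma;\mathbf{V})$ where the paper works with $C^{\ell,\alpha}(\Sigma;\mathbf{V})$, which does not affect the argument.
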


\section{First order ODE system} \label{sec:1storderode}
We transform \eqref{equ:main} into a first order ODE system.  Let $u\in C^2(Q_{0,\infty},\widetilde{\mathbf{V}})$ be a solution to \eqref{equ:main}. By setting ${E}_1 (u)=N_1(u)- \mathcal{M}_\Sigma u+\mathcal{L}_{\Sigma} u$, there holds
\bea\label{equ:minimalODE}  {\ddot u}- m  {\dot u}+   \mathcal{L}_{\Sigma} u = E_1(u).\eea
From \eqref{equ:quasilinear} and \eqref{equ:N}, the error term $E_1(u)$ has the structure
\begin{equation}\label{equ:Estr}
 {E}_1(u)=a_1\cdot D{\dot u}(t)+a_2\cdot {\dot u}(t)+\sum_{j=0}^2 a_{4,j}\cdot \slashed{\nabla}^j u,
\end{equation} 
where $a_{4,j}=a_{4,j}(\omega, u,\slashed{\nabla}u,{\dot u})$ are smooth with $a_{4,j}(\omega,0, 0,0)=0$. Let us begin to set up some notion.
\begin{definition}\label{def:mathcal}
Let $\mathbf{L}:H^2(\Sigma;\mathbf{V})\times H^1(\Sigma;\mathbf{V}) \to H^1(\Sigma;\mathbf{V})\times L^2(\Sigma;\mathbf{V})$ be the linear map given by 
\begin{align*}
\mathbf{L}(v,w) := \left( \tfrac{m}{2} v+
w , -\mathcal{L}_{\Sigma}v+\tfrac{m^2}{4} v+\tfrac{m}{2} w
\right).
\end{align*}
\end{definition}
\begin{definition}\label{def:qE}
For a section $u\in C^2(Q_{0,\infty},\widetilde{\mathbf{V}})$, define
\begin{align*}
 q(u):= (u, {\dot u} - \tfrac{m}{2}u),\ \mathcal{E}(u) := (0,E_1(u)). 
\end{align*} 
If there is no confusion, we often suppress the argument and write $q$, $\mathcal{E}$ to denote $q(u)$ and $\mathcal{E}(u)$, respectively. Moreover,  $q(t)$ and $\mathcal{E}(t)$ denote the restriction of $q(u)$ and $\mathcal{E}(u)$ on $\{t\}\times\Sigma$, respectively. We also suppress the argument $t$ if no confusion is made.  
\end{definition}

The equation \eqref{equ:minimalODE} can be rewritten as
\begin{align}\label{eq-vectoru}
 q'=\mathbf{L}q+\mathcal{E} .
\end{align}
We now identify the eigenvalues and eigensections of $\mathbf{L}$. For $i\in\mathbb{N}$, let 
$$\gamma^\pm _i := \tfrac{m}{2} \pm \sqrt {\tfrac{m^2}{4}   -\lambda_i }.$$
Recall the indices $\cup_{i=1}^4 I_i=\mathbb{N} $ in \eqref{def:I123}. For $i\in I_1$, we write the complex eigenvalue $\gamma^\pm_i = \tfrac m2 \pm \mathbf{i} \beta_i$ using  
\[\beta_i = \sqrt {\lambda_i- \tfrac{ m^2 }4}>0.\]
We define the eigenvectors 
\begin{align*}
& \psi_i ^\pm :=   \big ( \tfrac{m}{m-2\gamma^\pm_i }\,\varphi_i ,  -\tfrac{m}{2}  \varphi_i\big ) &&\text{ for }i\in I_3\cup I_4,\\
&\psi_{i,1}:= (0, \tfrac{m}{\sqrt2}\varphi_i),\ \psi_{i,2}:= (\tfrac{m}{\sqrt {2} \beta_i}\varphi_i ,0) &&\text{ for }i\in I_1,\\
&\psi_{i,3}:=(0,\tfrac{m}{\sqrt 2}\varphi_i),\ \psi_{i,4}:=(\tfrac{m}{\sqrt 2}\varphi_i,0) && \text{ for } i \in I_2.
\end{align*} From $\mathcal{L}_{\Sigma}\varphi_i=\lambda_i\varphi_i$, we directly check that 
\begin{equation}\begin{aligned}\label{equ:L1}
&
 \mathbf{L}\psi^{\pm}_i=\gamma^\pm_i\psi^\pm_i &&\text{ for }i\in I_3\cup I_4,\\
&
 \mathbf{L}\psi_{i,1}=\tfrac{m}{2} \psi_{i,1}+\beta_i\psi_{i,2},\ \mathbf{L}\psi_{i,2}=\tfrac{m}{2} \psi_{i,2}-\beta_i\psi_{i,1} &&\text{ for }i\in I_1,\\
&\mathbf{L}\psi_{i,3}=\tfrac{m}{2} \psi_{i,3}+\psi_{i,4},\ \mathbf{L}\psi_{i,4}=\tfrac{m}{2} \psi_{i,4} && \text{ for } i \in I_2.
\end{aligned}\end{equation}
\begin{definition}[inner product]\label{def:G}
Let $G:( H^1(\Sigma;\mathbf{V})\times H^0(\Sigma;\mathbf{V})  )^2\to\mathbb{R}$ be the symmetric bilinear form defined by 
\begin{align*}
&\tfrac{m^2}{2}\,G((v_1,w_1);(v_2,w_2)):=   \int_{\Sigma}\big( -\left\langle \mathcal{L}_{\Sigma} v_1-\tfrac{m^2}{4}v_1 ,v_2 \right\rangle+\left\langle w_1 ,w_2 \right\rangle \big)\, d\mu\\
 &+  \sum_{i\in I_1}  2  \beta^2_i \int_\Sigma \left\langle v_1, \varphi_i \right\rangle d\mu \int_\Sigma \left\langle v_2, \varphi_i \right\rangle d\mu  +\sum_{i\in I_2}\int_\Sigma \left\langle v_1, \varphi_i \right\rangle d\mu \int_\Sigma \left\langle v_2, \varphi_i \right\rangle d\mu . 
\end{align*}
We denote $\|(v,w)\|^2_G:= G((v,w);(v,w))$. The positive-definiteness of $G$ will be justified in Lemma~\ref{lem:G}. $\mathbf{L}^\dagger$ denotes the adjoint of $\mathbf{L}$ with respect to $G$:
\begin{align*}
G(\mathbf{L}(v_1,w_1);(v_2,w_2))=G((v_1,w_1);\mathbf{L}^\dagger(v_2,w_2)).
\end{align*}

\end{definition}

We define the collection of vectors 
\begin{equation}\label{def:B}
 \mathcal{B}:= \{\psi_{i,1}, \psi_{i,2}\}_{i\in I_1}\cup\{\psi_{i,3}, \psi_{i,4}\}_{i\in I_2} \cup\{\psi^+_i,\psi^-_i\}_{i\in I_3\cup I_4}. 
\end{equation}
\begin{lemma}[spectral decomposition of  $\mathbf{L}$]\label{lem:G}
There hold

\begin{enumerate}[$(1)$]
\item   The bilinear form $G$ is equivalent to the standard inner product
\begin{equation*}
  \|(v,w)\|_G \asymp  \Vert (v,w)\Vert_{H^1\times H^0}. 
\end{equation*}
\item  The collection $\mathcal{B}$ forms a complete $G$-orthonormal basis.

\item  $\mathbf{L}^\dagger$, the adjoint operator of $\mathbf{L}$ with respect to $G$, acts on $\mathcal{B}$ as follows: 
\begin{align*}
 &\mathbf{L}^\dagger \psi^\pm_i=\gamma^\pm_i\psi^\pm_i  && \text{for }i\in I_3\cup I_4,
\\ 
 & \mathbf{L}^\dagger \psi_{i,1}=\tfrac{m}{2}\psi_{i,1}-\beta_i\psi_{i,2} ,\ \mathbf{L}^\dagger \psi_{i,2}=\tfrac{m}{2}\psi_{i,2}+\beta_i\psi_{i,1} &&\text{for }i\in I_1,
\\ 
& \mathbf{L}^\dagger \psi_{i,3}=\tfrac{m}{2}\psi_{i,3},\ \mathbf{L}^\dagger \psi_{i,4}=\tfrac{m}{2}\psi_{i,4}+\psi_{i,3} &&\text{for }i\in I_2.
\end{align*}

\end{enumerate}

\end{lemma}
\begin{proof}
We start to prove (1). By expressing $v$ as $v=\sum_{i=1}^\infty a_i \varphi_i$, \begin{align*}
G((v,0);(v,0))=2m^{-2}\bigg (\sum_{i\notin I_2 } \left| 4^{-1}m-\lambda_i \right| a_i^2+\sum_{i\in I_2 } a_i^2 \bigg ) \geq c \sum_{i=1}^\infty a_i^2=c\|v\|^2_{L^2}, 
\end{align*} 
for some $c=c(\mathcal{M}_\Sigma,m)>0 $. In view of \eqref{equ:H1}, this implies (1). It is straightforward to check that vectors in $\mathcal{B}$ are $G$-orthonormal. Suppose $(v,w)\in H^1(\Sigma;\mathbf{V})\times H^0(\Sigma;\mathbf{V}) $ is $G$-orthogonal to every vector in $\mathcal{B}$. Note that for all $j\in \mathbb{N}$, $(0,\varphi_j)$ and $(\varphi_j,0)$ lie in the linear span of $\mathcal{B}$. This implies that 
$
0=2^{-1}m^{2} G((v,w);(0,\varphi_j))= \int_{\Sigma}\left\langle w,\varphi_j \right\rangle \, d\mu,
$
and that
$
0=2^{-1}m^{2} G((v,w);(\varphi_j,0 ))= C_j  \int_{\Sigma}\left\langle v,\varphi_j \right\rangle \, d\mu.
$
Here $C_j=1$ for $j\in I_2$ and $C_j=\left| 4^{-1}m-\lambda_j \right|$ for $j\notin I_2$. Therefore, $v=w=0$ and $\mathcal{B}$ is complete. Lastly, (3) follow from \eqref{equ:L1} and the $G$-orthogonality of $\mathcal{B}$.
\end{proof}

\begin{lemma}[equivalence of $\mathbf{L}$ and angular derivative]\label{lem:Gequ}
For each $\ell\in \mathbb{N}_0$, 
\begin{equation}\label{equ:multiL}
 \sum_{j=0}^\ell \Vert \mathbf{L}^j (v,w) \Vert_{G} \asymp   \Vert (v,w)\Vert_{H^{\ell+1}\times H^\ell}.
\end{equation}
\end{lemma}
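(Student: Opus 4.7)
The plan is to argue by induction on $\ell$, with the base case $\ell=0$ being precisely \eqref{equ:Gequiv} in Lemma~\ref{lem:G}. Write $S_\ell:=\sum_{j=0}^\ell \|\mathbf{L}^j(v,w)\|_G$ for brevity. The main input beyond Lemma~\ref{lem:G} is the standard elliptic regularity estimate $\|v\|_{H^{k+2}} \leq C(\|\mathcal{L}_\Sigma v\|_{H^k}+\|v\|_{L^2})$ for the elliptic operator $\mathcal{L}_\Sigma$ on the closed manifold $\Sigma$, which will be used in each inductive step to convert second-component information of $\mathbf{L}(v,w)$ into a gain of two derivatives on $v$.

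For the upper bound $S_\ell\lesssim \|(v,w)\|_{H^{\ell+1}\times H^\ell}$, the key observation is that $\mathbf{L}$ loses exactly one derivative: by Definition~\ref{def:mathcal}, the first component $2^{-1}m v+w$ lies in $H^k$ whenever $(v,w)\in H^{k+1}\times H^k$, while the second component $-\mathcal{L}_\Sigma v + 4^{-1}m^2 v + 2^{-1}m w$ lies in $H^{k-1}$ since $\mathcal{L}_\Sigma$ is second order. Hence $\mathbf{L}^j:H^{\ell+1}\times H^\ell\to H^{\ell+1-j}\times H^{\ell-j}$ boundedly for $0\le j\le \ell$, and each $\|\mathbf{L}^j(v,w)\|_G$ is then controlled by its $H^1\times L^2$ norm via \eqref{equ:Gequiv}.

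For the lower bound $\|(v,w)\|_{H^{\ell+2}\times H^{\ell+1}}\lesssim S_{\ell+1}$ in the inductive step, I would apply the inductive hypothesis simultaneously to $(v,w)$ and to $(v_1,w_1):=\mathbf{L}(v,w)$. The former yields $\|(v,w)\|_{H^{\ell+1}\times H^\ell}\lesssim S_\ell\leq S_{\ell+1}$, while the latter gives
\[
\|(v_1,w_1)\|_{H^{\ell+1}\times H^\ell}\lesssim \sum_{j=0}^\ell\|\mathbf{L}^j(v_1,w_1)\|_G=\sum_{j=1}^{\ell+1}\|\mathbf{L}^j(v,w)\|_G\leq S_{\ell+1}.
\]
Since the first component of $(v_1,w_1)$ equals $2^{-1}m v+w$, the bound $\|2^{-1}m v+w\|_{H^{\ell+1}}\lesssim S_{\ell+1}$, combined with the already obtained control on $\|v\|_{H^{\ell+1}}$, forces $\|w\|_{H^{\ell+1}}\lesssim S_{\ell+1}$. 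Similarly, the second component gives $\|\mathcal{L}_\Sigma v\|_{H^\ell}\lesssim S_{\ell+1}$ (after absorbing the zeroth-order terms $\|v\|_{H^\ell}$ and $\|w\|_{H^\ell}$), and elliptic regularity upgrades this to $\|v\|_{H^{\ell+2}}\lesssim S_{\ell+1}$, closing the induction.

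I do not foresee any serious obstacle; the only mildly subtle point is the two-track induction, where one applies the hypothesis not only to $(v,w)$ but also to its image $\mathbf{L}(v,w)$, thereby gaining simultaneously the extra derivative on $v$ (via the second component of $\mathbf{L}$ and ellipticity of $\mathcal{L}_\Sigma$) and on $w$ (via the first component of $\mathbf{L}$). Regularity of the intermediate iterates is automatic from the mapping properties described above.
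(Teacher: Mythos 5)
Your proof is correct and follows essentially the same route as the paper: induction on $\ell$ with base case \eqref{equ:Gequiv}, applying the inductive hypothesis to $\mathbf{L}(v,w)$ and recovering $\Vert w\Vert_{H^{\ell+1}}$ from the first component and $\Vert v\Vert_{H^{\ell+2}}$ from the second component together with elliptic regularity for $\mathcal{L}_\Sigma$. The only cosmetic difference is that your upper bound is a direct mapping-property argument for $\mathbf{L}^j$ rather than part of the induction.
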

\begin{proof}
We use an induction argument. The assertion for $\ell=0$ follows directly from Lemma \ref{lem:G} (1). Now we assume \eqref{equ:multiL} holds for $\ell$ and prove it for $\ell+1$. From the induction hypothesis, 
\begin{align}\label{equ:GappH}
\sum_{j=0}^{\ell+1} \Vert \mathbf{L}^j (v,w) \Vert_{G}\asymp  \Vert  (v,w)  \Vert_{H^1\times H^0}+\Vert   \mathbf{L}(v,w)  \Vert_{H^\ell\times H^{\ell-1}}
\end{align}
In view of Definition~\ref{def:mathcal},  $ \Vert   \mathbf{L}(v,w)  \Vert_{H^\ell\times H^{\ell-1}}\leq C \Vert (v,w)\Vert_{H^{\ell+1}\times H^\ell}$, and thus
\begin{equation}\label{equ:GleqH}
\sum_{j=0}^{\ell+1} \Vert \mathbf{L}^j (v,w) \Vert_{G}\leq C \Vert (v,w)\Vert_{H^{\ell+1}\times H^\ell}.
\end{equation}
To obtain the inequality in the other direction, we use
\begin{align*}
&\Vert w \Vert_{H^\ell}   \leq C    \Vert  2^{-1}m v+
w  \Vert_{H^\ell} +  C\Vert v \Vert_{H^\ell},\\
&\Vert v\Vert_{H^{\ell+1}}    \leq C  \Vert -\mathcal{L}_{\Sigma}v+ 4^{-1}m^2 v+2^{-1} m  w  \Vert_{  H^{\ell-1}}+ C  \Vert  (v,w) \Vert_{H^{\ell}\times H^{\ell-1}}.
\end{align*} 
Combining Definition~\ref{def:mathcal}, \eqref{equ:GappH} and the induction hypothesis, \begin{equation}\label{equ:GgeqH}
\Vert (v,w)\Vert_{H^{\ell+1}\times H^\ell}  \leq C   \sum_{j=0}^{\ell+1} \Vert \mathbf{L}^j (v,w) \Vert_{G}.
\end{equation} 
The assertion then follows from \eqref{equ:GleqH} and \eqref{equ:GgeqH}.
\end{proof}
\begin{definition}\label{def:qEkl} 

For given \(k,\ell\in\mathbb{N}_0\) and section \(u\in C^{k+\ell+1}(Q_{0,\infty},\widetilde{\mathbf{V}})\), we define
\[
q^{(k,\ell)}(u):= \partial_t^k\mathbf{L}^\ell q(u),\ \mathcal{E}^{(k,\ell)}(u):= \partial_t^k\mathbf{L}^\ell \mathcal{E}(u). 
\]
Here, \(q(u)\) and \(\mathcal{E}(u)\) are as defined in Definition~\ref{def:qE}. These are often abbreviated as \(q^{(k,\ell)}\) and \(\mathcal{E}^{(k,\ell)}\) respectively. Additionally, \(q^{(k,\ell)}(t)\) and \(\mathcal{E}^{(k,\ell)}(t)\) denote the restrictions of \(q^{(k,\ell)}(u)\) and \(\mathcal{E}^{(k,\ell)}(u)\) to \(\{t\}\times\Sigma\), respectively.
\end{definition}

\begin{corollary}\label{cor:qE} Fix $s\in\mathbb{N}_0$. Then for all $u\in C^{s+2}(Q_{0,\infty},\widetilde{\mathbf{V}})$ and $t\in (0,\infty)$, 
\begin{equation}\label{equ:qkl}
\sum_{k+\ell \le s } \Vert q^{(k,\ell)}(u) \Vert_{G}(t)  \asymp  \sum_{k+\ell \le s+1 }\Vert \partial_t^k \slashed{\nabla}^{\ell  }u\Vert_{L^2}(t).
\end{equation}
Moreover, suppose $u$ satisfies $\|u\|_{C^{s+2}}(t)=o(1)$. Then
\begin{equation}\label{equ:Ekl}
\sum_{k+\ell \le s } \Vert \mathcal{E} ^{(k,\ell)}(u)\Vert_{G}(t)  = o(1)\sum_{k+\ell \le s } \Vert q^{(k,\ell)}(u) \Vert_{G}(t)
\end{equation}
\end{corollary}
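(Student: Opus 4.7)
The plan is to reduce both estimates to classical Sobolev-type bounds via Lemma~\ref{lem:Gequ}, and then exploit the fact that the coefficients in \eqref{equ:Estr} vanish at the zero section. The key preliminary observation is that $\mathbf{L}$ acts only in space and therefore commutes with $\partial_t$, so $q^{(k,\ell)}=\mathbf{L}^\ell\partial_t^k q$ and $\mathcal{E}^{(k,\ell)}=\mathbf{L}^\ell\partial_t^k \mathcal{E}$.

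For \eqref{equ:qkl}, I would apply Lemma~\ref{lem:Gequ} with $(v,w)=\partial_t^k q$ for each $k\le s$:
\[
\sum_{\ell'\le s-k}\|\mathbf{L}^{\ell'}\partial_t^k q\|_G(t) \asymp \|\partial_t^k q\|_{H^{s-k+1}\times H^{s-k}}(t).
\]
Plugging in $q=(u,\dot u-2^{-1}mu)$ and using the triangle inequality to absorb the lower-order term $\|\partial_t^k u\|_{H^{s-k}}$ into $\|\partial_t^k u\|_{H^{s-k+1}}$, the right-hand side is equivalent to $\|\partial_t^k u\|_{H^{s-k+1}}+\|\partial_t^{k+1}u\|_{H^{s-k}}$. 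Summing over $k\in\{0,\dots,s\}$ traces out exactly the pairs $(k',\ell')$ with $k'+\ell'\le s+1$, giving \eqref{equ:qkl}.

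For \eqref{equ:Ekl}, the same argument applied to $\mathcal{E}=(0,E_1(u))$, whose first component vanishes, reduces the task to
\[
\sum_{k+\ell\le s}\|\partial_t^k\slashed{\nabla}^\ell E_1(u)\|_{L^2}(t)\le o(1)\sum_{k+\ell\le s+1}\|\partial_t^k\slashed{\nabla}^\ell u\|_{L^2}(t).
\]
I would expand $\partial_t^k\slashed{\nabla}^\ell E_1(u)$ by Leibniz and the chain rule on each term of \eqref{equ:Estr}. Every resulting summand is a monomial of the form (smooth function of $(\omega,u,Du)$) $\cdot \prod_{j=1}^N D^{r_j}u$, with $N\ge 2$, where each $D^{r_j}u$ denotes a mixed space-time derivative of $u$ of order $r_j$. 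The bound $N\ge 2$ comes from the vanishing of the $a_i$ at the zero section: undifferentiated coefficient factors obey the pointwise bound $|a_i|\le C(|u|+|Du|+\ldots)$, while differentiated ones acquire an extra $u$-derivative via the chain rule. A careful derivative-budget bookkeeping then shows that whenever one factor attains the maximum possible order $s+2$, the remaining factors are necessarily of order at most $s+1$.

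The final step is to apply Hölder's inequality to each monomial: place a single factor of order $\le s+1$ in $L^2$, and all other $N-1\ge 1$ factors in $L^\infty$, where each is bounded by $\|u\|_{C^{s+2}}(t)=o(1)$. Summing over monomials and comparing with \eqref{equ:qkl} yields \eqref{equ:Ekl}. The most delicate point is the derivative-budget bookkeeping in the previous paragraph: although $\partial_t^k\slashed{\nabla}^\ell E_1(u)$ may contain factors one derivative order above what appears on the right-hand side of \eqref{equ:qkl}, the structural constraint provided by the vanishing coefficients ensures that such high-order factors always come paired with enough low-order ones, so the $C^{s+2}$ hypothesis absorbs precisely that one extra order.
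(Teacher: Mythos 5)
Your proposal is correct and follows essentially the same route as the paper: identity \eqref{equ:qkl} is obtained exactly as you describe, by applying Lemma~\ref{lem:Gequ} to $\partial_t^k q$ for each $k\le s$ and summing, and \eqref{equ:Ekl} is reduced via the same lemma to a product estimate for $\sum_{k\le s}\Vert \partial_t^k E_1(u)\Vert_{H^{s-k}}$ coming from the quadratic structure \eqref{equ:Estr}. The paper states that last estimate without detail, so your Leibniz/H\"older bookkeeping — in particular placing the possible order-$(s+2)$ factor in $L^\infty$ where it is controlled by $\|u\|_{C^{s+2}}(t)=o(1)$ and keeping the $L^2$ factor at order at most $s+1$ — is a correct filling-in of the step the authors leave implicit.
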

\begin{proof}
From Definition~\ref{def:qE} and Lemma~\ref{lem:Gequ},
\begin{align*}
\sum_{k+\ell \le s } \Vert q^{(k,\ell)}(u) \Vert_{G}(t)&\asymp  \sum_{k=0}^s \Vert  (\partial_t^k u,\partial_t^{k+1} u-2^{-1}m \partial_t^k u)  \Vert_{H^{s-k+1}\times H^{s-k} }(t)\\
&\asymp \sum_{k+\ell \le s+1 }\Vert \partial_t^k \slashed{\nabla}^{\ell  }u\Vert_{L^2}(t). 
\end{align*}
This gives \eqref{equ:qkl}. Similarly, from Lemma~\ref{lem:Gequ}  and Definition~\ref{def:qE}, 
\begin{align*}
\sum_{k+\ell \le m } \Vert \mathcal{E}^{(k,\ell)}(u) \Vert_{G}(t)&\asymp  \sum_{k=0}^s \Vert \partial_t^k  E_1(u) \Vert_{ H^{s-k} }(t).  
\end{align*}
From \eqref{equ:Estr} and the assumption, 
\begin{align*}
\sum_{k=0}^s \Vert \partial_t^k  E_1(u) \Vert_{ H^{s-k} }(t)=o(1)\sum_{k+\ell \le s+1 }\Vert \partial_t^k \slashed{\nabla}^{\ell  }u\Vert_{L^2}(t). 
\end{align*}
Then \eqref{equ:Ekl} follows from \eqref{equ:qkl}.
\end{proof}

Let us project the equation \eqref{eq-vectoru} onto vectors in $\mathcal{B}$. Let 
\begin{equation}\label{def:coef_exp}
\begin{split}
&\xi_{i,1}(t):= G(q(t),\psi_{i,1}),\ \xi_{i,2}(t):=G(q(t),\psi_{i,2})\ \textup{for}\ i\in I_1,\\ 
&\xi_{i,3}(t):= G(q(t),\psi_{i,3}),\ \xi_{i,4}(t):=G(q(t),\psi_{i,4})\ \textup{for}\ i\in I_2,\\ 
&\xi^\pm_{i}(t):= G(q(t),\psi^\pm_{i})\ \textup{for}\ i\in I_3\cup I_4.
\end{split}
\end{equation}
Also, let
\begin{equation}\label{def:error_exp}
\begin{split}
&\mathcal{E}_{i,1}(t):= G(\mathcal{E}(t),\psi_{i,1}),\ \mathcal{E}_{i,2}(t):= G(\mathcal{E}(t),\psi_{i,2})\ \textup{for}\ i\in I_1,\\
&\mathcal{E}_{i,3}(t):= G(\mathcal{E}(t),\psi_{i,3}),\ \mathcal{E}_{i,4}(t):= G(\mathcal{E}(t),\psi_{i,4})\ \textup{for}\ i\in I_2,\\
&\mathcal{E}^\pm_{i}(t):= G(\mathcal{E}(t),\psi^\pm_{i})\ \textup{for}\ i\in I_3\cup I_4.
\end{split}
\end{equation}
The equation \eqref{eq-vectoru} becomes the following system: 
\begin{align}\label{equ:odegroup1_exp}
&\tfrac{d}{dt} \xi_{i,1}-\tfrac m2  \xi_{i,1}+\beta_i\xi_{i,2}=\mathcal{E}_{i,1}, && \tfrac{d}{dt} \xi_{i,2}-\tfrac m2 \xi_{i,2}-\beta_i\xi_{i,1}=\mathcal{E}_{i,2}, 
\\ \label{equ:odegroupJ_exp}
&\tfrac{d}{dt} \xi_{i,3}-\tfrac m2 \xi_{i,3} =\mathcal{E}_{i,3}, && 
\tfrac{d}{dt} \xi_{i,4}-\tfrac m2\xi_{i,4}- \xi_{i,3}=\mathcal{E}_{i,4},
\\
\label{equ:odegroup_exp}
&\tfrac{d}{dt} \xi^\pm_{i}-\gamma^\pm_i \xi_{i }^\pm  =\mathcal{E}^\pm_{i },
\end{align}
where the index $i$ runs $i\in I_1$ for \eqref{equ:odegroup1_exp}, $i\in I_2$ for \eqref{equ:odegroupJ_exp}, $i\in I_3\cup I_4$ for \eqref{equ:odegroup_exp}, respectively. 

\section{Fast decaying solutions}\label{sec:exp}
In this section, we prove the asymptotics of exponentially decaying solutions for elliptic problems as in Theorem~\ref{thm:general_exponential_e}. The proof for the parabolic problems in  Theorem~\ref{thm:general_exponential_p} follows {\it mutatis mutandis} after mostly simple notational changes. (See Remark \ref{remark:parabolicfast}). For the same reason, we assume $m<0$  throughout this section. The case $m>0$ is simpler as the alternatives (2),(3) in Theorem~\ref{thm:general_exponential_e} do not appear. We begin with a unique continuation property at infinity. Though we closely follow the argument in \cite{Strehlke}, we include the proof for readers' convenience.  
{\begin{proposition}\label{pro:infinitydecay}
Let $u\in C^\infty(Q_{0,\infty},\widetilde{ \mathbf{V}})$ be a solution to \eqref{equ:main} that satisfies $\|u\|_{C^1}(t)=O(e^{ \gamma t})$ as $t\to \infty$ for all $\gamma<0$. Then $u\equiv 0$.
\end{proposition}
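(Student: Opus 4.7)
I argue by contradiction, assuming $u\not\equiv 0$ with $\|u\|_{C^1}(t)=O(e^{\gamma t})$ for every $\gamma<0$. The plan is to reduce the equation, via the spectral decomposition in Lemma~\ref{lem:G}, to a countable family of scalar or $2\times 2$ ODE blocks whose forcing is small, and then to derive a self-improving integral inequality that forces $q\equiv 0$.

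\textbf{Higher regularity and decomposition.} Interior elliptic bootstrapping for \eqref{equ:minimalODE}, using the quasilinear structure \eqref{equ:Estr}, upgrades the $C^1$-decay to $\|u\|_{C^s}(t)=O(e^{\gamma t})$ for every $s\in\mathbb{N}_0$ and every $\gamma<0$. By Corollary~\ref{cor:qE}, the norms $\|q(t)\|_G$ and $\|\mathcal{E}(t)\|_G$ (together with all their $\partial_t$- and $\mathbf{L}$-derivatives) decay faster than any exponential, and the smallness relation $\|\mathcal{E}(t)\|_G\le\epsilon(t)\|q(t)\|_G$ holds with $\epsilon(t)\to 0$. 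Projecting $q'=\mathbf{L}q+\mathcal{E}$ onto the $G$-orthonormal basis $\mathcal{B}$ of \eqref{def:B} via \eqref{def:coef_exp} then yields the decoupled systems \eqref{equ:odegroup1_exp}--\eqref{equ:odegroup_exp}, one per characteristic exponent $\gamma$ of the corresponding $1\times 1$ or $2\times 2$ block.

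\textbf{Representation and Gronwall contraction.} For each block, super-exponential decay of the projected coefficient lets me send the upper endpoint in variation of parameters to $+\infty$, killing the homogeneous piece and yielding the tail representation
\begin{equation}\label{eq:planrep}
\xi(t)=-\int_t^\infty e^{\gamma(t-s)}\,\mathcal{E}_\xi(s)\,ds,
\end{equation}
with the analogous $2\times 2$ matrix-exponential formula for the $I_1$- and $I_2$-blocks. The integral in \eqref{eq:planrep} is absolutely convergent for every real $\gamma$ (including $\gamma<0$), because $\mathcal{E}_\xi$ beats any exponential. Assembling the squared bounds via $G$-Parseval and inserting $\|\mathcal{E}(s)\|_G\le\epsilon(s)\|q(s)\|_G$ yields
\begin{equation}\label{eq:planineq}
\|q(t)\|_G\le C\int_t^\infty e^{-\Lambda(s-t)}\,\epsilon(s)\,\|q(s)\|_G\,ds
\end{equation}
for some $\Lambda>0$ depending only on $\mathcal{F}_\Sigma$ and $m$. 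Fixing any $A>0$ and setting $M(T):=\sup_{s\ge T}e^{As}\|q(s)\|_G$, which is finite by the super-exponential decay of $q$, substituting $\|q(s)\|_G\le M(T)e^{-As}$ into \eqref{eq:planineq} gives $M(T)\le\frac{C\,\sup_{s\ge T}\epsilon(s)}{\Lambda+A}\,M(T)$. Choosing $T$ so large that this coefficient is $<1$ forces $M(T)=0$, so $q\equiv 0$ on $[T,\infty)$. Unique continuation for the (spacetime) elliptic equation \eqref{equ:minimalODE} then propagates this back to $t=0$, giving the contradiction $u\equiv 0$.

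\textbf{Main obstacle.} The delicate step is establishing \eqref{eq:planineq} with a kernel uniform in the spectral index: because $\gamma_i^-\to-\infty$, a naive mode-by-mode estimation of \eqref{eq:planrep} produces weights $e^{|\gamma_i^-|(s-t)}$ with no uniform decay, and summing in $i$ would diverge. Following \cite{Strehlke}, I would circumvent this by assembling the pointwise representations in the $G$-norm \emph{before} estimating, and by exploiting the block structure recorded in Lemma~\ref{lem:G}(3) to collapse the dependence on the mode index into a single effective exponent $\Lambda$. This uniform kernel bound is the technical heart of the proof.
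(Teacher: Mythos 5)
Your overall strategy (spectral decomposition of $q'=\mathbf{L}q+\mathcal{E}$, super-exponential decay used to kill boundary terms at $+\infty$, a smallness/absorption argument in the $G$-norm) is the right one and matches the paper's Strehlke-type proof in spirit. But the central inequality \eqref{eq:planineq} is exactly the step you have not proved, and the route you sketch toward it does not close.

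The difficulty you flag in your last paragraph is not a technicality that can be finessed by ``assembling the representations in the $G$-norm before estimating'': the tail representation \eqref{eq:planrep} for a deeply stable mode $\gamma_i^-=-\mu_i$ with $\mu_i\to\infty$ has kernel $e^{\mu_i(s-t)}$, which \emph{grows} in $s-t$ at a rate unbounded in $i$. No rearrangement of the order of summation and integration converts a family of growing kernels into a single decaying kernel $e^{-\Lambda(s-t)}$; your contraction $M(T)\le \frac{C\sup\epsilon}{\Lambda+A}M(T)$ therefore rests on an inequality that is not available. Concretely, to make the mode-$i$ tail integral converge against the ansatz $\|q(s)\|_G\le M(T)e^{-As}$ you need $A>\mu_i$, and since $\mu_i$ is unbounded no single $A$ works, while $M_A(T)$ is not controlled uniformly in $A$. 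So the ``pure Volterra'' inequality with no initial-data term cannot be derived this way.

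The paper resolves this differently, and the difference is essential. Fix a cut level $\gamma<2^{-1}m-1$. For the finitely-or-infinitely many modes with exponent $>\gamma$ (all of $I_1$, $I_2$, the $\gamma_i^+$'s, and the finitely many $\gamma_i^-$'s above $\gamma$) one has the uniform differential inequality $X_+'\ge\gamma X_++Y_+$, and only \emph{there} does one integrate backward from $+\infty$ — the kernel is $e^{-\gamma(s-t)}$ with the single fixed exponent $\gamma$, not the mode-dependent one. For the deeply stable modes with $\gamma_i^-\le\gamma$ one integrates \emph{forward} and keeps the initial-data term $\Vert\Pi_\gamma q(t_1)\Vert_G$. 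This yields
\begin{equation*}
e^{-\gamma(t_2-t_1)}\Vert q(t_2)\Vert_G\le \Vert\Pi_\gamma q(t_1)\Vert_G+\int_{t_1}^\infty e^{-\gamma(s-t_1)}\Vert\mathcal{E}(s)\Vert_G\,ds ,
\end{equation*}
and after absorbing the error one gets $M_\gamma(t_1)\le 2\Vert\Pi_\gamma q(t_1)\Vert_G$. The conclusion $q\equiv 0$ then comes from sending $\gamma\to-\infty$ \emph{at the end}: for the fixed vector $q(t_1)$, its projection onto the spectral subspace with eigenvalues $\le\gamma$ tends to $0$. This limiting step is the replacement for the initial-data-free inequality you were aiming at, and it is what your proposal is missing. (Your final appeal to unique continuation to reach $t=0$ is also heavier than needed — the paper simply re-runs the same quantitative estimate on short intervals where $\int\Vert u\Vert_{C^2}$ is small — but that is a minor point compared with the gap above.)
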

\begin{proof}
By the elliptic regularity, Lemma~\ref{lem:regularity_e},  $\|u\|_{C^s}(t)=O(e^{\gamma t} )$ for all $s\in\mathbb{N}$ and $\gamma <0$. Let $q=q(u)$ be given in Definition~\ref{def:qE}. Take $\gamma<2^{-1}m-1$. Let $\Pi_\gamma q$ be the projection of $q$ onto the eigenspace of $\mathbf{L}$ whose eigenvalues are less than or equal to  $\gamma $. In terms of the coefficients introduced in \eqref{def:coef_exp}, 
\begin{align*}
(\Pi_\gamma q)(t)=\sum_{i:\gamma^-_i \le  \gamma} \xi_i^-(t)\psi_i^-.
\end{align*}
We claim that for any $t_2 \ge t_1 $, 
\bea \label{eq-mainprop41} e^{-\gamma (t_2 -t_1 )} \Vert  q(t_2) \Vert_G \le \Vert\Pi_\gamma   q(t_1 )\Vert_G + \int_{t_1 }^\infty e^{ -\gamma (s-t_1 )} \Vert \mathcal{E}(s)\Vert_G \, ds.  \eea Suppose \eqref{eq-mainprop41} is true at the moment. From \eqref{equ:Estr}, there exists a uniform constant $C<\infty$ such that if $\Vert u\Vert_{C^1}(t)\le 1$, then there holds
\begin{align*}
\Vert \mathcal{E}(t)\Vert _{G } \le C  \Vert q(t)\Vert _{G}\Vert u\Vert _{C^2}(t).  
\end{align*}
Define $M_\gamma(t)=\sup_{\tau \geq t} e^{-\gamma(\tau-t)} \Vert q(\tau) \Vert_G $. Suppose $t_1$ is large enough such that $\Vert u\Vert _{C^1}(s )\le 1$ for $s\ge t_1$. Then from \eqref{eq-mainprop41}, 
\begin{align*}
M_\gamma(t_1) \le \Vert \Pi_\gamma q(t_1)\Vert_G + C  \int_{t_1}^\infty  \Vert u\Vert_{C^2}(s) \, ds \cdot M_\gamma(t_1).
\end{align*}
From the exponential decay assumption we may choose large $t_1$ so that $C \int _{t_1}^\infty \Vert u\Vert_{C^2}(s) ds\le 1/2$. Hence
\begin{align}\label{eq-4777}
M_\gamma(t_1)  \le 2\Vert \Pi_\gamma  q(t_1)\Vert_G.
\end{align}
It is clear that $M_\gamma(t_1)$ is non-increasing in $\gamma$. Therefore,  
\begin{align*}
M_\gamma(t_1)\leq \limsup_{\gamma'\to -\infty } M_{\gamma'}(t_1)\leq 2\limsup_{\gamma'\to -\infty }  \Vert \Pi_{\gamma'}  q(t_1)\Vert_G=0.
\end{align*} 
We conclude $q(t)=0$ (and thus $u(t)=0$) for $t\ge t_1$. Next, we show $q(t)=0$ up to $t=0$. Suppose on the contrary $t_0=\inf \{ t_1 \, :\, q(t)=0 \text{ for } t\ge t_1\}>0$. By the smoothness of $u(\omega,t)$, we may find a small $\e>0$ such that $\Vert u\Vert_{C^1}(t) \le 1$ for $t\in [t_0-\e, t_0]$ and 
\[C\int_{t_0-\e}^\infty \Vert u\Vert _{C^2}(s)\, ds =C\int_{t_0-\e}^{t_0}  \Vert u\Vert _{C^2}(s)\, ds \le \frac12 ,  \]
which implies that \eqref{eq-4777} holds for $t_1=t_0-\e$. This gives a contradiction and proves the statement.

It remains to show \eqref{eq-mainprop41}. Define non-negative functions $X_\pm(t)$ by
\begin{align*}
X^2_+ = &\sum_{i\in I_1} |\xi_{i,1} |^2+|\xi_{i,2} |^2+\sum_{i\in I_2} |\xi_{i,3} |^2+|\xi_{i,4} |^2 + \sum |\xi^+_i |^2+\sum_{i:\gamma^-_i>\gamma} |\xi^-_i |^2,\\
X^2_- = &\sum_{i:\gamma^-_i\leq \gamma} |\xi^-_i |^2.
\end{align*}
From \eqref{equ:odegroup1_exp}-\eqref{equ:odegroup_exp}, $X_+X_+'$ becomes  
\begin{align*}
 &\frac m2 \bigg (\sum_{i\in I_1} |\xi_{i,1} |^2+|\xi_{i,2} |^2+\sum_{i\in I_2} |\xi_{i,3} |^2+|\xi_{i,4} |^2 \bigg )+ \sum \gamma^+_i |\xi^+_i |^2+\sum_{i:\gamma^-_i>\gamma} \gamma^-_i |\xi^-_i |^2\\
 &+\sum_{i\in I_2}\xi_{i,3} \xi_{i,4} +\sum \xi^+_i \mathcal{E}^+_i +\sum_{i:\gamma^-_i>\gamma} \xi^-_i \mathcal{E}^-_i +\sum_{i\in I_1}  \xi_{i,1} \mathcal{E}_{i,1}  +\xi_{i,2} \mathcal{E}_{i,2} \\
&+\sum_{i\in I_2}  \xi_{i,3} \mathcal{E}_{i,3}  +\xi_{i,4} \mathcal{E}_{i,4} .
\end{align*}
From $\left| \sum_{i\in I_2}\xi_{i,3} \xi_{i,4}  \right|\leq  2^{-1}\sum_{i\in I_2}|\xi_{i,3} |^2+|\xi_{i,4} |^2$ and $\gamma<2^{-1}m-1$, 
\begin{align}\label{equ:42}
X_+' \geq \gamma {X_+} +Y_+ .
\end{align}
Here we define $Y_+(t)$ by 
\begin{align*}
X_+ Y_+ =&\sum \xi^+_i \mathcal{E}^+_i +\sum_{i:\gamma^-_i>\gamma} \xi^-_i \mathcal{E}^-_i +\sum_{i\in I_1}  \xi_{i,1} \mathcal{E}_{i,1}  +\xi_{i,2} \mathcal{E}_{i,2} +\sum_{i\in I_2}  \xi_{i,3} \mathcal{E}_{i,3}  +\xi_{i,4} \mathcal{E}_{i,4} 
\end{align*}
and we set $Y_+(t_0)=0$ if $X_+$ vanishes at $t_0$.
Similarly, defining $Y_-(t)$ by the identity $X_- Y_- = \sum_{i:\gamma^-_i \le \gamma} \xi^-_i \mathcal{E}^-_i $ and $Y_-(t_0)=0$ if $ X_-(t_0)=0$, we have
\begin{align}\label{equ:43}
X_-' \leq \gamma {X_-} +Y_- .
\end{align}
For $t_1 \le t_2$, integrating \eqref{equ:42} and \eqref{equ:43},   
\begin{align}\label{equ:45}
e^{-\gamma (t_2-t_1)}X_+(t_2)\leq & \int_{t_1}^{\infty} e^{-\gamma (s-t_1)} |Y_+(s)|\, ds, \text{ and} \\ 
\label{equ:44}
e^{-\gamma (t_2-t_1)}X_-(t_2)\leq &   X_-(t_1)+\int_{t_1}^{\infty} e^{-\gamma (s-t_1)} |Y_-(s)|\, ds, \text{ respectively}.
\end{align} 
In \eqref{equ:45}, the decay assumption $\lim_{s\to \infty}e^{-\gamma (s-t_1)}X_+(s)=0$ was used.
Now \eqref{eq-mainprop41} follows as $X_+^2  +X_-^2 =\Vert q  \Vert^2_G $, $X_- =\Vert \Pi_\gamma q  \Vert_G$ and $ Y_+^2 +Y_-^2 \leq  \Vert \mathcal{E}  \Vert^2_G $.
\end{proof}

\begin{remark} \label{remark:parabolicfast}
A similar argument applies for the parabolic equation. The only difference is that to control the error term $E_2(u)$ in \eqref{equ:linear_p}, one needs to differentiate the equation. See Lemma~\ref{lem:error_p}.
\end{remark}
} 
Let $u$ be an exponentially decaying solution so that  $\|u\|_{C^1}(t)=O(e^{-2\varepsilon_0 t})$ for some $\varepsilon_0>0$. We further assume $u$ is not identically zero. Define 
\begin{align*}
\gamma_*:=\inf \{\gamma<0\, :\, \|u\|_{C^1}(t)=O(e^{ \gamma t})\}.
\end{align*}
 Proposition~\ref{pro:infinitydecay} shows $\gamma_*$ is a finite negative number. From the elliptic regularity, Lemma~\ref{lem:regularity_e}, $\|u\|_{C^s}(t)=O(e^{(\gamma_*+\varepsilon) t})$ for all $s\in\mathbb{N}$ and $\varepsilon>0$. The quadratic nature of $E_1(u)$ (see \eqref{equ:Estr}), in particular, implies  
\begin{equation}\label{equ:error_exp}
\begin{split} \Vert(0, E_1(u))\Vert^2_{G}  
=O(e^{2(\gamma_*-\varepsilon_0)t}).
\end{split}
\end{equation} Here we used $|E_1(u)| \le C e^{-2\varepsilon_0t} e^{(\gamma^*+\varepsilon_0)t} = Ce^{(\gamma^*-\varepsilon_0)t}$.

\begin{lemma}\label{lem:gamma_star}
There holds $\gamma_*\in \{\gamma^+_i,\gamma^-_i\}_{i\in I_3\cup I_4}\cup\{\tfrac m2 \}$.
\end{lemma}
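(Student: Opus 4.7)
My plan is to argue by contradiction using a spectral gap around $\gamma_*$. Suppose $\gamma_*\notin \mathcal{S}:=\{\gamma_i^{\pm}\}_{i\in I_3\cup I_4}\cup\{2^{-1}m\}$. Since $\lambda_i\to -\infty$, the set $\mathcal{S}$ is discrete with accumulation only at $\pm\infty$, so there exists $\delta_0>0$ such that $(\gamma_*-\delta_0,\gamma_*+\delta_0)\cap\mathcal{S}=\emptyset$. The goal is to upgrade the a priori decay $\Vert u\Vert_{C^1}(t)=O(e^{(\gamma_*+\varepsilon)t})$ (which holds for every $\varepsilon>0$ by definition of $\gamma_*$) to $\Vert u\Vert_{C^1}(t)=O(e^{(\gamma_*-\eta)t})$ for some $\eta>0$; this would force $\gamma_*-\eta\in\Lambda$ and contradict $\gamma_*=\inf\Lambda$.

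The key step is a spectral splitting of $q$. Using the $G$-orthonormal basis $\mathcal{B}$ from Lemma~\ref{lem:G}, I would $G$-orthogonally decompose $q=q^++q^-$, where $q^-$ is the projection onto the $\mathbf{L}$-invariant subspace $E_-$ spanned by those elements of $\mathcal{B}$ whose associated rate (meaning $\gamma_i^{\pm}$ for a scalar mode $\psi_i^\pm$, or $2^{-1}m$ for an $I_1$ rotation block or an $I_2$ Jordan block) is at most $\gamma_*-\delta_0$, and $E_+$ is the $G$-orthogonal complement, which contains only modes with rate at least $\gamma_*+\delta_0$. The system \eqref{equ:odegroup1_exp}--\eqref{equ:odegroup_exp} then splits into $(q^\pm)'=\mathbf{L}|_{E_\pm}q^\pm+\Pi_\pm\mathcal{E}$. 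Because $\mathbf{L}$ acts on $\mathcal{B}$ as a direct sum of scalar blocks together with only finitely many $2\times 2$ blocks, explicit block computations combined with $G$-orthonormality yield uniform semigroup bounds $\Vert e^{t\mathbf{L}|_{E_-}}\Vert_G\le C(1+t)e^{(\gamma_*-\delta_0)t}$ and $\Vert e^{-t\mathbf{L}|_{E_+}}\Vert_G\le C(1+t)e^{-(\gamma_*+\delta_0)t}$ for $t\ge 0$. I would then apply Duhamel's formula in opposite directions: forward from $t=0$ for $q^-$ and backward from $t_2\to\infty$ for $q^+$, using $\Vert\mathcal{E}\Vert_G(s)=O(e^{(\gamma_*-\varepsilon_0)s})$ from \eqref{equ:error_exp} together with the a priori bound $\Vert q(t_2)\Vert_G=O(e^{(\gamma_*+\varepsilon)t_2})$ with $\varepsilon<\delta_0$ to kill the boundary term. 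These give $\Vert q^-(t)\Vert_G=O(e^{(\gamma_*-\eta)t})$ for every $\eta\in(0,\min(\delta_0,\varepsilon_0))$ and $\Vert q^+(t)\Vert_G=O(e^{(\gamma_*-\varepsilon_0)t})$. Lemma~\ref{lem:Gequ} together with the elliptic regularity estimate (Lemma~\ref{lem:regularity_e}) then upgrades the resulting $G$-norm decay of $q$ to $\Vert u\Vert_{C^1}(t)=O(e^{(\gamma_*-\eta/2)t})$, producing the desired contradiction.

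The main technical hurdle I anticipate is establishing the uniform semigroup bound on $E_+$, which is infinite-dimensional. Each block (scalar or $2\times 2$) admits a routine matrix-exponential estimate, but one must check that the implicit constant is uniform over the infinitely many scalar blocks — which is exactly where the spectral gap $\delta_0$ around $\gamma_*$ enters — and that the blockwise operator-norm bounds assemble into a genuine $G$-norm bound on the direct sum, using the $G$-orthonormality of $\mathcal{B}$. A related subtlety is that $e^{-t\mathbf{L}|_{E_+}}$ is not defined on the full Hilbert space but only on $E_+$, where $\mathbf{L}-\gamma_*I$ has spectrum with real part bounded below by $\delta_0$; this is, however, sufficient for the Duhamel argument as long as it is carried out entirely within $E_+$.
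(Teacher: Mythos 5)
Your proposal is correct and is essentially the argument the paper gives: the paper also exploits the spectral gap around $\gamma_*$, splits $q$ into the modes with rate above $\gamma_*$ (estimated by integrating backward from $t=\infty$, with the boundary term killed by the a priori $O(e^{(\gamma_*+\varepsilon)t})$ bound) and the modes with rate below $\gamma_*$ (estimated by integrating forward from $t=0$), feeds in $\Vert\mathcal{E}\Vert_G=O(e^{(\gamma_*-\varepsilon_0)t})$, and concludes via elliptic regularity that $\Vert u\Vert_{C^1}(t)=O(e^{(\gamma_*-\varepsilon_1)t})$, contradicting the minimality of $\gamma_*$. The only cosmetic difference is that the paper runs scalar differential inequalities for the norms $X_\pm(t)$ (absorbing the $I_2$ Jordan coupling by weighting $\xi_{i,4}$ with $\varepsilon_1^2$) rather than a Duhamel formula with a $(1+t)$ semigroup loss.
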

\begin{proof}
Suppose the assertion fails. This implies there exists $\varepsilon_1\in (0,\varepsilon_0)$ such that there is no element of $\{\gamma^+_i,\gamma^-_i\}_{i\in I_3\cup I_4}\cup\{2^{-1}m\}$ in the interval $[\gamma_*-2\varepsilon_1,\gamma_*+2\varepsilon_1]$. We show this leads to a contradiction for the case $2^{-1}m< \gamma_*$. The argument for the case $\gamma_*<2^{-1}m$ is similar. Define a non-negative function $X_+(t)$ by
\begin{align*}
 X^2_+  =\sum_{i:\gamma^+_i>\gamma_*} |\xi_i^+ |^2+\sum_{i:\gamma^-_i>\gamma_*} |\xi_i^- |^2.
\end{align*}
Set  $Y_+(t_0)=0$ if $X_+(t_0)=0$, and otherwise define $Y_+(t)$ by $
X_+ Y_+  = \sum_{i:\gamma^+_i>\gamma_*}  \xi_i^+ \mathcal{E}^+_i  +\sum_{i:\gamma^-_i>\gamma_*} \xi_i^- \mathcal{E}^-_i .$
From \eqref{equ:odegroup_exp},
\begin{align*}
 X_+ X_+'  
 \geq &(\gamma_*+\varepsilon_1) X^2_+   +X_+ Y_+ .
\end{align*}
By the Cauchy-Schwarz and \eqref{equ:error_exp}, $|Y_+(t)|=O(e^{(\gamma_*-\varepsilon_0)t})$. Utilizing $\lim_{t\to\infty} e^{-(\gamma_*+\varepsilon_1)t}X_+(t)=0$, we integrate $\frac{d}{dt}\left(e^{-(\gamma_*+\varepsilon_1)t}X_+(t)  \right) \geq  e^{-(\gamma_*+\varepsilon_1)t}Y_+(t)$ and obtain 
\begin{align}\label{equ:X+}
X_+(t)\leq e^{(\gamma_*+\varepsilon_1)t}\int_t^\infty e^{-(\gamma_*+\varepsilon_1)\tau }|Y_+(\tau)|\, d\tau=O(e^{(\gamma_*-\varepsilon_0)t}).
\end{align}
Let us define $X_-(t)$ by 
\begin{align*}
 X^2_-  =\sum_{i:\gamma^+_i<\gamma_*} |\xi_i^+ |^2+\sum_{i:\gamma^-_i<\gamma_*} |\xi_i^- |^2+\sum_{i\in I_1} |\xi_{i,1} |^2+|\xi_{i,2} |^2+\sum_{i\in I_2} |\xi_{i,3} |^2+\varepsilon_1^2 |\xi_{i,4} |^2.
\end{align*}
From \eqref{equ:odegroup1_exp}-\eqref{equ:odegroup_exp}, $ X_- X_-' $ becomes
\begin{align*}
&\sum_{i:\gamma^+_i<\gamma_*} \gamma^+_i|\xi_i^+ |^2+\sum_{i:\gamma^-_i<\gamma_*} \gamma^-_i|\xi_i^- |^2+2^{-1}m\bigg(\sum_{i\in I_1}|\xi_{i,1} |^2+|\xi_{i,2} |^2+\sum_{i\in I_2} |\xi_{i,3} |^2+\varepsilon_1^2 |\xi_{i,4} |^2\bigg) \\
 &+\varepsilon_1^2 \sum_{i\in I_2}\xi_{i,3} \xi_{i,4} + \sum_{i:\gamma^+_i<\gamma_*}  \xi_i^+ \mathcal{E}^+_i  +\sum_{i:\gamma^-_i<\gamma_*} \xi_i^- \mathcal{E}^-_i +\sum_{i\in I_1} \xi_{i,1} \mathcal{E}_{i,1}  + \xi_{i,2} \mathcal{E}_{i,2} +\sum_{i\in I_2} \xi_{i,3} \mathcal{E}_{i,3}  + \varepsilon_1^2 \xi_{i,4} \mathcal{E}_{i,4} .
\end{align*}
From $\varepsilon_1^2 \sum_{i\in I_2}|\xi_{i,3} \xi_{i,4} |\leq 2^{-1}\varepsilon_1  \sum_{i\in I_2}|\xi_{i,3} |^2+\varepsilon^2_1|\xi_{i,4} |^2$, \begin{align*}
X_- X_-' \leq (\gamma_*-\varepsilon_1) X^2_-   +X_- Y_- .
\end{align*}
Here $Y_-(t_0)=0$ if $X_-(t_0)=0$ and is otherwise given by
\begin{align*}
X_- Y_- =&\sum_{i:\gamma^+_i<\gamma_*}  \xi_i^+ \mathcal{E}^+_i  +\sum_{i:\gamma^-_i<\gamma_*} \xi_i^- \mathcal{E}^-_i +\sum_{i\in I_1} \xi_{i,1} \mathcal{E}_{i,1}  + \xi_{i,2} \mathcal{E}_{i,2} + \sum_{i\in I_2} \xi_{i,3} \mathcal{E}_{i,3}  + \varepsilon_1^2 \xi_{i,4} \mathcal{E}_{i,4} .
\end{align*} 
By the Cauchy-Schwarz inequality and \eqref{equ:error_exp},  $|Y_-(t)|=O(e^{(\gamma_*-\varepsilon_0)t})$. Integrating 
$$\frac{d}{dt}\left(e^{-(\gamma_*-\varepsilon_1)t}X_-(t)  \right) \geq  e^{-(\gamma_*-\varepsilon_1)t}Y_-(t),$$ 
\begin{align*}
X_-(t)\leq e^{(\gamma_*-\varepsilon_1)t}\bigg( X_-(0)+ \int_0^\infty e^{-(\gamma_*-\varepsilon_1)\tau} |Y_-(\tau)| d\tau\bigg )=O(e^{(\gamma_*-\varepsilon_1)t}).
\end{align*}
Combining \eqref{equ:X+} and the above,  $\Vert q(t) \Vert_G=O(e^{(\gamma_*-\varepsilon_1)t})$. From the elliptic regularity, Lemma~\ref{lem:regularity_e}, we then have $\Vert u \Vert_{C^1}(t)=O(e^{(\gamma_*-\varepsilon_1)t})$. This contradicts to the definition of $\gamma_*$. 
\end{proof}

\begin{lemma}\label{lem:gamma_star_m}
Suppose $\gamma_*=2^{-1}m$. Then there exist $w_i \in\mathbb{C} $ for $i\in I_1$ and $c_{i,3}, c_{i,4}\in\mathbb{R} $ for $i\in I_2$ such that the following holds. For
\begin{align*}
\hat{q}(t):= q(t) - e^{\gamma_* t} \bigg(\sum_{i\in I_1 } \textup{Re}\big( w_i e^{\mathbf{i}\beta_i t}  \big) \psi_{i,1}+\textup{Im}\big( w_i e^{\mathbf{i}\beta_i t}  \big) \psi_{i,2} + \sum_{i\in I_2 } c_{i,3}\psi_{i,3}+(tc_{i,3}+c_{i,4})\psi_{i,4}\bigg)    
\end{align*}
there exists $\varepsilon >0$ such that $\Vert \hat{q}(t) \Vert_G= O(e^{(\gamma_*-\varepsilon)t})$.
\end{lemma}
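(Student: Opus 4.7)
The plan is to analyze the scalar ODE systems \eqref{equ:odegroup1_exp}--\eqref{equ:odegroup_exp} for the coefficients of $q(t)$ with respect to the basis $\mathcal{B}$, extract the leading-order asymptotics mode by mode, and reassemble via the $G$-orthonormality of $\mathcal{B}$ from Lemma \ref{lem:G}. Throughout I'll use \eqref{equ:error_exp}, which together with $\gamma_*=2^{-1}m$ gives $\|\mathcal{E}(t)\|_G=O(e^{(2^{-1}m-\varepsilon_0)t})$, so each scalar projection $\mathcal{E}_{i,*}(t)$ and $\mathcal{E}_i^\pm(t)$ satisfies the same pointwise bound.

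First, for each $i\in I_1$ I would form the complex combination $z_i(t):=\xi_{i,1}(t)+\mathbf{i}\xi_{i,2}(t)$. The pair \eqref{equ:odegroup1_exp} then reduces to the scalar equation
\begin{equation*}
z_i'(t)=(2^{-1}m+\mathbf{i}\beta_i)z_i(t)+(\mathcal{E}_{i,1}(t)+\mathbf{i}\mathcal{E}_{i,2}(t)).
\end{equation*}
The integrating factor $e^{-(2^{-1}m+\mathbf{i}\beta_i)t}$ turns the forcing into an $O(e^{-\varepsilon_0 t})$ term, hence integrable, so $e^{-(2^{-1}m+\mathbf{i}\beta_i)t}z_i(t)$ converges to some $w_i\in\mathbb{C}$, and integrating from $t$ to $\infty$ yields $z_i(t)=w_ie^{(2^{-1}m+\mathbf{i}\beta_i)t}+O(e^{(2^{-1}m-\varepsilon_0)t})$. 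Taking real and imaginary parts reproduces the $I_1$ contribution to $\hat{q}$.

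Second, for each $i\in I_2$ the system \eqref{equ:odegroupJ_exp} is a $2\times 2$ Jordan block. The same integrating factor applied to $\xi_{i,3}$ produces a limit $c_{i,3}\in\mathbb{R}$ with $\xi_{i,3}(t)=c_{i,3}e^{2^{-1}mt}+O(e^{(2^{-1}m-\varepsilon_0)t})$. Substituting into the equation for $\xi_{i,4}$ and applying the integrating factor again gives
\begin{equation*}
(e^{-2^{-1}mt}\xi_{i,4})'(t)=c_{i,3}+O(e^{-\varepsilon_0 t}),
\end{equation*}
so $e^{-2^{-1}mt}\xi_{i,4}(t)=c_{i,3}t+c_{i,4}+O(e^{-\varepsilon_0 t})$ for some $c_{i,4}\in\mathbb{R}$, which is the $I_2$ contribution.

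Third, for $i\in I_3\cup I_4$ I choose $\varepsilon_1\in(0,\varepsilon_0)$ so that no $\gamma_i^\pm$ lies in $[\gamma_*-2\varepsilon_1,\gamma_*+2\varepsilon_1]$; this is possible because Lemma \ref{lem:gamma_star} excludes $\gamma_*=2^{-1}m$ from being any $\gamma_i^\pm$, and the $\gamma_i^\pm$ accumulate only at $\pm\infty$ since $\lambda_i\to-\infty$. Define
\begin{equation*}
X_+(t)^2:=\sum_{i,\pm:\gamma_i^\pm>\gamma_*}|\xi_i^\pm(t)|^2,\qquad X_-(t)^2:=\sum_{i,\pm:\gamma_i^\pm<\gamma_*}|\xi_i^\pm(t)|^2.
\end{equation*}
Repeating the differential inequalities for $X_\pm$ from the proof of Lemma \ref{lem:gamma_star} (integrating from $t$ to $\infty$ for $X_+$ and from $0$ to $t$ for $X_-$) and plugging in $\|\mathcal{E}\|_G=O(e^{(\gamma_*-\varepsilon_0)t})$ yields $X_+(t)+X_-(t)=O(e^{(\gamma_*-\varepsilon)t})$ with $\varepsilon:=\min(\varepsilon_0,\varepsilon_1)>0$; unlike in Lemma \ref{lem:gamma_star} there is no contradiction to chase, the conclusion is simply the decay estimate. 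Finally, $G$-orthonormality gives
\begin{equation*}
\|\hat{q}(t)\|_G^2=\sum_{i\in I_1}\bigl(|\xi_{i,1}-e^{\gamma_*t}\mathrm{Re}(w_ie^{\mathbf{i}\beta_it})|^2+|\xi_{i,2}-e^{\gamma_*t}\mathrm{Im}(w_ie^{\mathbf{i}\beta_it})|^2\bigr)+\sum_{i\in I_2}(\text{analogous})+X_+(t)^2+X_-(t)^2,
\end{equation*}
each term $O(e^{2(\gamma_*-\varepsilon)t})$. The main obstacle is the third step: controlling the infinite collection of modes in $I_4$ uniformly. Using the summed $X_\pm$ rather than per-mode estimates is what makes this go through, since the argument only requires the global bound on $\|\mathcal{E}\|_G$ via Cauchy-Schwarz and not on the individual projections $\mathcal{E}_i^\pm$.
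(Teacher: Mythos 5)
Your proposal is correct and follows essentially the same route as the paper: the same integrating-factor computation for the $I_1$ and $I_2$ modes (producing $w_i$, $c_{i,3}$, $c_{i,4}$), the same $X_\pm$ differential inequalities borrowed from the proof of Lemma~\ref{lem:gamma_star} for the $I_3\cup I_4$ modes, and reassembly via $G$-orthonormality of $\mathcal{B}$. The only cosmetic difference is the order of the steps and that the fact $\gamma_i^\pm\neq 2^{-1}m$ for $i\in I_3\cup I_4$ comes from the definition of those index sets rather than from Lemma~\ref{lem:gamma_star} itself.
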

\begin{proof}
Define non-negative functions $X_\pm(t)$ by
\begin{align*}
X^2_+=\sum_{i:\gamma^+_i>\gamma_*} |\xi_i^+|^2+\sum_{i:\gamma^-_i>\gamma_*} |\xi_i^-|^2,  \quad X^2_-=\sum_{i:\gamma^+_i<\gamma_*} |\xi_i^+|^2+\sum_{i:\gamma^-_i<\gamma_*} |\xi_i^-|^2.
\end{align*}
By shrinking the value of $\varepsilon_0$ if necessary, we may assume $\varepsilon_0<|2^{-1}m-\gamma^\pm_i|$ for all $i\in I_3\cup I_4$. An argument similar to the one in the proof of Lemma~\ref{lem:gamma_star} shows $X_\pm(t)=O(e^{(\gamma_*-\varepsilon_0)t})$. From \eqref{equ:odegroup1_exp}, we derive for $i\in I_1$
\begin{align*}
\frac{d}{dt}\left( e^{-(\gamma_* +\mathbf{i}\beta_i)t} \left(\xi_{i,1}(t)+\mathbf{i}\xi_{i,2}(t) \right)\right)=  e^{-(\gamma_* +\mathbf{i}\beta_i)t} \left(\mathcal{E}_{i,1}(t)+\mathbf{i}\mathcal{E}_{i,2}(t) \right) .
\end{align*}
Integrating the above from $0$ to $t$ and using \eqref{equ:error_exp} yield 
\begin{align*}
\xi_{i,1}(t)+\mathbf{i}\xi_{i,2}(t)= w_i e^{(\gamma_*+\mathbf{i}\beta_i)t}+O(e^{(\gamma_*-\varepsilon_0)t})
\end{align*}
for some $w_i\in\mathbb{C}$. A similar argument on $\frac{d}{dt}\left( e^{-\gamma_* t} \xi_{i,3}(t)\right)= e^{-\gamma_* t} \mathcal{E}_{i,3}(t)$ gives 
$$\xi_{i,3}(t)= c_{i,3} e^{\gamma_* t}+O(e^{(\gamma_*-\varepsilon_0)t})$$ 
for some $c_{i,3}\in\mathbb{R}$. Lastly, from  
\begin{align*}
\frac{d}{dt}\left( e^{-\gamma_* t} \xi_{i,4}(t)\right) =& e^{-\gamma_* t} \xi_{3,i}(t) +e^{-\gamma_* t} \mathcal{E}_{i,4}(t)=  c_{i,3}+O(e^{  -\varepsilon_0t}), 
\end{align*}
we derive $\xi_{4,i}(t)=(tc_{i,3}+c_{i,4})e^{\gamma_*t}+O(e^{(\gamma_*-\varepsilon_0)t})$ for some $c_{i,4}\in\mathbb{R}$.
\end{proof}

For $\gamma_*=\gamma^+_i$ or $\gamma^-_i$, an analogous result holds. We omit the proof because it is similar to and simpler than the one for Lemma~\ref{lem:gamma_star_m}.  
\begin{lemma}\label{lem:gamma_star_pm}
Suppose $\gamma_*=\gamma^+_i$ for some $i\in I_3\cup I_4$ and let $N$ be the multiplicity of $\lambda_i$. We may assume $\lambda_{i}=\lambda_{i+1}=\dots =\lambda_{i+N-1}$. Then there exists $a_1,a_2,\dots, a_N\in\mathbb{R}$ and $\varepsilon>0$ such that
\begin{align*}
\Vert q(t)- e^{\gamma_* t} \sum_{j=1}^N a_j \psi^+_{i+j-1} \Vert_G=O(e^{(\gamma_*-\varepsilon) t}).
\end{align*}
Similar result holds when $\gamma_*=\gamma^-_i$ for some $i\in I_3\cup I_4$. 
\end{lemma}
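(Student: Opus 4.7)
The plan is to mirror the three-group spectral decomposition used in the proof of Lemma~\ref{lem:gamma_star_m}, now with $\gamma_*=\gamma^+_i$ taking the role that $2^{-1}m$ played there. Two preliminary separations are needed before setting up the decomposition: within $I_3\cup I_4$ the multiplicity assumption forces the indices with $\gamma^+_j=\gamma_*$ to be exactly $j=i,i{+}1,\ldots,i{+}N{-}1$, and no $\gamma^-_j$ equals $\gamma_*$, since this would require $\sqrt{4^{-1}m^2-\lambda_j}+\sqrt{4^{-1}m^2-\lambda_i}=0$ with both radicals real and nonnegative; moreover, since $i\in I_3\cup I_4$ we have $\gamma_*\neq 2^{-1}m$, so no $\mathbf{L}$-eigenvalue arising from $I_1\cup I_2$ (whose real parts all equal $2^{-1}m$) coincides with $\gamma_*$. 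I would then fix $\varepsilon_1\in(0,\varepsilon_0)$ small enough that every other element of the spectrum of $\mathbf{L}$ (including real parts of the complex $I_1$ pairs) lies outside $[\gamma_*-2\varepsilon_1,\gamma_*+2\varepsilon_1]$.

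Next, define $X_\pm(t)$ in direct analogy with Lemma~\ref{lem:gamma_star_m}: $X_+^2$ collects the squares of all coefficients in \eqref{def:coef_exp} whose associated $\mathbf{L}$-eigenvalue (or real part, for an $I_1$ pair) is strictly larger than $\gamma_*$, and $X_-^2$ those strictly smaller; the $I_2$ coefficient $\xi_{j,4}$ should again enter with weight $\varepsilon_1^2$ so that the cross term from \eqref{equ:odegroupJ_exp} can be absorbed. The same manipulations as in Lemma~\ref{lem:gamma_star_m}, combined with $\|\mathcal{E}(t)\|_G=O(e^{(\gamma_*-\varepsilon_0)t})$ from \eqref{equ:error_exp}, produce differential inequalities of the form $X_+'\geq(\gamma_*+\varepsilon_1)X_++Y_+$ and $X_-'\leq(\gamma_*-\varepsilon_1)X_-+Y_-$ with $|Y_\pm(t)|=O(e^{(\gamma_*-\varepsilon_0)t})$. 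Integrating the first backward from $+\infty$ (using $e^{-(\gamma_*+\varepsilon_1)t}X_+(t)\to 0$, valid since $\|u\|_{C^1}(t)=O(e^{(\gamma_*+\varepsilon_1)t})$ by the definition of $\gamma_*$ and elliptic regularity) and the second forward from $t=0$ would yield
\[
X_+(t)+X_-(t)=O(e^{(\gamma_*-\varepsilon)t})\quad\text{for some }\varepsilon\in(0,\varepsilon_0).
\]

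For the remaining $N$ resonant coefficients, \eqref{equ:odegroup_exp} gives
\[
\frac{d}{dt}\bigl(e^{-\gamma_* t}\xi^+_{i+k-1}(t)\bigr)=e^{-\gamma_* t}\mathcal{E}^+_{i+k-1}(t)=O(e^{-\varepsilon_0 t}),\qquad k=1,\ldots,N,
\]
which is integrable on $[0,\infty)$. Consequently $e^{-\gamma_* t}\xi^+_{i+k-1}(t)$ tends to a limit $a_k\in\mathbb{R}$, and estimating the tail yields $\xi^+_{i+k-1}(t)=a_k e^{\gamma_* t}+O(e^{(\gamma_*-\varepsilon_0)t})$. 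Combining this with the $X_\pm$ bounds and invoking the $G$-orthonormality of $\mathcal{B}$ from Lemma~\ref{lem:G} gives the desired estimate after possibly shrinking $\varepsilon$. The parallel case $\gamma_*=\gamma^-_i$ is handled by the same argument with $\psi^+$ replaced by $\psi^-$ and the roles of the strict inequalities in the definitions of $X_\pm$ unchanged.

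The step I expect to require the most care is organizational rather than analytic: verifying that the three spectral groups partition $\mathcal{B}$ cleanly (which reduces to the non-coincidence observations above) and choosing the weighting on $\xi_{j,4}$ so that the $\xi_{j,3}\xi_{j,4}$ cross term is absorbed, exactly as in Lemma~\ref{lem:gamma_star_m}. Once the bookkeeping is set up correctly, the three estimates above combine immediately via the orthonormality of $\mathcal{B}$ and no new analytical ideas beyond those already established in Lemmas~\ref{lem:gamma_star} and \ref{lem:gamma_star_m} are needed.
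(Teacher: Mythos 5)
Your proposal is correct and follows exactly the route the paper indicates for this lemma (whose proof is omitted there as ``similar to and simpler than'' that of Lemma~\ref{lem:gamma_star_m}): split the spectrum of $\mathbf{L}$ about $\gamma_*$, derive the two differential inequalities for $X_\pm$ with source $O(e^{(\gamma_*-\varepsilon_0)t})$, integrate backward/forward, and integrate \eqref{equ:odegroup_exp} directly on the resonant modes. The preliminary non-coincidence observations ($\gamma^-_j\neq\gamma^+_i$ and $\gamma^+_i\neq 2^{-1}m$ for $i\in I_3\cup I_4$) and the $\varepsilon_1^2$ weighting on $\xi_{j,4}$ are exactly the right bookkeeping, so no gap remains.
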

We are ready to prove Theorem~\ref{thm:general_exponential_e}.
\begin{proof}[Proof of Theorem~\ref{thm:general_exponential_e}]
Suppose $\gamma_*=\gamma^+_i$ or $\gamma^-_i$. From Lemma~\ref{lem:gamma_star_pm} and Corollary~\ref{cor:qE}, there exists $v$ with $\mathcal{L}_{\Sigma} v=\lambda_i v$ and $\varepsilon>0$ such that $
\Vert u(t)- e^{\gamma_* t}v \Vert_{H^1}= O(e^{(\gamma_*-\varepsilon ) t})$. Let $w=v/\Vert v\Vert_{L^2}$. Then
\begin{align*}
\lim_{t\to\infty} u(t)/\Vert u(t) \Vert_{L^2}=w\ \textup{in}\ H^1(\Sigma,\mathbf{V}).
\end{align*}
We may upgrade the convergence to $C^\infty(\Sigma,\mathbf{V})$ by taking $\mathbf{L}$ derivatives to \eqref{eq-vectoru}. As a result, case (1) in Theorem~\ref{thm:general_exponential_e} holds. Suppose $\gamma_*=2^{-1}m$. From Lemma~\ref{lem:gamma_star_m} and Corollary~\ref{cor:qE}, there exist $w_i\in\mathbb{C}$  and $c_{i,3}, c_{i,4}\in \mathbb{R}$ such that \begin{align*}
 \left\Vert  u(t) - e^{\gamma_*t}\sum_{i\in I_1}\textup{Re} \left(w_ie^{\mathbf{i}\beta_i t}  \right)\varphi_i- e^{\gamma_*t}\sum_{i\in I_2}(tc_{i,3}+c_{i,4})  \varphi_i\right\Vert_{H^1} = O(e^{(\gamma_*-\varepsilon ) t}),
\end{align*}
and this proves (2) and (3) depending on if $c_{i,3}=0$ for all $i\in I_2$ or not.  
\end{proof}

\section{Slowly decaying solutions to elliptic equation}\label{sec:elliptic}

In this section, we show that if a solution to \eqref{equ:main} decays slowly, then the neutral mode, the projection of $q(u)$ onto the $0$-eigenspace of $\mathbf{L}$, dominates the solution. Moreover, the neutral mode evolves by a gradient flow up to a small error. That is the content of Proposition~\ref{prop-neutral-dynamics}. 
\vspace{0.1cm}

Let $u\in C^\infty(Q_{0,\infty},\widetilde{ \mathbf{V}})$ be a solution to \eqref{equ:main} with $\|u\|_{C^1}(t)=o(1)$ as $t\to \infty$. From the elliptic regularity, Lemma~\ref{lem:regularity_e}, $\|u\|_{C^s}(t)=o(1)$ for all $s\in\mathbb{N}$. We further assume that $u$ does not decay exponentially. Namely, for any $\varepsilon>0$, 
\begin{equation}\label{equ:nonexpdecay}
\limsup_{t\to\infty} e^{\varepsilon t}\|u\|_{C^1}(t)=\infty.
\end{equation}

Recall that we rewrote \eqref{equ:main} as an ODE system \eqref{equ:odegroup1_exp}-\eqref{equ:odegroup_exp}. For brevity, we assume throughout this section that $I_2=\varnothing$. With notational changes, the proof can be readily extended to cover the case where $I_2\neq \varnothing$. Since $I_2=\varnothing,$ the ODE system consists of \eqref{equ:odegroup1_exp} and \eqref{equ:odegroup_exp}. It is convenient to relabel the coefficients $\{\xi^\pm_i\}_{i\in I_3\cup I_4}$ in \eqref{equ:odegroup_exp}. For $\{\psi^\pm_i\}_{i\in I_4}$, we set 
\begin{align*}
& \{\Psi_i\}_{i\in\mathbb{N}}= \{\psi^+_i\, :\, i\in I_4\ \textup{and}\ \gamma^{+}_i>0\}\cup \{\psi^-_i\, :\, i\in I_4\ \textup{and}\ \gamma^{-}_i>0\},\\
 &  \{\Psi_i\}_{i\in-\mathbb{N}}=\{\psi^+_i\, :\, i\in I_4\ \textup{and}\ \gamma^{+}_i<0\}\cup \{\psi^-_i\, :\, i\in I_4\ \textup{and}\ \gamma^{-}_i<0\},
\end{align*}
and define $\Gamma_i$, a relabelling of $\gamma^\pm_i $ for $i\in I_4$, by
\begin{align}\label{equ:Psi}
\mathbf{L}\Psi_i=\mathbf{L}^\dagger\Psi_i=\Gamma_i\Psi_i. 
\end{align}
Recall that $I_3=\{\iota+1,\dots, \iota+J\}$. If $m>0$, we set, for $1\leq j\leq J$, $ \Upsilon_j =\psi^-_{\iota+j} $ and $ \overline{\Upsilon}_j = \psi_{\iota+j}^+$. If $m<0$, we set $ \Upsilon_j =\psi^+_{\iota+j} $ and $ \overline{\Upsilon}_j = \psi_{\iota+j}^-$. This arrangement ensures that
\begin{equation}\label{equ:Ups}
\begin{split}
\mathbf{L}\Upsilon_j=\mathbf{L}^\dagger\Upsilon_j=0,\ \mathbf{L}\overline{\Upsilon}_j=\mathbf{L}^\dagger\overline{\Upsilon}_j=m\overline{\Upsilon}_j.  
\end{split}
\end{equation}
Let
\begin{equation}\label{def:coef}
\begin{split}
&\xi_{i}(t):= G(q(t),\Psi_{i})\ \textup{for}\ i\in \mathbb{Z}\setminus\{0\},\\
&z_j(t):= G(q(t),\Upsilon_j),\ \bar{z}_j(t):=G(q(t),\overline{\Upsilon}_j) \ \textup{for}\ 1\leq j\leq J,
\end{split}
\end{equation}
and
\begin{equation}\label{def:error}
\begin{split}
&\mathcal{E}_{i}(t):= G(\mathcal{E}(t),\Psi_{i})\ \textup{for}\ i\in \mathbb{Z}\setminus\{0\},\\
&\mathcal{W}_j(t):= G(\mathcal{E}(t),\Upsilon_j), \ \overline{\mathcal{W}}_j(t):=G(\mathcal{E}(t),\overline{\Upsilon}_j) \ \textup{for}\ 1\leq j\leq J.
\end{split}
\end{equation}
We rewrite \eqref{equ:odegroup_exp} as \eqref{equ:odegroup2} and \eqref{equ:odegroup3} below. For $i\in\mathbb{Z}\setminus\{0\}$,
\begin{equation}\label{equ:odegroup2}
\frac{d}{dt} \xi_{i}-\Gamma_i \xi_{i } =\mathcal{E}_{i }, 
\end{equation}
and for $1\leq j\leq J$,
\begin{equation}\label{equ:odegroup3}
\begin{split}
&\frac{d}{dt} z_j =\mathcal{W}_j,\ \frac{d}{dt} \bar{z}_j-m \bar{z}_j =\overline{\mathcal{W}}_j.
\end{split}
\end{equation} 

We denote $z(t):=(z_1(t),\dots z_{J}(t))$ and $\bar{z}(t):=(\bar{z}_1(t),\dots \bar{z}_{J}(t))$, and use $|z(t)|$ and $|\bar{z}(t)|$ to denote their Euclidean norms respectively. Recall the reduced functional $f$ is introduced in Proposition~\ref{pro:AS}. The main result of this section proves that $z(t)$ evolves by the gradient flow of potential function $m^{-1}f(t)$ modulo a small error, which will shown to be negligible in the proof of Thom's gradient conjecture in Section~\ref{sec:7}.
\begin{proposition}\label{prop-neutral-dynamics}  {
Let $u$ be a slowly decaying solution to \eqref{equ:main}.
\begin{enumerate}[$(1)$]
\item  For given $\rho \in(0,1)$ and $N\in \mathbb{N}$, there exists  $C=C(\mathcal{M}_\Sigma, N_1,\rho, N )<\infty $ and $t_0=t_0(u,N,\rho  )$ such that, for $t\ge t_0$, 
\begin{align}\label{equ:zgradient}
 |z'(t)-m^{-1} \nabla f(z(t))| \leq C   (|z(t)|^{\rho }|\nabla f(z(t))|+|z(t)|^N ).
\end{align} 
\item  There exist $0<D_1, D_2<\infty$, $\alpha_2 >0$, which are constants of $\mathcal{M}_\Sigma$, $N_1$ such that, for $t\ge t_0=t_0(u)$,   \be \label{eq-algebraicratebound-5} D_1 t^{-1} \le |z(t)|\le D_2 t^{-\alpha_2}.\ee
\end{enumerate}
}
\end{proposition} 

The remainder of this section is dedicated to proving Proposition~\ref{prop-neutral-dynamics}. {Note that the time $t_0$ in the proposition may depend on the solution itself. As the equation and the limit are regarded as given, for simplicity we will omit mentioning the dependency of the constant $C$ on  $\mathcal{M}_\Sigma$ and $N_1$ in the remaining of this section.} The proof involves two main parts. In Section~\ref{sec:5.1}, we demonstrate (as shown in Corollary~\ref{cor:sobolev}) that any $C^s$ norm of $u$ can be bounded by $|z(t)|$. In Section~\ref{sec:5.2}, we obtain an enhanced decay rate in Lemma~\ref{lem-iterate2} through the decomposition \eqref{def:ut}. Proposition~\ref{prop-neutral-dynamics} is then a simple consequence of Lemma~\ref{lem-iterate2}. 

\subsection{Bounding $\Vert u \Vert_{C^s}$}\label{sec:5.1} To estimate $C^s$ norms of $u$, we need higher-derivative versions of the ODE system. Fix $k,\ell\in\mathbb{N}_0$. Recall that $q^{(k,\ell)}$ and $\mathcal{E}^{(k,\ell)}$ are given in Definition~\ref{def:qEkl}. Let 
\begin{equation}\label{def:coefkl} 
\begin{split}
&\xi^{(k,\ell)}_{i,1}(t):= G(q^{(k,\ell)}(t),\psi_{i,1}),\ \xi^{(k,\ell)}_{i,2}(t):=G(q^{(k,\ell)}(t),\psi_{i,2})\ \textup{for}\ i\in I_1,\\ 
&\xi^{(k,\ell)}_{i}(t):= G(q^{(k,\ell)}(t),\Psi_{i})\ \textup{for}\ i\in \mathbb{Z}\setminus\{0\},\\
&z^{(k,\ell)}_j(t):= G(q^{(k,\ell)}(t),\Upsilon_j),\ \bar{z}^{(k,\ell)}_j(t):=G(q^{(k,\ell)}(t),\overline{\Upsilon}_j) \ \textup{for}\ 1\leq j\leq J.
\end{split}
\end{equation}
Also, let
\begin{equation}\label{def:errorkl} 
\begin{split}
&\mathcal{E}^{(k,\ell)}_{i,1}(t):= G(\mathcal{E}^{(k,\ell)}(t),\psi_{i,1}),\ \mathcal{E}^{(k,\ell)}_{i,2}(t):= G(\mathcal{E}^{(k,\ell)}(t),\psi_{i,2})\ \textup{for}\ i\in I_1,\\
&\mathcal{E}^{(k,\ell)}_{i}(t):= G(\mathcal{E}^{(k,\ell)}(t),\Psi_{i})\ \textup{for}\ i\in \mathbb{Z}\setminus\{0\},\\
&\mathcal{W}^{(k,\ell)}_j(t):= G(\mathcal{E}^{(k,\ell)}(t),\Upsilon_j), \ \overline{\mathcal{W}}^{(k,\ell)}_j(t):=G(\mathcal{E}^{(k,\ell)}(t),\overline{\Upsilon}_j) \ \textup{for}\ 1\leq j\leq J.
\end{split}
\end{equation}
Then for $i\in I_1$,
\begin{equation}\label{equ:odegroup1kl} 
\begin{split}
&\frac{d}{dt} \xi^{(k,\ell)}_{i,1}-2^{-1}m \xi^{(k,\ell)}_{i,1}+\beta_i\xi^{(k,\ell)}_{i,2}=\mathcal{E}^{(k,\ell)}_{i,1},\\ 
&\frac{d}{dt} \xi^{(k,\ell)}_{i,2}-2^{-1}m \xi^{(k,\ell)}_{i,2}-\beta_i\xi^{(k,\ell)}_{i,1}=\mathcal{E}^{(k,\ell)}_{i,2}, 
\end{split}
\end{equation}
for $i\in\mathbb{Z}\setminus\{0\}$,
\begin{equation}\label{equ:odegroup2kl} 
\frac{d}{dt} \xi^{(k,\ell)}_{i}-\Gamma_i \xi^{(k,\ell)}_{i } =\mathcal{E}^{(k,\ell)}_{i }, 
\end{equation}
and for $1\leq j\leq J$,
\begin{equation}\label{equ:odegroup3kl} 
\begin{split}
&\frac{d}{dt} z^{(k,\ell)}_j =\mathcal{W}^{(k,\ell)}_j,\ \frac{d}{dt} \bar{z}^{(k,\ell)}_j-m \bar{z}^{(k,\ell)}_j =\overline{\mathcal{W}}^{(k,\ell)}_j.
\end{split}
\end{equation}

In the next lemma, we use the Merle-Zaag ODE lemma \cite{MZ} (see Lemma \ref{lem-MZODE}) to show that $|z(t)|$ dominates the other coefficients.

\begin{lemma}[dominance of neutral mode]\label{lem:ODE}
For any $s\in\mathbb{N}_0$, as $t\to \infty$, 
\begin{align*}
\sum_{k+\ell\leq s}\bigg [ |z^{(k,\ell)} |^2+|\bar{z}^{(k,\ell)} |^2+\sum_{i\in I_1}\left(|\xi^{(k,\ell)}_{i,1} |^2+|\xi^{(k,\ell)}_{i,2} |^2 \right)+\sum_{i\neq 0}|\xi^{(k,\ell)}_i |^2 \bigg ] =(1+o(1))|z |^2.
\end{align*}
\end{lemma}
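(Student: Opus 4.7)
The plan is to apply the Merle-Zaag ODE lemma (Lemma~\ref{lem-MZODE}) to the stacked vector
\[W(t) := \bigoplus_{k+\ell\le s} q^{(k,\ell)}(t),\]
regarded as an element of the product Hilbert space equipped with the product $G$-inner product. Since $\partial_t$ commutes with $\mathbf{L}$, componentwise differentiation of $\partial_t q = \mathbf{L} q + \mathcal{E}$ gives $\partial_t W = \tilde{\mathbf{L}}W + \tilde{\mathcal{E}}$, where $\tilde{\mathbf{L}}$ acts block-diagonally as $\mathbf{L}$ and $\tilde{\mathcal{E}} = (\mathcal{E}^{(k,\ell)})_{k+\ell\le s}$. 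The elliptic regularity Lemma~\ref{lem:regularity_e} provides $\|u\|_{C^{s+2}}(t) = o(1)$, so Corollary~\ref{cor:qE} yields $\|\tilde{\mathcal{E}}\|_G = o(\|W\|_G)$ as $t\to\infty$, verifying the smallness of the error required by Merle-Zaag.

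Next, decompose $W = W^{\mathcal{S}}\oplus W^{\mathcal{N}}\oplus W^{\mathcal{U}}$ into the $\tilde{\mathbf{L}}$-invariant subspaces spanned by eigenvectors from $\mathcal{B}$ with negative, zero, and positive real eigenvalue, respectively; in each block the neutral subspace is $\mathrm{span}\{\Upsilon_j\}_{j=1}^J$. The key computation is that the neutral projection of $W$ coincides with $|z(t)|^2$ up to a $o(\|W\|_G^2)$ error: for $\ell\ge 1$,
\[z_j^{(k,\ell)} = G(\partial_t^k\mathbf{L}^\ell q, \Upsilon_j) = \partial_t^k G(q, (\mathbf{L}^\dagger)^\ell\Upsilon_j) = 0,\]
using $\mathbf{L}^\dagger\Upsilon_j = 0$ from \eqref{equ:Ups}, while for $\ell=0$ and $k\ge 1$, iterating \eqref{equ:odegroup3} gives $z_j^{(k,0)} = \mathcal{W}_j^{(k-1,0)}$. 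Summing squares and invoking Corollary~\ref{cor:qE},
\[\|W^{\mathcal{N}}\|_G^2 = |z(t)|^2 + \sum_{k\ge 1,\,j}|\mathcal{W}_j^{(k-1,0)}|^2 = |z(t)|^2 + o(\|W\|_G^2).\]

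To conclude, note that $W(t)\to 0$ (since $\|u\|_{C^{s+2}}(t)=o(1)$) while $W$ does not decay exponentially because $\|W\|_G \ge \|q\|_G \ge c\|u\|_{L^2}$ together with \eqref{equ:nonexpdecay} forbids it. Thus Merle-Zaag rules out both the exponentially-growing (unstable-dominant) and exponentially-decaying (stable-dominant) alternatives, forcing $\|W^{\mathcal{S}}\|_G^2 + \|W^{\mathcal{U}}\|_G^2 = o(\|W^{\mathcal{N}}\|_G^2)$. Combined with the neutral-norm identity and the trivial lower bound $\|W\|_G^2 \ge |z(t)|^2$, straightforward algebra produces $\|W\|_G^2 = (1+o(1))|z(t)|^2$, which via Corollary~\ref{cor:qE} is exactly the lemma.

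The main technical hurdle I foresee is justifying the Merle-Zaag trichotomy in this infinite-dimensional setting, since $\mathcal{S}$ and $\mathcal{U}$ are infinite-dimensional through $I_4$; this should be handled by aggregating stable and unstable $G$-norms into scalar quantities and running standard Gronwall-type estimates, using the uniform spectral gap $\inf_{i\in I_4}|\mathrm{Re}\,\Gamma_i|>0$ guaranteed by discreteness of the spectrum of $\mathcal{L}_\Sigma$ and the definition \eqref{def:I123} excluding $\lambda_i\in\{0, 4^{-1}m^2\}$ from $I_4$.
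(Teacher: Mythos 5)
Your proposal is correct and follows essentially the same route as the paper: the paper's $X_+,X_0,X_-$ are exactly the $G$-norms of your unstable/neutral/stable projections of the stacked vector, the error smallness comes from Corollary~\ref{cor:qE}, the Merle--Zaag trichotomy is resolved by the non-exponential-decay assumption, and the identification $\|W^{\mathcal N}\|_G^2=|z(t)|^2+o(\|W\|_G^2)$ uses the same two identities ($\mathbf{L}^\dagger\Upsilon_j=0$ for $\ell\ge1$ and $z_j^{(k,0)}=\mathcal{W}_j^{(k-1,0)}$ for $k\ge1$). The "technical hurdle" you flag is handled exactly as you anticipate, by aggregating into scalar quantities and deriving the differential inequalities \eqref{eq:mz.ode.cor} directly.
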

 
\begin{proof} Fix $s\in\mathbb{N}_0$. We give the proof for the case $m>0$. The argument for $m<0$ is similar. Define three non-negative functions $X_+(t)$, $X_0(t)$ and $X_-(t)$ by
\begin{align*}X^2_+&=\sum_{k+\ell\leq s}\bigg (\sum_{i\in I_1}\left( |\xi^{(k,\ell)}_{i,1}|^2+|\xi^{(k,\ell)}_{i,2}|^2 \right)+\sum_{i\in\mathbb{N}}|\xi^{(k,\ell)}_i|^2+\sum_{1\leq j\leq J}|\bar{z}^{(k,\ell)}_j|^2 \bigg)\\
X^2_0&=\sum_{k+\ell\leq s}\sum_{1\leq j\leq J}|z^{(k,\ell)}_j|^2,\quad  X^2_-=\sum_{k+\ell\leq s}\sum_{i\in -\mathbb{N}}|\xi^{(k,\ell)}_i|^2.
\end{align*}
From \eqref{equ:odegroup1kl}, \eqref{equ:odegroup2kl} and \eqref{equ:odegroup3kl}, $X_+X_+'$ becomes 
\begin{align*}
&\sum_{k+\ell\leq s}\bigg [ \frac m2  \sum_{i\in I_1}\left(  |\xi^{(k,\ell)}_{i,1} |^2+|\xi^{(k,\ell)}_{i,2} |^2 \right)+\sum_{i\in\mathbb{N}} \Gamma_i |\xi^{(k,\ell)}_i|^2+m\sum_{1\leq j\leq J}|\bar{z}^{(k,\ell)}_j |^2 \\
&\quad +   \sum_{i\in I_1}\left(   \xi^{(k,\ell)}_{i,1}\mathcal{E}^{(k,\ell)}_{i,1}  +\xi^{(k,\ell)}_{i,2}\mathcal{E}^{(k,\ell)}_{i,2}  \right)+\sum_{i\in\mathbb{N}}   \xi^{(k,\ell)}_i\mathcal{E}^{(k,\ell)}_i + \sum_{1\leq j\leq J} \bar{z}^{(k,\ell)}_j \overline{\mathcal{W}}^{(k,\ell)}_j  \bigg ].
\end{align*}
Let us define $Y_+(t)$ by equating the three terms on the second line as $X_+(t)Y_+(t)$ and $Y_+(t_0)=0$ if $X_+(t_0)=0$. Let $b$ be the minimum among $2^{-1}m$ and $|\Gamma_i|,\ i\in\mathbb{Z}\setminus\{0\}$. We have
\begin{align*}
X_+'-bX_+\geq Y_+. 
\end{align*}
Similarly, define $Y_0(t)$ and $Y_-(t)$ by
\begin{align*}
X_0Y_0= \sum_{k+\ell\leq s}\sum_{1\leq j\leq J} z^{(k,\ell)}_j\mathcal{W}^{(k,\ell)}_j,\quad 
X_-Y_-= \sum_{k+\ell\leq s}\sum_{i\in-\mathbb{N} } \xi^{(k,\ell)}_i\mathcal{E}^{(k,\ell)}_i,
\end{align*}respectively. If $X_0(t_0)=0$ or $X_-(t_0)=0$, we set $Y_0(t_0)=0$ or $Y_-(t_0)=0$ respectively.
It holds that
\begin{align} \label{eq-neutralstable}
X_0'= Y_0,\quad  X_-'+bX_-\leq Y_-.  
\end{align}
We now compare $|X_+|^2+|X_0|^2+|X_-|^2$ and $|Y_+|^2+|Y_0|^2+|Y_-|^2$. Observe $|X_+|^2+|X_0|^2+|X_-|^2= \sum_{k+\ell\leq s} \| q^{(k,\ell)} \|^2_G$.
From the Cauchy-Schwarz inequality and \eqref{def:errorkl}, $|Y_+|^2+|Y_0|^2+|Y_-|^2$ 
 can be bounded by $\sum_{k+\ell\leq s} \| \mathcal{E}^{(k,\ell)}\|^2_G$.
By \eqref{equ:Ekl}, $|Y_+|^2+|Y_0|^2+|Y_-|^2=  o(1)\big( |X_+|^2+|X_0|^2+|X_-|^2 \big).$ Then we apply Lemma~\ref{lem-MZODE}. In view of \eqref{eq:mz.ode.cor.B1}, the slow decay assumption \eqref{equ:nonexpdecay} rules out the possibility that $X_-(t)$ dominates. Hence  
\begin{equation}\label{equ:ODEmiddle1}
|X_+|^2+|X_0|^2+|X_-|^2=(1+o(1))|X_0|^2. 
\end{equation}
It remains to show that 
\begin{equation}\label{equ:ODEmiddle2}
 |X_0|^2=(1+o(1))|z|^2. 
\end{equation}
For $\ell\geq 1$,
$
z^{(k,\ell)}_j =G(\mathbf{L}q^{k,\ell-1},\Upsilon_j)=G(q^{k,\ell-1},\mathbf{L}^\dagger\Upsilon_j)=0.
$
For $k\geq 1$,
$
z^{(k,\ell)}_j =\frac{d}{dt} z^{(k-1,\ell)}_j=\mathcal{W}^{(k-1,\ell)}_j.
$
These imply
\begin{align*}
 |X_0|^2&\leq |z|^2+\sum_{k\leq s-1 }  |\mathcal{W}^{(k,0)} |^2 \leq |z|^2+o(1)\left(|X_+|^2+|X_0|^2+|X_-|^2\right),
\end{align*}
and \eqref{equ:ODEmiddle2} follows by \eqref{equ:ODEmiddle1}.
\end{proof}

In view of Corollary~\ref{cor:qE} and Lemma~\ref{lem:ODE}, for any $s\in\mathbb{N}_0$,
\begin{align*}
\sum_{k+\ell \le s}\Vert \partial_t^k \slashed{\nabla}^{\ell  }u\Vert_{L^2}(t)=O(1)|z(t)|. 
\end{align*}
Applying the Sobolev embedding, bounds on any $C^s$-norm of $u$ follow.
\begin{corollary}[control on higher derivatives]\label{cor:sobolev} For any $\gamma \in \mathbb{N}$, there exists {$C=C(s)<\infty $} such that $\Vert u \Vert_{C^{s}}(t) \leq C   |z(t) | $ {for large time $t\ge t_0=t_0(s ,u)$.}  
\end{corollary}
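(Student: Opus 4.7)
My plan is to derive Corollary~\ref{cor:sobolev} as a direct consequence of Lemma~\ref{lem:ODE}, Corollary~\ref{cor:qE}, and classical Sobolev embedding on the closed $n$-manifold $\Sigma$. The strategy is to apply the dominance of the neutral mode at a sufficiently high order so that, after converting from the $\|\cdot\|_G$-sums to mixed space-time Sobolev norms, I have enough spatial regularity to embed into $C^s$.

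Concretely, let $n = \dim \Sigma$ and set $s' := s + \lceil n/2 \rceil + 1$. Since $u$ is a slowly decaying solution, the hypothesis $\|u\|_{C^{s'+2}}(t) = o(1)$ of Corollary~\ref{cor:qE} holds by the elliptic regularity result (Lemma~\ref{lem:regularity_e}) established earlier in the section. Applying Lemma~\ref{lem:ODE} at order $s'$, the left-hand side equals $\sum_{k+\ell \le s'} \|q^{(k,\ell)}(t)\|_G^2$, and by Corollary~\ref{cor:qE} this quantity is comparable to $\sum_{k+\ell \le s'+1} \|\partial_t^k \slashed{\nabla}^\ell u\|_{L^2}^2(t)$. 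Consequently, for all $t \ge t_0 = t_0(s, u)$,
\begin{equation*}
\sum_{k+\ell \le s'+1} \|\partial_t^k \slashed{\nabla}^\ell u\|_{L^2}(t) \le C\, |z(t)|.
\end{equation*}

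Next, I would fix $k$ with $0 \le k \le s$ and apply the standard Sobolev embedding on $\Sigma$, namely $H^{s-k+\lceil n/2 \rceil + 1}(\Sigma;\mathbf{V}) \hookrightarrow C^{s-k}(\Sigma;\mathbf{V})$, to the section $\partial_t^k u(\cdot, t)$:
\begin{equation*}
\sup_{\omega \in \Sigma} \sup_{\ell \le s-k} \bigl|\slashed{\nabla}^\ell \partial_t^k u(\omega,t)\bigr| \le C \bigl\|\partial_t^k u(\cdot,t)\bigr\|_{H^{s-k+\lceil n/2 \rceil + 1}} \le C \sum_{\ell \le s'-k} \bigl\|\partial_t^k \slashed{\nabla}^\ell u\bigr\|_{L^2}(t).
\end{equation*}
Summing this estimate over $0 \le k \le s$ and using the previous display produces $\|u\|_{C^s}(t) \le C\, |z(t)|$, which is the desired conclusion.

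There is no serious obstacle here: the work has already been done in Lemma~\ref{lem:ODE}, which established that the neutral coefficient $|z(t)|$ dominates all other projections up to arbitrary order, and in Corollary~\ref{cor:qE}, which translates the abstract $\mathbf{L}$-derivative bounds back into mixed space-time Sobolev norms. The only care needed is bookkeeping the loss of derivatives in the Sobolev embedding, which is handled by shifting the application of Lemma~\ref{lem:ODE} from order $s$ to order $s' = s + \lceil n/2 \rceil + 1$. The time threshold $t_0$ depends on $s$ and $u$ because the $o(1)$ factor in Lemma~\ref{lem:ODE} becomes a uniform $O(1)$ constant only after waiting long enough for $\|u\|_{C^{s'+2}}(t)$ to be small.
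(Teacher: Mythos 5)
Your argument is correct and is essentially identical to the paper's: the paper also combines Lemma~\ref{lem:ODE} with Corollary~\ref{cor:qE} to get $\sum_{k+\ell\le s}\Vert\partial_t^k\slashed{\nabla}^{\ell}u\Vert_{L^2}(t)=O(1)|z(t)|$ for any $s$, and then invokes the Sobolev embedding on $\Sigma$. Your only addition is the explicit bookkeeping of the derivative loss via $s'=s+\lceil n/2\rceil+1$, which the paper leaves implicit.
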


\begin{corollary} [algebraic lower bound] \label{cor-Xi^+_0} For slowly decaying solution $u(t)$, there is $C=C(\mathcal{M}_\Sigma, N_1)<\infty$ such that $|z(t)|\ge (Ct)^{-1}$ for $t\ge t_0$.
 
\begin{proof}
The assertion directly follows from $ |z'(t)|  \leq C  |z(t)|^2$ for large time $t\ge t_0$ and $\lim_{t\to\infty}|z(t)|=0$.   

\end{proof}  
\end{corollary}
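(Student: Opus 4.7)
The plan is to reduce the statement to a concrete scalar ODE inequality of the form $|z'(t)| \leq C|z(t)|^2$ for large $t$, and then integrate. From \eqref{equ:odegroup3}, the components $z_j$ satisfy $z_j'(t) = \mathcal{W}_j(t) = G(\mathcal{E}(t), \Upsilon_j)$. Since $\{\Upsilon_j\}$ belongs to the $G$-orthonormal basis $\mathcal{B}$, the Cauchy-Schwarz inequality gives
\begin{equation*}
|z'(t)| \leq \|\mathcal{E}(t)\|_G = \|(0, E_1(u))\|_G \lesssim \|E_1(u)\|_{L^2}(t).
\end{equation*}

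Next, I would estimate $\|E_1(u)\|_{L^2}(t)$ by exploiting the quadratic structure in \eqref{equ:Estr}. The coefficients $a_1, a_2, a_{4,j}$ depend smoothly on $(\omega, u, Du, D^2 u)$ and vanish at the zero section, so for any fixed $s$ large enough there is a constant $C$ such that $|a_\bullet|(\omega,t) \leq C\|u\|_{C^s}(t)$ as long as $\|u\|_{C^s}(t) = o(1)$, which holds here by the slow decay hypothesis and Lemma~\ref{lem:regularity_e}. Combining this with the pointwise bound $|D\dot u| + |\dot u| + \sum_{j \le 2}|\slashed\nabla^j u| \leq C\|u\|_{C^s}(t)$ yields
\begin{equation*}
\|E_1(u)\|_{L^2}(t) \leq C \|u\|_{C^s}^2(t).
\end{equation*}
Corollary~\ref{cor:sobolev} then upgrades this to $\|E_1(u)\|_{L^2}(t) \leq C|z(t)|^2$ for $t \geq t_0(u,s)$, which gives the desired differential inequality $|z'(t)| \leq C|z(t)|^2$.

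The final step is a standard ODE argument. Assume first that $z(t) \neq 0$ for all large $t$; then $|z(t)|$ is locally Lipschitz and satisfies $\frac{d}{dt}|z(t)| \geq -|z'(t)| \geq -C|z(t)|^2$, so
\begin{equation*}
\frac{d}{dt}\Bigl(\frac{1}{|z(t)|}\Bigr) = -\frac{1}{|z(t)|^2}\frac{d|z(t)|}{dt} \leq C.
\end{equation*}
Integrating from $t_0$ to $t$ yields $1/|z(t)| \leq 1/|z(t_0)| + C(t - t_0)$, and the decay $|z(t)| \to 0$ combined with choosing $t_0$ large enough gives $|z(t)| \geq (2C)^{-1}t^{-1}$ for $t \geq t_0$. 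To rule out the alternative $z(t_*)=0$: the differential inequality $|z'|\leq C|z|^2$ together with $z(t_*)=0$ would force $z \equiv 0$ on $[t_*,\infty)$ by Gronwall; Lemma~\ref{lem:ODE} then propagates this to the vanishing of all other modes and hence to $u\equiv 0$ for $t\geq t_*$, contradicting the non-exponential decay assumption \eqref{equ:nonexpdecay}.

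The main obstacle is the first step: verifying that $E_1(u)$ is genuinely quadratic in $|z(t)|$. This requires being careful that the coefficients $a_2$ depend on $D^2 u$ as well, so one must apply Corollary~\ref{cor:sobolev} with $s$ large enough to absorb all derivatives appearing in \eqref{equ:Estr}. Once that bookkeeping is done, everything else is elementary.
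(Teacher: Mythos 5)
Your proposal is correct and follows the same route as the paper: the paper's one-line proof rests exactly on the differential inequality $|z'(t)|\leq C|z(t)|^2$, which you justify in the natural way (Bessel/Cauchy--Schwarz against the $G$-orthonormal $\Upsilon_j$, the quadratic structure \eqref{equ:Estr} of $E_1$, and Corollary~\ref{cor:sobolev}), followed by the standard integration of $\frac{d}{dt}(1/|z|)\leq C$. Your extra care about a possible zero of $z$ (Gronwall plus Lemma~\ref{lem:ODE} contradicting \eqref{equ:nonexpdecay}) is a legitimate way to close the gap the paper leaves implicit.
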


The next elementary result will be used to show that the dynamics of non-neutral part will be driven by the inhomogeneous source term $\mathcal{E}(t)$. This will be one of key ingredients to obtain improved decay rate in Section~\ref{sec:5.2}.

\begin{lemma}\label{lemma:integrate} Let $\eta(t)$, defined for $t\ge t'$, be a positive function that satisfies 
\begin{enumerate}[$(a)$]
\item  $\eta(t)\to 0 $ as $t\to \infty$, and  
\item for given $\e>0$, $e^{-\e t}\eta(t)$ is non-increasing and $e^{\e t} \eta(t)$ is non-decreasing for large time $t\ge t_0=t_0(\eta,\e)$. 
\end{enumerate}
 Suppose $y_i(t)$ is a sequence of functions that solves 
\begin{align}\label{equ:ODE_y}
y'_i(t)-m_iy_i(t)=R_i(t). 
\end{align}
Here,  $m_i\in\mathbb{R}$ are non-zero constants and  $R_i(t)$ are functions such that $\sum_{i=1}^\infty |R_i(t)|^2\leq  M^2\eta^2(t)$ some $M>0$. Then the followings hold:
\begin{enumerate}[$(1)$]
\item If $\sup_{i\in\mathbb{N}}m_i=-b<0$ and $\sum_{i=1}^\infty |y_j(t')|^2<\infty$, then there exists  $C<\infty $  such that $\sum_{i=1}^\infty |y_i(t)|^2\leq C M^2 \eta^2(t) $ for large time $t\ge t_1$ . 
\item If $\inf_{i\in\mathbb{N}}m_i=b>0$ and $\lim_{t\to\infty} e^{-m_i t}y_i(t)=0$ for all $i\in\mathbb{N}$, then there exists $C<\infty $ such that $\sum_{i=1}^\infty |y_i(t)|^2\leq CM^2\eta^2(t) $ for large time $t\ge t_1$.
\end{enumerate}
Here,  $C=C(b)$ and $t_1=t_1(\eta, \sum_{i\in\mathbb{N}} |y_i(t')|^2  , b)$. 
\begin{proof} 
We assume $M=1$ and $t'=0$ as in the other case we may replace $M\eta(t-t')$ by $\eta(t)$. We present the proof of (1). The proof of (2) is similar and simpler. In the proof $C$ represents a positive constant of stated dependency whose value may vary from one line to another. From \eqref{equ:ODE_y},
\begin{align*}
y_i(t)=e^{m_i t} y_i(0)+\int_0^t  e^{m_i(t-\tau)} R_i (\tau)\, d\tau.
\end{align*}
For any sequence $\{a_i\}_{i\in \mathbb{N}}$ with $\sum_{i\in \mathbb{N}} |a_i|^2=1$, we estimate
\begin{align*}
\left|\sum_{i\in  \mathbb{N}} y_i(t)a_i\right|
\leq & e^{-bt} \sum_{i\in  \mathbb{N}}\left| y_i(0)a_i\right|+ \sum_{i\in \mathbb{N}}   \int_0^t e^{-b(t-\tau)} \left| R_i (\tau)a_i \right|\, d\tau.
\end{align*}
Observe first that   {$e^{-bt} \sum_{i\in \mathbb{N}}\left| y_i(0)a_i\right|   \le e^{-bt} (\sum_{i\in \mathbb{N}} |y_i(0)|^2)^{\frac12} \le \eta(t)$ for large $t\ge t_1$.} From the assumption and the Cauchy-Schwarz inequality,
\begin{align*}
\sum_{i\in \mathbb{N}}   \int_0^t e^{-b(t-\tau)} \left| R_i (\tau)a_i \right|\, d\tau  \leq  \int_0^t e^{-b(t-\tau)}  \eta(\tau)\, d\tau.
\end{align*}
Let $t_0=t_0(\eta,b/2)$ be the large time provided by  (b) when $\eps=b$. For $t\ge t_0$,
\begin{align*}
\int_0^t e^{-b(t-\tau)} \eta(t)\, d\tau=&\int_0^{t_0} e^{-b(t-\tau)}\eta(\tau )\, d\tau +\int_{t_0}^t e^{-\frac b2 (t-\tau)} \left(e^{-\frac b2 (t-\tau)} \eta(\tau) \right) \, d\tau.
\end{align*}
Note that 
\be \label{eq-t5} \int_0^{t_0} e^{-b(t-\tau)}\eta(\tau )\, d\tau\le e^{-\frac b2 (t-t_0)} \frac{t_0\max_{\tau\in[0,\infty)} \eta(\tau)}{e^{\frac b2 (t-t_0)}\eta(t)}\eta (t).\ee 
Since ${e^{\frac b2 (t-t_0)}\eta(t)}\ge {\eta(t_0)}$ for $t\ge t_0$, we may choose $t_1$ large so that $ \int_0^{t_0} e^{-b(t-\tau)}\eta(\tau )\, d\tau\le \eta(t)$ for $t\ge t_1$. Next, 
\[\int_{t_0}^t e^{-2^{-1}b(t-\tau)} \left(e^{-2^{-1}b(t-\tau)} \eta(\tau) \right) \, d\tau \le \eta(t) \int_{t_0}^t e^{-2^{-1}b(t-\tau)}\, d\tau \le \frac{2}{b} \eta(t).\]
Consequently, $\left|\sum_{i\in  \mathbb{N}}  y_i(t)a_i\right| \leq C \eta(t)$, for $t\ge t_1$ for every sequence $\{a_i\}$ with $\sum_{i\in \mathbb{N}} |a_i|^2=1$. This implies $\sum_{i=1}^\infty |y_i(t)|^2\leq C \eta^2(t)$. 
\end{proof}
\end{lemma}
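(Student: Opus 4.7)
The plan is to use the variation-of-parameters formula for each scalar ODE and then convert the pointwise-in-$i$ estimates into an $\ell^2$-in-$i$ estimate via duality (Cauchy--Schwarz against an arbitrary unit sequence $\{a_i\}$ with $\sum |a_i|^2=1$). Throughout, we may normalize $M=1$ and $t'=0$.

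For case (1), since $m_i\le -b<0$, the natural solution representation integrates forward from $0$:
\begin{equation*}
y_i(t)=e^{m_it}y_i(0)+\int_0^t e^{m_i(t-\tau)}R_i(\tau)\,d\tau.
\end{equation*}
Pairing with $\{a_i\}$ and applying the Cauchy--Schwarz inequality in $i$ (using $\sum|y_i(0)|^2<\infty$ and $\sum|R_i(\tau)|^2\le\eta^2(\tau)$), we reduce the estimate of $\sum |y_i(t)|^2$ to controlling $e^{-bt}$ and the scalar convolution $I(t):=\int_0^t e^{-b(t-\tau)}\eta(\tau)\,d\tau$. For the first piece, I will invoke the monotonicity of $e^{\varepsilon t}\eta(t)$ (with any fixed $\varepsilon<b$) to show $e^{-bt}$ is eventually dominated by $\eta(t)$. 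For $I(t)$, I would split at the threshold $t_0$ provided by the monotonicity hypothesis on $\eta$: on $[0,t_0]$, the factor $e^{-b(t-\tau)}\le e^{-b(t-t_0)}$ decays exponentially while $\eta(\tau)$ is bounded, so again the monotone lower bound $e^{-b(t-t_0)}\eta(t)^{-1}\to 0$ swallows this term; on $[t_0,t]$, I use $\eta(\tau)\le e^{(b/2)(t-\tau)}\eta(t)$ coming from the monotonicity of $e^{-(b/2)t}\eta(t)$, and bound the remaining integral $\int_{t_0}^t e^{-(b/2)(t-\tau)}\,d\tau\le 2/b$. This yields $I(t)\le C(b)\eta(t)$ for $t$ large.

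For case (2), the hypothesis $\lim_{t\to\infty}e^{-m_it}y_i(t)=0$ together with $m_i\ge b>0$ lets me represent the solution as a backward integral, namely
\begin{equation*}
y_i(t)=-\int_t^\infty e^{m_i(t-\tau)}R_i(\tau)\,d\tau.
\end{equation*}
No initial term appears. Duality against a unit sequence $\{a_i\}$ and Cauchy--Schwarz again give $|\sum y_i(t)a_i|\le\int_t^\infty e^{-b(\tau-t)}\eta(\tau)\,d\tau$. Here only the upper-monotonicity of $\eta$ matters: using $\eta(\tau)\le e^{(b/2)(\tau-t)}\eta(t)$ (a consequence of $e^{-(b/2)t}\eta(t)$ being eventually non-increasing when we take $\varepsilon=b/2$), the tail integral is bounded by $(2/b)\eta(t)$.

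The main technical obstacle, which in both cases is the same point, is turning the sequence-wise scalar identities into an $\ell^2$ bound while keeping constants independent of $i$; the duality trick with a unit sequence $\{a_i\}$ followed by Cauchy--Schwarz in $i$ at each time $\tau$ handles this cleanly. The remaining delicate step is the careful choice of the splitting threshold so that both monotonicity properties of $\eta$ are used: the upper monotone inequality tames the convolution tail, while the lower monotone inequality absorbs the transient contribution from $[0,t_0]$ and (in case (1)) the decaying initial term $e^{-bt}$ into the bound $C\eta(t)$.
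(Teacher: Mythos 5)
Your proposal is correct and follows essentially the same route as the paper's proof: variation of parameters, duality against a unit $\ell^2$ sequence $\{a_i\}$ with Cauchy--Schwarz in $i$, splitting the convolution at the threshold $t_0$, and using the non-decreasing property of $e^{(b/2)t}\eta(t)$ to absorb the transient and initial terms together with the remaining monotonicity to tame the tail. One cosmetic slip: in case (1) the bound $\eta(\tau)\le e^{(b/2)(t-\tau)}\eta(t)$ for $\tau\le t$ follows from $e^{(b/2)t}\eta(t)$ being non-decreasing, not from $e^{-(b/2)t}\eta(t)$ being non-increasing as you wrote, but both properties are hypotheses so nothing breaks.
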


\subsection{Enhanced decay rate}\label{sec:5.2} Recall that the decomposition of  $u$ in  \eqref{def:ut}  
\begin{equation}\label{def:ut2}
u=u^T+H(u^T)+\tilde{u}^\perp. 
\end{equation} 
Let us introduce an auxiliary quantity
\begin{align}\label{def:Q}
{Q(t):= |z(t)||\bar{z}(t)|+|z(t)|\left\Vert  {\dot u}\right\Vert _{C^2}(t)+ |z(t)|\Vert \tilde u^\perp \Vert _{C^3}(t). }
\end{align}
\begin{lemma}\label{lem:5.5}
There exists  {$C<\infty$} such that, for large time $t\ge t_0=t_0(u)$,  
\begin{align}\label{equ:MQ}
 | \mathcal{M}_\Sigma (u)+\nabla f&(z(t))-\mathcal{L}_{\Sigma} \tilde{u}^\perp | \leq C   Q(t), \text{ and}
\\\label{equ:NQ}
&|N_1(u)| \leq C  Q(t).
\end{align}
\begin{proof}

Note that the coefficients of $u^T\in \ker \mathcal{L}_{\Sigma}$ are given by $z_j(t)-\bar{z}_j(t)$.
\begin{align*}
u^T(t)= \sum_{j=1}^{J} ( z_j(t) -  \bar{z}_j(t) ) \varphi_{\iota+j}. 
\end{align*}
Fix $\alpha=1/2$. From Lemma~\ref{lem:MSigma} $| \mathcal{M}_\Sigma (u)+\nabla f(z)-\mathcal{L}_{\Sigma} \tilde{u}^\perp   |$ is bounded by 
\begin{align*} C  \big(  \Vert u\Vert_{C^{2,\alpha}(\Sigma)}\Vert \tilde{u}^\perp \Vert_{C^{2,\alpha}(\Sigma)}+  |\nabla f(z-\bar{z})-\nabla f(z) |\big).
\end{align*}
Clearly $\Vert u(t) \Vert_{C^{2,\alpha}(\Sigma)}  \leq C  \Vert u \Vert_{C^{3}}(t)$ and $\Vert \tilde{u}^{\perp} (t) \Vert_{C^{2,\alpha}(\Sigma)} \leq C  \Vert \tilde{u}^{\perp} \Vert_{C^{3}}(t)$. From Corollary~\ref{cor:sobolev}, $\Vert u \Vert_{C^{3} }(t) \leq C   |z(t)|$. Therefore,
$\Vert u(t) \Vert_{C^{2,\alpha}(\Sigma)}\Vert \tilde{u}^\perp(t)  \Vert_{C^{2,\alpha}(\Sigma)}  \leq C  Q(t).$
Because $f-f(0)$ vanishes at the origin of degree $p$, $|\nabla^2 f(x)| \leq C  |x|^{p-2} \le C|x|$ near the origin. Together with $|\bar{z}(t)| \leq C  |z(t)|$, 
$
|\nabla f(z(t)-\bar{z}(t))-\nabla f(z(t)) |  \leq C   Q(t).
$
Hence \eqref{equ:MQ} holds. {Recall that in \eqref{equ:N} $N_1(u)=a_1\cdot D {\dot u}+a_2\cdot {\dot u}+a_3\cdot\mathcal{M}_\Sigma (u)$ with $a_i$ depending smoothly on $(\omega,u,Du,D^2u)$ and $a_i\equiv 0$ if $u\equiv 0$. From Corollary~\ref{cor:sobolev},  $|a_i|  \leq C    |z(t)|$.} Hence 
\begin{align*}
|a_1\cdot D {\dot u}+a_2\cdot {\dot u}|  \leq C  |z(t)|(|D{\dot u}|+|{\dot u}|)  \leq C  |z(t)|\|{\dot u}\|_{C^1}(t) \leq C  Q(t).
\end{align*}
Together with \eqref{equ:MQ}, \eqref{equ:NQ} follows.
\end{proof}
\end{lemma}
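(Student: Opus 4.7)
My plan is to derive both bounds by feeding the general perturbation estimate from Lemma~\ref{lem:MSigma} into the $C^s$-regularity control $\|u\|_{C^s}(t)\le C|z(t)|$ from Corollary~\ref{cor:sobolev}, and then carefully identifying the error with the three summands of $Q(t)$.

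For \eqref{equ:MQ}, the first observation I would make explicit is that the $\ker\mathcal{L}_\Sigma$-coefficient of $u^T$ is \emph{not} $z_j(t)$ but rather $z_j(t)-\bar z_j(t)$: by \eqref{equ:Ups} both $\Upsilon_j$ and $\overline{\Upsilon}_j$ project to the span of $\varphi_{\iota+j}$ after taking the first component of $q(u)$. Applying Lemma~\ref{lem:MSigma} with $x=z(t)-\bar z(t)$ (valid once $|z-\bar z|\le 2^{-1}\rho$, which holds for large $t$) gives
\[
\bigl|\mathcal{M}_\Sigma(u)+\nabla f(z-\bar z)-\mathcal{L}_\Sigma \tilde u^\perp\bigr|\le C\,\|u\|_{C^{2,\alpha}}\,\|\tilde u^\perp\|_{C^{2,\alpha}}\le C\,\|u\|_{C^3}(t)\,\|\tilde u^\perp\|_{C^3}(t).
\]
Using $\|u\|_{C^3}(t)\le C|z(t)|$ from Corollary~\ref{cor:sobolev}, the right-hand side is bounded by $C|z(t)|\|\tilde u^\perp\|_{C^3}(t)\le CQ(t)$.

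To trade $\nabla f(z-\bar z)$ for $\nabla f(z)$, I would use the fact that the reduced functional $f$ vanishes at the origin to order $p\ge 3$ (see \eqref{def:fp}), so $|\nabla^2 f(x)|\le C|x|^{p-2}\le C$ for small $x$, and a mean-value estimate gives $|\nabla f(z)-\nabla f(z-\bar z)|\le C|\bar z|$. Since we also have $|\bar z|\le C|z|$ (by Corollary~\ref{cor:sobolev} applied to the $\overline{\Upsilon}_j$-coefficients), and using the stronger bound $|\nabla^2 f|\le C|x|^{p-2}\le C|x|$ near zero, we actually get $|\nabla f(z)-\nabla f(z-\bar z)|\le C|z|^{p-2}|\bar z|\le C|z||\bar z|\le CQ(t)$. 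Combining these bounds yields \eqref{equ:MQ}.

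For \eqref{equ:NQ}, I would use the structure \eqref{equ:N}: $N_1(u)=a_1\cdot D\dot u+a_2\cdot\dot u+a_3\cdot\mathcal{M}_\Sigma(u)$ where each $a_i(\omega,0,0,\ldots)=0$, so a Lipschitz bound gives $|a_i|\le C\|u\|_{C^2}(t)\le C|z(t)|$ via Corollary~\ref{cor:sobolev}. The first two terms yield
\[
|a_1\cdot D\dot u+a_2\cdot\dot u|\le C|z(t)|\bigl(|D\dot u|+|\dot u|\bigr)\le C|z(t)|\,\|\dot u\|_{C^1}(t)\le C|z(t)|\,\|\dot u\|_{C^2}(t)\le CQ(t),
\]
matching the second summand of $Q$. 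For the remaining term $a_3\cdot\mathcal{M}_\Sigma(u)$, I would substitute the already-proven \eqref{equ:MQ} to estimate $|\mathcal{M}_\Sigma(u)|\le |\nabla f(z)|+|\mathcal{L}_\Sigma\tilde u^\perp|+CQ(t)\le C(|z|^{p-1}+\|\tilde u^\perp\|_{C^2}+Q(t))$, multiply by $|a_3|\le C|z|$, and absorb the small factor $|z|$ using $p\ge 3$ and $|z|\to 0$.

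The main thing to watch is the bookkeeping in the second bound: I need to make sure every error term genuinely reduces to one of $|z||\bar z|$, $|z|\|\dot u\|_{C^2}$, or $|z|\|\tilde u^\perp\|_{C^3}$, rather than merely being $O(|z|)$ — otherwise $Q(t)$ would lose its quadratic-smallness flavor and become useless in the subsequent Łojasiewicz-type arguments. The key quantitative input that saves the day is the order-of-integrability bound $p\ge 3$, which provides the extra factor of $|z|$ needed to convert $|\nabla f(z)-\nabla f(z-\bar z)|$ into a genuine $|z||\bar z|$ term rather than an $O(|\bar z|)$ one.
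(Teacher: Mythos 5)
Your argument is the paper's proof almost verbatim: the same identification of the $\ker\mathcal{L}_\Sigma$-coefficients of $u^T$ as $z_j-\bar z_j$, the same application of Lemma~\ref{lem:MSigma} combined with $\|u\|_{C^3}(t)\le C|z(t)|$ from Corollary~\ref{cor:sobolev}, the same Hessian bound $|\nabla^2 f|\le C|x|^{p-2}\le C|x|$ (using $p\ge 3$) to replace $\nabla f(z-\bar z)$ by $\nabla f(z)$, and the same structural bound $|a_i|\le C|z(t)|$ for the $N_1$ terms. The one step where you are no more precise than the paper is the term $a_3\cdot\mathcal{M}_\Sigma(u)$: after substituting \eqref{equ:MQ} you are left with $|z(t)|\,|\nabla f(z(t))|$, and ``absorbing the small factor $|z|$'' does not by itself give $|z|^{p}\le CQ(t)$, since $Q(t)$ has no pure power of $|z|$ in it; the clean way to finish is to observe $|z'(t)|\le C\|\dot u\|_{C^2}(t)$ and to project the equation onto $\varphi_{\iota+j}$ to get $|\nabla f(z(t))|\le C\bigl(\|\dot u\|_{C^2}(t)+Q(t)+|z(t)||\nabla f(z(t))|\bigr)$, after which the last term can be absorbed for large $t$ and $|z(t)||\nabla f(z(t))|\le CQ(t)$ follows.
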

 
\begin{lemma}\label{lem:WbarW} There exists a constant $C<\infty $ such that, for $t\ge t_0(u)$,
\begin{align}\label{equ:zprimeQ}
&|z'(t)+m^{-1}\nabla f(z(t))| \leq C  Q(t), \text{ and }
\\\label{equ:zbarQ}
&|\bar{z}'(t)-m\bar{z}(t)+m^{-1}\nabla f(z(t))|  \leq C  Q(t).
\end{align}
\begin{proof}
In view of \eqref{equ:odegroup3}, it suffices to show
\begin{align*}
\big|\mathcal{W}_{j}(t)+ m^{-1} \frac{\partial f}{\partial x^j}(z(t)) \big| \leq C  Q(t)\ \textup{and}\ \big|\overline{\mathcal{W}}_j(t)+m^{-1}\frac{\partial f}{\partial x^j}(z(t)) \big| \leq C Q(t).
\end{align*}
From \eqref{def:error} and $\mathcal{E}(u)= (0,E_1(u))$, we actually have $\mathcal{W}_{j}(t)=\overline{\mathcal{W}}_j(t)$. Recall
\begin{align*}
\mathcal{W}_j=G((0,E_1(u)) ;  ( \varphi_{\iota+j},-2^{-1}m\varphi_{\iota+j}) )=-m^{-1} \int_\Sigma E_1(u)\varphi_{\iota+j}\, d\mu,
\end{align*}
where ${E}_1(u) =N_1(u)-\mathcal{M}_{\Sigma} (u) +\mathcal{L}_{\Sigma}u  $. Because $\varphi_{\iota+j}\in\ker \mathcal{L}_{\Sigma}$,\begin{align*}
\mathcal{W}_j=-m^{-1} \int_\Sigma (N_1(u)-\mathcal{M}_\Sigma(u))\varphi_{\iota+j}\, d\mu,
\end{align*}
Then the assertion follows from \eqref{equ:MQ} and \eqref{equ:NQ}.
\end{proof}
\end{lemma}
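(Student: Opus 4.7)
The plan is to reduce both estimates to the same bound on the source terms $\mathcal{W}_j$ and $\overline{\mathcal{W}}_j$ via the ODE system \eqref{equ:odegroup3}, and then compute those inner products by hand using the explicit forms of $\Upsilon_j$ and $\overline{\Upsilon}_j$. Since \eqref{equ:odegroup3} gives $z_j' = \mathcal{W}_j$ and $\bar z_j' - m \bar z_j = \overline{\mathcal{W}}_j$, it suffices to prove
\[
\bigl|\mathcal{W}_j(t) + m^{-1}\partial_j f(z(t))\bigr|\le C\, Q(t), \qquad \bigl|\overline{\mathcal{W}}_j(t) + m^{-1}\partial_j f(z(t))\bigr|\le C\, Q(t).
\]

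My next step is to simplify the two inner products. For $\iota+j \in I_3$ we have $\lambda_{\iota+j}=0$, so the roots are $\{0,m\}$ and \eqref{def:psi+-} shows that the second components of both $\psi_{\iota+j}^+$ and $\psi_{\iota+j}^-$ are $-\tfrac{m}{2}\varphi_{\iota+j}$, regardless of the sign of $m$ and of which root is labeled $\Upsilon_j$ versus $\overline\Upsilon_j$. Because $\mathcal{E}(t)=(0,E_1(u))$ has vanishing first component, the definition of $G$ forces the $\mathcal{L}_\Sigma v_1$ piece and the $I_1,I_2$ cross-sums to drop out, leaving
\[
\mathcal{W}_j(t)=\overline{\mathcal{W}}_j(t)=-m^{-1}\int_{\Sigma} E_1(u)\,\varphi_{\iota+j}\,d\mu .
\]
Thus the two estimates coincide and it is enough to control this one integral.

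Then I would substitute $E_1(u)=N_1(u)-\mathcal{M}_\Sigma u+\mathcal{L}_\Sigma u$ and treat the three pieces separately. The $\mathcal{L}_\Sigma u$ term drops out after integration against $\varphi_{\iota+j}\in\ker\mathcal{L}_\Sigma$ by self-adjointness. The estimate \eqref{equ:NQ} combined with Cauchy--Schwarz (using $\|\varphi_{\iota+j}\|_{L^2}=1$) bounds $|\int N_1(u)\,\varphi_{\iota+j}\,d\mu|$ by $C\,Q(t)$. For the remaining piece, \eqref{equ:MQ} expresses $\mathcal{M}_\Sigma u = -\nabla f(z(t)) + \mathcal{L}_\Sigma \tilde u^\perp + O(Q(t))$ pointwise: integrating against $\varphi_{\iota+j}$, the $\mathcal{L}_\Sigma \tilde u^\perp$ contribution vanishes by self-adjointness, and under the identification \eqref{equ:identification} the integral of $\nabla f(z(t))$ against $\varphi_{\iota+j}$ produces exactly $\partial_j f(z(t))$. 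Assembling these three bounds yields $-m^{-1}\int E_1(u)\varphi_{\iota+j}\,d\mu = -m^{-1}\partial_j f(z(t)) + O(Q(t))$, which is the desired estimate for both $\mathcal{W}_j$ and $\overline{\mathcal{W}}_j$.

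The proof is essentially a bookkeeping exercise once Lemma~\ref{lem:5.5} is in hand; the only mild subtlety is checking that the second components of $\Upsilon_j$ and $\overline{\Upsilon}_j$ agree so that the two $G$-projections give the same scalar, which is what allows the single estimate on $\int E_1(u)\varphi_{\iota+j}\,d\mu$ to produce both inequalities in the statement simultaneously.
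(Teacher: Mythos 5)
Your proposal is correct and follows essentially the same route as the paper's own proof: reduce via \eqref{equ:odegroup3} to bounding $\mathcal{W}_j=\overline{\mathcal{W}}_j=-m^{-1}\int_\Sigma E_1(u)\varphi_{\iota+j}\,d\mu$ (the equality holding because the second components of $\Upsilon_j$ and $\overline{\Upsilon}_j$ coincide and $\mathcal{E}$ has vanishing first component), drop the $\mathcal{L}_\Sigma u$ and $\mathcal{L}_\Sigma\tilde u^\perp$ contributions by self-adjointness and $\varphi_{\iota+j}\in\ker\mathcal{L}_\Sigma$, and conclude with \eqref{equ:MQ} and \eqref{equ:NQ}. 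Your write-up merely spells out the orthogonality bookkeeping the paper leaves implicit.
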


Let us define for $\lambda\ge2$, the function $\eta_\lambda(t)$ by \[\eta _\lambda(t) := |\nabla f(z(t))|+|z(t)|^\lambda .\]
\begin{lemma} \label{lemma-etalambda} For $\lambda \ge 2$, suppose $Q(t)\le M\eta_\lambda(t) $ for some $M<\infty$ for large time $t\ge t'$. Then $\eta(t):= \eta_\lambda(t)$ satisfies the condition of Lemma~\ref{lemma:integrate}.   
\begin{proof}
It is clear that $\eta_\lambda(t) \to 0$ as $t\to \infty$. Next, $\frac{d}{dt}|\nabla f(z(t))|^2 = 2 \nabla^2f\,  [{\nabla f}, z'(t)] $ implies $|\frac{d}{dt}|\nabla f(z(t))|| \le |\nabla^2 f ||z'(t)| \le C |z(t)|^{p-2} |z'(t)|$. Since $|z'(t)| \le C(|\nabla f(z(t))|+ Q(t)) \le C \eta_\lambda(t)$ and $p\ge 3$, 
\bea \big |  \frac{d}{dt } \eta_{\lambda}(t) \big| & \le C (|z(t)|^{p-2} +|z(t)|^{\lambda-1})|z'(t)| \le C |z(t)| \eta_\lambda(t).  \eea 
As $|z(t)|\to 0$, this shows $e^{\varepsilon t} \eta_\lambda(t)$ and $e^ { -\varepsilon t}\eta_\lambda(t)$ are eventually increasing and decreasing, respectively.  

\end{proof}

\end{lemma}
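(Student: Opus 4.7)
The plan is to verify the two hypotheses of Lemma~\ref{lemma:integrate} for $\eta(t) := \eta_\lambda(t) = |\nabla f(z(t))| + |z(t)|^\lambda$. Hypothesis (1), that $\eta_\lambda(t) \to 0$, is immediate: $|z(t)| \to 0$ by Corollary~\ref{cor:sobolev} combined with the standing assumption $\|u\|_{C^1}(t) = o(1)$, and since $f - f(0)$ vanishes to order $p \geq 3$ at the origin, we have $|\nabla f(x)| \leq C|x|^{p-1}$ near zero, so $|\nabla f(z(t))| \to 0$ as well.

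The real content is hypothesis (2). Writing the monotonicity conditions in differentiated form, it suffices to prove $|\eta_\lambda'(t)| \leq \varepsilon \eta_\lambda(t)$ for any preassigned $\varepsilon > 0$ and all sufficiently large $t$. My plan is to establish the stronger pointwise estimate $|\eta_\lambda'(t)| \leq C |z(t)| \eta_\lambda(t)$; since $|z(t)| \to 0$, hypothesis (2) then follows. To produce this estimate I would differentiate each summand of $\eta_\lambda$ separately. The chain rule gives $\bigl|\tfrac{d}{dt}|\nabla f(z(t))|\bigr| \leq |\nabla^2 f(z(t))|\, |z'(t)| \leq C|z(t)|^{p-2}\, |z'(t)|$ (using degree-$p$ vanishing of $f$ at $0$), and $\bigl|\tfrac{d}{dt}|z(t)|^\lambda\bigr| \leq \lambda |z(t)|^{\lambda-1}\, |z'(t)|$. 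The bound on $|z'(t)|$ is supplied by Lemma~\ref{lem:WbarW}, which combined with the hypothesis $Q(t) \leq M \eta_\lambda(t)$ yields $|z'(t)| \leq C|\nabla f(z(t))| + C\, Q(t) \leq C(1 + M)\, \eta_\lambda(t)$. Putting these together and using $p \geq 3$ and $\lambda \geq 2$, so that $\min(p-2,\lambda-1) \geq 1$, gives $|\eta_\lambda'(t)| \leq C\bigl(|z(t)|^{p-2} + |z(t)|^{\lambda-1}\bigr)|z'(t)| \leq C|z(t)|\, \eta_\lambda(t)$, as required.

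I do not expect a serious obstacle: the argument is essentially a chain-rule computation, with the only mild subtlety being that $|\nabla f(z(t))|$ may fail to be smooth at zeros of $\nabla f$. Since we only need an upper bound on the time derivative, this causes no real issue---one may either restrict to where $|\nabla f|>0$ or invoke the $1$-Lipschitz property of $|\cdot|$. The bulk of the work has already been done upstream: Lemma~\ref{lem:WbarW} supplies the crucial $|z'|$ estimate, and the structural reason the argument closes is precisely that the derivative of each component of $\eta_\lambda$ carries an extra factor $|z(t)|^{\min(p-2,\lambda-1)}$, which tends to zero and therefore lets $\eta_\lambda$ change arbitrarily slowly on an exponential scale at late times.
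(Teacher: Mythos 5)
Your proposal is correct and follows essentially the same route as the paper: verify $\eta_\lambda\to 0$, then establish the pointwise bound $|\eta_\lambda'(t)|\leq C(|z(t)|^{p-2}+|z(t)|^{\lambda-1})|z'(t)|\leq C|z(t)|\,\eta_\lambda(t)$ using the chain rule, the degree-$p$ vanishing of $f$, and the $|z'|$ estimate from Lemma~\ref{lem:WbarW} together with $Q\leq M\eta_\lambda$. Your remark about the non-smoothness of $|\nabla f(z(t))|$ at its zeros is a fair point of care that the paper glosses over, but it is handled exactly as you say.
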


\begin{corollary}\label{cor:source1} For $\lambda \ge2$,
suppose $ \left| Q(t)\right|\leq M \eta_\lambda(t) $ for large time $t\ge t'$ for some $M<\infty$. Then there exists $C=C(M)<\infty $ such that $$ \left|{z}'(t)\right|  \leq C \eta_\lambda(t)\ \textup{and}\ \left| \bar{z}(t)\right|  \leq C\eta_\lambda(t), $$ for further large time $t\ge t''$. 
\begin{proof}
The bound for $z'(t)$ follows from \eqref{equ:zprimeQ}. In view of Lemma \ref{lemma-etalambda}, the bound for $\bar{z}(t)$ follows by {applying Lemma~\ref{lemma:integrate}} to \eqref{equ:zbarQ} with choices $\eta=\eta_\lambda$, $y_i=\bar z_i$ for $1\le i\le J$ and $y_j=0$ for $j>J$. 
\end{proof}
\end{corollary}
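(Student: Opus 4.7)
The plan is to reduce both estimates to direct applications of the two inequalities \eqref{equ:zprimeQ} and \eqref{equ:zbarQ} already established in Lemma~\ref{lem:WbarW}, combined with the hypothesis $Q(t) \le M\eta_\lambda(t)$.

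For the bound on $|z'(t)|$, the argument is immediate. From \eqref{equ:zprimeQ},
\[
|z'(t)| \le |m^{-1}\nabla f(z(t))| + C Q(t).
\]
Since $|\nabla f(z(t))| \le \eta_\lambda(t)$ by the very definition of $\eta_\lambda$, and $Q(t) \le M\eta_\lambda(t)$ by assumption, one concludes $|z'(t)| \le C(M)\eta_\lambda(t)$. No further work is needed for this half.

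For the bound on $|\bar{z}(t)|$, I would view \eqref{equ:zbarQ} componentwise: for each $1 \le j \le J$, the coefficient $\bar z_j(t)$ satisfies a scalar linear inhomogeneous ODE
\[
\bar z_j'(t) - m\, \bar z_j(t) = R_j(t), \qquad |R_j(t)| \le C\eta_\lambda(t),
\]
with $|R_j(t)| \le C M\eta_\lambda(t) + |m^{-1}\partial_j f(z(t))| \le C(M)\eta_\lambda(t)$. To extract the pointwise bound $|\bar z(t)| \le C\eta_\lambda(t)$ I would feed this finite system into Lemma~\ref{lemma:integrate}, extending trivially by $y_j \equiv 0$ for $j > J$ so that the hypotheses are formally met. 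Lemma~\ref{lemma-etalambda} is exactly what verifies the two structural requirements on the source function $\eta = \eta_\lambda$ (decay to zero, and eventual monotonicity of $e^{\pm\varepsilon t}\eta_\lambda(t)$), so there is no additional work there.

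The only genuine case split is the sign of the nonzero constant $m$. If $m<0$, Lemma~\ref{lemma:integrate}(1) applies directly with $b=-m$ since $\sup m_j = m < 0$ and $\sum|\bar z_j(t')|^2 < \infty$ (finite sum). If $m>0$, we must instead use Lemma~\ref{lemma:integrate}(2) with $b=m$, which requires checking $\lim_{t\to\infty}e^{-mt}\bar z_j(t) = 0$; this is automatic because $\bar z_j(t)\to 0$ (as $u(t)\to 0$ in $C^s$, by Corollary~\ref{cor:sobolev}) while $e^{-mt}\to 0$. In either case, Lemma~\ref{lemma:integrate} yields $\sum_j |\bar z_j(t)|^2 \le C\eta_\lambda(t)^2$ for all sufficiently large $t$, completing the proof. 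There is no real obstacle here; the content of the corollary is essentially packaged in Lemmas~\ref{lem:WbarW}, \ref{lemma:integrate}, and \ref{lemma-etalambda}, and the proof is a two-line assembly of these pieces.
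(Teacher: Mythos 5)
Your proof is correct and follows essentially the same route as the paper: the $z'$ bound is read off from \eqref{equ:zprimeQ} together with the hypothesis on $Q$, and the $\bar z$ bound comes from applying Lemma~\ref{lemma:integrate} to \eqref{equ:zbarQ} with $\eta=\eta_\lambda$, $y_j=\bar z_j$ for $j\le J$ and $y_j=0$ otherwise, with Lemma~\ref{lemma-etalambda} supplying the hypotheses on $\eta_\lambda$. Your explicit case split on the sign of $m$ (part (1) versus part (2) of Lemma~\ref{lemma:integrate}) is a correct spelling-out of what the paper leaves implicit.
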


We vectorize $\tilde{u}^\perp$ and perform the projection. Set $\tilde{q}:=q(\tilde{u}^\perp)$ 
and
\begin{equation*} 
\begin{split}
&\tilde\xi_{i,1}(t):= G(\tilde q(t),\psi_{i,1}),\ \tilde\xi_{i,2}(t):=G(\tilde q(t),\psi_{i,2})\ \textup{for}\ i\in I_1,\\ 
 &\tilde\xi_{i}(t):= G(\tilde q(t),\Psi_{i})\ \textup{for}\ i\in \mathbb{Z}\setminus\{0\},
\end{split}
\end{equation*}
Note that because $\tilde{u}^\perp$ is orthogonal to $\ker \mathcal{L}_{\Sigma}$, those coefficients completely characterize $\tilde{q}$. The projections of higher order derivatives of $\tilde q$, namely $\tilde q ^{(k,\ell)}={\partial_t^k}\mathbf{L}^\ell \tilde q$, are defined similarly.  In the lemma below, we show that $\{\tilde{\xi}_{i,1},\tilde{\xi}_{i,2}\}_{i\in I_1}$, $\{\tilde{\xi}_i\}_{i\in\mathbb{Z}\setminus\{0\}}$ and the higher order coefficients are bounded by $|z(t)|$.
\begin{lemma}[control on higher derivatives of $\tilde q$] \label{lemma-iterate1}
For any $k,\ell\in\mathbb{N}_0$, there exists a positive constant $C=C(u,m,\ell+k)$ such that
 \begin{align*}
 \sum_{i\in I_1}\left(|\tilde{\xi}^{(k,\ell)}_{i,1}(t)|^2+|\tilde{\xi}^{(k,\ell)}_{i,2}(t)|^2 \right)+ \sum_{i\neq 0}|\tilde{\xi}^{(k,\ell)}_i(t)|^2  \leq C |z(t)|^2.
 \end{align*}

\begin{proof} 
Let $s=\ell+k$.  
\begin{align*}
\sum_{i\in I_1}\left(|\tilde{\xi}^{(k,\ell)}_{i,1}(t)|^2+|\tilde{\xi}^{(k,\ell)}_{i,2}(t)|^2 \right)+ \sum_{i\neq 0}|\tilde{\xi}^{(k,\ell)}_i(t)|^2=\|\tilde{q}^{(k,\ell)}\|^2_G(t)   \leq C  \|\tilde{u}^\perp\|^2_{C^{s+1}}(t).
\end{align*} 
The assertion then follows from Lemma~\ref{lem:utup} (boundedness of decomposition) and Corollary~\ref{cor:sobolev} (control on higher derivatives). 
\end{proof}
\end{lemma}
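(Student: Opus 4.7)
The plan is to recognize that the left-hand side equals $\|\tilde q^{(k,\ell)}(t)\|_G^2$, then convert $G$-norms into Sobolev norms of $\tilde u^\perp$ via Lemma~\ref{lem:Gequ}, and finally dominate those by $|z(t)|$ through Lemma~\ref{lem:utup} followed by Corollary~\ref{cor:sobolev}.

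The key setup step is to verify that $\tilde q^{(k,\ell)}(t)$ has no component along $\Upsilon_j$ or $\overline{\Upsilon}_j$. Since $\tilde u^\perp(t) \in (\ker \mathcal{L}_\Sigma)^\perp$ for every $t$, each time derivative $\partial_t^{k'}\tilde u^\perp(t)$ remains in $(\ker \mathcal{L}_\Sigma)^\perp$, so both components of $\partial_t^{k'}\tilde q(t)$ are $L^2$-orthogonal to $\varphi_{\iota+j}$; since the $I_1$- and $I_2$-correction terms in $G$ only see directions in $\textup{span}\{\varphi_i\}_{i\in I_1\cup I_2}$, this gives $G(\partial_t^{k'}\tilde q(t),\Upsilon_j)=G(\partial_t^{k'}\tilde q(t),\overline{\Upsilon}_j)=0$. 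Combined with \eqref{equ:Ups} and the adjointness $G(\mathbf{L}\,\cdot,\,\cdot)=G(\cdot,\mathbf{L}^\dagger\,\cdot)$, this orthogonality persists after applying $\mathbf{L}^\ell$. Completeness of $\mathcal{B}$ from Lemma~\ref{lem:G} then yields
\begin{align*}
\sum_{i \in I_1}\left(|\tilde{\xi}^{(k,\ell)}_{i,1}(t)|^2 + |\tilde{\xi}^{(k,\ell)}_{i,2}(t)|^2\right) + \sum_{i \neq 0} |\tilde{\xi}^{(k,\ell)}_i(t)|^2 = \|\tilde q^{(k,\ell)}(t)\|_G^2.
\end{align*}

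Next I would iterate Lemma~\ref{lem:Gequ} applied to $\mathbf{L}^\ell \tilde q$ and commute with $\partial_t^k$, exactly as in the proof of Corollary~\ref{cor:qE}, to get
\begin{align*}
\|\tilde q^{(k,\ell)}(t)\|_G \leq C \sum_{k'+\ell' \leq s+1} \|\partial_t^{k'}\slashed{\nabla}^{\ell'}\tilde u^\perp\|_{L^2}(t) \leq C\|\tilde u^\perp\|_{C^{s+1}}(t),
\end{align*}
where $s=k+\ell$ and the last bound is Sobolev embedding on the compact base $\Sigma$. I would then apply Lemma~\ref{lem:utup} (its smallness/boundedness hypothesis being available thanks to $\|u\|_{C^{s+1}}(t)=o(1)$ from elliptic regularity) to replace $\|\tilde u^\perp\|_{C^{s+1}}(t)$ by $C\|u\|_{C^{s+1}}(t)$, and finally Corollary~\ref{cor:sobolev} to replace this by $C|z(t)|$.

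There is no serious obstacle; the whole argument is essentially packaging. The only point requiring care is the first step, where one must check that repeated application of $\partial_t$ and $\mathbf{L}$ to $\tilde q$ cannot generate $\Upsilon_j$- or $\overline{\Upsilon}_j$-components, so that the Parseval identity above holds without a residual kernel-direction term. This follows cleanly from the action of $\mathbf{L}^\dagger$ on the kernel directions recorded in \eqref{equ:Ups} together with the time-invariance of $(\ker\mathcal{L}_\Sigma)^\perp$ for $\tilde u^\perp$.
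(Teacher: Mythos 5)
Your proposal is correct and follows essentially the same route as the paper: identify the left-hand side with $\|\tilde q^{(k,\ell)}\|_G^2(t)$, bound it by $C\|\tilde u^\perp\|^2_{C^{s+1}}(t)$ via Lemma~\ref{lem:Gequ}, and conclude with Lemma~\ref{lem:utup} and Corollary~\ref{cor:sobolev}. Your extra care in checking that the $\Upsilon_j$- and $\overline{\Upsilon}_j$-components of $\tilde q^{(k,\ell)}$ vanish (so the Parseval identity holds with no kernel-direction remainder) is a detail the paper leaves implicit, and it is verified correctly.
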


Let us define
\begin{align}\label{def:tildeE}
&\tilde{E}(u):= \left(\tilde u^\perp\right)''- m\left(\tilde u^\perp\right)' +  \mathcal{L}_{\Sigma} \tilde u^\perp,\text{ and } \tilde{\mathcal{E}}:=(0,\tilde{E}(u)).
\end{align}  
Set $\{\tilde{\mathcal{E}}_{i,1},\tilde{\mathcal{E}}_{i,2}\}_{i\in I_1}$ and $\{\tilde{\mathcal{E}}_i\}_{i\in\mathbb{Z}\setminus\{0\}}$ be the coefficients of $\tilde{\mathcal{E}}$. Namely, 
\begin{equation}\label{def:tildeE_coef} 
\begin{split}
&\tilde{ \mathcal{E}}_{i,1}(t):= G( \tilde{ \mathcal{E}}(t),\psi_{i,1}),\ \tilde{ \mathcal{E}}_{i,2}(t):= G(\tilde{ \mathcal{E}}(t),\psi_{i,2})\ \textup{for}\ i\in I_1,\\
&\tilde{ \mathcal{E}}_{i}(t):= G(\tilde{ \mathcal{E}}(t),\Psi_{i})\ \textup{for}\ i\in \mathbb{Z}\setminus\{0\}.
\end{split}
\end{equation}
Then, for $i\in I_1$,
\begin{equation}\label{equ:odegroup1t}
\frac{d}{dt} \tilde{\xi}_{i,1}-2^{-1}m \tilde\xi_{i,1}+\beta_i\tilde\xi_{i,2}=\tilde{\mathcal{E}}_{i,1}, \quad \frac{d}{dt} \tilde\xi_{i,2}-2^{-1}m \tilde\xi_{i,2}-\beta_i\tilde\xi_{i,1}=\tilde{\mathcal{E}}_{i,2}, 
\end{equation}
and for $i\in\mathbb{Z}\setminus\{0\}$,
\begin{equation}\label{equ:odegroup2t}
\frac{d}{dt} \tilde\xi_{i}-\Gamma_i \tilde\xi_{i } =\tilde{\mathcal{E}}_{i }. 
\end{equation}
\begin{lemma}\label{lem:tildeE}
 There exists a positive constant {$C$} such that
$
\|\tilde{E}(u)\|_{L^2}(t)  \leq C  Q(t) $,
{for large time $t\ge t_0=t_0(u)$.}\end{lemma}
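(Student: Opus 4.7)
The strategy is to expand $\tilde E(u)$ through the Lyapunov--Schmidt decomposition $u = u^T + H(u^T) + \tilde u^\perp$, substitute the original equation \eqref{equ:main}, and exhibit the cancellation of the leading-order $\nabla f(z)$ contributions that arise both from $\mathcal{L}_\Sigma \tilde u^\perp - \mathcal{M}_\Sigma u$ and from $-\ddot u^T + m\dot u^T$. Setting $\phi(v) := v + H(v)$, the chain rule gives $(\tilde u^\perp)' = \dot u - D\phi(u^T)[\dot u^T]$ and $(\tilde u^\perp)'' = \ddot u - D\phi(u^T)[\ddot u^T] - D^2\phi(u^T)[\dot u^T,\dot u^T]$. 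Combined with $\ddot u - m\dot u = N_1(u) - \mathcal{M}_\Sigma u$, this produces the identity
\begin{align*}
\tilde E(u) = \bigl[N_1(u) + \mathcal{L}_\Sigma \tilde u^\perp - \mathcal{M}_\Sigma u\bigr] + D\phi(u^T)[-\ddot u^T + m\dot u^T] - D^2\phi(u^T)[\dot u^T, \dot u^T].
\end{align*}

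Next I handle the first bracket using Lemma~\ref{lem:5.5}: the pointwise bounds \eqref{equ:MQ} and \eqref{equ:NQ} show it equals $\nabla f(z) + R_0$ with $\|R_0\|_{L^2} \leq CQ$. To compute $-\ddot u^T + m\dot u^T$, I first observe $\mathcal{W}_j = \overline{\mathcal{W}}_j$, which is immediate from \eqref{def:psi+-} and \eqref{def:coef} since the second components of $\Upsilon_j$ and $\overline\Upsilon_j$ coincide. Since $u^T = \sum_j (z_j - \bar z_j)\varphi_{\iota+j}$, differentiating the ODE system \eqref{equ:odegroup3} yields $z_j'' - \bar z_j'' = -m^2\bar z_j - m\overline{\mathcal{W}}_j$ while $m(z_j' - \bar z_j') = -m^2\bar z_j$, so that
\begin{align*}
-\ddot u^T + m\dot u^T = \sum_{j=1}^J m\,\overline{\mathcal{W}}_j\, \varphi_{\iota+j}.
\end{align*}
Pairing \eqref{equ:MQ} against $\varphi_{\iota+j}$ and using $\langle \mathcal{L}_\Sigma \tilde u^\perp, \varphi_{\iota+j}\rangle = 0$ together with \eqref{equ:NQ} gives $|m\overline{\mathcal{W}}_j + (\partial_j f)(z)| \leq CQ$, hence $-\ddot u^T + m\dot u^T = -\nabla f(z) + R_1$ with $\|R_1\|_{L^2} \leq CQ$. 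This is morally the identity underlying Lemma~\ref{lem:WbarW}, applied here at the level of $\ddot u^T$.

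Substituting into the master identity and using $D\phi = \mathrm{Id} + DH$, the $\nabla f(z)$ contributions cancel, producing
\begin{align*}
\tilde E(u) = -DH(u^T)[\nabla f(z)] + D\phi(u^T)[R_1] - D^2\phi(u^T)[\dot u^T, \dot u^T] + R_0.
\end{align*}
Analyticity of $H$ with $H(0)=DH(0)=0$ bounds $D\phi(u^T)$ and $D^2\phi(u^T)$ in operator norm for $u^T$ small, so $\|D\phi(u^T)[R_1]\|_{L^2} \leq C\|R_1\|_{L^2} \leq CQ$, and $\|D^2\phi(u^T)[\dot u^T,\dot u^T]\|_{L^2} \leq C|\dot u^T|^2 \leq C\|\dot u\|_{C^2}^2 \leq C|z|\|\dot u\|_{C^2} \leq CQ$ via Corollary~\ref{cor:sobolev}.

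The main obstacle is to control $\|DH(u^T)[\nabla f(z)]\|_{L^2}$, for which the naive bound $\leq C|u^T||\nabla f(z)| \leq C|z|^p$ is not immediately absorbed into $Q$. I plan to split $\nabla f(z) = \nabla f(u^T) + \bigl(\nabla f(z) - \nabla f(u^T)\bigr)$: since $u^T = z - \bar z$ in coefficients, $|\nabla f(z) - \nabla f(u^T)| \leq C|\bar z|\,|z|^{p-2} \leq C|\bar z|\,|z|$, so this correction contributes $\leq C|z|^2|\bar z| \leq C|z||\bar z| \leq CQ$ since $|z|$ is bounded. The remaining piece $DH(u^T)[\nabla f(u^T)]$ of size $O(|z|^p)$ is the delicate one; it can be absorbed into $CQ$ because the ODE from Lemma~\ref{lem:WbarW}, namely $\bar z_j' - m\bar z_j = -m^{-1}(\partial_j f)(z) + O(Q)$ with $m\neq 0$, forces $|\bar z|$ to be at least of order $|\nabla f(z)| \sim |z|^{p-1}$ in the non-exponentially decaying regime, whence $|z|^p \leq C|z||\bar z| \leq CQ$.
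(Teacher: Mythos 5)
Your master identity is correct and is in fact the unprojected version of the paper's computation: the paper writes $\tilde E(u)=\Pi^\perp\big[E_1(u)-(H(u^T))''+m(H(u^T))'-\mathcal{L}_\Sigma H(u^T)\big]$ and splits it into $\textsc{I}=-\Pi^\perp\mathcal{M}_\Sigma(u)+\mathcal{L}_\Sigma\tilde u^\perp+\Pi^\perp N_1(u)$ (controlled by Lemma~\ref{lem:5.5}) and $\textsc{II}=-(H(u^T))''+m(H(u^T))'$. Since $\tilde u^\perp\perp\ker\mathcal{L}_\Sigma$, the projection $\Pi^\perp$ annihilates both $\nabla f(z)$ and $-\ddot u^T+m\dot u^T$ at the outset, so the paper never has to estimate any kernel-valued term; the surviving $H$-terms are bounded by $C|z-\bar z|\,(|(z-\bar z)'|+|(z-\bar z)''|)+C|(z-\bar z)'|^2\le C|z|\,\|\dot u\|_{C^1}\le CQ$. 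Your first bracket, your formula $-\ddot u^T+m\dot u^T=\sum_j m\overline{\mathcal W}_j\varphi_{\iota+j}$, the estimate $|m\overline{\mathcal W}_j+\partial_jf(z)|\le CQ$, and the bounds on $D\phi(u^T)[R_1]$ and $D^2\phi(u^T)[\dot u^T,\dot u^T]$ are all fine and consistent with this.

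The genuine gap is your final step. You bound $\|DH(u^T)[\nabla f(u^T)]\|_{L^2}\le C|z|^p$ and then claim $|z|^{p-1}\le C|\bar z|$ because the ODE for $\bar z$ "forces" $|\bar z|\gtrsim|\nabla f(z)|\sim|z|^{p-1}$. Neither half of this is available. First, $|\nabla f(z)|\gtrsim|z|^{p-1}$ is false in general: when the secant converges to a critical point of $\hat f_p$ with zero critical value (the case $\ell_*>p$ explicitly permitted by Theorems~\ref{thm:general_e} and \ref{thm:general_p}), one has $|\nabla f(z)|=o(|z|^{p-1})$. Second, the lower bound $|\bar z|\gtrsim|\nabla f(z)|$ does not follow from $\bar z'-m\bar z=-m^{-1}\nabla f(z)+O(Q)$: at the stage where Lemma~\ref{lem:tildeE} is invoked one only knows $Q\le M\eta_\lambda$ with $M$ not small, so the $O(Q)$ forcing can be comparable to (and can cancel) the $\nabla f(z)$ forcing; a lower bound of this type would essentially presuppose the conclusion of the bootstrap that this lemma feeds. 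The repair inside your own framework is short: do not substitute $-\nabla f(z)$ inside $DH$. Estimate $\|DH(u^T)[-\ddot u^T+m\dot u^T]\|_{L^2}\le C|u^T|\,(|\dot u^T|+|\ddot u^T|)\le C|z|\,\|\dot u\|_{C^2}\le CQ$ directly (using Lemma~\ref{lem:utup} and Corollary~\ref{cor:sobolev}), and note that the identity component of $D\phi(u^T)[-\ddot u^T+m\dot u^T]$ equals $-\nabla f(z)+R_1$, which cancels the $\nabla f(z)$ from the first bracket. Equivalently, observe that $|\nabla f(z)|\le|-\ddot u^T+m\dot u^T|+CQ\le C\|\dot u\|_{C^1}+CQ$, so $|z|\,|\nabla f(z)|\le CQ$ holds for the right reason — through $\|\dot u\|$, not through a lower bound on $|\bar z|$.
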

\begin{proof} Observe $\tilde E(u)$ becomes 
\begin{align*}
 & \Pi ^\perp \bigg [  E_1(u)- \left( H(u^T)\right)''  +m\left( H(u^T)\right)'  -   \mathcal{L}_{\Sigma} H(u^T) \bigg] \\
&= \Pi ^\perp \bigg [ -\mathcal{M}_{\Sigma} (u) +\mathcal{L}_{\Sigma} u  +N_1(u) - \left( H(u^T)\right)''  +m\left( H(u^T)\right)'  -   \mathcal{L}_{\Sigma} H(u^T) \bigg].  
\end{align*}
We decompose $\tilde E(u)= \textsc{I}+\textsc{II}$, where  \begin{align*}
 \textsc{I}&= -\Pi^\perp \mathcal{M}_\Sigma(u) + \mathcal{L}_{\Sigma} \tilde{u} ^\perp+\Pi ^\perp N_1(u),\ \textsc{II} =  - \left( H(u^T)\right)''  +m\left( H(u^T)\right)' .
 \end{align*} 
From \eqref{equ:MQ}-\eqref{equ:NQ},  $\|\textsc{I}\|_{L^2}(t)    \leq C    Q(t).$ Also, note that 
\begin{align*}
\textsc{II}= -D^2H(z-\bar{z})[ (z-\bar{z})', (z-\bar{z})']-DH(z-\bar{z})[(z-\bar{z})'']+m DH(z-\bar{z})[(z-\bar{z})'].
\end{align*}	
Since $DH(0)=0$, the above is bounded by $$  C |(z-\bar{z})'|^2+ C | z-\bar{z}  ||(z-\bar{z})''|+ C | z-\bar{z}  ||(z-\bar{z})'|   \leq C  \Vert u^T \Vert _{C^2}(t) \Vert \partial _t u ^T \Vert_{C^1}(t).$$
Together with Corollary~\ref{cor:sobolev}, $\|\textsc{II}\|_{L^2}(t)   \leq C  |z(t)| \Vert  {\dot u}\Vert_{C^1 }(t)   \leq C  Q(t).$
\end{proof}

\begin{corollary}\label{cor:source}
Suppose $Q(t) \leq M \eta_\lambda(t) $ for some $M<\infty$ and $\lambda\ge 2$ for large time. Then there is {$C=C(M)<\infty$} such that, for further large time,
\begin{align*}
\sum_{i\in I_1}|\tilde{\xi}_{i,1}(t)|^2+|\tilde{\xi}_{i,2}(t)|^2+\sum_{i\in\mathbb{Z}\setminus\{0\}}|\tilde{\xi}_{i}(t)|^2  \leq C \eta^2_\lambda(t) .
\end{align*} 
\end{corollary}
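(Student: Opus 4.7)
The plan is to expand $\tilde{\mathcal E}$ in the $G$-orthonormal basis $\mathcal B$, bound the resulting coefficients via Lemma~\ref{lem:tildeE}, and then apply Lemma~\ref{lemma:integrate} separately to each of the three families appearing in \eqref{equ:odegroup1t}--\eqref{equ:odegroup2t}. Combining the equivalence $\|\tilde{\mathcal E}\|_G\asymp \|\tilde E(u)\|_{L^2}$ with Lemma~\ref{lem:tildeE} and the hypothesis $Q(t)\le M\eta_\lambda(t)$ yields
\[
\sum_{i\in I_1}\bigl(|\tilde{\mathcal E}_{i,1}(t)|^2+|\tilde{\mathcal E}_{i,2}(t)|^2\bigr)+\sum_{i\neq 0}|\tilde{\mathcal E}_i(t)|^2\;\le\; CM^{2}\eta_\lambda^{2}(t).
\]
Lemma~\ref{lemma-etalambda} shows that $\eta_\lambda$ satisfies the monotonicity and decay hypotheses of Lemma~\ref{lemma:integrate}, while Lemma~\ref{lemma-iterate1} supplies the finiteness $\sum|\tilde\xi_i(t')|^2+\sum_{I_1}(|\tilde\xi_{i,1}(t')|^2+|\tilde\xi_{i,2}(t')|^2)\le C|z(t')|^2<\infty$ at any fixed late time $t'$, together with $\tilde\xi_i(t),\tilde\xi_{i,j}(t)\to 0$ as $t\to \infty$.

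For the scalar equations \eqref{equ:odegroup2t}, the coefficients $\Gamma_i$ are uniformly positive for $i>0$ and uniformly negative for $i<0$, since the relevant part of the spectrum of $\mathbf L$ is bounded away from $0$ (the kernel is accounted for by $I_3$ and disjoint from $I_4$). Thus Lemma~\ref{lemma:integrate}(1) applied to $\{\tilde\xi_i\}_{i<0}$ and Lemma~\ref{lemma:integrate}(2) applied to $\{\tilde\xi_i\}_{i>0}$ each deliver $\sum|\tilde\xi_i(t)|^2\le C\eta_\lambda^2(t)$, the latter using $e^{-\Gamma_i t}\tilde\xi_i(t)\to 0$ to verify the hypothesis.

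The main subtlety is the $I_1$ block, for which \eqref{equ:odegroup1t} is not diagonal in the real basis $\{\psi_{i,1},\psi_{i,2}\}$: here $\mathbf L$ has complex eigenvalues $2^{-1}m\pm \mathbf{i}\beta_i$. I would diagonalize by passing to the complex variable $y_i:=\tilde\xi_{i,1}+\mathbf{i}\tilde\xi_{i,2}$, which satisfies the scalar equation
\[
y_i'-\bigl(2^{-1}m+\mathbf{i}\beta_i\bigr)y_i=\tilde{\mathcal E}_{i,1}+\mathbf{i}\,\tilde{\mathcal E}_{i,2}.
\]
The proof of Lemma~\ref{lemma:integrate} extends verbatim to complex $m_i$ because only $|e^{m_i(t-\tau)}|=e^{\mathrm{Re}(m_i)(t-\tau)}$ enters the estimate. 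Applying part~(1) when $m<0$ or part~(2) when $m>0$ to $\{y_i\}_{i\in I_1}$ gives $\sum_{i\in I_1}|y_i(t)|^2\le C\eta_\lambda^2(t)$. Combining the three bounds yields the corollary.
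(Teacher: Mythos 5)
Your proposal is correct and follows essentially the same route as the paper: bound $\|\tilde{\mathcal E}\|_G^2=2m^{-2}\|\tilde E(u)\|_{L^2}^2$ via Lemma~\ref{lem:tildeE}, then feed the resulting source bound into Lemma~\ref{lemma:integrate} for the diagonal modes \eqref{equ:odegroup2t}, using $\inf_{i\neq 0}|\Gamma_i|>0$. The only (cosmetic) divergence is the $I_1$ block: you complexify to $y_i=\tilde\xi_{i,1}+\mathbf{i}\tilde\xi_{i,2}$ and observe that the Duhamel estimate in Lemma~\ref{lemma:integrate} only sees $\mathrm{Re}(m_i)=2^{-1}m$, whereas the paper takes the modulus $\sqrt{|\tilde\xi_{i,1}|^2+|\tilde\xi_{i,2}|^2}$ (which is exactly $|y_i|$) and derives the scalar differential inequality directly; these are the same computation in two guises.
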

\begin{proof}
{From \eqref{def:tildeE}, \eqref{def:tildeE_coef}, Lemma~\ref{lem:tildeE} and the assumption, 
\begin{align*}
\sum_{i\in I_1}\big (|\tilde{\mathcal{E}}_{i,1} |^2+|\tilde{\mathcal{E}}_{i,2} |^2 \big )+\sum_{i\in\mathbb{Z}\setminus\{0\}}|\tilde{\mathcal{E}}_{i} |^2= \Vert \tilde{\mathcal{E}}\Vert^2_G  = \tfrac{2}{m^2}\|\tilde{E}(u)\|^2_{L^2}   \leq \tfrac{2}{m^2} C^2M^2  \eta_\lambda^2 ,
\end{align*}
where $C$ is the constant from Lemma~\ref{lem:tildeE}. Note that $\inf_{i\in\mathbb{Z}\setminus\{0\}} |\Gamma_i|=b>0$ for some $b=b(m,\mathcal{M}_\Sigma)$. Hence the bound for $\sum_{i\in\mathbb{Z}\setminus\{0\}}|\tilde{\xi}_{i}(t)|^2$ follows by applying Lemma~\ref{lemma:integrate} to \eqref{equ:odegroup2t}. From \eqref{equ:odegroup1t}, \begin{align*}
\left|\frac{d}{dt} \sqrt{ |\tilde{\xi}_{i,1} |^2+|\tilde{\xi}_{i,2} |^2}-\frac{m}{2}\sqrt{ |\tilde{\xi}_{i,1} |^2+|\tilde{\xi}_{i,2} |^2}   \right|\leq  \sqrt{ |\tilde{\mathcal{E}}_{i,1} |^2+|\tilde{\mathcal{E}}_{i,2} |^2}.
\end{align*}
Applying Lemma~\ref{lemma:integrate} to the above, we derive the bound for remaining terms.}
\end{proof}

\begin{lemma}[improvement in decay] \label{lem-iterate2} For a given $ \lambda \ge 2 $, suppose 
\be\label{equ:iterateQ} Q(t) \le M \eta_{\lambda}(t),\ee 
{for large time $t\ge t'$.} For given $\rho \in (0,1)$,  there holds\begin{align}\label{equ:zprimebound}
Q(t) \leq C  |z(t)|^{\rho  }\eta_\lambda(t) \le C \eta_{\lambda+\rho  }(t),
\end{align}   
for further large time $t\ge t_0=t_0(u, \rho,\lambda,M )$ and some {$C=C(\rho,\lambda,M)<\infty $}.
 
\begin{proof} 
Consider a small number $\varepsilon\in (0,1)$ to be fixed later. From Corollaries \ref{cor:source1} and \ref{cor:source}, for large enough time $t$, 
\begin{align}\label{equ:iterateamp}
| z'  |^2+|\bar{z} |^2 + \sum_{i\in I_1}|\tilde{\xi}_{i,1} |^2+|\tilde{\xi}_{i,2} |^2+ \sum_{i\neq 0}|\tilde{\xi}_i |^2  \leq M   \eta_\lambda^2 ,
\end{align}  Using  \eqref{equ:iterateamp}, Lemma~\ref{lem:ODE}  (dominance of neutral mode) and Lemma~\ref{lemma-interpolation} (interpolation), for all $k,\ell\in\mathbb{N}_0$, \[ \bigg|\frac{d}{dt} z^{(k,\ell)} (t)\bigg|^2+|\bar{z}^{(k,\ell)}(t)|^2 \leq C_{k,\ell,\eps }  \big( \sup_{s\in[t-1,t+1]}[ \eta_\lambda(s)]  \big)^{2(1-\varepsilon)} \big(\sup_{s\in[t-1,t+1]} \Vert u \Vert_{C^N}(s)\big)^{2\eps}.\] 
Here $N$ is a large integer that depends on $\eps>0$ and $k$. Note $\eta(t)$ satisfies the condition of Lemma \ref{lemma:integrate} by Lemma \ref{lemma-etalambda}. Therefore, $\sup_{s\in[t-1,t+1]} \eta_\lambda(s) \le 2\eta_{\lambda}(t)$ for large $t$. Utilizing $\Vert u\Vert_{C^N}(s)\to 0$, for given $\eps>0$,
\begin{equation}\label{eq-419tilde}
\left|\frac{d}{dt} z^{(k,\ell)} (t)\right|^2+|\bar{z}^{(k,\ell)}(t)|^2 \leq C_{k,\ell,\eps }[ \eta_{\lambda}(t)]^{2(1-\eps)},
\end{equation}
for $t\ge t_0=t_0(u,\eps)$. Similarly, applying Lemma \ref{lemma-interpolation} and Sobolev interpolation inequality (for functions on $\Sigma$) with Lemma~\ref{lemma-iterate1}, 
\be  \label{eq-418tilde}
\sum_{i\in I_1}\left(|\tilde{\xi}^{(k,\ell)}_{i,1}(t)|^2+|\tilde{\xi}^{(k,\ell)}_{i,2}(t)|^2 \right)+ \sum_{i\neq 0}|\tilde{\xi}^{(k,\ell)}_i(t)|^2 \leq C_{k,\ell,\eps }  [ \eta_\lambda(t)]^{2(1-\varepsilon)},
\ee for $t\ge t_0$. 
By the Sobolev embedding, \eqref{eq-419tilde} and \eqref{eq-418tilde} imply $\|(u^T)'\|_{C^3 }(t)+\|\tilde{u}^\perp\|_{C^3 }(t)\leq C [ \eta_\lambda(t)]^{1-\varepsilon}$. From \eqref{def:ut2}, we also have $\| {\dot u} \|_{C^3 }(t) \leq  [ \eta_\lambda(t)]^{(1-\varepsilon)}$. In view of the definition of $Q(t)$ in \eqref{def:Q}, 
$$Q(t) \leq   C |z(t)|[ \eta_\lambda(t)]^{(1-\varepsilon)} \leq C (|z(t)||\nabla f (z(t))|^{1-\e}+ |z(t)|^{\lambda (1-\varepsilon)+1}).$$  In view of Young's inequality
\[|z(t)||\nabla f (z(t))|^{1-\e} \le (1-\e)|z(t)|^{\rho }|\nabla f (z(t))|+\e |z(t)|^{ \frac{(1-\rho)+\eps \rho }{\eps} }, \]
we obtain \eqref{equ:zprimebound} once we choose $\eps$  small depending on $\lambda$ and $\rho $.
\end{proof}

\end{lemma}
{
\begin{proof}[Proof of Proposition~\ref{prop-neutral-dynamics} (1)]
 To show \eqref{equ:zgradient}, we may assume $N\ge2$ as $|z(t)|\to 0$. From Lemma~\ref{lem:ODE} (dominance of neutral mode) and Lemma~\ref{lemma-iterate1} (control on higher derivatives of $\tilde q$), the assumption~\eqref{equ:iterateQ} in Lemma~\ref{lem-iterate2} (improvement in decay) holds for $\lambda=2$. By iterating Lemma~\ref{lem-iterate2},  
\be \label{eq-166}Q(t)\le C (|z(t)|^{\rho } |\nabla f(z(t)) | + |z(t)|^N ),\ee 
for some $C=C(\rho,N)$ for large time $t\ge t_0=t_0(u,\rho,N)$. Combining this with Lemma~\ref{lem:WbarW} yields \eqref{equ:zgradient}. 
\end{proof}

\begin{proof}[Proof of Proposition~\ref{prop-neutral-dynamics} (2)]
As the lower bound was shown in Corollary~\ref{cor-Xi^+_0}, we proceed with the upper bound. The equation~\eqref{equ:main} can be written as 
\be \label{eq-mainremind} \dot u  - m^{-1}\mathcal{M}_{\Sigma}u =  m^{-1}\ddot u - m^{-1}{N}_1(u) .\ee 
It was proved in \cite[(6.58)-(6.59)]{S0} by Simon that for every small $\eta >0$, there exists $t_0=t_0(u,\eta)$ such that 
\be \label{equ:nigligiblesecondorder}  \Vert  \ddot u \Vert _{L^2} \le \eta \Vert \dot  u \Vert _{L^2},\text{ for } t\ge t_0.\ee  Moreover, there exist $C=C(\mathcal{M}_\Sigma, N_1)$ and $t_0=t_0(u)$ such that 
\[ \quad \Vert \slashed{\nabla} \dot u\Vert _{L^2} \le C \Vert \dot u \Vert_{L^2}  ,\text{ for } t\ge t_0.\]  
In view of the structure of $N_1$ in \eqref{equ:N}, 
\[\Vert  \dot u -m^{-1}\mathcal{M}_{\Sigma} u \Vert _{L^2} \le \eta (\Vert \mathcal{M}_\Sigma u\Vert _{L^2}+ \Vert \dot u \Vert _{L^2}), \text{ for } t\ge t_0=t_0(u,\eta).  \]
By taking $\eta$ small, we may assume
 $ \Vert \dot u \Vert_{L^2}\ge \frac12 \Vert   m^{-1}\mathcal{M}_\Sigma u\Vert _{L^2}$ and $  m^{-1} \mathcal{M}_{\Sigma} u\cdot \dot u   \ge \frac{1}{2}  \Vert \dot u \Vert _{L^2} \Vert m^{-1} \mathcal{M}_\Sigma u \Vert _{L^2}$. Therefore,
\bea \label{eq-6891} \frac{d}{d t} m^{-1}\mathcal{F}(u(t)) = - m^{-1}\int_{\Sigma}  \mathcal{M}_\Sigma u \cdot \dot u \, & \le  -\frac12 \Vert \dot u \Vert _{L^2} \Vert m^{-1}\mathcal{M}_\Sigma u \Vert _{L^2}. \eea 
Note that the analytic function $f(x)$ satisfies the \L ojasiewicz inequality  $|\nabla f(x)| \ge c |f(x)- f(0) |^\theta$ for some exponent $\theta \in [1/2,1)$ near $0$.
From \cite[Theorem 3]{S0}, $\mathcal{F}$ satisfies a \L ojasiewicz-Simon inequality for the same exponent $\theta$ near the origin: 
\be  \label{eq-simonloj0}\Vert \mathcal{M}_\Sigma u \Vert _{L^2}\ge c|\mathcal{F}(u)-\mathcal{F}(0)|^\theta, \ee  for some possibly smaller $c>0$.  Take $\bar{\mathcal{F}}=m^{-1}\mathcal{F}$. By the Lojasiewicz-Simon inequality \eqref{eq-simonloj0}, 
\bea \frac{d}{d t} \bar{\mathcal{F}}(u(t))  \le -\frac12 \Vert \dot u \Vert _{L^2} \Vert m^{-1}\mathcal{M}_\Sigma u \Vert _{L^2} \le  -\frac c2 \Vert \dot  u \Vert_{L^2}| \bar{\mathcal{F}}(u(t))-\bar{\mathcal{F}}(0)|^\theta . \eea  
This shows $\bar{\mathcal{F}}(u(t))$ is non-increasing for large times and 
\[\sigma(t):=\int _{t}^\infty \Vert \dot u \Vert_{L^2}(s)  ds \le \frac{2}{c(1-\theta)} |\bar{\mathcal{F}}(u(t))-\bar{\mathcal{F}}(0)|^{1-\theta},  \]for sufficiently large times. Therefore, again with the \L ojasiwicz inquality, for $t\ge t_0$, there exist $\theta\in[1/2,1)$ and $c'>0$ such that 
\[-\dot \sigma(t) =\Vert \dot u \Vert_{L^2}\ge \frac12 \Vert   m^{-1}\mathcal{M}_\Sigma u\Vert _{L^2} \ge \frac{c}{2}|\bar{\mathcal{F}} (u(t))-\bar{\mathcal{F}}(0)|^{\theta} \ge c' \sigma^{\frac{\theta}{1-\theta}}(t).\] 
If $\theta=1/2$, then by integrating the inequality above, $\sigma(t) \le \sigma(t_0) e^{-c'(t-t_0)} $. If $\theta\in(1/2,1)$, then $\sigma(t) \le c''t ^{\frac{1-\theta}{1-2\theta}}$. 
Since $u(t)\to 0 $, $\Vert u(t)-0\Vert_{L^2}  \le \int_t^\infty \Vert \partial _t u\Vert_{L^2}  ds =\sigma(t)$ and hence $\Vert u(t) \Vert_{L^2} $ has desired upper bound.

\end{proof}

}

\section{Slowly decaying solutions to parabolic equation} \label{sec:slowparabolic}
We consider the slowly decaying solutions to the parabolic equation~\eqref{equ:parabolic}. The main goal is to prove Proposition~\ref{prop-neutral-dynamics-p1}, an analogous result of Proposition~\ref{prop-neutral-dynamics} for elliptic equation~\eqref{equ:main}.
\vspace{0.1cm}

Let $u\in C^\infty(Q_{0,\infty},\widetilde{ \mathbf{V}})$ be a solution to \eqref{equ:parabolic} with $\|u\|_{H^{n+4}}(t)=o(1)$ as $t\to \infty$. From Lemma~\ref{lem:regularity} (parabolic regularity), $\|u\|_{C^s}(t)=o(1)$ for all $s\in\mathbb{N}$. We further assume that $u$ does not decay exponentially (See Theorem \ref{thm:general_exponential_p} and Section \ref{sec:exp}.)  Namely, for any $\varepsilon>0$, 
\begin{equation}\label{equ:nonexpdecay_p1}
\limsup_{t\to \infty} e^{ \varepsilon t}\|u\|_{C^1}(t)=\infty.
\end{equation}
We project $u$ onto the eigensections $\varphi_i$. Set
\begin{align}\label{def:parabolic_xi}
\xi_i(t):=\int_\Sigma  \left\langle u,\varphi_{i}\right\rangle \, d\mu. 
\end{align}
Recall that $\{i\in\mathbb{N}\, :\, \lambda_i=0\}=\{\iota+1,\iota+2,\dots, \iota+J\}$. The projection onto the neutral mode will play a crucial role as the role of $z(t)$ in the elliptic case, and here we denote it by
\begin{align}\label{def:parabolic_x}
x_j(t):= \xi_{\iota+j}(t).
\end{align}

\begin{proposition}\label{prop-neutral-dynamics-p1}   
Let $u$ be a slowly decaying solution to \eqref{equ:parabolic}. 
\begin{enumerate}[$(1)$]
\item  For given $\rho \in(0,1)$ and $N\in \mathbb{N}$, there exists a positive constant $C=C(\mathcal{M}_\Sigma, N_2,\rho )<\infty $ such that
\begin{align}\label{equ:xgradient}
 |x' -\nabla f(x )| \leq C   (|x |^{\rho }|\nabla f(x )|+|x |^N ), \text{ for time } t\ge t_0=t_0(u,N,\rho).
\end{align} 
\item  There exist $0<D_1, D_2<\infty$, $\alpha_2 >0$, which are constants of $\mathcal{M}_\Sigma$, $N_1$ such that, for $t\ge t_0=t_0(u)$,   \be \label{eq-algebraicratebound-5p} D_1 t^{-1} \le |x(t)|\le D_2 t^{-\alpha_2}.\ee\end{enumerate} 
\end{proposition}

The proof of Proposition~\ref{prop-neutral-dynamics-p1} follows a series of arguments similar to those of Proposition~\ref{prop-neutral-dynamics}. The first part is to show (in Corollary~\ref{cor:parabolic}) that $|x(t)|$ dominates any $C^s$ norm of $u$. 

\bigskip

Let $E_2(u)=N_2(u)+\mathcal{M}_\Sigma (u)-\mathcal{L}_{\Sigma} u$. Then \eqref{equ:parabolic} becomes
\begin{equation}\label{equ:linear_p}
{\dot u}-\mathcal{L}_{\Sigma} u=E_2(u).
\end{equation}
From \eqref{equ:quasilinear} and \eqref{equ:N}, the error term $E_2(u)$ has the structure 
\begin{align} \label{equ:E_2str}
E_2(u)=\sum_{j=0}^2 b_{2,j}\cdot \slashed{\nabla}^j u,
\end{align}
where $b_{2,j}=b_{2,j}(\omega, u,\slashed{\nabla}u)$ are smooth with $b_{2,j}(\omega,0,0)=0$. With $\|u\|_{C^s}(t)=o(1)$, the quadratic nature of $E_2$ allows us to bound higher derivatives of $E_2$ in terms of higher derivatives of $u$.
\begin{lemma}\label{lem:error_p}
For any integer $s\geq 2$, 
\begin{align*}
\sum_{2k+\ell\leq 2s}\| \partial^k_t\slashed{\nabla}^\ell E_2(u) \|_{L^2}(t)=o(1)\sum_{2k+\ell\leq 2s}\| \partial^k_t\slashed{\nabla}^\ell u \|_{L^2}(t).
\end{align*}
\begin{proof}
We present the proof for
\begin{align*}
\sum_{2k+\ell\leq 2s}\left\| \partial^k_t\slashed{\nabla}^\ell \left( b_{2,2}\cdot\slashed{\nabla}^2 u \right)  \right\|_{L^2}(t)=o(1)\sum_{2k+\ell\leq 2s}\| \partial^k_t\slashed{\nabla}^\ell u \|_{L^2}(t).
\end{align*} 
The other terms can be treated similarly. For simplicity, we write $b(\omega,u,\slashed{\nabla}u )$ for $b_{2,2}(\omega, u,\slashed{\nabla}u )$. For $m_0,m_1,m_2 \in\mathbb{N}_0$, we write $b^{(m_0, m_1,m_2 )}$ for the partial derivative of $b$ of order $(m_0,m_1,m_2 )$. Fix $k_0,\ell_0$ with $2k_0+\ell_0\leq 2s$. Then the terms in the expansion of $\partial^{k_0}_t\slashed{\nabla}^{\ell_0} \left( b \cdot\slashed{\nabla}^2 u \right)$ are of the form
\begin{align*}
b^{(m_0,m_1,m_2 )}\cdot\left(\partial^{k_1}_t\slashed{\nabla}^{\ell_1} u \ast \partial^{k_2}_t\slashed{\nabla}^{\ell_2} u \ast\dots \partial^{k_N}_t\slashed{\nabla}^{\ell_N} u \right),
\end{align*} 
where $N=m_1+m_2 +1$, $\sum_{i=1}^N k_i=k_0 $ and $\sum_{i=1}^N \ell_i=\ell_0+m_2+2$. It suffices to show the pointwise bound
$$b^{(m_0,m_1,m_2 )}\cdot\left( \big(\partial^{k_1}_t\slashed{\nabla}^{\ell_1} u  \big)\cdot \big(\partial^{k_2}_t\slashed{\nabla}^{\ell_2} u \big)  \cdot \ldots \cdot \big( \partial^{k_N}_t\slashed{\nabla}^{\ell_N} u\big) \right)=o(1)\sum_{2k+\ell\leq 2s} |\partial^k_t\slashed{\nabla}^\ell u|.$$ We first discuss the case $m_1=m_2=0$. The above becomes $b^{(m_0,0,0)}\cdot \partial^{k_0}_t\slashed{\nabla}^{\ell_0-m_0+2} u $. From $$|b^{(m_0,0,0)}| \leq C ( |u|+|\slashed{\nabla }u|)\ \textup{and}\  |\partial^{k_0}_t\slashed{\nabla}^{\ell_0-m_0+2} u |=o(1),$$ the assertion holds. 

Next, we consider the case $m_1+m_2 \geq 1$. It suffices to show $\min_i (2k_i+\ell_i)\leq 2s$ as the other terms can be bounded by  $o(1)$. Suppose this fails. In other words, $2k_i+\ell_i\geq 2s+1$ for all $1\leq i\leq N$. Then
\begin{align*}
2Ns+N\leq   \sum_{i=1}^N (2k_i+\ell_i)=2k_0+\ell_0+m_2+2\leq 2s+2N. 
\end{align*} 
In view of $N\geq 2$ and $s\geq 2$, this is a contradiction. 
\end{proof}
\end{lemma}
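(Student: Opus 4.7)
The plan is to expand each term $\partial^k_t \slashed{\nabla}^\ell (b_{2,j} \cdot \slashed{\nabla}^j u)$ by the Leibniz and chain rules into a finite sum of products of the form
\begin{equation*}
b_{2,j}^{(m_0, m_1, m_2)}(\omega, u, \slashed{\nabla}u) \cdot \prod_{i=1}^{N} \partial^{k_i}_t \slashed{\nabla}^{\ell_i} u,
\end{equation*}
where the superscripts denote partial derivatives with respect to the three arguments of $b_{2,j}$. Each $\partial_t$ or $\slashed{\nabla}$ falling on $b_{2,j}$ either differentiates the $\omega$-slot (counted by $m_0$) or, by the chain rule, spawns a new $u$-derivative factor (counted by $m_1$ or $m_2$), so $N = m_1 + m_2 + 1$, while the remaining derivatives act on the original $\slashed{\nabla}^j u$. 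The goal then reduces to a pointwise estimate of each such product by $o(1) \sum_{2k+\ell \leq 2s} |\partial^k_t \slashed{\nabla}^\ell u|$, after which integrating and summing yields the claim.

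When $N \geq 2$ the argument is a pigeonhole on parabolic orders. Since $\sum_i (2k_i + \ell_i) = 2k + \ell + m_2 + j \leq 2s + O(N)$, it is impossible for every $2k_i + \ell_i$ to strictly exceed $2s$ once $N \geq 2$ and $s \geq 2$: assuming the contrary gives $N(2s+1) \leq 2s + O(N)$, forcing $s \leq O(1)$ and contradicting $s \geq 2$. I would then single out an index $i_*$ with $2k_{i_*} + \ell_{i_*} \leq 2s$, treat $\partial^{k_{i_*}}_t \slashed{\nabla}^{\ell_{i_*}} u$ as the ``main'' factor (which appears on the right-hand side), bound $b_{2,j}^{(m_0, m_1, m_2)}$ pointwise by a constant using smoothness of $b_{2,j}$ and boundedness of $u, \slashed{\nabla}u$, and bound each of the remaining $u$-derivative factors pointwise by $o(1)$ via $\|u\|_{C^{s'}}(t) = o(1)$ for every $s'$, an input supplied by the parabolic regularity lemma. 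The product of the $o(1)$ factors then furnishes the required prefactor.

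The residual case $N = 1$ requires separate treatment because the only $u$-derivative factor, $\partial^k_t \slashed{\nabla}^{\ell - m_0 + j} u$, can have parabolic order above $2s$ and thus may not appear in the right-hand side. Here, however, $N = 1$ forces $m_1 = m_2 = 0$, so the coefficient $b_{2,j}^{(m_0, 0, 0)}$ still vanishes at $(u, \slashed{\nabla}u) = (0,0)$ by the standing hypothesis on $b_{2,j}$, whence a first-order Taylor bound gives $|b_{2,j}^{(m_0, 0, 0)}| \leq C(|u| + |\slashed{\nabla}u|)$ and places this factor into the right-hand side as low-order derivatives of $u$; the partnering high-order derivative of $u$ contributes the $o(1)$ prefactor via the same $C^{s'}$ smallness. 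I expect the main obstacle to be purely combinatorial bookkeeping in the Leibniz expansion — tracking the precise relations between the multiindices $(m_0, m_1, m_2)$, the individual orders $(k_i, \ell_i)$, and the total $(k, \ell)$ — rather than any analytic difficulty, since the regularity input $\|u\|_{C^{s'}}(t) = o(1)$ for all $s'$ is extremely robust.
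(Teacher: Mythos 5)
Your proposal is correct and follows essentially the same route as the paper's proof: the same Leibniz/chain-rule expansion into products $b^{(m_0,m_1,m_2)}\cdot\prod_i \partial^{k_i}_t\slashed{\nabla}^{\ell_i}u$, the same treatment of the $N=1$ case via the vanishing of $b_{2,j}$ at the zero section (putting $|u|+|\slashed{\nabla}u|$ on the right-hand side and the high-order factor into the $o(1)$), and the same pigeonhole on parabolic orders for $N\geq 2$ using $\sum_i(2k_i+\ell_i)=2k_0+\ell_0+m_2+j$ together with $s\geq 2$.
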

Now we project \eqref{equ:linear_p} onto the eigensections $\varphi_i$. Let
\begin{align*}
\xi^{(k,\ell)}_i:=\int_\Sigma  \left\langle \mathcal{L}_{\Sigma}^{\ell}\partial^{k}_t u,\varphi_{i}\right\rangle\, d\mu,\ \mathcal{E}^{(k,\ell)}_i:=\int_\Sigma  \left\langle \mathcal{L}_{\Sigma}^{\ell}\partial^{k}_t E_2(u),\varphi_{i}\right\rangle\, d\mu
\end{align*}
Then \eqref{equ:linear_p} becomes
\begin{align}\label{equ:ODE_sys_p}
\frac{d}{dt}\xi^{(k,\ell)}_i-\lambda_i \xi^{(k,\ell)}_i=\mathcal{E}^{(k,\ell)}_i.
\end{align}
For the neutral mode, we denote 
$
x^{(k,\ell)}_{j}:= \xi^{(k,\ell)}_{\iota+j}$ and $\mathcal{W}^{(k,\ell)}_{j}:=\mathcal{E}^{(k,\ell)}_{\iota+j}
$.
\begin{lemma}[\textit{c.f.} Lemma \ref{lem:ODE}]\label{lem:ODE_p}
For any $s\geq 2$, as $t\to \infty$, 
\begin{align*}
\sum_{k+\ell\leq s}\sum_{i=1}^\infty  |\xi^{(k,\ell)}_i|^2=(1+o(1))|x|^2.
\end{align*}
\begin{proof}Define three non-negative functions $X_+(t)$, $X_0(t)$ and $X_-(t)$ by
\begin{align*}
X^2_0 =&\sum_{k+\ell\leq s}\sum_{1\leq j\leq J}|x^{(k,\ell)}_j |^2,\quad  X^2_\pm =\sum_{k+\ell\leq s}\sum_{i: \pm\lambda_i>0 }|\xi^{(k,\ell)}_i |^2. 
\end{align*}
We note that these coefficients are grouped together according to the sign of the eigenvalues. From \eqref{equ:ODE_sys_p}, we compute $X_+X_+'$ and obtain 
\begin{align*}
\sum_{k+\ell\leq s}\sum_{i:\lambda_i>0} \left(\lambda_i  |\xi^{(k,\ell)}_i |^2+\xi^{(k,\ell)}_i \mathcal{E}^{(k,\ell)}_i \right)= \bigg (\sum_{k+\ell\leq s}\sum_{i:\lambda_i>0} \lambda_i  |\xi^{(k,\ell)}_i |^2 \bigg)+X_+Y_+.
\end{align*}
Here $X_+(t)Y_+(t)$ is defined by the last equality and $Y_+(t_0)=0$ if $X_+(t_0)=0$. Let $b=\min\{|\lambda_i|\, :\, \lambda_i\neq 0\}$. We have
$
X_+'-bX_+\geq Y_+. 
$
Similarly, define $Y_0(t)$ and $Y_-(t)$ by
\begin{align*}
X_0 Y_0 = \sum_{k+\ell\leq s}\sum_{1\leq j\leq J} x^{(k,\ell)}_j  \mathcal{W}^{(k,\ell)}_j ,\quad 
X_- Y_- = \sum_{k+\ell\leq s}\sum_{i:\lambda_i<0 } \xi^{(k,\ell)}_i \mathcal{E}^{(k,\ell)}_i .
\end{align*}If $X_0(t_0)=0$ or $X_-(tt_0)=0$, we set $Y_0(t_0)=0$ or $Y_-(t_0)=0$ respectively. It holds that $X_0'= Y_0$ and $X_-'+bX_-\leq Y_-.$ 
Next, recall $|X_+ |^2+|X_0 |^2+|X_- |^2=\sum_{k+\ell\leq s} \| \mathcal{L}_{\Sigma}^{\ell}\partial^{k}_t u  \|_{L^2}^2$. From the Cauchy-Schwarz inequality, $ |Y_+ |^2+|Y_0 |^2+|Y_- |^2 \leq  \sum_{k+\ell\leq s}\sum_{i=1}^\infty |\mathcal{E}^{k,\ell}_i |^2   = \sum_{k+\ell\leq s} \| \mathcal{L}_{\Sigma}^{\ell}\partial^{k}_t E_2(u)  \|_{L^2}^2 .$
In view of Lemma~\ref{lem:error_p}, $|Y_+ |^2+|Y_0 |^2+|Y_- |^2=  o(1)\big( |X_+ |^2+|X_0 |^2+|X_- |^2 \big).$ Then we apply Lemma~\ref{lem-MZODE}. As the slow decay assumption \eqref{equ:nonexpdecay_p1} rules out the possibility that $X_- $ dominates,   \begin{equation}\label{equ:ODE_pmiddle1}
|X_+ |^2+|X_0 |^2+|X_- |^2=(1+o(1))|X_0 |^2. 
\end{equation}
It remains to show $
 |X_0 |^2=(1+o(1))|x |^2. 
$  If $\ell\geq 1$, then 
$
x^{(k,\ell)}_j = \int_\Sigma \left\langle \mathcal{L}_{\Sigma}^{\ell}\partial^{k}_t u,\varphi_{\iota+j}\right\rangle\, d\mu=\int_\Sigma \left\langle \partial^{k}_t u, \mathcal{L}_{\Sigma}^{\ell}\varphi_{\iota+j}\right\rangle\, d\mu=0
$, and  if $k\geq 1$, then 
$
x^{(k,\ell)}_j =\frac{d}{dt} z^{(k-1,\ell)}_j=\mathcal{W}^{(k-1,\ell)}_j.
$
These imply
\begin{align*}
 |X_0 |^2&\leq |x |^2+\sum_{k \leq s-1}  |\mathcal{W}^{(k,0)}  |^2 \leq |x |^2+o(1)\left(|X_+ |^2+|X_0 |^2+|X_- |^2\right).\end{align*}
Finally $
 |X_0 |^2=(1+o(1))|x |^2 
$ follows by \eqref{equ:ODE_pmiddle1}.
\end{proof}
\end{lemma}
\begin{corollary}[\textit{c.f.} Corollary \ref{cor:sobolev}]\label{cor:parabolic}
For any $\gamma \in \mathbb{N}$, there exists $C=C(\gamma )<\infty$ such that $\Vert u \Vert_{C^{\gamma }}(t) \leq C   |x(t) |,$ for $t\ge t_0=t_0(u,\gamma)$.  
\end{corollary}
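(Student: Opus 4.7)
The plan mirrors the proof of Corollary~\ref{cor:sobolev} in the elliptic setting, using Lemma~\ref{lem:ODE_p} in place of Lemma~\ref{lem:ODE}, and passing from the projection bound to a pointwise $C^\gamma$ bound in three steps.

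First, by Parseval applied to the orthonormal eigenbasis $\{\varphi_i\}$, the inner sum in Lemma~\ref{lem:ODE_p} equals $\|\mathcal{L}_\Sigma^\ell \partial_t^k u\|_{L^2}^2(t)$, so for any fixed integer $s\ge 2$,
$$\sum_{k+\ell\le s}\|\mathcal{L}_\Sigma^\ell \partial_t^k u\|_{L^2}(t)\le C|x(t)|, \qquad t\ge t_0(u,s).$$

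Second, I would iterate the elliptic coercivity \eqref{equ:H1} for the self-adjoint elliptic operator $\mathcal{L}_\Sigma$ to obtain the standard regularity bound $\|v\|_{H^{2\ell}}\le C_\ell(\|\mathcal{L}_\Sigma^\ell v\|_{L^2}+\|v\|_{L^2})$. Applied to $v=\partial_t^k u$ for each $k+\ell \le s$, this yields $\|\partial_t^k u\|_{H^{2\ell}}(t)\le C|x(t)|$, and in particular, since $\|\partial_t^k\slashed{\nabla}^\ell u\|_{L^2}\le\|\partial_t^k u\|_{H^\ell}$,
$$\sum_{2k+\ell\le 2s}\|\partial_t^k \slashed{\nabla}^\ell u\|_{L^2}(t)\le C|x(t)|.$$

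Third, by Sobolev embedding on the closed $n$-dimensional manifold $\Sigma$, one has $|\partial_t^k \slashed{\nabla}^\ell u(\omega,t)|\le C\|\partial_t^k u\|_{H^{\ell+m_0}}(t)$ for any fixed $m_0>n/2$. Choosing $s$ large enough so that $2\gamma+m_0\le 2s$ controls every term entering
$\|u\|_{C^\gamma}(t)=\sup_{k+\ell\le\gamma}\sup_{\omega\in\Sigma}|\partial_t^k \slashed{\nabla}^\ell u(\omega,t)|$, giving $\|u\|_{C^\gamma}(t)\le C(\gamma)|x(t)|$.

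The substantive work was already performed in Lemma~\ref{lem:ODE_p}: by organizing derivatives through $\mathcal{L}_\Sigma^\ell \partial_t^k$ rather than a raw space-time mix, that lemma packages both spatial and temporal regularity into an expression that projects cleanly onto the eigenbasis. The parabolic equation itself is not needed again at this step; the only real issue is bookkeeping the number of derivatives so that $s$ is chosen large enough in terms of $\gamma$ and $n$. This is where any apparent obstacle would lie, but it is only a matter of arithmetic in the indices, not of analysis.
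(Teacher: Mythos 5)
Your proof is correct and follows the route the paper intends: the paper states Corollary~\ref{cor:parabolic} without an explicit argument, by analogy with Corollary~\ref{cor:sobolev}, and the intended steps are exactly yours — Parseval to turn Lemma~\ref{lem:ODE_p} into a bound on $\sum_{k+\ell\le s}\|\mathcal{L}_\Sigma^\ell\partial_t^k u\|_{L^2}(t)$, iterated elliptic regularity for the self-adjoint elliptic operator $\mathcal{L}_\Sigma$ to convert this into $H^{2\ell}$ control of $\partial_t^k u$, and Sobolev embedding on $\Sigma$ to reach the $C^\gamma$ norm (which, per the paper's convention, includes time derivatives and is handled by your index bookkeeping). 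The constant dependencies also match: the $(1+o(1))$ in Lemma~\ref{lem:ODE_p} is absorbed for $t\ge t_0(u,\gamma)$, leaving $C=C(\gamma)$ up to the suppressed dependence on the equation.
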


\begin{corollary} [\textit{c.f.} Corollary \ref{cor-Xi^+_0}]\label{cor-Xi^+_0-p} For slowly decaying solution $u(t)$, there is $C=C(\mathcal{M}_\Sigma, N_2)<\infty$ such that $|x(t)|\ge (Ct)^{-1}$ for $t\ge t_0=t_0(u)$. 
 
\begin{proof}
The first part directly follows from $ |x'(t)|  \leq C  |x(t)|^2$ for large time $t\ge t_0$ and $\lim_{t\to\infty}|x(t)|=0$.   

\end{proof}  \end{corollary}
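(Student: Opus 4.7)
The plan is to mimic the proof of the elliptic analog (Corollary~\ref{cor-Xi^+_0}): first establish the differential inequality $|x'(t)|\le C|x(t)|^2$ for large $t$, then integrate the resulting ODE for $|x(t)|^{-1}$. The parabolic case is in fact a bit simpler than the elliptic one since the neutral-mode equation is of first order with no $\bar z$-component to balance.

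To obtain the ODE inequality, I would project \eqref{equ:linear_p} onto the neutral directions. Taking $k=\ell=0$ in \eqref{equ:ODE_sys_p} with $i=\iota+j$ (so that $\lambda_{\iota+j}=0$) collapses to
\[
x_j'(t)=\mathcal{W}_j(t)=\int_\Sigma\langle E_2(u),\varphi_{\iota+j}\rangle\,d\mu,
\]
so by Cauchy--Schwarz $|\mathcal{W}_j(t)|\le\|E_2(u)\|_{L^2}(t)$. From the structure \eqref{equ:E_2str} together with $b_{2,j}(\omega,0,0)=0$, one has the pointwise bound $|E_2(u)|\le C(|u|+|\slashed{\nabla}u|)(|u|+|\slashed{\nabla}u|+|\slashed{\nabla}^2u|)$ whenever $\|u\|_{C^1}(t)$ is small, hence $\|E_2(u)\|_{L^2}(t)\le C\|u\|_{C^2}^2(t)$. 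Corollary~\ref{cor:parabolic} with $\gamma=2$ then gives $\|u\|_{C^2}(t)\le C|x(t)|$ for $t\ge t_0(u)$, so
\[
|x'(t)|\le C|x(t)|^2,\qquad t\ge t_0,
\]
with $C=C(\mathcal{M}_\Sigma,N_2)$ independent of the particular solution.

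For the integration step, the non-exponential decay hypothesis combined with Corollary~\ref{cor:parabolic} (which forces $u\equiv 0$ at any time where $x(t)=0$, contradicting slow decay via parabolic unique continuation) ensures $x(t)\neq 0$ on some tail $[t_0,\infty)$. On this tail
\[
\left|\frac{d}{dt}|x(t)|^{-1}\right|=\frac{|\langle x(t),x'(t)\rangle|}{|x(t)|^3}\le\frac{|x'(t)|}{|x(t)|^2}\le C,
\]
and integrating from $t_0$ to $t$ yields $|x(t)|^{-1}\le|x(t_0)|^{-1}+C(t-t_0)\le 2Ct$ for $t$ large enough (using $|x(t_0)|^{-1}$ is a fixed constant). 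This delivers $|x(t)|\ge(2C)^{-1}t^{-1}$, which is the desired bound.

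There is no serious obstacle; the only thing to be careful about is that the constant $C$ arising in $|x'(t)|\le C|x(t)|^2$ depends only on $\mathcal{M}_\Sigma$ and $N_2$. This is automatic because the smooth coefficients $b_{2,j}$ from \eqref{equ:E_2str} and the eigensections $\varphi_i$ are intrinsic to the equation, so the constant in the quadratic bound on $E_2$ is universal, and the constant in Corollary~\ref{cor:parabolic} for $\gamma=2$ depends only on the operator (once $t$ is large). The solution-dependence enters only through the choice of $t_0$.
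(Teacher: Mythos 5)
Your proposal is correct and follows essentially the same route as the paper: derive $|x'(t)|\le C|x(t)|^2$ by projecting the equation onto the neutral modes, bounding the quadratic error $E_2(u)$ via Corollary~\ref{cor:parabolic}, and then integrating $\frac{d}{dt}|x(t)|^{-1}$. The paper states this in one line; you have simply filled in the details (including the positivity of $|x(t)|$ on a tail), all consistent with the intended argument and with the stated dependence of the constants.
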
 

The rest of the arguments are simpler than ones in the elliptic case. We omit the details and only provide the main steps. Their elliptic counterparts can be found in Section~\ref{sec:5.2}. Let us define
{
\begin{align*} Q(t)& :=|x(t)|\left\Vert  {\dot u}\right\Vert _{C^2}(t)+ |x(t)|\Vert \tilde u^\perp \Vert _{C^3}(t) , \text{ and}\\
\eta _\lambda(t) & := |\nabla f(x(t))|+|x(t)|^\lambda .\end{align*}}\begin{lemma}[\textit{c.f.} Lemma~\ref{lem:5.5}]\label{lem:3.5}There exists  $C=C(\mathcal{M}_\Sigma,N_{2} )<\infty $ such that, for $t\ge t_0=t_0(u)$, $$|\mathcal{M}_\Sigma(u)+\nabla f(x(t))-\mathcal{L}_\Sigma\tilde{u}^\perp|+|N_2(u)| \leq C  Q(t).$$ \end{lemma}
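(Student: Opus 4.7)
The plan is to mirror the argument for Lemma~\ref{lem:5.5}, exploiting two structural simplifications specific to the parabolic setting. Recall the decomposition $u = u^T + H(u^T) + \tilde u^\perp$. Since $x_j(t) = \int_\Sigma \langle u, \varphi_{\iota+j}\rangle\, d\mu$ is precisely the coordinate of $u^T = \Pi^T u$ in the basis $\{\varphi_{\iota+j}\}$ (there is no counterpart of $\bar z_j$ here, as the parabolic problem is a first-order system to start with), Lemma~\ref{lem:MSigma} applied with $\alpha = 1/2$ delivers directly
\begin{equation*}
|\mathcal{M}_\Sigma(u) + \nabla f(x(t)) - \mathcal{L}_\Sigma \tilde u^\perp| \le C \,\|u\|_{C^{2,\alpha}(\Sigma)}\,\|\tilde u^\perp\|_{C^{2,\alpha}(\Sigma)}.
\end{equation*}
Unlike in Lemma~\ref{lem:5.5}, no bridging between $\nabla f(z-\bar z)$ and $\nabla f(z)$ is required. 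Dominating the H\"older norms by the $C^3$ norm and invoking Corollary~\ref{cor:parabolic} to estimate $\|u\|_{C^3}(t) \le C |x(t)|$, one obtains
\begin{equation*}
|\mathcal{M}_\Sigma(u) + \nabla f(x(t)) - \mathcal{L}_\Sigma \tilde u^\perp| \le C\,|x(t)|\,\|\tilde u^\perp\|_{C^3}(t) \le C\,Q(t).
\end{equation*}

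For $N_2(u) = b_1 \cdot \mathcal{M}_\Sigma(u)$, the naive estimate $|\mathcal{M}_\Sigma u| \le C\|u\|_{C^2}(t) \le C|x(t)|$ together with $|b_1| \le C\|u\|_{C^1}(t) \le C|x(t)|$ only yields $|N_2(u)| \le C|x(t)|^2$, which is not in general controlled by $Q(t)$. The right move is to use the equation itself: \eqref{equ:parabolic} rewrites as $\dot u = (I + b_1)\mathcal{M}_\Sigma u$. Since $b_1(\omega,0,0)\equiv 0$ and $u(t)$ is uniformly small for $t \ge t_0$, the pointwise linear map $I + b_1$ is invertible with uniformly bounded inverse, so $\mathcal{M}_\Sigma u = (I+b_1)^{-1}\dot u$ and
\begin{equation*}
|N_2(u)| = |b_1 \cdot \mathcal{M}_\Sigma u| \le C\,|b_1|\,|\dot u| \le C\,|x(t)|\,\|\dot u\|_{C^0}(t) \le C\, Q(t).
\end{equation*}
Adding the two bounds finishes the proof.

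The only conceptual step, which takes the place of the handling of $N_1$ in Lemma~\ref{lem:5.5}, is recognizing that the $\mathcal{M}_\Sigma u$ factor inside $N_2$ should be traded via the evolution equation for a $\dot u$, which is already built into the definition of $Q(t)$. Everything else is a routine combination of the Lyapunov--Schmidt reduction (Lemma~\ref{lem:MSigma}) and the pointwise domination $\|u\|_{C^s}(t) \le C|x(t)|$ established in Corollary~\ref{cor:parabolic}, so I do not anticipate a serious obstacle.
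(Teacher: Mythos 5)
Your proof is correct, and it is essentially the argument the paper intends: the paper omits the proof of this lemma and points back to Lemma~\ref{lem:5.5}, whose first half you reproduce verbatim (correctly observing that $x(t)$ is exactly the coordinate of $u^T=\Pi^Tu$, so the $\nabla f(z-\bar z)$ versus $\nabla f(z)$ bridging step of the elliptic case disappears), with Lemma~\ref{lem:MSigma} and Corollary~\ref{cor:parabolic} supplying the two inputs.

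The one place where you genuinely add something is the treatment of $N_2(u)=b_1\cdot\mathcal{M}_\Sigma(u)$. The elliptic proof disposes of the analogous term $a_3\cdot\mathcal{M}_\Sigma(u)$ with the terse remark ``together with \eqref{equ:MQ}, \eqref{equ:NQ} follows,'' which taken literally leaves a term $|z(t)|\,|\nabla f(z(t))|$ that is not manifestly dominated by $Q(t)$; closing that gap requires feeding the evolution equation back in to control $\mathcal{M}_\Sigma u$ by time derivatives of $u$. Your rewriting $\dot u=(I+b_1)\mathcal{M}_\Sigma u$ and inverting $I+b_1$ (legitimate, since $b_1$ contracts $\mathcal{M}_\Sigma u$ to a section of $\mathbf{V}$ and hence acts as an endomorphism with $|b_1|\le C\|u\|_{C^1}(t)\to 0$) does exactly this in one step and lands directly in the $|x(t)|\|\dot u\|_{C^2}(t)$ part of $Q(t)$. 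So the argument is sound and, if anything, makes explicit a step the paper leaves implicit.
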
 

\begin{lemma}[\textit{c.f.} Lemmas~\ref{lem:WbarW} and \ref{lem:tildeE}]\label{lem:3.6}There exists a positive constant $C=C(\mathcal{M}_\Sigma,N_2 )$ such that, for $t\ge t_0=t_0(u)$, there holds
$$|x'(t) +\nabla f(x(t))|+\left \Vert (\tilde{u}^\perp )'-\mathcal{L}_\Sigma\tilde{u}^\perp \right\Vert_{L^2}(t) \leq C Q(t).$$  
\end{lemma}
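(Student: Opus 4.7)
The plan is to mirror the elliptic arguments from Subsection~\ref{sec:5.2}: the first bound follows from projecting the parabolic equation onto the kernel $\ker \mathcal{L}_\Sigma$ and invoking Lemma~\ref{lem:3.5}, while the second bound follows from a direct computation of $(\tilde u^\perp)'-\mathcal{L}_\Sigma \tilde u^\perp$ using the Lyapunov--Schmidt decomposition $u=u^T+H(u^T)+\tilde u^\perp$. The overall structure parallels Lemmas~\ref{lem:WbarW} and \ref{lem:tildeE}, but is noticeably simpler since there is no second time derivative to control.

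For the neutral-mode estimate, I would differentiate $x_j(t)=\int_\Sigma \langle u,\varphi_{\iota+j}\rangle\,d\mu$ in time and substitute $\dot u=\mathcal{M}_\Sigma u+N_2(u)$ to obtain
\begin{align*}
x_j'(t)=\int_\Sigma \langle \mathcal{M}_\Sigma u+N_2(u),\varphi_{\iota+j}\rangle\, d\mu.
\end{align*}
Using the pointwise bound $|\mathcal{M}_\Sigma u+\nabla f(x)-\mathcal{L}_\Sigma \tilde u^\perp|\le CQ(t)$ from Lemma~\ref{lem:3.5}, the $\mathcal{L}_\Sigma\tilde u^\perp$ contribution drops out after integration because $\mathcal{L}_\Sigma$ is self-adjoint and $\mathcal{L}_\Sigma\varphi_{\iota+j}=0$. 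Under the identification $(x_1,\ldots,x_J)\mapsto \sum x_j\varphi_{\iota+j}$, the projection of $\nabla f(x)$ against $\varphi_{\iota+j}$ is exactly $\partial_j f(x)$, while the remaining $O(Q)$ term and $N_2(u)$ contribute $L^2$-errors bounded by $CQ(t)$ again by Lemma~\ref{lem:3.5}. This yields $|x'(t)+\nabla f(x(t))|\le CQ(t)$.

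For the second bound, I would differentiate $\tilde u^\perp=u-u^T-H(u^T)$ in time and use $\mathcal{L}_\Sigma u^T=0$ to derive the identity
\begin{align*}
(\tilde u^\perp)'-\mathcal{L}_\Sigma \tilde u^\perp = E_2(u) - \dot u^T - DH(u^T)\dot u^T + \mathcal{L}_\Sigma H(u^T),
\end{align*}
where $E_2(u)=\dot u-\mathcal{L}_\Sigma u=N_2(u)+\mathcal{M}_\Sigma u-\mathcal{L}_\Sigma u$. Since the left-hand side belongs to $(\ker\mathcal{L}_\Sigma)^\perp$, applying $\Pi^\perp$ kills $\dot u^T$. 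Expanding $\mathcal{L}_\Sigma u=\mathcal{L}_\Sigma H(u^T)+\mathcal{L}_\Sigma \tilde u^\perp$ inside $E_2(u)$ and substituting $\mathcal{M}_\Sigma u=-\nabla f(x)+\mathcal{L}_\Sigma\tilde u^\perp+O(Q)$ from Lemma~\ref{lem:3.5}, the two $\mathcal{L}_\Sigma H(u^T)$ contributions cancel, leaving $\Pi^\perp N_2(u)-DH(u^T)\dot u^T+\Pi^\perp O(Q)$. The first is controlled by $\|N_2(u)\|_{L^2}\le CQ$ via Lemma~\ref{lem:3.5}. The cross term is controlled via $DH(0)=0$, which gives $\|DH(u^T)\|_{op}\le C|x(t)|$; combined with $\|\dot u^T\|_{L^2}\le \|\dot u\|_{L^2}\le C\|\dot u\|_{C^0}$ and the definition of $Q(t)$, this yields $\|DH(u^T)\dot u^T\|_{L^2}\le C|x(t)|\,\|\dot u\|_{C^2}(t)\le CQ(t)$.

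The main obstacle is not difficulty but bookkeeping: correctly tracking which terms lie in $\ker\mathcal{L}_\Sigma$ versus its orthogonal complement, and recognizing the cancellation of the two $\mathcal{L}_\Sigma H(u^T)$ contributions after projection. Once these identities are set up, the estimates reduce to Lemma~\ref{lem:3.5}, the vanishing of $DH$ at the origin, and the definition of $Q(t)$.
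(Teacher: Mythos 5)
Your proof is correct and is precisely the adaptation of the elliptic arguments (Lemmas~\ref{lem:WbarW} and \ref{lem:tildeE}) that the paper intends but omits: the neutral-mode bound via projecting $\dot u=\mathcal{M}_\Sigma u+N_2(u)$ onto $\varphi_{\iota+j}$ and invoking Lemma~\ref{lem:3.5}, and the perpendicular bound via the identity for $(\tilde u^\perp)'-\mathcal{L}_\Sigma\tilde u^\perp$ with the $\mathcal{L}_\Sigma H(u^T)$ cancellation and the $DH(0)=0$ estimate on the cross term. No gaps.
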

\begin{corollary}[\textit{c.f.} Corollaries~\ref{cor:source1} and \ref{cor:source}]\label{cor:3.7}
Suppose $Q(t)\leq M \eta_{\lambda}(t)$ for some $M<\infty$ and $\lambda\geq 2$. Then there exists a positive constant $C=C(M,\mathcal{M}_\Sigma,N_2 )$ such that $|x'(t)| \leq C \eta_\lambda(t)$ and $\Vert\tilde{u}^\perp  \Vert_{L^2}(t)  \leq C   \eta_\lambda(t)$, for $t\ge t_0=t_0(u,M,\lambda)$. 
\end{corollary}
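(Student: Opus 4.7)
The plan is to mirror the elliptic arguments of Corollaries \ref{cor:source1} and \ref{cor:source}, taking advantage of the fact that in the parabolic setting the analogous bounds are cleaner because Lemma \ref{lem:3.6} is a first-order statement rather than a second-order one. The first bound, $|x'(t)|\le C\eta_\lambda(t)$, is immediate from Lemma \ref{lem:3.6}:
\begin{equation*}
|x'(t)|\le |\nabla f(x(t))|+|x'(t)+\nabla f(x(t))|\le \eta_\lambda(t)+CQ(t)\le (1+CM)\eta_\lambda(t),
\end{equation*}
where the last inequality uses the hypothesis $Q(t)\le M\eta_\lambda(t)$ and the definition of $\eta_\lambda$.

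Before attacking the second bound, I would verify that $\eta(t):=\eta_\lambda(t)$ satisfies the two hypotheses of Lemma \ref{lemma:integrate}, by repeating the proof of Lemma \ref{lemma-etalambda} verbatim with $z$ replaced by $x$. Namely, $\eta_\lambda(t)\to 0$ since $|x(t)|\to 0$, and differentiating the definition of $\eta_\lambda$ and using the already-established bound $|x'(t)|\le C\eta_\lambda(t)$ yields $|\tfrac{d}{dt}\eta_\lambda(t)|\le C|x(t)|\eta_\lambda(t)$. Because $|x(t)|\to 0$, this implies that for every $\varepsilon>0$ the function $e^{\varepsilon t}\eta_\lambda(t)$ is eventually non-decreasing and $e^{-\varepsilon t}\eta_\lambda(t)$ is eventually non-increasing.

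For the $\tilde u^\perp$ bound, I would project onto the eigenbasis. Since $\tilde u^\perp\in(\ker\mathcal{L}_\Sigma)^\perp$, only the eigensections $\varphi_i$ with $\lambda_i\ne 0$ contribute. Set $y_i(t):=\int_\Sigma\langle\tilde u^\perp(t),\varphi_i\rangle\,d\mu$ and $R_i(t):=\int_\Sigma\langle(\tilde u^\perp)'(t)-\mathcal{L}_\Sigma\tilde u^\perp(t),\varphi_i\rangle\,d\mu$. Then $\tilde u^\perp=\sum_{\lambda_i\ne 0} y_i\varphi_i$ and each $y_i$ satisfies the scalar ODE $y_i'(t)-\lambda_i y_i(t)=R_i(t)$. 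By Lemma \ref{lem:3.6} and the hypothesis on $Q$,
\begin{equation*}
\sum_{\lambda_i\ne 0}|R_i(t)|^2\le \bigl\|(\tilde u^\perp)'-\mathcal{L}_\Sigma\tilde u^\perp\bigr\|_{L^2}^2(t)\le C^2Q^2(t)\le C^2M^2\eta_\lambda^2(t).
\end{equation*}
Splitting indices by the sign of $\lambda_i$, one has $\inf\{|\lambda_i|:\lambda_i\ne 0\}=:b>0$. For the stable modes ($\lambda_i<0$), part (1) of Lemma \ref{lemma:integrate} directly applies. For the unstable modes ($\lambda_i>0$), part (2) applies once we verify $\lim_{t\to\infty}e^{-\lambda_i t}y_i(t)=0$; but $|y_i(t)|\le\|\tilde u^\perp\|_{L^2}(t)\le C\|u\|_{L^2}(t)\to 0$ by Lemma \ref{lem:utup}, so this decay (in fact boundedness) is automatic. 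Summing the resulting bounds gives $\|\tilde u^\perp\|_{L^2}^2(t)=\sum_i|y_i(t)|^2\le C\eta_\lambda^2(t)$, which is the asserted inequality.

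The only point requiring care, and thus the main (though mild) obstacle, is checking the decay hypothesis $\lim e^{-\lambda_i t}y_i(t)=0$ needed to discard the exponentially growing homogeneous solutions for positive eigenvalues; this is handled by the global decay $u\to 0$ together with the boundedness of the Lyapunov--Schmidt decomposition from Lemma \ref{lem:utup}. Everything else is a direct transcription of the elliptic argument, with the second-order oscillatory complications of $I_1$ absent.
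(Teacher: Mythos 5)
Your proof is correct and is exactly the transcription of the elliptic argument (Corollaries~\ref{cor:source1} and \ref{cor:source}) that the paper intends — the paper omits the proof of Corollary~\ref{cor:3.7} and refers to Subsection~\ref{sec:5.2}, and your version, projecting $\tilde u^\perp$ onto the nonzero eigensections and applying Lemma~\ref{lemma:integrate} after verifying the $\eta_\lambda$ hypotheses via the argument of Lemma~\ref{lemma-etalambda}, is the intended one. The care you take with the decay hypothesis for the unstable modes is exactly the right (and only) point needing verification.
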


Using above, we obtain a crucial iterative Lemma. 
\begin{lemma}[\textit{c.f.} Lemma~\ref{lem-iterate2}]\label{lem:3.10} 
For a given $ \lambda \ge 2 $, suppose 
\be \label{eq-6y7}  Q(t) \le M \eta_{\lambda}(t),\ee  
{for large time $t\ge t'$.} 
Then  there is {$C=C(M,\rho,\lambda,  \mathcal{M}_{\Sigma}, N_2)<\infty $} such that\begin{align}Q(t) \leq C  |z(t)|^{\rho }\eta_\lambda(t) \le C \eta_{\lambda+\rho }(t),
\end{align}   
{for further large time $t\ge t_0=t_0(M,u,\rho , \lambda )$.} 
\end{lemma}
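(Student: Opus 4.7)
The plan is to follow the strategy used for the elliptic Lemma~\ref{lem-iterate2}, which is notably simpler here thanks to the absence of a $\bar{z}$-component. First, under the hypothesis $Q(t) \le M \eta_\lambda(t)$, Corollary~\ref{cor:3.7} supplies the base-level $L^2$ estimates
\begin{equation*}
|x'(t)|^2 + \Vert \tilde u^\perp \Vert_{L^2}^2(t) \le C \eta_\lambda^2(t).
\end{equation*}
The goal is then to upgrade these into pointwise $C^3$ controls on $\dot u$ and $\tilde u^\perp$, at the price of an arbitrarily small loss $\eta_\lambda(t)^{1-\varepsilon}$.

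To achieve this, I will first promote the base estimate to higher-order $L^2$ estimates. For the neutral-mode derivatives $\partial_t^k x(t)$, this follows by differentiating the reduced ODE from Lemma~\ref{lem:3.6} in $t$ and bootstrapping, using Lemma~\ref{lem:ODE_p} (parabolic dominance of the neutral mode) together with Corollary~\ref{cor:parabolic} to keep all higher derivatives uniformly controlled by $|x(t)|$. For $\tilde u^\perp$, I will establish parabolic analogues of Lemmas~\ref{lemma-iterate1} and \ref{lem:tildeE} by essentially the same arguments: the quadratic structure of $E_2(u)$ from Lemma~\ref{lem:error_p}, combined with parabolic regularity (Lemma~\ref{lem:regularity}), yields a bound of the form $\Vert \tilde E(u)\Vert_{L^2}(t) \le C Q(t)$ for the inhomogeneity of the linearised parabolic equation governing $\tilde u^\perp$, and a Duhamel estimate in the spirit of Corollary~\ref{cor:3.7} produces the desired higher-derivative $L^2$ bounds on $\tilde u^\perp$.

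Next, I will apply the interpolation Lemma~\ref{lemma-interpolation} between the base-level bounds of order $\eta_\lambda(t)$ and the high-order-derivative bounds of order $|x(t)|$, exploiting that $\Vert u\Vert_{C^N}(t)\to 0$ for every $N$ by parabolic regularity. Sobolev embedding on $\Sigma$ then converts these into the pointwise estimate
\begin{equation*}
\Vert \dot u \Vert_{C^2}(t) + \Vert \tilde u^\perp \Vert_{C^3}(t) \le C_\varepsilon\, \eta_\lambda(t)^{1-\varepsilon},
\end{equation*}
for any prescribed $\varepsilon > 0$, and hence $Q(t) \le C |x(t)|\, \eta_\lambda(t)^{1-\varepsilon}$. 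Finally, exactly as in the elliptic case, I will split $|x(t)| \eta_\lambda(t)^{1-\varepsilon}$ as $|x(t)||\nabla f(x(t))|^{1-\varepsilon} + |x(t)|^{1+\lambda(1-\varepsilon)}$ and invoke Young's inequality with exponents chosen so that the first term becomes $C|x(t)|^\rho |\nabla f(x(t))|$ plus a pure power of $|x(t)|$; choosing $\varepsilon$ small depending on $\rho$ and $\lambda$ lets every resulting pure-power-of-$|x(t)|$ term be absorbed into $|x(t)|^{\lambda+\rho}$, giving the claimed bound $Q(t) \le C|x(t)|^\rho \eta_\lambda(t) \le C\eta_{\lambda+\rho}(t)$.

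The main obstacle is verifying that the parabolic analogues of Lemmas~\ref{lemma-iterate1} and \ref{lem:tildeE} for $\tilde u^\perp$ really go through with the same rates. This should be routine given the parabolic regularity and the quadratic vanishing of $E_2$ at the origin, but requires some care in tracking the $L^2$-to-$L^2$ smoothing of the parabolic semigroup in place of the elliptic resolvent used before; once this is in place, the remainder of the argument is a short repetition of the elliptic computation.
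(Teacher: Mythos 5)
Your proposal is correct and follows essentially the route the paper intends: the paper omits the proof of this lemma, stating that it is obtained by repeating the (simpler, $\bar z$-free) version of the argument for the elliptic Lemma~\ref{lem-iterate2}, which is exactly what you reconstruct — base $L^2$ bounds from Corollary~\ref{cor:3.7}, higher-order $L^2$ bounds for the neutral mode and for $\tilde u^\perp$ via the parabolic analogues of Lemmas~\ref{lemma-iterate1} and \ref{lem:tildeE}, interpolation (Lemma~\ref{lemma-interpolation}) plus Sobolev embedding to get $C^2$/$C^3$ control at rate $\eta_\lambda^{1-\varepsilon}$, and Young's inequality to absorb the loss. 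The only small point to make explicit is that the interpolation step also uses that $\sup_{s\in[t-1,t+1]}\eta_\lambda(s)\le 2\eta_\lambda(t)$ for large $t$, which is the content of the parabolic analogue of Lemma~\ref{lemma-etalambda}.
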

Proposition~\ref{prop-neutral-dynamics-p1} follows exactly as Proposition~\ref{prop-neutral-dynamics}.
\begin{proof}[Proof of Proposition~\ref{prop-neutral-dynamics-p1}(1)] 
First by Corollary~\ref{cor:parabolic} and Lemma~\ref{lem:utup}, the solution satisfies the assumption  \eqref{eq-6y7} for $\lambda=2$. The proof follows by iterating Lemma \ref{lem:3.10}. 
\end{proof}

\begin{proof}[Proof of Proposition~\ref{prop-neutral-dynamics-p1} (2)] 
Recalling the equation $\dot  u = \mathcal{M}_{\Sigma}u + N_2(u)$, the structural assumption on $N_2$, and the convergence of $u$ to $0$, we may assume\[\Vert \dot u -\mathcal{M}_\Sigma {u} \Vert_{L^2} = \Vert N_2 (u) \Vert _{L^2} \le \eta  \Vert \mathcal{M}_\Sigma u \Vert _{L^2} , \]for $t\ge t_0=t_0(u,\eta)$, and thus
\[({1-\eta}) \Vert \mathcal{M}_\Sigma u \Vert _{L^2} \le \Vert \dot  u \Vert _{L^2}\le ({1+\eta}) \Vert \mathcal{M}_\Sigma u \Vert _{L^2} .\]
With two estimates above, the evolution of $u$ becomes a gradient flow of $\mathcal{F}_\Sigma$ with a small perturbation and the rest of the proof becomes a straightforward application of the \L ojasiewicz-Simon inequality near the origin: $\Vert \mathcal{M}_\Sigma u \Vert _{L^2}\ge c|\mathcal{F}(u)-\mathcal{F}(0)|^\theta.$
In particular, it is exactly the same as the argument from \eqref{eq-6891} to the end of the proof of Proposition \ref{prop-neutral-dynamics} $(2)$. 
\end{proof}

\section{perturbed gradient flow in finite dimension}\label{sec:7}

Let $0\in U\subset\mathbb{R}^n$ be an open set, $g:U\to\mathbb{R}$ be an analytic function with $g(0)=0$ and $\nabla g(0)=0$. There exists $p\geq 2$ such that $g=\sum_{j\geq p} g_j$ , where $g_j$ are homogeneous polynomials of degree $j$ and $g_p\not\equiv 0$. We write $\hat{g}_p$ for the restriction of $g_p$ on the unit sphere.  From {\L}ojasiewicz gradient inequality, there exists $\theta\in (0,1)$ such that, for $|y|$ small, $|\nabla g(y)|\geq c|g(y)|^\theta$. Let $y:[1,\infty)\to U$ be a continuously differentiable curve that satisfies

\noindent \textbf{A1}: There exist $0<\alpha_2\leq 1$ and $D_1,D_2>0$ such that for all $t\geq 1$, 
\begin{equation}\label{equ:alpha12}
D_1 t^{-1}\leq |y(t)|\leq D_2 t^{-\alpha_2};
\end{equation}

\noindent \textbf{A2}: Let $\textup{Err}(t)=y'(t)+\nabla g(y(t)) .$ For all $N\in\mathbb{N}$, there exists $b_N>0$ such that for all $t\geq 1,$
\begin{equation}\label{equ:Ebound}
|\textup{Err}(t)|\leq b_N\left( |y(t)|^\rho|\nabla g(y(t))|+|y(t)|^N \right),
\end{equation}
where $\rho:=\max\{\theta,\frac{3}{4}\}$.

\begin{theorem}\label{thm:gradflow} {
There is a finite set $\mathcal{Z}=\mathcal{Z}(g)\subset \mathbb{Q}_{\ge 3}\times \mathbb{R}_{>0}$ such that if $y(t)$ satisfies \textbf{A1}--\textbf{A2}, then 

\noindent (1) The secant $ {y(t)}/{|y(t)|}$ converges to some $\theta_*\in\mathbb{S}^{n-1}$, which is a critical point of $\hat{g}_p$ with $\hat{g}_p(\theta_*)\geq 0$.

\noindent (2) There exists $(\ell_* ,\alpha_0)\in \mathcal{Z}$ such that $ \lim_{t\to\infty} t^{-1}|y(t)|^{2-\ell_*}=\alpha_0\ell_*(\ell_*-2)$. Moreover, $\ell_*=p$ holds if and only if $\hat{g}_p(\theta_*)>0$. In that case, $\alpha_0=\hat{g}_p(\theta_*)$. }
 
\end{theorem}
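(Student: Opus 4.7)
The plan follows a {\L}ojasiewicz-type analysis on the sphere combined with a desingularization in the spirit of Kurdyka--Mostowski--Parusi\'nski \cite{KMP}. Write $y(t)=r(t)\theta(t)$ with $r=|y|$ and $\theta\in \mathbb{S}^{n-1}$. Using Euler's identity for $g_p$ and $\nabla g=\nabla g_p+O(|y|^{p})$, one derives
\[
r' = -p\, r^{p-1}\hat{g}_p(\theta)+O(r^p)+\langle\theta,\textup{Err}\rangle,\qquad r\theta' = -r^{p-1}\nabla_{\mathbb{S}^{n-1}}\hat{g}_p(\theta)+O(r^p)+\textup{Err}^{\perp}.
\]
After the reparametrization $d\tau = r^{p-2}\,dt$, the equation for $\theta(\tau)$ becomes a perturbed gradient flow on $\mathbb{S}^{n-1}$ for the analytic function $\hat g_p$. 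The error bound in \textbf{A2}, together with $\rho\ge\max\{\theta,3/4\}$, ensures that the induced perturbation on the sphere is dominated by $|\nabla_{\mathbb{S}^{n-1}}\hat g_p|$, so a {\L}ojasiewicz argument (applied to $\hat g_p$, in the style of the proof of Simon's theorem) yields convergence $\theta(\tau)\to \theta_*\in \mathbf{C}$ together with finite length of the secant.

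For the sign, $\hat g_p(\theta_*)<0$ would force $r'>0$ eventually by the radial ODE, contradicting $r\to 0$; hence $\hat g_p(\theta_*)\ge 0$. In the generic case $\hat g_p(\theta_*)>0$, integrating
\[
\frac{d(r^{2-p})}{dt}=p(p-2)\hat g_p(\theta(t))+o(1)\longrightarrow p(p-2)\hat g_p(\theta_*)
\]
yields $r^{2-p}(t)/t\to p(p-2)\hat g_p(\theta_*)$, giving $(\ell_*,\alpha_0)=(p,\hat g_p(\theta_*))$. The lower bound in \textbf{A1} and the fact that $\rho\ge\theta$ are what allow us to pass the Łojasiewicz error term $|y|^\rho|\nabla g|$ to the $o(1)$ on the right-hand side.

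The main obstacle is the degenerate case $\hat g_p(\theta_*)=0$, where both $\ell_*$ and $\alpha_0$ must come from strictly higher-order information. Following \cite{KMP}, I would introduce local coordinates $(\zeta_1,\dots,\zeta_{n-1})$ on $\mathbb{S}^{n-1}$ centered at $\theta_*$, expand $\hat g_p$ at $\theta_*$, and project the flow onto the decomposition $T_{\theta_*}\mathbb{S}^{n-1}=\ker D^2\hat g_p(\theta_*)\oplus \textup{Range}\,D^2\hat g_p(\theta_*)$. The range component is driven to $0$ at a geometrically fast rate in rescaled time, while the kernel component is governed by a new perturbed gradient-flow problem with a new analytic leading part of strictly higher (possibly rational) degree. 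Iterating this desingularization produces a descending chain of effective flows whose degrees lie in $\mathbb{Q}_{\ge 3}$, which terminates in finitely many steps because each step strictly decreases a discrete invariant of $g$ (e.g.\ a suitable chain length of critical strata), yielding the finite set $\mathcal{Z}(g)$.

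The hard part will be carefully tracking the propagation of the error $\textup{Err}(t)$ through this iterated desingularization: every reduction step generates fresh error contributions coming from Taylor truncations and coordinate changes, and the precise choice $\rho\ge 3/4$ in \textbf{A2} is calibrated to keep all induced errors admissible for the next application of the Łojasiewicz inequality at the next level. Once the algorithm is set up, the finiteness of $\mathcal{Z}$, the rationality of $\ell_*\in \mathbb{Q}_{\ge 3}$, and the positivity of $\alpha_0$ all follow from the combinatorics of the reduction, while the characterization ``$\ell_*=p$ iff $\hat g_p(\theta_*)>0$'' is immediate: the algorithm iterates precisely when the current leading spherical critical value vanishes.
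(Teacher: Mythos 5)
Your handling of the nondegenerate case $\hat g_p(\theta_*)>0$ is essentially the right heuristic, but there is a genuine gap exactly where the theorem has content, namely the degenerate regime that produces $\ell_*>p$. First, the claim that after the reparametrization $d\tau=r^{p-2}\,dt$ the spherical perturbation is ``dominated by $|\nabla_{\mathbb{S}^{n-1}}\hat g_p|$'' is false in general: the perturbation of the $\theta$-equation has absolute size $O(r^{\rho})+O(r)$ (from the higher-order part of $g$ and from $\textup{Err}$), whereas near a degenerate critical point of $\hat g_p$ --- which is precisely the situation in which the trajectory decays like $t^{-1/(\ell_*-2)}$ with $\ell_*>p$ --- the quantity $|\nabla_{\mathbb{S}^{n-1}}\hat g_p(\theta(t))|$ may tend to zero much faster than any fixed power of $r$. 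In that regime the ``perturbation'' drives the dynamics, the function $\hat g_p(\theta(\tau))$ need not be almost monotone, and the {\L}ojasiewicz argument for $\hat g_p$ alone does not close; so even part (1) is not established by your argument. Second, your iterated desingularization for the degenerate case is a program rather than a proof: termination of the iteration, rationality of the resulting exponents, finiteness of $\mathcal{Z}$, and admissibility of the errors accumulated at each reduction step are all asserted but not proved, and none of these is routine.

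For comparison, the paper never reparametrizes or blows up on the sphere. It imports the subanalytic machinery of \cite{KMP}: Proposition~\ref{pro:1} supplies a finite set $L\subset\mathbb{Q}_{>0}$ of characteristic exponents, and Lemmas~\ref{lem:7}--\ref{lem:9} show that the trajectory selects a unique $\ell_*\in L$ for which $G_{\ell_*}(y(t))=g(y(t))/|y(t)|^{\ell_*}$ remains bounded between positive constants and converges to a positive asymptotic critical value $\alpha_0$; this also yields the lower bound $|\nabla g(y(t))|\gtrsim |y(t)|^{\ell_*-1}$ needed to absorb the $|y|^N$ term of \textbf{A2} (Proposition~\ref{thm:perturb}). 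The {\L}ojasiewicz step is then run not for $\hat g_p$ but for the control function $h(t)=(G_{\ell_*}(y(t))-\alpha_0)+|y(t)|^{\omega_*/2}$, using the {\L}ojasiewicz inequality at asymptotic critical values of $G_{\ell_*}$ (Proposition~\ref{pro:subanalytic}); this treats the degenerate and nondegenerate cases uniformly and gives $\hat g_p(\theta_*)=\lim_{t\to\infty}|y(t)|^{\ell_*-p}\alpha_0\ge 0$ for free. The finiteness and rationality of $\mathcal{Z}$ come from the finiteness of $L$ and of the set of asymptotic critical values (subanalytic geometry, via the curve selection lemma), not from a terminating reduction algorithm; to make your route work you would essentially have to reprove these finiteness statements, which is the hard content of \cite{KMP}.
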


\begin{proof}[Proof of Theorems~\ref{thm:general_p} and \ref{thm:general_e}]
We present the proof for Theorem~\ref{thm:general_e}. The argument for Theorem~\ref{thm:general_p} is identical. In view of Proposition~\ref{prop-neutral-dynamics}, Theorem~\ref{thm:gradflow} applies with $y(t)=z(t)$ and $g=m^{-1}{(f-f(0))}$. This ensures $z(t)/ |z(t)|$ converges to $\theta_*$, a critical point of $\hat{f}_p$ with $m^{-1} \hat{f}_p(\theta_*)\geq 0$. Moreover, $ \lim_{t\to\infty} t^{1/(\ell_* -2)} |z(t)|=\big( \alpha_0  \ell_*(\ell_*-2) \big)^{-1/(\ell_*-2)}.$ 
This implies $$\lim_{t\to\infty} t^{1/(\ell_*-2)}\Vert u(t) \Vert_{L^2}  =\big( \alpha_0  \ell_*(\ell_*-2) \big)^{-1/(\ell_*-2)}.$$ 
Let $w\in\ker \mathcal{L}_{\Sigma}$ be the section corresponding to $\theta_*$ through \eqref{equ:identification}. Clearly $ u(t)/\Vert u(t) \Vert_{L^2}$ converges to $w$ in $L^2(\Sigma;\mathbf{V})$. Corollary~\ref{cor:sobolev} then ensures the convergence of $ u(t)/\Vert u(t) \Vert_{L^2}$ in $C^\infty(\Sigma;\mathbf{V})$.
\end{proof}

 In Section~\ref{sec:perturbed gradient flow}, we show that the error term $\textup{Err}(t)$ is much smaller than $\nabla g(y(t))$. Then we prove (1) and (2) in Theorem~\ref{thm:gradflow} in Sections~\ref{sec:limitofsecant} and \ref{sec:rateofconvergence} respectively. From now on we assume \textbf{A1} and \textbf{A2} hold true. Moreover, we may assume $|y(t)|\leq 1$ for all $t$. We rely on some notation and results in \cite{KMP} which we record below. 

\vspace*{0.3cm}

Let $\nabla' g:= \nabla g- \partial_rg \partial r$ be the spherical component of $\nabla g$. For any $q>0$, define $G_q$ on $U\setminus\{0\}\to\mathbb{R}$ by $G_q(y):= \frac{g(y)}{|y|^q}$. For any $\varepsilon>0$ and $r>0$, define
\begin{equation*}
W(\varepsilon,r):=\left\{ y\in U\, |\, \vert y\vert\leq r,\ g(y)\neq 0\ \textup{and}\ \varepsilon|\nabla'g(y)|\leq \left| \partial_r g (y) \right| \right\}.
\end{equation*}
For any $\omega>0$ and $q>0$, define
\begin{equation}\label{equ:def_W_4}
W(\varepsilon,r,\omega,q):=\left\{ y\in W(\varepsilon,r)\, |\, \left|1 -\frac{q g(y)}{|y|\partial_r g(y)} \right|\leq  \frac{1}{2 } |y|^{2\omega}  \right\}.
\end{equation}

{Roughly, $W(\eps,r)\subset \mathbb{R}^n$ corresponds to a good region where the spherical component of $\nabla g$ is controlled by the radial component so that $y(t)$ moves radially. $W(\eps,r,\omega,q)$ is the subregion where $\frac{r\partial_rg}{g}$ is asymptotic to $q$ and thus the gradient flow $y(t)$ moves as if it slides under a potential function of form $C |y|^q$. A limit of $\frac{r\partial_r g(y)}{g(y)}$ for $ y\to 0$ and $y\in W(\eps,r)$ is called a characteristic exponent of $g$ (see \cite{KMP}). Moreover, \cite{KMP} showed there are only finitely many characteristic exponents, which are all rational.} \begin{proposition}[Proposition 4.2 in \cite{KMP}]\label{pro:1}
For any $\varepsilon>0$, there exist $r_1=r_1(\varepsilon)>0$ and $\omega_1=\omega_1(\varepsilon)\in (0,4^{-1})$ such that {$W(\eps,r_1)$ can be decomposed into pairwise disjoint unions} \begin{equation*}
W(\varepsilon,r_1)=\bigsqcup_{\ell\in L} W(\varepsilon,r_1,\omega_1,\ell),
\end{equation*} where $L\subset{\mathbb{Q}_{>0}}$ is a finite set and is independent of $\varepsilon$. Moreover, there exists $C_1=C_1(\varepsilon)\geq 1 $ such that $\textup{for all}\ y\in W(\varepsilon,r_1,\delta_1,\ell)\ \textup{and}\ \ell\in L,$
\begin{equation}\label{equ:C_1}
\frac{1}{C_1}\leq |G_\ell(y)| \leq C_1.
\end{equation}
\end{proposition}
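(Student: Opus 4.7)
The key quantity to study is the \emph{radial logarithmic derivative}
$\phi(y) := |y|\,\partial_r g(y)/g(y)$
on $W(\varepsilon,r)$. The defining condition of $W(\varepsilon,r_1,\omega_1,\ell)$ in \eqref{equ:def_W_4} can be rewritten as $|\phi(y)-\ell|\le 2^{-1}|\phi(y)|\,|y|^{2\omega_1}$, so the plan is to identify $L$ with the set of cluster values of $\phi$ at the origin within $W(\varepsilon,r)$ (the ``characteristic exponents'' of $g$), and then to quantify how fast $\phi$ approaches that set. Since $g$ is real analytic and the constraint $\varepsilon|\nabla'g|\le|\partial_rg|$ is semi-analytic, both $W(\varepsilon,r)$ and $\phi|_{W(\varepsilon,r)}$ are subanalytic near $0$; this subanalyticity is the structural property that drives the entire argument.

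The first step is to show that the cluster set $L$ is a finite subset of $\mathbb{Q}_{>0}$. For any such cluster value $\ell$, the curve selection lemma in the subanalytic category produces a real-analytic arc $\gamma:[0,\delta)\to W(\varepsilon,r)\cup\{0\}$ with $\gamma(0)=0$ along which $\phi\circ\gamma\to\ell$. Expanding $g\circ\gamma$ and $(|y|\,\partial_rg)\circ\gamma$ as Puiseux series in $s$ forces $\phi\circ\gamma$ to admit a rational limit, so $\ell\in\mathbb{Q}$; the defining inequality of $W(\varepsilon,r)$ combined with $g\circ\gamma\neq 0$ further forces the leading Puiseux exponent, and hence $\ell$, to be positive. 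A second application of curve selection, applied to the subanalytic function $y\mapsto \mathrm{dist}(\phi(y),L\setminus\{\ell_0\})$ at an alleged accumulation point $\ell_0$ of $L$, rules out such accumulation, so $L$ is finite. Independence of $\varepsilon$ follows because the cluster set stabilizes once $\varepsilon$ is small, as the family $W(\varepsilon,r)$ is monotone in $\varepsilon$.

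Next, the stratification and the exponent $\omega_1$ come from a {\L}ojasiewicz inequality for subanalytic functions, applied to $y\mapsto\mathrm{dist}(\phi(y),L)$ on $W(\varepsilon,r)\cap\{|y|\le r_1\}$: there exist $C>0$ and $\omega_1\in(0,1/4)$ with $\mathrm{dist}(\phi(y),L)\le C|y|^{2\omega_1}$. Shrinking $r_1$ so that $Cr_1^{2\omega_1}$ is less than half the minimum gap between distinct elements of $L$, each $y\in W(\varepsilon,r_1)$ is matched to a unique $\ell\in L$; absorbing the bounded, strictly positive factor $|\phi(y)|$ (which is comparable to the matched $\ell$ on the relevant region) converts $|\phi(y)-\ell|\le C|y|^{2\omega_1}$ into the form \eqref{equ:def_W_4}, giving the disjoint decomposition.

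The hardest step is the two-sided bound $C_1^{-1}\le|G_\ell(y)|\le C_1$ on each stratum. The identity
\begin{equation*}
\partial_r\log|G_\ell(y)|=\frac{\phi(y)-\ell}{|y|},
\end{equation*}
combined with the stratum condition, yields $|\partial_r\log|G_\ell||\le C|y|^{2\omega_1-1}$, which is radially integrable since $\omega_1>0$. The obstruction is that the radial segment from $y$ to a fixed reference sphere need not remain inside $W(\varepsilon,r_1,\omega_1,\ell)$, so one cannot integrate directly to transfer a reference bound. I would overcome this by combining a curve-selection arc $\gamma_\ell$ realizing $\ell$, on which $G_\ell\circ\gamma_\ell$ has a finite nonzero limit (so $|G_\ell|$ is pinched on $\gamma_\ell$), with a subanalytic trivialization of the stratum (a wing lemma or Puiseux-with-parameter argument) that propagates the bound from $\gamma_\ell$ to a uniform bound on the whole stratum. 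This uniform transport is the main technical hurdle and is precisely where the subanalytic machinery of Kurdyka--Mostowski--Parusi\'nski is essential.
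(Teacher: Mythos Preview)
The paper does not give a proof of this proposition at all: it is stated verbatim as Proposition~4.2 of \cite{KMP} and then used as a black box throughout Section~\ref{sec:7}. So there is no ``paper's own proof'' to compare against; what you have written is a sketch of the original KMP argument rather than a reconstruction of anything in the present paper.

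As a sketch of the KMP proof your outline is broadly on the right track---the key objects (the ratio $\phi(y)=|y|\partial_rg/g$, the characteristic exponents as its cluster values, curve selection to get rationality and finiteness, a {\L}ojasiewicz-type inequality to produce $\omega_1$, and the identity $\partial_r\log|G_\ell|=(\phi-\ell)/|y|$ for the two-sided bound) are exactly the ones KMP use. Two places where your sketch is genuinely incomplete rather than merely terse: (i) the independence of $L$ from $\varepsilon$ is not just monotonicity---one needs that any cluster value of $\phi$ on $W(\varepsilon,r)$ is realized along an analytic arc, and on such an arc the inequality $\varepsilon'|\nabla'g|\le|\partial_rg|$ holds eventually for \emph{every} $\varepsilon'>0$, which is what pins $L$ down; (ii) the ``subanalytic trivialization / wing lemma'' step you flag at the end is indeed the heart of the matter, and in KMP it is carried out via a careful Puiseux-with-parameter construction rather than a soft transport argument---your sketch correctly identifies this as the main hurdle but does not resolve it.
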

To simplify notation, in the sequel we write $V(\varepsilon)$ and $V(\varepsilon,\ell)$ for $W(\varepsilon,r_1(\varepsilon))$ and $W(\varepsilon,r_1(\varepsilon),\omega_1(\varepsilon),\ell)$ respectively. In the next lemma, we record direct consequences of {\L}ojasiewicz gradient inequality and Bochnak-{\L}ojasiewicz inequality in \cite[Lemma~4.3]{KMP}.
\begin{lemma}\label{lem:2}
There exist $T_1\geq 1$, $c_2, c_3>0$ such that for all $t\geq T_1$,
\begin{equation}\label{equ:Lo}
|\nabla g(y(t))|\geq c_2 |g(y(t))|^{\rho}, \text{ and}
\end{equation}
\begin{equation}\label{equ:BLo}
|y(t)||\nabla g(y(t))|\geq c_3 |g(y(t))|.
\end{equation}
\end{lemma}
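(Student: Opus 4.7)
The plan is to reduce both estimates to their standard pointwise counterparts for the analytic function $g$ on a neighborhood of the origin, and then exploit the fact that $y(t)\to 0$ as $t\to\infty$. Indeed, \textbf{A1} gives $|y(t)|\leq D_2 t^{-\alpha_2}$, so for any prescribed neighborhood $U$ of the origin there is $T_1\geq 1$ such that $y(t)\in U$ for all $t\geq T_1$. Thus it suffices to establish each inequality at every point $y\in U$, for $U$ small enough, and then evaluate at $y=y(t)$.

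For \eqref{equ:Lo}, I would invoke the \L ojasiewicz gradient inequality already quoted in the preamble: on some neighborhood of $0$ one has $|\nabla g(y)|\geq c|g(y)|^\theta$ for the exponent $\theta\in(0,1)$ associated with $g$. Since $\rho=\max\{\theta,3/4\}\geq\theta$ and $g(y(t))\to 0$, by shrinking $U$ (equivalently, enlarging $T_1$) so that $|g(y)|\leq 1$ on $U$ one has $|g(y)|^\rho\leq|g(y)|^\theta$, which upgrades the bound to $|\nabla g(y)|\geq c|g(y)|^\rho$; setting $c_2=c$ yields the claim at $y=y(t)$. For \eqref{equ:BLo}, I would apply the Bochnak--\L ojasiewicz inequality for real analytic functions vanishing at the origin: there exist a neighborhood $U'$ of $0$ and $c_3>0$ such that $|y||\nabla g(y)|\geq c_3|g(y)|$ on $U'$. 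Choosing $T_1$ large enough so that $y(t)\in U\cap U'$ for $t\geq T_1$ produces both assertions simultaneously with a single constant $T_1$.

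The statement here is exactly \cite[Lemma~4.3]{KMP}, which the authors cite explicitly, so no genuine obstacle is expected; the only observation beyond the cited lemma is the harmless replacement of the \L ojasiewicz exponent $\theta$ by the larger exponent $\rho$, which is valid as soon as $|g(y(t))|\leq 1$, a condition automatically met for large $t$ by continuity of $g$ and $y(t)\to 0$.
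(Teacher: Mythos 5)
Your proof is correct and follows exactly the route the paper intends: the paper gives no separate argument for this lemma, stating only that it records "direct consequences" of the Łojasiewicz gradient inequality (already quoted with exponent $\theta$ in the preamble of Section~7) and the Bochnak--Łojasiewicz inequality from \cite[Lemma~4.3]{KMP}. Your only added step — upgrading the exponent from $\theta$ to $\rho=\max\{\theta,3/4\}\geq\theta$ via $|g(y(t))|\leq 1$ for $t$ large, which holds since $g(0)=0$ and $|y(t)|\leq D_2t^{-\alpha_2}\to 0$ by \textbf{A1} — is precisely the observation needed to pass from the cited inequality to the stated form \eqref{equ:Lo}.
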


\subsection{bounding the error term}\label{sec:perturbed gradient flow}
{In this subsection, the goal is to prove  $|y|^N$ in \eqref{equ:Ebound} can be absorbed to the other term and hence $\textup{Err}(t)$ is small compared to $\nabla g(y(t))$ as follows:
}

\begin{proposition}[smallness of error]\label{thm:perturb}
There exists $b_0>0$ and $T_0\geq 1$ such that for all $t\geq T_0$,
\begin{equation}\label{equ:Ebound_0}
| \textup{Err}(t)| \leq b_0  |y(t)|^\rho|\nabla g(y(t))|. 
\end{equation}
\end{proposition}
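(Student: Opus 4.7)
The goal is to absorb the $|y(t)|^N$ term from \eqref{equ:Ebound} into $|y(t)|^\rho |\nabla g(y(t))|$. The strategy reduces to showing a single polynomial lower bound
\[
|\nabla g(y(t))|\ \geq\ c\,|y(t)|^{M}
\]
for some $M\geq 0$, $c>0$, and all sufficiently large $t$. Granting this, applying \eqref{equ:Ebound} with $N=M+\rho+1$ gives $b_N|y(t)|^N \leq (b_N/c)\,|y(t)|\cdot|y(t)|^\rho|\nabla g(y(t))|$, which is absorbed into $b_0 |y(t)|^\rho|\nabla g(y(t))|$ once $|y(t)|$ is sufficiently small (guaranteed by \textbf{A1}).

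To prove this polynomial lower bound, I would first invoke the \L ojasiewicz inequality \eqref{equ:Lo}, which further reduces the task to showing $|g(y(t))|\geq c'|y(t)|^{M'}$ for some $M'\geq 0$ and large $t$. For this I would combine two ingredients. The first is the structural decomposition $V(\varepsilon) = \bigsqcup_{\ell\in L} V(\varepsilon,\ell)$ from Proposition~\ref{pro:1}, in each stratum of which \eqref{equ:C_1} directly yields $|g(y)|\asymp |y|^\ell$. The second is a dynamical argument, using the perturbed evolution $y'=-\nabla g+\mathrm{Err}$ together with \eqref{equ:Ebound} and the bound $|y(t)|\geq D_1 t^{-1}$ from \textbf{A1}, to show that $y(t)$ eventually enters and stays in some $V(\varepsilon)$. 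The key point is that outside $V(\varepsilon)$ the angular component $\nabla' g$ dominates $\partial_r g$, producing motion transverse to the radial direction that is incompatible with the algebraic decay rate of $|y(t)|$ imposed by \textbf{A1}.

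The main obstacle is controlling the trajectory in the ``bad'' region $U\setminus V(\varepsilon)$, where the unperturbed-flow analysis of \cite{KMP} must be adapted to the presence of $\mathrm{Err}(t)$. The saving grace is that \textbf{A2} already carries a factor $|y|^\rho$ in front of $|\nabla g|$, so the perturbed flow is genuinely close to the pure gradient flow whenever the $|y|^N$ term does not dominate. This opens a bootstrap: any preliminary polynomial lower bound on $|\nabla g|$ tightens the control on $\mathrm{Err}$ via \eqref{equ:Ebound}, which in turn strengthens the geometric analysis of the trajectory, iterating until the desired bound stabilizes at a value of $M$ large enough for the absorption in the first paragraph to close.
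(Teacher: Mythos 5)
Your reduction is the right one and matches the paper's endgame: once a polynomial lower bound $|\nabla g(y(t))|\geq c|y(t)|^{M}$ is available for \emph{all} large $t$, taking $N$ large in \eqref{equ:Ebound} absorbs the $|y(t)|^N$ term, and the paper obtains that bound exactly as you suggest, from a two-sided estimate $|g(y(t))|\asymp|y(t)|^{\ell_*}$ (it uses the Bochnak--{\L}ojasiewicz inequality \eqref{equ:BLo} rather than \eqref{equ:Lo}, which is immaterial). The gap is in how you propose to get $|g(y(t))|\gtrsim|y(t)|^{M'}$ on an entire half-line. You assert that $y(t)$ eventually enters and \emph{stays} in some $V(\varepsilon)$, with the heuristic that transverse motion outside $V(\varepsilon)$ is incompatible with \textbf{A1}. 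That claim is not established, and it is stronger than what is true or needed: the trajectory may leave $V(\varepsilon_*)$ repeatedly (Lemma~\ref{lem:bad} later only shows that the excursions contribute negligibly to certain integrals, not that they stop). What the paper actually proves is (a) \emph{recurrence}: $y(t)$ returns to $V(\varepsilon_*)$ at arbitrarily large times (Lemma~\ref{lem:7}, via a {\L}ojasiewicz arclength argument --- if not, $G_{q_*}$ with $q_*\geq 2/(1-\rho)$ is eventually monotone, hence $g(y(t))\leq C|y(t)|^{q_*}$, and Lemma~\ref{lem:5} then forces $|y(t)|\leq C|y(t)|^{2}$, contradicting $|y(t)|\to 0$ --- not via a transversality-versus-decay-rate argument); and (b) a \emph{trapping} argument for the normalized value $G_{\ell_*}(y(t))=g(y(t))/|y(t)|^{\ell_*}$: outside $V(\varepsilon_*)$ the Lyapunov function $t^{-4}+G_{\ell_*}(y(t))$ is non-increasing (Lemma~\ref{lem:6}), so $G_{\ell_*}$ cannot re-cross the thresholds $1/(2C_{1,*})$ and $2C_{1,*}$ between visits (Lemma~\ref{lem:8}), and therefore remains in a fixed compact interval of $(0,\infty)$ for all large $t$ (Lemma~\ref{lem:9}).

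Without step (b), your plan only yields $|g(y(t))|\asymp|y(t)|^{\ell}$ at the times when $y(t)\in V(\varepsilon,\ell)$, whereas Proposition~\ref{thm:perturb} requires the bound for every $t\geq T_0$; moreover, different visits could a priori land in different strata $V(\varepsilon,\ell)$ with different exponents $\ell$, which is ruled out only by the same trapping lemma. Your closing bootstrap does not substitute for this: once a single polynomial lower bound valid for all large $t$ is in hand, the proof closes in one step with no iteration, so the iteration you describe does not address the actual difficulty, which is propagating the lower bound on $g/|y|^{\ell_*}$ across the excursions out of the good region.
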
 
The proof will be given once necessary lemmas are presented. 
 
\begin{lemma}\label{lem:4}
For any $d>0$ and $M\geq 4$, there exists $c =c (d,M)$ such that, for $t$ large enough, $  dt^{-M}+g(y(t))\geq 0 $. Moreover, it holds that
\begin{equation}\label{equ:L4_0}
\frac{d}{dt}\left( dt^{-M}+g(y(t)) \right)\leq -c  \left( dt^{-M}+g(y(t)) \right)^{\rho}|\nabla g(y(t))|.
\end{equation}
\begin{proof}
For a given $N\in \mathbb{N}$, by \eqref{equ:alpha12}-\eqref{equ:Ebound}, $\frac{d}{dt}( dt^{-M}+g(y(t)))$ is bounded by 
\begin{align*}
-\frac{dM}{t^{M+1}}- (1-b_N|y|^\rho) |\nabla g(y)|^2+b_N(D_2 t^{-\alpha_2})^N|\nabla g(y)|. 
\end{align*}
Let $T$ be a large constant to be fixed later. Since $y(t)\to 0$, we may choose $N=N(M,\alpha_2)$ and $T$ large so that, for $t \ge T$, there hold
\begin{align}\label{equ:L4_1}
\frac{d}{dt}\left({d}{t^{-M}}+g(y) \right) \leq &-\frac{dM}{t^{M+1}}-  \frac{|\nabla g(y)|^2}{2}+\frac{b_ND_2^N}{ t^{M/2+1 }}|\nabla g(y)|,
\end{align}
and $(dMt)^{1/2}\geq 2b_ND_2^N.$ From the Cauchy-Schwarz inequality, 
\begin{equation}\label{equ:L4_2}
dMt^{-M-1}+4^{-1}|\nabla g(y)|^2\geq 2b_ND_2^N t^{-M/2-1}|\nabla g(y)|.  
\end{equation}
Thus, 
$
\frac{d}{dt}\left( dt^{-M}+g(y) \right) \leq  - 4^{-1}  |\nabla g(y)|^2-b_ND_2^N t^{-M/2-1 }|\nabla g(y)|,
$
and this ensures $dt^{-M}+g(y(t))\geq 0$. By \eqref{equ:Lo}, $M\geq 4$ and $\rho\geq \frac{3}{4}$, 
\begin{align*} 
\frac{d}{dt}\left( dt^{-M}+g(y) \right) \leq &\left( - 4^{-1}c_2 |g(y)|^{\rho}-b_ND_2^Nd^{-\rho}  (dt^{-M})^{\rho}\right)  |\nabla g(y)|\\
\leq & -c'\max\{dt^{-M},| g(y)| \}^{\rho}|\nabla g(y)|,
\end{align*}
where $c'=\min\{4^{-1}c_2,b_ND_2^Nd^{-\rho}\}$. Using $\max\{|a|,|b|\}\geq \frac{ a+b}{2}$, \eqref{equ:L4_0} holds with $c =2^{-\rho}c'$.
\end{proof}
\end{lemma} 

\begin{lemma}\label{lem:5}
For any $M\geq 4$, there exists $C=C(M)$ such that, for $t$ large enough, $  t^{-M}+g(y(t))\geq 0 $. Moreover, 
\begin{equation*}
\int_{t}^\infty |{y'} (\tau)|\,d\tau \leq C \left(  t^{-M}+g(y(t)) \right)^{1-\rho}.
\end{equation*}
\begin{proof}
From \eqref{equ:alpha12}, \eqref{equ:Ebound} and $|y(t)|\leq 1$, $|y'(t)|\leq (1+b_N)|\nabla g(y(t))|+b_ND_2^N t^{-N\alpha_2}.$ Here $N$ is an integer to be chosen later. 
Therefore,
\begin{align*}
\int_t^\infty |y'(\tau)|\, d\tau\leq &(1+b_N) \int_t^\infty |\nabla g(y(\tau))|\, d\tau+\frac{b_ND_2^N}{N\alpha_2-1} t^{-N\alpha_2+1}.
\end{align*}
Let $h(t)=t^{-M}+g(y(t))$ and let $c=c(1,M)$ be the constant given in Lemma~\ref{lem:4}. From Lemma~\ref{lem:4}, for $t$ large enough,
\begin{align*}
\int_t^\infty |\nabla g(y(\tau))|\, d\tau\leq \frac{1}{c(1-\rho)} h(t)^{1-\rho}.
\end{align*} {Applying Lemma \ref{lem:4} for $d=1/2$, we notice} $h(t)\geq 2^{-1}t^{-M}$ for $t$ large enough. As a result, after choosing sufficiently large $N$ so that $-N\alpha_2 +1 \le (1-\rho)M$, we conclude $\frac{b_ND_2^N}{N\alpha_2-1} t^{-N\alpha_2+1}\leq C h(t)^{1-\rho}.$ The lemma follows by combining two estimates above.  \end{proof} 
\end{lemma}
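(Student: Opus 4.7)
The plan is to reduce the integral estimate to the decay information provided by the preceding Lemma~\ref{lem:4}. First I would use the assumption \textbf{A2} together with $|y(t)|\le 1$ to obtain a pointwise bound of the form $|y'(t)|\le (1+b_N)|\nabla g(y(t))|+b_N D_2^N t^{-N\alpha_2}$, where $N$ is a large integer to be fixed later. The right-hand side splits the integrand $|y'(\tau)|$ into a gradient part, controlled by the \L ojasiewicz-type inequality coming out of Lemma~\ref{lem:4}, and a purely polynomial remainder that I will absorb using the $t^{-M}$ barrier.

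For the gradient part, I would apply Lemma~\ref{lem:4} with $d=1$, which simultaneously yields the desired non-negativity $t^{-M}+g(y(t))\ge 0$, and rewrite its differential inequality as $|\nabla g(y(t))|\le -\frac{1}{c(1-\rho)}\frac{d}{dt}h(t)^{1-\rho}$, where $h(t):=t^{-M}+g(y(t))$. Integrating from $t$ to $\infty$ and observing that $h(\tau)\to 0$ as $\tau\to\infty$ (because $\tau^{-M}\to 0$ and $g(y(\tau))\to g(0)=0$ by \textbf{A1} and continuity of $g$), the contribution of this piece to $\int_t^\infty |y'(\tau)|\,d\tau$ is bounded by a constant multiple of $h(t)^{1-\rho}$.

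For the polynomial remainder $\int_t^\infty b_N D_2^N \tau^{-N\alpha_2}\,d\tau$, which is of order $t^{-N\alpha_2+1}$ once $N\alpha_2>1$, I would absorb it into $h(t)^{1-\rho}$ by applying Lemma~\ref{lem:4} a second time with $d=\tfrac12$; this gives $h(t)\ge \tfrac12 t^{-M}$ for large $t$, and hence $h(t)^{1-\rho}\ge 2^{-(1-\rho)}t^{-M(1-\rho)}$. Choosing $N$ large enough that $N\alpha_2-1\ge M(1-\rho)$, which is possible since $\alpha_2>0$ is fixed in \textbf{A1}, makes the absorption automatic, and combining the two estimates yields the stated bound.

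The main obstacle is that \textbf{A2} only controls the error $y'(t)+\nabla g(y(t))$ up to a term $|y(t)|^N$ whose decay in $t$ is merely polynomial of exponent $N\alpha_2$, governed by $\alpha_2$ and \emph{not} by the \L ojasiewicz exponent $\rho$. A direct comparison of this term with $|\nabla g(y)|$ is not available, and the trick that makes the argument work is exactly the introduction of the composite barrier $h(t)=t^{-M}+g(y(t))$ from Lemma~\ref{lem:4}: it simultaneously encodes the unavoidable polynomial floor coming from \textbf{A2} and the \L ojasiewicz decay of $g(y(t))$, so both pieces of the error budget land on the same scale $h(t)^{1-\rho}$ once $N$ is chosen sufficiently large.
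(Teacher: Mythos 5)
Your proposal is correct and follows essentially the same route as the paper: the same pointwise bound from \textbf{A2}, the same integration of the differential inequality from Lemma~\ref{lem:4} with $d=1$, and the same absorption of the polynomial tail via the lower bound $h(t)\ge \tfrac12 t^{-M}$ obtained from Lemma~\ref{lem:4} with $d=\tfrac12$. Your condition $N\alpha_2-1\ge M(1-\rho)$ is in fact the correct form of the inequality the paper states (the paper's version appears to have a sign typo), so there is nothing to fix.
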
 
\begin{lemma}\label{lem:6}
For any $d>0$, $M\geq 4$ and $q>0$, there exists $T_2=T_2(d,M,q)$ such that the following statement holds. Set $\varepsilon=c_3/(2q)$ where {$c_3$ the constant is from  the Bochnak-{\L}ojasiewicz inequality \eqref{equ:BLo}}. Then 
\begin{align*}
t\geq T_2\ \textup{and}\ y(t)\notin  V(\varepsilon)\Longrightarrow  \frac{d}{dt}\left( dt^{-M}+G_q(y(t))\right)\leq 0.
\end{align*}
\begin{proof} 
Set $r_1=r_1(\varepsilon)$ be given in Proposition~\ref{pro:1}. Let $N$ and $T_2$ be sufficiently large integer and number to be fixed later, respectively. In the beginning, let us assume $T_2$ is large so that $T_2\ge T_1$ and $|y(t)|\le r_1$ for $t\ge T_2$.
We aim to show that
\begin{equation}\label{equ:L6_0}
t\geq T_2\ \textup{and}\  \frac{d}{dt}\left( dt^{-M}+G_q(y(t))\right)>0\Longrightarrow y(t)\in  V(\varepsilon) .
\end{equation}

From now on we always assume $t\geq T_2$. 
From the Bochnak-{\L}ojasiewicz inequality \eqref{equ:BLo}, $\frac{d}{dt}\left( dt^{-M}+G_q(y ) \right)$ is bounded by 
\begin{equation}\label{equ:L6_1}
\begin{split}
 &-\frac{dM}{t^{M+1}}-\frac{|\nabla g(y )|^2}{|y |^q}+\frac{q g(y )\partial_r g(y ) }{|y |^{q+1}} +(1+q/c_3)  \frac{|\nabla g(y )|}{|y |^q}|\textup{Err}(t)|.
\end{split}
\end{equation}
By the Cauchy-Schwarz inequality and \eqref{equ:alpha12},
\begin{equation}
\ba   \frac{|\nabla g(y )|}{|y |^q}|\textup{Err}(t)|&\le b_N(|y |^\rho + |y |^{\rho})\frac{|\nabla g(y )|^2}{|y |^q} +\frac{b_N}{4}(D_2t^{-\alpha_2})^{2N-q-{\rho }}, \ea 
\end{equation} Fixing $T_2$ and $N$ sufficiently large, we can make $2b_N(1+q/c_3)|y(t)|^\rho\le 1/2$ and  $4^{-1}b_N(1+q/c_3) (D_2t^{-\alpha_2})^{2N-q-1}\le dMt^{-M-1}$ and  for $t\ge T_2$.  Combining this with the bound \eqref{equ:L6_1},
\begin{align}\label{equ:L6_4}
\frac{d}{dt}\left( dt^{-M}+G_q(y ) \right)\leq - 2^{-1}\frac{|\nabla g(y )|^2}{|y |^q}+\frac{q g(y )\partial_r g(y ) }{|y |^{q+1}} .
\end{align} 
In view of \eqref{equ:L6_4},  $\frac{d}{dt}\left( dt^{-M}+G_q(y ) \right)> 0$ implies   $\left|\partial_r g (y ) \right|> \frac{|y |}{2q |g(y )|}|\nabla g(y )|^2.$
From $T_2\geq T_1$, we may apply the Bochnak-{\L}ojasiewicz inequality \eqref{equ:BLo} to obtain
$
\left| \partial_r g(y ) \right|\geq \frac{c_3}{2q}|\nabla g(y )|\geq \varepsilon |\nabla' g(y )|.
$
Together with $|y(t)|\leq r_1(\varepsilon)$, $y(t)\in V(\varepsilon)$. 
\end{proof}
\end{lemma}
Let us fix
\begin{align} \label{equ:choices}
q_*=\max_{\ell\in L}\left\{ \frac{2}{1-\rho},\ell\right\},\ \varepsilon_*=\frac{c_3}{2q_*},\ C_{1,*}=C_1(\varepsilon_*),\ \omega_*=\omega_1(\varepsilon_*).
\end{align}
\begin{lemma}\label{lem:7}
For any $t\geq 1$, there exists $t'\geq t$ such that $y(t')\in V(\varepsilon_*)$.
\begin{proof}
Suppose the assertion fails. There exists $\bar{t}\geq 1$ such that $y(t)\notin V(\varepsilon_*)$ for all $t\geq \bar{t}$. Take $d= 1/ D_1^{q_*}$ and define $h(t)=dt^{-4}+G_{q_*}(y(t)).$ From Lemma~\ref{lem:6}, $h(t)$ is non-increasing for $t$ large enough and hence $h(t)\leq C'$ for some $C'$. Equivalently, $d|y(t)|^{q_*}t^{-4}+g(y(t))\leq C'|y(t)|^{q_*}.$ From {the lower bound in} \eqref{equ:alpha12}, \begin{align*}
t^{-4-q_*}+g(y(t)) = dD_1^{q_*}t^{-4-q_*}+g(y(t)) \leq C'|y(t)|^{q_*}.
\end{align*}
From Lemma~\ref{lem:5} with $M=4+q_*$, for $t$ large enough,
\begin{align*}
|y(t)|\leq \int_t^\infty |y'(\tau)|\, d\tau\leq C(t^{-4-q_*}+g(y(t)) )^{1-\rho}\leq  C(C')^{1-\rho} |y(t)|^{2}.  
\end{align*}
Here we used $q_*(1-\rho)\geq 2$. This contradicts $|y(t)|\to 0$.
\end{proof}
\end{lemma}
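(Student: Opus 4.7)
The plan is to argue by contradiction: assume there exists $\bar t \geq 1$ such that $y(t)\notin V(\varepsilon_*)$ for all $t\geq \bar t$, and derive a contradiction from the fact that $|y(t)|\to 0$ combined with the lower bound $|y(t)|\geq D_1 t^{-1}$ in \textbf{A1}. The intuition is that outside $V(\varepsilon_*)$ the radial component of $\nabla g$ is too small, so the homogeneous ratio $G_{q_*}(y)=g(y)/|y|^{q_*}$ cannot grow, and this forces $g(y(t))$ to be comparable to $|y(t)|^{q_*}$ (up to a lower-order $t^{-M}$ safety term). Feeding this back into the {\L}ojasiewicz estimate from Lemma~\ref{lem:5} will produce an upper bound on $|y(t)|$ in terms of a higher power of $|y(t)|$, which is impossible.

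Concretely, I would fix $q=q_*$ and $\varepsilon=\varepsilon_*=c_3/(2q_*)$ (the choice already used to apply Lemma~\ref{lem:6}), set $M=4$ and pick $d=D_1^{-q_*}$ so that the safety term will absorb the lower bound on $|y(t)|$. Define
\[
h(t):=dt^{-4}+G_{q_*}(y(t)).
\]
By Lemma~\ref{lem:6}, under our contradiction hypothesis $h(t)$ is non-increasing for $t\geq T_2$, hence bounded by some constant $C'$. Multiplying $h(t)\leq C'$ through by $|y(t)|^{q_*}$ gives
\[
d\,t^{-4}|y(t)|^{q_*}+g(y(t))\leq C'|y(t)|^{q_*},
\]
and the lower bound $|y(t)|\geq D_1 t^{-1}$ from \textbf{A1}, combined with $dD_1^{q_*}=1$, upgrades the left-hand side to
\[
t^{-(4+q_*)}+g(y(t))\leq C'|y(t)|^{q_*}.
\]

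The final step applies Lemma~\ref{lem:5} with $M=4+q_*$, which bounds $\int_t^\infty|y'(\tau)|\,d\tau$ (and hence $|y(t)|$, since $y(t)\to 0$) by a constant times $\bigl(t^{-(4+q_*)}+g(y(t))\bigr)^{1-\rho}$. Substituting the previous inequality,
\[
|y(t)|\leq C\,(C')^{1-\rho}|y(t)|^{q_*(1-\rho)}\leq C\,(C')^{1-\rho}|y(t)|^{2},
\]
where the last step uses the key choice $q_*\geq 2/(1-\rho)$ baked into \eqref{equ:choices}. This contradicts $|y(t)|\to 0$, proving the lemma. The main obstacle is really the careful calibration of constants: one must line up the exponent $q_*$ (large enough so $q_*(1-\rho)\geq 2$), the radius $M=4+q_*$ in Lemma~\ref{lem:5}, and the coefficient $d=D_1^{-q_*}$ so that the polynomial safety term, the {\L}ojasiewicz--Bochnak rate, and the a priori lower bound in \textbf{A1} all interlock to yield the final super-quadratic self-bound on $|y(t)|$.
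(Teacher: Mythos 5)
Your proof is correct and follows essentially the same route as the paper's: the same auxiliary function $h(t)=D_1^{-q_*}t^{-4}+G_{q_*}(y(t))$, the same use of Lemma~\ref{lem:6} to get monotonicity and hence a bound $h\le C'$, and the same application of Lemma~\ref{lem:5} with $M=4+q_*$ together with $q_*(1-\rho)\ge 2$ to reach the super-quadratic self-bound contradicting $|y(t)|\to 0$. Nothing to add.
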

\begin{lemma}\label{lem:8}
There exists $T_3\geq 1 $ such that the following statements hold.
\begin{enumerate}
\item There does not exist $\bar{t}_2\geq\bar{t}_1\geq T_3$ and $\ell\in L$ such that
\begin{align*}
G_{\ell}(y(\bar{t}_1))= \frac{1}{2C_{1,*}} \ \textup{and}\ G_{\ell}(y(\bar{t}_2))= \frac{1}{C_{1,*}}  .
\end{align*}
\item There does not exist $\bar{t}_2\geq\bar{t}_1\geq T_3$ and $\ell\in L$ such that
\begin{align*}
G_{\ell}(y(\bar{t}_1))= C_{1,*} \ \textup{and}\ G_{\ell}(y(\bar{t}_2))= 2C_{1,*} .
\end{align*}
\end{enumerate}
\begin{proof} We will eventually fix $T_3$ later. Let us first assume  $T_3$ is large so that 
\begin{equation}\label{def:T_3-1}
\begin{split}
&\left\{ \left( \frac{1}{2C_{1,*}}|y(t)|^\ell, 2C_{1,*}   |y(t)|^\ell\right)\right\}_{\ell\in L}\ \textup{are disjoint}\ \textup{for}\ t\geq T_3, \text{ and }
\end{split}
\end{equation}
\begin{equation} \label{def:T_3-2}
 T_3 \geq T_2(1,4,\ell)\ \textup{for all}\ \ell\in L,
\end{equation}
where $T_2(1,4,\ell)$ is given in  Lemma \ref{lem:6}. Suppose (1) fails. There exists $t_2'\geq t_1'\geq T_3$ and $\ell\in L$ such that $G_{\ell}(y(  t'_1))= \frac{1}{2C_{1,*}} ,\ G_{\ell}(y( t'_2))= \frac{1}{C_{1,*}},$ and $G_{\ell}(y( t)) \in \left(    \frac{1}{2C_{1,*}},\frac{1}{C_{1,*}}\right) \ \textup{for all}\ t\in (t'_1,t'_2).$ From \eqref{def:T_3-1}, the above implies
\begin{align*}
|G_{\ell'}(y( t))|\notin \left[\frac{1}{C_{1,*}}, C_{1,*}   \right]\ \textup{for all}\ t\in (t'_1,t'_2)\ \textup{and}\ \ell'\in L.
\end{align*}
From \eqref{equ:C_1}, $y(t)\notin V(\varepsilon_*)$. From \eqref{def:T_3-2}, $\varepsilon_*\leq \frac{c_3}{2\ell}$ and Lemma~\ref{lem:6}, we infer the function $t^{-4}+G_{\ell}(y(t))$ is non-increasing for $t\in (t_1',t_2')$. In particular,
\begin{align*}
(t_2')^{-4}+ {G_{\ell}(y(t_2'))} \leq (t_1')^{-4}+ {G_{\ell}(y(t_1'))}. 
\end{align*} 
This implies $\frac{1}{2C_{1,*}}\leq (t_1')^{-4}-(t_2')^{-4}\leq T_3^{-4}.$ A contradiction follows by fixing $T_3$ large so that $T_3^{-4}\leq 3^{-1}C^{-1}_{1,*}$. This finishes the case (1).  {The other case (2) follows by a similar (even easier) argument.}
\end{proof}
\end{lemma}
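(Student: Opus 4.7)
The plan is to argue by contradiction, using Lemma~\ref{lem:6} to force monotonicity of $t^{-4}+G_\ell(y(t))$ during any putative upward crossing, and then exploit the fact that the total drop of $t^{-4}$ over large times is arbitrarily small. First I would choose $T_3\geq 1$ large enough that two structural properties hold: the finitely many intervals $\{(\tfrac{1}{2C_{1,*}}|y(t)|^{\ell'},\, 2C_{1,*}|y(t)|^{\ell'})\}_{\ell'\in L}$ are pairwise disjoint for $t\geq T_3$ (a consequence of $|y(t)|\to 0$ and the finiteness of $L$), and $T_3 \geq T_2(1,4,\ell')$ for every $\ell'\in L$, where $T_2(1,4,\ell')$ is the time supplied by Lemma~\ref{lem:6} applied with $d=1$, $M=4$, $q=\ell'$.

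To treat case~(1), suppose for contradiction that there exist $t_2'\geq t_1'\geq T_3$ and $\ell\in L$ with $G_\ell(y(t_1'))=\tfrac{1}{2C_{1,*}}$ and $G_\ell(y(t_2'))=\tfrac{1}{C_{1,*}}$. By restricting to a subinterval I may assume $G_\ell(y(t))\in(\tfrac{1}{2C_{1,*}},\tfrac{1}{C_{1,*}})$ for all $t\in(t_1',t_2')$. The key observation is that on this interval $y(t)\notin V(\varepsilon_*)$: indeed $y(t)\notin V(\varepsilon_*,\ell)$ by \eqref{equ:C_1}, and for $\ell'\neq\ell$ the identity $G_{\ell'}(y(t))=G_\ell(y(t))\,|y(t)|^{\ell-\ell'}$ together with the disjointness of the intervals chosen above precludes $G_{\ell'}(y(t))\in[\tfrac{1}{C_{1,*}},C_{1,*}]$, hence $y(t)\notin V(\varepsilon_*,\ell')$. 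Applying Lemma~\ref{lem:6} with $d=1$, $M=4$, $q=\ell$ yields that $t\mapsto t^{-4}+G_\ell(y(t))$ is non-increasing on $(t_1',t_2')$, and comparing endpoints gives
\begin{equation*}
\tfrac{1}{2C_{1,*}}\;=\;G_\ell(y(t_2'))-G_\ell(y(t_1'))\;\leq\;(t_1')^{-4}-(t_2')^{-4}\;\leq\;T_3^{-4},
\end{equation*}
which contradicts the further enlargement of $T_3$ so that $T_3^{-4}<\tfrac{1}{2C_{1,*}}$.

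Case~(2) follows by the same template: $G_\ell(y(t))\in(C_{1,*},2C_{1,*})$ on the crossing interval lies strictly above the range $[\tfrac{1}{C_{1,*}},C_{1,*}]$ required for membership in $V(\varepsilon_*,\ell)$, while the same disjointness comparison excludes membership in any other stratum $V(\varepsilon_*,\ell')$; Lemma~\ref{lem:6} then forces $t^{-4}+G_\ell(y(t))$ to be non-increasing, so the jump $C_{1,*}$ must be dominated by $T_3^{-4}$, which is absurd.

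The main obstacle I anticipate is the disjointness step: it requires checking that the values of $G_\ell$ excluded from $[\tfrac{1}{C_{1,*}},C_{1,*}]$ are incompatible with membership in any \emph{other} stratum $V(\varepsilon_*,\ell')$, not merely in $V(\varepsilon_*,\ell)$. Fortunately, since $L\subset\mathbb{Q}_{>0}$ is finite and $|y(t)|\to 0$, for any distinct pair $\ell\neq\ell'$ the intervals $[\tfrac{1}{2C_{1,*}}|y|^{\ell},2C_{1,*}|y|^\ell]$ and $[\tfrac{1}{2C_{1,*}}|y|^{\ell'},2C_{1,*}|y|^{\ell'}]$ separate once $|y|$ is sufficiently small, so enlarging $T_3$ suffices to close this gap.
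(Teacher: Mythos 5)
Your proposal is correct and follows essentially the same argument as the paper: reduce to a minimal crossing interval, use the disjointness of the finitely many bands $(\tfrac{1}{2C_{1,*}}|y|^{\ell'},2C_{1,*}|y|^{\ell'})$ together with \eqref{equ:C_1} to conclude $y(t)\notin V(\varepsilon_*)$ there, invoke Lemma~\ref{lem:6} to make $t^{-4}+G_\ell(y(t))$ non-increasing, and derive the contradiction $\tfrac{1}{2C_{1,*}}\leq T_3^{-4}$. The only cosmetic difference is your explicit justification of the disjointness step via $G_{\ell'}(y)=G_\ell(y)|y|^{\ell-\ell'}$, which the paper leaves implicit.
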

\begin{lemma}\label{lem:9}
There exist  $\ell_*\in L$ and $T_4 \geq 1$ such that for all $t\geq T_4$,
\begin{align*}
G_{\ell_*}(y(t))\in \left(   \frac{1}{2C_{1,*}},2C_{1,*} \right) .
\end{align*} 
\begin{proof}
The uniqueness of $\ell_*$ readily follows if $T_4$ is chosen large so that the same condition in \eqref{def:T_3-1} holds for $t\ge T_4$. It suffices to show the existence. From Lemma~\ref{lem:7} and Proposition~\ref{pro:1}, there exists $\ell_*\in L$ and ${t}'_i\to\infty$ such that $y( {t}'_i)\in V(\varepsilon_*,{ \ell_*})$. From Proposition~\ref{pro:1}, $\frac{1}{C_{1,*}} \leq |G_{\ell_*}(y( {t}'_i))|\leq C_{1,*}$. By Lemma~\ref{lem:4} and {the lower bound in} \eqref{equ:alpha12}, $G_{\ell_*}(y( {t}'_i))>0$ for $i$ large enough. Without loss of generality, we may assume$\frac{1}{C_{1,*}} \leq G_{\ell_*}(y( {t}'_i)) \leq C_{1,*}$ for all $i$ and $t'_1\geq T_3$. Set $T_4=t'_1$. Then Lemma~\ref{lem:8} implies for all $t\geq T_4$, $\frac{1}{2C_{1,*}}  \leq  G_{\ell_*}(y(t))\leq 2C_{1,*}$.
\end{proof}
\end{lemma}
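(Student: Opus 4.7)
My strategy is to extract from Lemma~\ref{lem:7} a single exponent $\ell_* \in L$ that governs the scaling of $g(y(t))$ along a recurrent sequence of times, and then to deploy the one-sided barrier statements in Lemma~\ref{lem:8} to lock $G_{\ell_*}(y(t))$ inside the open interval $(1/(2C_{1,*}),\, 2C_{1,*})$ for all sufficiently large $t$. Uniqueness will then be immediate from the homogeneity of $G_\ell$.

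First, Lemma~\ref{lem:7} produces a sequence $t_i' \to \infty$ with $y(t_i') \in V(\varepsilon_*)$. By Proposition~\ref{pro:1}, the good region decomposes into finitely many pieces $V(\varepsilon_*, \ell)$ indexed by $\ell \in L$, and on each of them $|G_\ell| \in [1/C_{1,*},\, C_{1,*}]$ by \eqref{equ:C_1}. A pigeonhole on $L$ then lets me pass to a subsequence along which $y(t_i') \in V(\varepsilon_*, \ell_*)$ for a single $\ell_* \in L$. Next I would fix the sign of $g(y(t_i'))$ using Lemma~\ref{lem:4}: taking $d \to 0^+$ with $M > \ell_*$ yields $g(y(t)) \ge -d\,t^{-M}$ eventually, and combined with the lower bound $|y(t)| \ge D_1 t^{-1}$ from \eqref{equ:alpha12} this forces $G_{\ell_*}(y(t_i')) > 0$ for large $i$. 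Hence, after discarding finitely many terms, $G_{\ell_*}(y(t_i')) \in [1/C_{1,*},\, C_{1,*}]$ and $t_1' \ge T_3$.

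Now I set $T_4 := t_1'$ and argue by contradiction that $G_{\ell_*}(y(t)) \in (1/(2C_{1,*}),\, 2C_{1,*})$ on $[T_4,\infty)$. Suppose, for example, $G_{\ell_*}(y(s)) \le 1/(2C_{1,*})$ for some $s \ge T_4$. Because $G_{\ell_*}(y(t_1')) \ge 1/C_{1,*}$, continuity produces a crossing time $\bar t_1 \in [t_1', s]$ with $G_{\ell_*}(y(\bar t_1)) = 1/(2C_{1,*})$; choose the last such crossing. Picking any $t_j' > s$ with $G_{\ell_*}(y(t_j')) \ge 1/C_{1,*}$, continuity again gives a $\bar t_2 \in (\bar t_1, t_j']$ with $G_{\ell_*}(y(\bar t_2)) = 1/C_{1,*}$. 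Since $\bar t_2 \ge \bar t_1 \ge T_3$, this contradicts Lemma~\ref{lem:8}(1). The case $G_{\ell_*}(y(s)) \ge 2C_{1,*}$ is handled symmetrically, using the crossings $C_{1,*}$ and $2C_{1,*}$ together with Lemma~\ref{lem:8}(2).

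Finally, uniqueness of $\ell_*$: if two distinct $\ell, \ell' \in L$ with $\ell < \ell'$ both satisfied the conclusion, then $G_\ell(y(t)) / G_{\ell'}(y(t)) = |y(t)|^{\ell' - \ell} \to 0$ as $t \to \infty$, which is incompatible with both quotients staying in a common two-sided band. I expect the main obstacle to be the continuity-based bookkeeping in the trapping step; specifically, one must verify that the ``last crossing'' $\bar t_1$ and ``next crossing'' $\bar t_2$ both lie above $T_3$ so that Lemma~\ref{lem:8} genuinely applies, and also ensure the sign of $g(y(t))$ is already fixed on $[T_4,\infty)$ so that $G_{\ell_*}$ is well-defined and continuous throughout the crossing argument.
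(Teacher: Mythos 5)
Your proposal is correct and follows essentially the same route as the paper: pigeonholing the recurrence times from Lemma~\ref{lem:7} into a single $V(\varepsilon_*,\ell_*)$ via Proposition~\ref{pro:1}, fixing the sign of $G_{\ell_*}$ using Lemma~\ref{lem:4} together with the lower bound in \eqref{equ:alpha12}, setting $T_4=t_1'$, and trapping via Lemma~\ref{lem:8}. You merely make explicit the intermediate-value crossing argument that the paper leaves implicit in ``Lemma~\ref{lem:8} implies,'' and your uniqueness argument via $|y(t)|^{\ell'-\ell}\to 0$ is equivalent to the paper's appeal to the disjointness condition \eqref{def:T_3-1}.
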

Combining Lemma~\ref{lem:9}, \eqref{equ:C_1} and $|y(t)|\to 0$, we have the following:
\begin{corollary}\label{cor:10} 
Let $\ell_*\in L$ be {the exponent} given in Lemma~\ref{lem:9}. Then for $t$ large enough, $y(t)\in V(\varepsilon_*)$ implies $y(t)\in V(\varepsilon_*,\ell_*)$.
\end{corollary}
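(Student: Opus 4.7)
The plan is to use the disjoint decomposition from Proposition~\ref{pro:1}, namely $V(\varepsilon_*) = \bigsqcup_{\ell \in L} V(\varepsilon_*, \ell)$, together with the uniform two-sided bound $|G_\ell(y)| \in [C_{1,*}^{-1}, C_{1,*}]$ on each piece \eqref{equ:C_1} and the bound on $G_{\ell_*}(y(t))$ from Lemma~\ref{lem:9}, to rule out every exponent in $L \setminus \{\ell_*\}$ as a possible label of $y(t)$ when $t$ is large.

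I would argue as follows. Suppose $t \geq T_4$ and $y(t) \in V(\varepsilon_*)$. By Proposition~\ref{pro:1} there is a unique $\ell \in L$ with $y(t) \in V(\varepsilon_*,\ell)$, and for this $\ell$ we have $|G_\ell(y(t))| \asymp 1$ by \eqref{equ:C_1}. On the other hand, Lemma~\ref{lem:9} gives $G_{\ell_*}(y(t)) \asymp 1$ with explicit bounds. The key observation is the identity
\[
G_\ell(y(t)) = G_{\ell_*}(y(t)) \, |y(t)|^{\ell_* - \ell},
\]
so the ratio $G_\ell(y(t))/G_{\ell_*}(y(t))$ is a positive power (or negative power) of $|y(t)|$ whenever $\ell \neq \ell_*$. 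Since $L$ is finite and $|y(t)| \to 0$ by \textbf{A1}, for any $\ell \neq \ell_*$ the quantity $|G_\ell(y(t))|$ either tends to $0$ (if $\ell < \ell_*$) or to $\infty$ (if $\ell > \ell_*$), and hence eventually violates the bound $|G_\ell(y(t))| \in [C_{1,*}^{-1}, C_{1,*}]$.

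Therefore there exists $T_5 \geq T_4$ such that for all $t \geq T_5$ the only exponent in $L$ compatible with the two-sided bound is $\ell = \ell_*$, which by the disjointness of the decomposition forces $y(t) \in V(\varepsilon_*, \ell_*)$. The only step requiring any care is to make $T_5$ uniform in $\ell \in L$; since $L$ is finite this is immediate by taking a maximum of finitely many threshold times, so there is no genuine obstacle here — the corollary is essentially bookkeeping on top of Proposition~\ref{pro:1} and Lemma~\ref{lem:9}.
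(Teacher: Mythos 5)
Your argument is correct and is exactly the one the paper intends: the paper justifies the corollary with the one-line remark "Combining Lemma~\ref{lem:9} and \eqref{equ:C_1} and $|y(t)|\to 0$," and your write-up simply fills in that combination via the identity $G_\ell(y)=G_{\ell_*}(y)\,|y|^{\ell_*-\ell}$ and the finiteness of $L$. No discrepancy.
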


From now on, we fix $\ell_*\in L$ in Lemma~\ref{lem:9} and denote $G_{\ell_*}=G_*$. 

\begin{proof}[Proof of Proposition~\ref{thm:perturb}]
Set $T_0= \max\{ T_1,T_4\}$. From Lemma~\ref{lem:9} and the Bochnak-{\L}ojasiewicz inequality \eqref{equ:BLo}, for $t\geq T_0$, $
|\nabla g(y)|\geq \frac{c_3}{2C_{1,*}}|y|^{\ell_*-1}$. 
Take $N=\lceil \ell_* \rceil$. Then
\begin{align*}
|y|^N\leq |y|^{\ell_*}\leq \frac{2C_{1,*}}{c_3}|y||\nabla g(y)|\leq \frac{2C_{1,*}}{c_3}|y|^\rho|\nabla g(y)|.
\end{align*}
In view of \eqref{equ:Ebound}, \eqref{equ:Ebound_0} holds with $b_0=b_N(1+  2C_{1,*} / c_3 )$.
\end{proof}


\subsection{Convergence of secant}\label{sec:limitofsecant}

{
To explain the main idea, let us momentarily assume $\textup{Err}(t)\equiv 0$. \`A la  \L  ojasiewicz argument, the key boils down to find a control function $h(t)\ge 0 $ that satisfies  an inequality
\bea  \label{equ:controlineq} h'(t) \le - c h(t)^\rho \frac{|\nabla 'g(y(t))|}{|y(t)|} ,\eea for some $c>0$ and $\rho \in (0,1)$ for $t$  large enough time along the solution $y(t)$.  Since it is immediate to  check $\frac{ \nabla' g(y(t))}{|y(t)|}=-\frac{d}{dt} \frac{y(t)}{|y(t)|},$
by integrating \eqref{equ:controlineq}, one obtains that the trajectory of secant has a finite length  $$\int_t^\infty \left | \frac{d}{dt} \frac{y(\tau)}{|y(\tau )|} \right| d\tau  \le \frac{c}{1-\rho}h^{1-\rho}(t) <\infty ,$$ and thereby shows the convergence of the secant. Following the argument in \cite{KMP}, we  choose $$h(t)=(G_*(y(t))-\alpha_0)+|y(t)|^{\omega_*/2}.$$
  Roughly, \eqref{equ:controlineq} means $h(t)$ does not decrease so slowly along the change of the secant.  Two terms in $h(t)$ contribute to it for different situations: the change of the term $|y|^{\omega_*/2}$ will drive the control inequality \eqref{equ:controlineq} when the radial motion of $y(t)$ dominates the spherical motion (see Lemma~\ref{lem:2.9}) and the change of normalized potential function $(G_*(y)-\alpha_0)$ will do this in the other case (see Lemma~\ref{lem:2.8}). 

}

\begin{lemma}\label{lem:Fprime}
For $t$ large enough, \begin{equation}\label{eqn:F_levol}
\frac{d}{dt} G_*(y(t))\leq -\frac{3}{4}\frac{|\nabla g(y(t))|^2}{|y(t)|^{\ell_*}}+\frac{\ell_* g(y(t))\partial_rg(y(t)) }{|y(t)|^{\ell_*+1}}.
\end{equation}
If we further assume $y(t)\in V(\varepsilon_*,\ell_*)$, then it holds that
\begin{equation}\label{eqn:F_levol_2} 
\frac{d}{dt}G_*(y(t))\leq -2^{-1}\frac{|\nabla' g(y(t))|^2}{|y(t)|^{\ell_*}}+\frac{|y(t)|^{2\omega_*}}{|y(t)|^{\ell_* }}   |\partial_r g(y(t))|^2.
\end{equation}
\begin{proof}
Observe 
\begin{align*}
\frac{d}{dt} G_*(y )=&- \frac{|\nabla g(y )|^2}{|y |^{\ell_*}}+\frac{\ell_* g(y )\partial_r g(y ) }{|y |^{\ell_*+1}}+\frac{\left\langle \textup{Err} ,\nabla g(y ) \right\rangle}{|y |^{\ell_*}}-\frac{\ell_* g(y )}{|y |^{\ell_*+1}}\left\langle \textup{Err} ,\frac{\partial}{\partial r}  \right\rangle.
\end{align*}
 Applying Proposition \ref{thm:perturb} (smallness of error) and the Bochnak-{\L}ojasiewicz inequality \eqref{equ:BLo} to the above,\begin{equation}\label{eqn:F_levol_0}
\begin{split}
\left| \frac{d}{dt} G_*(y )+\frac{|\nabla g(y )|^2}{|y |^{\ell_*}}-\frac{\ell_* g(y )\partial_rg(y ) }{|y |^{\ell_*+1}} \right|\leq  C |y |^\rho \frac{|\nabla g(y )|^2}{|y |^{\ell_*}} ,
\end{split}
\end{equation}
for $C=b_0(1+q_*/c_2)$ and  for large time $t\ge \max \{T_0,T_1\}$.

The first assertion \eqref{eqn:F_levol} follows from $|y(t)|\to 0$. 
 If $y(t)\in V(\varepsilon_*,\ell_*)$,  then combining \eqref{eqn:F_levol_0} and \eqref{equ:def_W_4},
\begin{align*}
\frac{d}{dt} G_*(y )\leq & -\frac{|\nabla' g(y )|^2}{|y |^{\ell_*}}+\frac{|y |^{2\omega_*}}{2|y |^{\ell_*}}   |\partial_r g(y )|^2+C |y |^\rho \frac{|\nabla g(y )|^2}{|y |^{\ell_*}}\\
=&-(1-C|y |^\rho)\frac{|\nabla' g(y )|^2}{|y |^{\ell_*}}+\left( \frac{1}{2}+C|y |^{\rho-2\omega_*} \right) \frac{|y |^{2\omega_*}}{ |y |^{\ell_*}}   |\partial_r g(y )|^2 .
\end{align*}
Recall call that $\rho\geq \frac{3}{4}$ and $\omega_*\leq \frac{1}{4}$. This ensures $\rho-2\omega_*\geq \frac{1}{4}$. Then \eqref{eqn:F_levol_2} follows by taking $t$ large enough.   
\end{proof}
\end{lemma}

\begin{lemma}\label{lem:2.2}
There exists $c >0$ such that for $t$ large enough 
\begin{align*}
y(t)\notin V(\varepsilon_*,\ell_*)\ \Longrightarrow\ \frac{d}{dt} G_*(y(t))\leq -c |y(t)|^{-1}|\nabla g(y(t))|.
\end{align*}
\begin{proof}
We assume $y(t)\notin V(\varepsilon_*,\ell_*)$ and $t$ is large enough such that Lemmas~\ref{lem:2},~\ref{lem:9}, \ref{lem:Fprime} and Corollary~\ref{cor:10} hold. From Corollary~\ref{cor:10}, $y(t) \notin V(\varepsilon_*)$. Therefore, $|\nabla g(y(t))|\geq \varepsilon_*^{-1}|\partial_r g(y(t))|$. Together with Lemma~\ref{lem:2} and the choice of $\varepsilon_*$ in \eqref{equ:choices}, 
\begin{align*}
|y||\nabla g(y)|^2=|y||\nabla g(y)|\cdot |\nabla g(y)|\geq c_3|g(y)|\cdot \varepsilon_*^{-1} |\partial_r g(y)|\geq 2\ell_* g(y)\partial_r g(y).  
\end{align*}
Combined with \eqref{eqn:F_levol}, we deduce
\begin{equation*} 
\begin{split}
\frac{d}{dt} G_*(y)\leq  -\frac{3}{4}  \frac{|\nabla g(y)|^2}{|y|^{\ell_*}}+\frac{\ell_* g(y)\partial_r g(y) }{|y|^{\ell_*+1}}\leq -4^{-1}\frac{|\nabla g(y)|^2}{|y|^{\ell_*}}.
\end{split}
\end{equation*}
Lemma~\ref{lem:2} and Lemma~\ref{lem:9} imply
$$|\nabla g(y)|\geq c_3|y|^{-1}|g(y)|\geq \frac{c_3}{2C_{1,*} }|y|^{\ell_*-1}.$$ 
Combining above, the assertion holds for $c=\frac{c_3}{8C_{1,*}}$.
\end{proof}
\end{lemma}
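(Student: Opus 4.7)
The plan is to exploit the structural inequality in Lemma~\ref{lem:Fprime} for $\frac{d}{dt}G_*(y(t))$ and absorb the sign-ambiguous radial-term contribution using the hypothesis that the trajectory is outside $V(\varepsilon_*,\ell_*)$. By Corollary~\ref{cor:10}, for large $t$ one has $y(t)\notin V(\varepsilon_*,\ell_*) \Longleftrightarrow y(t)\notin V(\varepsilon_*)$, so the assumption unpacks to $|\partial_r g(y(t))| < \varepsilon_*|\nabla' g(y(t))| \le \varepsilon_*|\nabla g(y(t))|$: the spherical component of $\nabla g$ dominates its radial component.

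First, I would invoke \eqref{eqn:F_levol} to obtain
\[
\frac{d}{dt}G_*(y(t)) \le -\frac{3}{4}\frac{|\nabla g(y(t))|^2}{|y(t)|^{\ell_*}} + \frac{\ell_* g(y(t))\,\partial_r g(y(t))}{|y(t)|^{\ell_*+1}}.
\]
To control the second term, I would chain two estimates: Bochnak--\L ojasiewicz \eqref{equ:BLo} gives $|g(y(t))| \le c_3^{-1}|y(t)||\nabla g(y(t))|$, while the hypothesis gives $|\partial_r g(y(t))| \le \varepsilon_*|\nabla g(y(t))|$. Their product is bounded by $(\ell_*\varepsilon_*/c_3)\,|\nabla g|^2/|y|^{\ell_*}$, and the calibration $\varepsilon_* = c_3/(2q_*)$ with $q_*\ge \ell_*$ from \eqref{equ:choices} is precisely designed to make this coefficient at most $1/2$. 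Consequently $\frac{d}{dt}G_*(y(t)) \le -\frac{1}{4}|\nabla g(y(t))|^2/|y(t)|^{\ell_*}$.

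Second, to convert one factor of $|\nabla g|/|y|^{\ell_*}$ into $|y|^{-1}$, I would use Lemma~\ref{lem:9}, which guarantees $g(y(t)) \ge |y(t)|^{\ell_*}/(2C_{1,*}) > 0$, and feed this back into Bochnak--\L ojasiewicz: $|\nabla g(y(t))| \ge c_3|g(y(t))|/|y(t)| \ge (c_3/(2C_{1,*}))|y(t)|^{\ell_*-1}$. Substituting this lower bound into one of the two factors of $|\nabla g|$ in the bound above yields the desired differential inequality with $c = c_3/(8C_{1,*})$.

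There is no genuine obstacle; the only subtlety is that all constants $q_*$, $\varepsilon_*$, $C_{1,*}$, $\omega_*$ were rigidly fixed in \eqref{equ:choices} precisely so that the two applications of Bochnak--\L ojasiewicz---one dominating the radial term, the other lower-bounding $|\nabla g|$---compose cleanly on the stratum $V(\varepsilon_*,\ell_*)$. No analytic tool beyond those already developed in this subsection is required.
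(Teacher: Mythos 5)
Your proposal is correct and follows essentially the same route as the paper's proof: unpack $y(t)\notin V(\varepsilon_*,\ell_*)$ via Corollary~\ref{cor:10} to get $|\partial_r g|\le \varepsilon_*|\nabla g|$, absorb the cross term in \eqref{eqn:F_levol} using Bochnak--\L ojasiewicz and the calibration $\varepsilon_*=c_3/(2q_*)$ with $q_*\ge\ell_*$, then lower-bound one factor of $|\nabla g|$ by $\tfrac{c_3}{2C_{1,*}}|y|^{\ell_*-1}$ via Lemma~\ref{lem:9}. The only difference is cosmetic (you bound the product $|g||\partial_r g|$ from above where the paper bounds $|y||\nabla g|^2$ from below), and you arrive at the same constant $c=c_3/(8C_{1,*})$.
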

\begin{lemma}\label{lem:5.4}
There exists $C_{4}\geq 1$ such that for $t$ large enough
\begin{align*}
y(t)\in V(\varepsilon_*,\ell_*)\ \Longrightarrow\ |\nabla g(y(t))|,  \partial_r g(y(t))\in \left[ \frac{1}{C_{4}},C_{4} \right]|y(t)|^{\ell_*-1} .
\end{align*}
\begin{proof}
Let $t$ be large enough such that Lemma~\ref{lem:9} holds. For $y(t)\in V(\varepsilon_*,\ell_*)$, $|\nabla g(y(t))|$ is comparable to $|\partial_r g(y(t))|$. Moreover, \eqref{equ:def_W_4} implies $\partial_r g(y(t))$ is comparable to $ g(y(t))/|y(t)|$, which is comparable to $|y(t)|^{\ell_*-1}$ in view of Lemma~\ref{lem:9}.
\end{proof}
\end{lemma}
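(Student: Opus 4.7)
The plan is to unpack the definitions of $V(\varepsilon_*) = W(\varepsilon_*, r_1(\varepsilon_*))$ and $V(\varepsilon_*, \ell_*) = W(\varepsilon_*, r_1(\varepsilon_*), \omega_*, \ell_*)$ and combine them with Lemma~\ref{lem:9}, which has already pinned down the single exponent $\ell_*$ and given the two-sided control $G_*(y(t)) = g(y(t))/|y(t)|^{\ell_*} \in \big(1/(2C_{1,*}),\, 2C_{1,*}\big)$ for $t \geq T_4$. I expect the proof to be short because all three controls in the conclusion follow almost mechanically from pieces already in place.

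I would proceed in three steps. First, translate Lemma~\ref{lem:9} into $g(y(t)) \asymp |y(t)|^{\ell_*}$. Second, exploit the defining condition \eqref{equ:def_W_4} for $V(\varepsilon_*, \ell_*)$, which says
\[
\left| 1 - \frac{\ell_* \, g(y(t))}{|y(t)|\, \partial_r g(y(t))} \right| \le \frac{1}{2} |y(t)|^{2\omega_*}.
\]
Since $|y(t)| \to 0$, for $t$ large enough the right-hand side is at most $1/2$, so the ratio $\ell_* g(y(t))/(|y(t)|\, \partial_r g(y(t)))$ is bounded in $[1/2, 3/2]$. Combining with Step~1 gives $\partial_r g(y(t)) \asymp |y(t)|^{\ell_* - 1}$ (with the correct sign, inherited from $g(y(t))>0$). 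Third, the inclusion $V(\varepsilon_*, \ell_*) \subset V(\varepsilon_*)$ yields $\varepsilon_* |\nabla' g(y(t))| \le |\partial_r g(y(t))|$, and then from $|\nabla g|^2 = |\nabla' g|^2 + (\partial_r g)^2$ we obtain
\[
|\partial_r g(y(t))|^2 \;\le\; |\nabla g(y(t))|^2 \;\le\; (1 + \varepsilon_*^{-2})\,|\partial_r g(y(t))|^2,
\]
so $|\nabla g(y(t))| \asymp |\partial_r g(y(t))| \asymp |y(t)|^{\ell_* - 1}$. The constant $C_4$ can be taken as an explicit function of $C_{1,*}$, $\varepsilon_*$, and $\ell_*$.

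There is no substantive obstacle. The lemma is a direct consequence of unpacking the two conditions baked into the definition of $V(\varepsilon_*, \ell_*)$, together with the already-established control on $G_*(y(t))$. The genuine work was done upstream, in isolating the unique exponent $\ell_*$ through Lemmas~\ref{lem:2}--\ref{lem:9}; once $\ell_*$ is fixed, this corollary-like statement requires only the three routine manipulations above.
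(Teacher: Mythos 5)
Your proposal is correct and follows the paper's proof exactly: Lemma~\ref{lem:9} gives $g(y(t))\asymp|y(t)|^{\ell_*}$, condition \eqref{equ:def_W_4} gives $\partial_r g\asymp g/|y|$, and membership in $V(\varepsilon_*)$ gives $|\nabla g|\asymp|\partial_r g|$. Your write-up is simply a more explicit version of the paper's three-line argument.
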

\begin{lemma}\label{lem:limFstar}
{The} function $G_*(y(t))+|y(t)|^{\omega_*}$ is non-increasing for $t$ large.
\begin{proof}
First, we consider the case $y(t)\in V(\varepsilon_*,\ell_*)$. Assume $t$ is large enough such that Proposition~\ref{thm:perturb} (smallness of error), Lemmas~\ref{lem:Fprime} and~\ref{lem:5.4} hold. Then \eqref{eqn:F_levol_2} implies 
\begin{align*}
\frac{d}{dt}G_*(y )  \leq &\frac{|y |^{2\omega_*}}{|y |^{\ell_*}}    |\partial_r g(y )|^2 \leq \frac{|y |^{2\omega_*}}{|y |^{\ell_*}} |\nabla g(y )|^2\leq   C_{4} |y |^{2\omega_*-1} |\nabla g(y )|.
\end{align*}
We compute
\begin{align*}
\frac{d}{dt}  |y |^{\omega_*} =& \omega_*|y |^{\omega_*-1}(-\partial_r g(y )+\left\langle \textup{Err} ,\partial_r \right\rangle)
\leq -2^{-1}\omega_* C_{4}^{-2} |y |^{\omega_*-1} |\nabla g(y )|.
\end{align*}
As $|y(t)|^{\omega_*}$ becomes arbitrarily small for large time, the assertion follows. 

Next, we turn to the case  $y(t)\notin V(\varepsilon_*,\ell_*)$. From Lemma~\ref{lem:2.2}, 
\begin{align*}
\frac{d}{dt}G_*(y)\leq -c |y|^{-1}|\nabla g(y)|.
\end{align*}
Together with
\begin{align*}
\left| \frac{d}{dt} |y|^{\omega_*}  \right|=\left|  \omega_*|y|^{\omega_*-1}(-\partial_r g(y)+\left\langle \textup{Err},\partial_r \right\rangle) \right| \leq 2\omega_* |y|^{\omega_*-1}|\nabla g(y)|,
\end{align*}
we deduce the assertion holds for $t$ large as $|y|^{\omega_*}$ becomes arbitrarily small. 
\end{proof}
\end{lemma}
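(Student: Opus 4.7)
The plan is to split into the two mutually exclusive cases distinguished in the preceding lemmas, namely $y(t)\in V(\varepsilon_*,\ell_*)$ and $y(t)\notin V(\varepsilon_*,\ell_*)$. In each case I will show that $\tfrac{d}{dt}G_*(y(t))$ is dominated in absolute value by $\tfrac{d}{dt}|y(t)|^{\omega_*}$, which is strictly negative (up to an error absorbed by $|y(t)|\to0$); the sum is therefore non-increasing once $t$ is large enough.

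\textbf{Case 1: $y(t)\in V(\varepsilon_*,\ell_*)$.} I would feed the refined estimate \eqref{eqn:F_levol_2} into Lemma~\ref{lem:5.4}. Dropping the manifestly negative $|\nabla'g|^2$ term and using $|\partial_rg(y(t))|\le|\nabla g(y(t))|\le C_4|y(t)|^{\ell_*-1}$ yields
\[
\tfrac{d}{dt}G_*(y(t))\le C_4\,|y(t)|^{2\omega_*-1}\,|\nabla g(y(t))|.
\]
To estimate $\tfrac{d}{dt}|y(t)|^{\omega_*}=\omega_*|y(t)|^{\omega_*-1}(-\partial_rg(y(t))+\langle\textup{Err}(t),\partial_r\rangle)$, use the lower bound $\partial_rg(y(t))\ge C_4^{-2}|\nabla g(y(t))|$ from Lemma~\ref{lem:5.4} together with the Err bound from Proposition~\ref{thm:perturb}. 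Since $|y(t)|\to0$, the error contribution $\omega_*b_0|y(t)|^{\omega_*-1+\rho}|\nabla g(y(t))|$ is absorbed into half of the main negative term, giving $\tfrac{d}{dt}|y(t)|^{\omega_*}\le-\tfrac{\omega_*}{2C_4^{2}}|y(t)|^{\omega_*-1}|\nabla g(y(t))|$ for $t$ large. Summing,
\[
\tfrac{d}{dt}\bigl(G_*+|y|^{\omega_*}\bigr)\le|y(t)|^{\omega_*-1}|\nabla g(y(t))|\Bigl(C_4|y(t)|^{\omega_*}-\tfrac{\omega_*}{2C_4^{2}}\Bigr)\le0
\]
once $|y(t)|^{\omega_*}$ is small enough, i.e.\ for $t$ large.

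\textbf{Case 2: $y(t)\notin V(\varepsilon_*,\ell_*)$.} Here Lemma~\ref{lem:2.2} delivers $\tfrac{d}{dt}G_*(y(t))\le-c|y(t)|^{-1}|\nabla g(y(t))|$. A direct computation, again invoking Proposition~\ref{thm:perturb}, bounds $|\tfrac{d}{dt}|y(t)|^{\omega_*}|\le2\omega_*|y(t)|^{\omega_*-1}|\nabla g(y(t))|$. Therefore
\[
\tfrac{d}{dt}\bigl(G_*+|y|^{\omega_*}\bigr)\le|y(t)|^{-1}|\nabla g(y(t))|\bigl(-c+2\omega_*|y(t)|^{\omega_*}\bigr)\le0
\]
for $t$ large.

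The only place any delicacy enters is verifying that the perturbative term $\textup{Err}(t)$ never overwhelms the cancellation. This is precisely what Proposition~\ref{thm:perturb} buys us: the extra factor $|y(t)|^{\rho}$ it produces goes to $0$, so the Err contributions can always be absorbed into half of the dominant negative term. Thus no new ingredient beyond the lemmas already at hand is needed; the main step is simply choosing the right moment to apply \eqref{eqn:F_levol_2} vs.\ Lemma~\ref{lem:2.2}, matched with the sign of $\tfrac{d}{dt}|y|^{\omega_*}$.
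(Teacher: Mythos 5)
Your proposal is correct and follows essentially the same route as the paper: the same case split on $y(t)\in V(\varepsilon_*,\ell_*)$ versus its complement, with \eqref{eqn:F_levol_2} and Lemma~\ref{lem:5.4} driving the first case, Lemma~\ref{lem:2.2} driving the second, and Proposition~\ref{thm:perturb} absorbing the $\textup{Err}$ contribution in both. No substantive difference from the paper's argument.
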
 
Lemma~\ref{lem:limFstar} ensures the limit of $G_*(t)$ exists. Let
\begin{align}\label{def:alpha_0}
\alpha_0:=\lim_{t\to\infty} G_*(y(t)).
\end{align}
In view of Lemma~\ref{lem:9}, $\alpha_0>0$, which immediately implies 
\begin{equation}\label{equ:ellandp}
\ell_*\geq p,
\end{equation}
where $p$ is the vanishing degree of $g$ at the origin.  {Following \cite[Section 5]{KMP}, we say $\alpha\in \mathbb{R}$ is an asymptotic critical value of $F$ at the origin if there exists a sequence $x_i\to 0$ and $x_i \in U$ such that
\begin{enumerate}
\item $|x_i||\nabla' F(x_i)|\to 0$ as $i\to \infty$, and 
\item $F(x_i)\to \alpha$ as $i\to \infty$. 
\end{enumerate}
Here $F$ is a  $C^1$-subanalytic function defined on an open subanalytic set $U\subset \mathbb{R}^n$ such that $0 \in \bar U$. Note that $G_*$ is an analytic function defined on $\mathbb{R}^n \setminus \{0\}$ and it satisfies the assumption of $F$.}

\begin{lemma}
$\alpha_0$ is an asymptotic critical value of $G_*(y)$.
\begin{proof}
Assume the assertion fails. Then there exists $c'>0$ such that $$|y ||\nabla 'G_*(y )|= |y |^{1-\ell_*} |\nabla 'g(y )| \ge c'$$ for large $t$. We first show that there exists $c>0$ such that for large $t$, 
\begin{align}\label{equ:5.60}
|\nabla' g(y)|\geq c|\partial_r g(y)|.
\end{align}
If $y(t)\notin V(\varepsilon_*,\ell_*)$ then \eqref{equ:5.60} holds for $c\leq \varepsilon_*$. If $y(t)\in V(\varepsilon_*,\ell _*)$, $|y|^{1-\ell_*}|\partial _r g(y)| \le  C_{4}  $ from Lemma~\ref{lem:5.4}. Hence \eqref{equ:5.60} is satisfied for $c\leq   c'/C_4$.

Next, there exists $\tilde{c}>0$ such that for $t$ large enough,
\begin{equation}\label{equ:5.6}
\frac{d}{dt} G_*(y )\leq -\tilde{c} |y |^{-1}|\nabla g(y )|.
\end{equation}
If $y(t)\notin V(\varepsilon_*,\ell_*)$, then \eqref{equ:5.6} follows from Lemma~\ref{lem:2.2}.  Suppose $y(t)\in V(\varepsilon_*,\ell_*)$. Then \eqref{equ:5.6} follows by combining \eqref{eqn:F_levol_2}, \eqref{equ:5.60} and Lemma~\ref{lem:5.4}. Integrating \eqref{equ:5.6} yields $\alpha_0-G_*(y(t))\leq -\infty$, which is impossible.
\end{proof} 
\end{lemma}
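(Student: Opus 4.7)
The plan is to argue by contradiction: suppose $\alpha_0$ is not an asymptotic critical value of $G_*$. Then, since $G_*(y(t))\to \alpha_0$ with $y(t)\to 0$, the negation forces a quantitative lower bound $|y(t)|\,|\nabla' G_*(y(t))|\ge c'>0$ for all sufficiently large $t$, which, using $\nabla' G_*(y)=|y|^{-\ell_*}\nabla' g(y)$, rewrites as
\begin{equation*}
|\nabla' g(y(t))|\ge c'\,|y(t)|^{\ell_*-1}\quad\text{for $t$ large.}
\end{equation*}
The first step I would carry out is to promote this into a comparison with the radial part: $|\nabla' g(y(t))|\ge c\,|\partial_r g(y(t))|$ for large $t$. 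If $y(t)\notin V(\varepsilon_*,\ell_*)$, this is immediate from Corollary~\ref{cor:10} and the definition of $V(\varepsilon_*)$. If $y(t)\in V(\varepsilon_*,\ell_*)$, Lemma~\ref{lem:5.4} gives $|\partial_r g(y(t))|\le C_4|y(t)|^{\ell_*-1}$, which combined with the displayed inequality yields the desired comparison with $c=c'/C_4$.

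Next, I would combine this comparison with the decay inequalities for $G_*(y(t))$ already established. For $y(t)\notin V(\varepsilon_*,\ell_*)$, Lemma~\ref{lem:2.2} directly gives $\frac{d}{dt}G_*(y(t))\le -\tilde c\,|y(t)|^{-1}|\nabla g(y(t))|$. For $y(t)\in V(\varepsilon_*,\ell_*)$, I would plug the comparison $|\nabla' g(y(t))|\ge c|\partial_r g(y(t))|$ into \eqref{eqn:F_levol_2} and use Lemma~\ref{lem:5.4} together with $|y(t)|^{\omega_*}\to 0$ to absorb the positive term, producing
\begin{equation*}
\frac{d}{dt}G_*(y(t))\le -\tilde c\,|y(t)|^{-1}|\nabla g(y(t))|
\end{equation*}
in this regime as well. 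Thus this differential inequality holds for all sufficiently large $t$.

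Finally, I would integrate. Using $|y'(t)|\le (1+b_0)|\nabla g(y(t))|$ from Proposition~\ref{thm:perturb}, together with $|y(t)|\to 0$, the right-hand side dominates $|y(t)|^{-1}|y'(t)|$ up to a constant, so integrating from $t$ to $\infty$ gives
\begin{equation*}
\alpha_0-G_*(y(t))\le -\tfrac{\tilde c}{2(1+b_0)}\int_t^\infty|y(\tau)|^{-1}|y'(\tau)|\,d\tau.
\end{equation*}
The integrand is at least $-\frac{d}{d\tau}\log|y(\tau)|$, which diverges to $+\infty$ as $\tau\to\infty$ because $|y(\tau)|\to 0$; this forces $\alpha_0-G_*(y(t))=-\infty$, a contradiction. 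The main obstacle is the bookkeeping in the $V(\varepsilon_*,\ell_*)$ regime, where the positive term $|y|^{2\omega_*-\ell_*}|\partial_r g|^2$ in \eqref{eqn:F_levol_2} must be absorbed by the (now-dominant) $|\nabla' g|^2$ contribution; the comparison $|\nabla' g|\ge c|\partial_r g|$ is exactly what allows this absorption, and together with $|y|^{\omega_*}\to 0$ it suffices.
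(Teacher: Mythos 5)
Your proposal is correct and follows essentially the same route as the paper's proof: the same case split on $y(t)\in V(\varepsilon_*,\ell_*)$ to get first the comparison $|\nabla' g|\ge c|\partial_r g|$ (via Lemma~\ref{lem:5.4}) and then the differential inequality $\frac{d}{dt}G_*(y(t))\le -\tilde c\,|y(t)|^{-1}|\nabla g(y(t))|$ (via Lemma~\ref{lem:2.2} and \eqref{eqn:F_levol_2}), followed by integration against the divergent $\int |y|^{-1}|y'|\,d\tau$. Your added remarks on absorbing the positive term in \eqref{eqn:F_levol_2} and on the logarithmic divergence just make explicit what the paper leaves implicit.
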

The following is Proposition~5.3 in \cite{KMP}.
\begin{proposition}\label{pro:subanalytic}
There exists $e_1, e_2>0$ and $\bar{\rho}\in [2^{-1},1)$ such that $|G_*(y)-\alpha_0|\leq e_1$ and $|\nabla' G_*(y)|\geq |y|^{-\omega_*/2}|\partial_r G_*(y)|$ imply $|y||\nabla G_* (y)|\geq e_2|G_*(y)-\alpha_0|^{\bar{\rho}}$.
\end{proposition}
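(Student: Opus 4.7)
The statement is essentially Proposition~5.3 in \cite{KMP}, a {\L}ojasiewicz-type gradient inequality for an asymptotic critical value of a subanalytic function, restricted to a spherically-dominated subanalytic region. My plan is to reproduce the KMP argument, whose main ingredients are: (i) the fact that $G_* = g/|y|^{\ell_*}$ is real analytic on $U\setminus\{0\}$ and defines a subanalytic function on $U$; (ii) the subanalytic curve selection lemma; and (iii) Puiseux expansions along real-analytic arcs.

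The argument proceeds by contradiction. The set
\[
S_\delta := \{y \in U\setminus\{0\} : |G_*(y)-\alpha_0|\leq \delta,\ |\nabla' G_*(y)|\geq |y|^{-\omega_*/2}|\partial_r G_*(y)|\}
\]
is subanalytic, and for each $\bar\rho\in[1/2,1)$ the function $\Phi_{\bar\rho}(y) := |y||\nabla G_*(y)|\cdot |G_*(y)-\alpha_0|^{-\bar\rho}$ is subanalytic on $S_\delta \setminus G_*^{-1}(\alpha_0)$. If the proposition fails, then for every $\bar\rho < 1$ and every $\delta>0$, the infimum of $\Phi_{\bar\rho}$ on $S_\delta$ is $0$. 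By the subanalytic curve selection lemma applied to the subanalytic family of sets $\{\Phi_{\bar\rho}<\varepsilon\}\cap S_\delta$, I obtain a real-analytic arc $\gamma:(0,\varepsilon_0)\to U\setminus\{0\}$ with $\gamma(s)\in S_{\delta(s)}$, $\delta(s)\to 0$, along which $\Phi_{\bar\rho}(\gamma(s))\to 0$. Expand in Puiseux series
\[
|\gamma(s)|\sim A s^a,\qquad G_*(\gamma(s))-\alpha_0\sim B s^b,\qquad |\gamma(s)||\nabla G_*(\gamma(s))|\sim C s^c,
\]
with $a\geq 0$ and $b,c>0$ rational. The chain rule $|(G_*\circ\gamma)'|\leq |\nabla G_*(\gamma)||\gamma'|$ combined with the usual bound on $|\gamma'|$ gives $c\leq b$, and the spherical-dominance condition in $S_\delta$, together with the fact that the unit-sphere component of $\gamma$ has bounded spherical speed in the Puiseux sense, upgrades this to a strict gap of the form $c + \tfrac{\omega_*}{2}a \leq b$.

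Two cases arise. If $a>0$ (so $\gamma(s)\to 0$), the strict gap forces $c/b<1$ with a uniform lower-bounded deficit; since only finitely many Puiseux exponents can appear for subanalytic functions constructed from $G_*$, taking the supremum of $c/b$ over all arcs produces a single $\bar\rho\in[1/2,1)$ that works. If $a=0$, then $\gamma(s)$ converges to a genuine point $y^*\in U\setminus\{0\}$ with $G_*(y^*)=\alpha_0$, and the statement reduces to the classical {\L}ojasiewicz gradient inequality at the analytic critical point $y^*$, which provides a local exponent in $[1/2,1)$; taking the maximum of the two exponents finishes the proof.

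The main obstacle is the case $a>0$: since $\alpha_0$ is only an \emph{asymptotic}, not a genuine, critical value at the singular point $0$, no classical {\L}ojasiewicz inequality applies directly, and one cannot continuously extend $G_*$ to $0$. The crucial role of the spherical-dominance hypothesis defining $S_\delta$ — which is precisely the reason why the specific exponent $\omega_*/2$ was extracted in Proposition~\ref{pro:1} — is to rule out the degenerate curves along which $|\gamma||\nabla G_*(\gamma)|$ could vanish faster than any fixed power of $|G_*(\gamma)-\alpha_0|$. Carrying out the Puiseux bookkeeping under this condition to extract a uniform exponent $\bar\rho<1$ is the technical heart of the proof.
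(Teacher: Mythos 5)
The paper does not prove Proposition~\ref{pro:subanalytic} at all: it is imported verbatim from \cite{KMP} (their Proposition~5.3), with the remark that $\bar{\rho}$ is merely normalized to lie in $[2^{-1},1)$ for convenience. So there is no ``paper's proof'' to compare against; what you have written is an attempted reconstruction of the KMP argument. Your toolbox is the right one (subanalyticity of $G_*$ on $U\setminus\{0\}$, the curve selection lemma, Puiseux expansions along arcs), and your reduction to the two cases $a>0$ and $a=0$ is sensible --- in the case $a=0$ the limit point $y^*\neq 0$ is indeed a genuine critical point of $G_*$ with value $\alpha_0$ and the classical {\L}ojasiewicz inequality applies.

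The case $a>0$, however, is not actually proved. First, the inequality you call the ``strict gap,'' $c+\tfrac{\omega_*}{2}a\leq b$, is asserted rather than derived; the phrase ``bounded spherical speed in the Puiseux sense'' does not substitute for an estimate, and this is exactly where the hypothesis $|\nabla' G_*|\geq |y|^{-\omega_*/2}|\partial_r G_*|$ must be used quantitatively. Second, even granting that inequality, it does not yield a \emph{uniform} $\bar{\rho}<1$: it gives $c/b\leq 1-\tfrac{\omega_*}{2}\,(a/b)$, and without a lower bound on $a/b$ over all admissible arcs the ratio $c/b$ could accumulate at $1$. Third, ``taking the supremum of $c/b$ over all arcs'' is not a legitimate way to extract a single exponent from infinitely many curves; the standard device is to pass to the one-variable subanalytic function $\theta(t):=\inf\{|y|\,|\nabla G_*(y)| : y\in S_{e_1},\ |G_*(y)-\alpha_0|=t\}$, show $\theta\not\equiv 0$ near $t=0$, and read off $\bar{\rho}$ from the leading exponent of its Puiseux expansion --- the curve selection lemma then enters only to show that this leading exponent is $<1$. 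As it stands, the technical heart of the proposition (which you yourself flag) is missing; either carry out that analysis following \cite{KMP}, or simply cite the result as the paper does.
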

In \cite[Proposition~5.3]{KMP}, $\bar{\rho}\in (0,1)$ but here we require $\bar{\rho}\geq 2^{-1}$ for notational convenience. For $\eta>0$, define
\begin{align*}
V(\varepsilon,\eta,\ell):=\{y\in V(\varepsilon,\ell)\, |\, \vert y \vert ^{-\eta}|\nabla' g(y)|\leq |\partial_r g(y)|\}.
\end{align*} 
\begin{lemma}\label{lem:2.6}
Let $\bar{\rho}$ be the exponent given in Proposition~\ref{pro:subanalytic}. There exists $c>0$ such that, for $t$ large enough, 
\begin{align*}
y(t)\notin V(\varepsilon_*,2^{-1}\omega_*,\ell_*)\Longrightarrow \frac{d}{dt} G_*(y(t))\leq - c|G_*(y(t))-\alpha_0|^{\bar{\rho}}\cdot\frac{|\nabla'g(y(t))|}{|y(t)|}.
\end{align*}
\begin{proof}
First, consider the case $y(t)\notin V(\varepsilon_*,\ell_*)$. Let $c'>0$ be the constant in Lemma~\ref{lem:2.2}. Then $
\frac{d}{dt} G_*(y)\leq -c'{|\nabla g(y)|}/{|y|}\leq -c'{|\nabla'g(y)|}/{|y|}.$
By requiring $t$ is large so that $|G_*(y)-\alpha_0|\leq 1$, the assertion holds for $c\leq c'$.

Next, consider the case $y(t)\in V(\varepsilon_*,\ell_*)\setminus V(\varepsilon_*,2^{-1}\omega_*,\ell_*)$. Since $y\notin V(\varepsilon_*,2^{-1}\omega_*,\ell_*)$,  $|\nabla'g(y)|>|y|^{\omega_*/2}|\partial_r g(y)|$. By Lemma~\ref{lem:5.4}, $|\partial_r g(y)|\geq C_{4}^{-1}|y|^{\ell_*-1}$ for $t$ large. Therefore, $|\nabla' G_*(y)|={|\nabla'g(y)|}/{|y|^{\ell_*}}\geq C_{4}^{-1} |y|^{\omega_*/2-1}.$
Combining \eqref{equ:def_W_4} and Lemma~\ref{lem:5.4}, 
\begin{align*}
\left|\partial_r G_*(y )  \right|= \left| \frac{\partial_r g(y )}{|y |^{\ell_*}}\left( 1-\frac{\ell_* g(y )}{|y |\partial_r g(y )} \right) \right|\leq 2^{-1}C_{4}|y |^{2\omega_*-1}.
\end{align*}
By two estimates above, for $t$ large enough,
$
|\nabla' G_*(y )|\geq |y |^{-\omega_*/2}\left|\partial_r G_*(y )  \right|$.
From Proposition~\ref{pro:subanalytic}, 
\begin{align}\label{equ:L13_1}
|y(t)||\nabla G_* (y(t))|\geq e_2|G_*(y(t))-\alpha_0|^{\bar{\rho}}.
\end{align}
By a direct computation, for $y\in V(\varepsilon_*,\ell_*)\setminus V(\varepsilon_*,2^{-1}\omega_*,\ell_*)$,
\begin{align*}
|\nabla G_*(y)|\leq &\frac{|\nabla' g(y)|}{|y|^{\ell_*}}+\left|1-\frac{\ell_*g(y)}{|y|\partial_r g(y)}  \right| \frac{|\partial_r g(y)|}{|y|^{\ell_*}} 
\leq  (1+2^{-1}|y|^{\frac{3\omega_*}{2}})\frac{|\nabla' g(y)|}{|y|^{\ell_*}}
\end{align*}
Therefore, for $t$ large enough,
\begin{align}\label{equ:L13_2}
|\nabla G_*(y)|\leq 2 \frac{|\nabla' g(y)|}{|y|^{\ell_*}}.
\end{align}
Also, combining \eqref{eqn:F_levol_2} and $|\nabla'g(y)|>|y|^{\omega_*/2}|\partial_r g(y)|$, 
\begin{align}\label{equ:L13_3}
\frac{d}{dt}G_*(y)\leq  &(-2^{-1}+ |y|^{\omega_*} )\frac{|\nabla' g(y)|^2}{|y|^{\ell_*}}\leq - \frac 13 \frac{|\nabla' g(y)|^2}{|y|^{\ell_*}}, 
\end{align}
for $t$ large enough. Combining \eqref{equ:L13_1}, \eqref{equ:L13_2} and \eqref{equ:L13_3}, \bea \label{eq-secondpart}
\frac{d}{dt}G_*(y )\leq &-\frac16  |y ||\nabla G_*(y )|\cdot\frac{|\nabla' g(y )| }{|y | }\leq  -\frac{e_2}6 |G_*(y )-\alpha_0|^{\bar{\rho}}\cdot\frac{|\nabla' g(y )| }{|y | }.
\eea 
Therefore, the assertion holds for $c\leq e_2/6$.
\end{proof}
\end{lemma}
\begin{lemma}\label{lem:2.7}
There exists $c>0$ such that for $t$ large enough,
\begin{align*}
y(t)\in V(\varepsilon_*,4^{-1}\omega_*,\ell_*)\Longrightarrow \frac{d}{dt}  |y(t)|^{\omega_*/2} \leq -c(|y(t)|^{\omega_*/2})^{\bar{\rho}}\frac{|\nabla'g(y(t))|}{|y(t)|}.
\end{align*}
\begin{proof} By Lemma~\ref{lem:5.4}, for large $t$, $\partial_r g(y(t))>0$. Hence
\begin{align*}
\frac{d}{dt} |y(t)|^{\frac{\omega_*}{2}} 
\leq &- \frac12 \omega_*|y(t)|^{\frac{\omega_*}{2}-1} \left( |\partial_r g(y(t))|- b_0|y(t)|^\rho|\nabla g(y(t))| \right).
\end{align*}
Since $y(t)\in V(\varepsilon_*,\ell_*)$, $|\nabla g(y)|\leq   (1+\varepsilon_*^{-1})|\partial_r g(y)|.$ Thus, for large $t$, $\frac{d}{dt} |y|^{\frac{\omega_*}{2}} \leq   - \frac 14\omega_*|y|^{\frac{\omega_*}{2}-1}   |\partial_r g(y)|.$ Together with $|\partial_r g(y)|\geq |y|^{-\frac{\omega_*}{4}}|\nabla' g(y)|$, there holds $\frac{d}{dt} |y|^{\frac{\omega_*}{2}} \leq - \frac14 \omega_*|y|^{\frac{\omega_*}{4}} \frac{|\nabla'g(y)|}{|y|}$.  The assertion holds for $c=4^{-1}\omega_*$ since $\bar{\rho}\geq  2^{-1}$.
\end{proof}
\end{lemma}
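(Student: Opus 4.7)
The plan is to differentiate $|y(t)|^{\omega_*/2}$ directly along the perturbed gradient flow and then exploit the two defining inequalities of the region $V(\varepsilon_*,4^{-1}\omega_*,\ell_*)$: first the strict dominance of $\partial_r g$ over $\nabla' g$, and then the $V(\varepsilon_*,\ell_*)$ inclusion together with Lemma~\ref{lem:5.4} to control $\partial_r g$ from below. Using $y'(t)=-\nabla g(y(t))+\textup{Err}(t)$ and $\tfrac{d}{dt}|y(t)|=\langle y'(t),\partial_r\rangle$, the chain rule gives
\[
\frac{d}{dt}|y(t)|^{\omega_*/2}=\tfrac{\omega_*}{2}\,|y(t)|^{\omega_*/2-1}\bigl(-\partial_r g(y(t))+\langle\textup{Err}(t),\partial_r\rangle\bigr).
\]

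Next, I would dispose of the error contribution. Since $y(t)\in V(\varepsilon_*,\ell_*)$, Lemma~\ref{lem:5.4} gives $\partial_r g(y(t))>0$ for $t$ large, and the inclusion $y(t)\in V(\varepsilon_*)$ yields $|\nabla g(y(t))|\le(1+\varepsilon_*^{-1})|\partial_r g(y(t))|$. Combining this with Proposition~\ref{thm:perturb} (smallness of error) produces
\[
|\langle\textup{Err}(t),\partial_r\rangle|\le b_0|y(t)|^{\rho}|\nabla g(y(t))|\le b_0(1+\varepsilon_*^{-1})|y(t)|^{\rho}\,\partial_r g(y(t)),
\]
and since $|y(t)|^\rho\to 0$ the parenthesis absorbs the error: for $t$ large,
\[
-\partial_r g(y(t))+\langle\textup{Err}(t),\partial_r\rangle\le -\tfrac{1}{2}\partial_r g(y(t)),
\]
so that $\tfrac{d}{dt}|y(t)|^{\omega_*/2}\le -\tfrac{\omega_*}{4}|y(t)|^{\omega_*/2-1}\partial_r g(y(t))$.

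Now I invoke the extra inclusion $y(t)\in V(\varepsilon_*,4^{-1}\omega_*,\ell_*)$, which is exactly the inequality $\partial_r g(y(t))\ge |y(t)|^{-\omega_*/4}|\nabla' g(y(t))|$. Substituting,
\[
\frac{d}{dt}|y(t)|^{\omega_*/2}\le -\tfrac{\omega_*}{4}\,|y(t)|^{\omega_*/4}\cdot\frac{|\nabla' g(y(t))|}{|y(t)|}.
\]
To finish, I compare $|y(t)|^{\omega_*/4}$ with $(|y(t)|^{\omega_*/2})^{\bar{\rho}}=|y(t)|^{\omega_*\bar{\rho}/2}$: since $\bar{\rho}\ge 1/2$ we have $\omega_*\bar{\rho}/2\ge \omega_*/4$, and because $|y(t)|\le 1$ for $t$ large this gives $|y(t)|^{\omega_*/4}\ge |y(t)|^{\omega_*\bar{\rho}/2}$. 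The asserted inequality then holds with $c=\omega_*/4$.

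This proof is essentially a bookkeeping exercise; the only subtlety is confirming that the error does not spoil the sign of $-\partial_r g$, which Proposition~\ref{thm:perturb} resolves thanks to its $|y|^\rho$ gain. The role of the threshold $4^{-1}\omega_*$ (as opposed to $\omega_*/2$ used in Lemma~\ref{lem:2.6}) is simply to leave enough room so that $|y|^{-\omega_*/4}$ is absorbed into $|y|^{\omega_*/4}$ while still meeting the exponent $\omega_*\bar{\rho}/2$ demanded by $\bar{\rho}\in[1/2,1)$.
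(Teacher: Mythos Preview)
Your proof is correct and follows essentially the same approach as the paper's: differentiate $|y(t)|^{\omega_*/2}$, absorb the error term using Proposition~\ref{thm:perturb} together with $|\nabla g|\le(1+\varepsilon_*^{-1})|\partial_r g|$ on $V(\varepsilon_*)$, then use the defining inequality of $V(\varepsilon_*,4^{-1}\omega_*,\ell_*)$ and $\bar{\rho}\ge 1/2$ to conclude with $c=\omega_*/4$. The only cosmetic difference is the order in which you invoke Lemma~\ref{lem:5.4} and the $V(\varepsilon_*)$ bound; the logic and constants match the paper exactly.
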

Define the control function
\begin{align*}
h(t):=(G_*(y(t))-\alpha_0)+|y(t)|^{\omega_*/2}.
\end{align*}
\begin{lemma}[control under non-radial motion]\label{lem:2.8}
There exists $c>0$ such that for $t$ large enough,
\begin{align*}
y(t)\notin V(\varepsilon_*,\omega_*/2,\ell_*)\Longrightarrow h'(t)\leq -c|h(t)|^{\bar{\rho}}\cdot \frac{|\nabla'g(y(t))|}{|y(t)|}.
\end{align*}
\begin{proof}
We first discuss the case $y(t)\notin V(\varepsilon_*,\ell_*).$ Let $c'$ be the constant in Lemma~\ref{lem:2.2}. Then from Lemma~\ref{lem:2.2}, for $t$ large enough,
\begin{align}\label{equ:L16_1}
\frac{d}{dt} G_*(y )\leq -c'|y |^{-1} |\nabla g(y )|.
\end{align}
From Proposition \ref{thm:perturb} (smallness of error), for $t$ large enough, 
\begin{equation}\label{equ:L16_2}
\begin{split}
\frac{d}{dt} |y |^{\omega_*/2}  \leq &   2^{-1}\omega_*|y |^{\omega_*/2-1}\cdot\left( |\partial_r g(y )|+b_0|y |^\rho|\nabla g(y )|  \right)\\
\leq &  \omega_*|y |^{\omega_*/2-1 }  |\nabla g(y )|.
\end{split}
\end{equation}
For $t$ large enough such that $|h(t)|\leq 1$ and \eqref{equ:L16_1} and \eqref{equ:L16_2} hold,  
\begin{align*}
h' \leq -2^{-1}c'|y |^{-1} |\nabla g(y )| \leq -2^{-1}c'|h |^{\bar{\rho}} \cdot\frac{|\nabla' g(y )|}{|y |}.
\end{align*}
Hence the assertion holds by requiring $c\leq 2^{-1}c'$.

Next, we consider the case $y(t)\in V(\varepsilon_*,\ell_*)\setminus V(\varepsilon_*,4^{-1}\omega_*,\ell_*)$. Note that $$V(\varepsilon_*,\ell_*)\setminus V(\varepsilon_*,4^{-1}\omega_*,\ell_*)\subset V(\varepsilon_*,\ell_*)\setminus V(\varepsilon_*,2^{-1}\omega_*,\ell_*).$$ From the first inequality of \eqref{eq-secondpart} in the proof of Lemma~\ref{lem:2.6},
\begin{align*}
\frac{d}{dt} G_*(y) \leq & -6^{-1}|y||\nabla' G_*(y)| \frac{|\nabla'g(y)|}{|y|}=-6^{-1}|y|^{1-\ell_*}|\nabla' g(y)| \frac{|\nabla'g(y)|}{|y|}.
\end{align*}
Because $y(t)\notin V(\varepsilon_*,4^{-1}\omega_*,\ell_*)$, $|\nabla'g(y )|\geq |y |^{\frac{\omega_*}{4}}|\partial_r g(y )|$. Therefore,
$
\frac{d}{dt} G_*(y )\leq -6^{-1}|y |^{1-\ell_*+\frac{\omega_*}{4}}|\partial_r g(y )| \frac{|\nabla'g(y )|}{|y |}.
$
From Lemma~\ref{lem:5.4}, for $t$ large enough,
\begin{equation}\label{equ:L16_3}
\frac{d}{dt} G_*(y )\leq -(6C_{4})^{-1} |y |^{\frac{\omega_*}{4}} \frac{|\nabla'g(y )|}{|y |}.
\end{equation}
On the other hand,  from Lemma~\ref{lem:2.6}, for $t$ large enough,
\begin{align}\label{equ:L16_4}
\frac{d}{dt} G_*(y )\leq -\tilde{c} |G_*(y )-\alpha_0|^{\bar{\rho}}\cdot\frac{|\nabla'g(y )|}{|y |}.
\end{align} 
Here $\tilde{c}$ is the constant in Lemma~\ref{lem:2.6}. Next,
from Lemma~\ref{lem:5.4}, $\partial_rg(y(t))>0$ for $t$ large enough. Hence 
\begin{align*}
\frac{d}{dt} |y |^{\frac{\omega_*}{2}}  \leq &  -2^{-1}\omega_*|y |^{\frac{\omega_*}{2}-1}\left( |\partial_r g(y )|-b_0|y |^\rho|\nabla g(y )|  \right).
\end{align*}
Because $y(t)\in V(\varepsilon_*,\ell_*)\subset V(\varepsilon_*)$, $|\nabla g(y )|\leq (1+\varepsilon_*^{-1})|\partial_r g(y )|$. Thus, for $t$ large enough, $\frac{d}{dt} |y |^{\frac{\omega_*}{2}}<0$.
Combining this with \eqref{equ:L16_3} and \eqref{equ:L16_4},
\begin{align*}
h' \leq -\left(2^{-1}\tilde{c} |G_*(y )-\alpha_0|^{\bar{\rho}}+ (12C_{4})^{-1} |y|^{\frac{\omega_*}{4}}\right)\frac{|\nabla'g(y )|}{|y |}.
\end{align*}
Since $\bar{\rho}\geq 2^{-1}$, $
2(|y |^{\frac{\omega_*}{4}}+|G_*(y )-\alpha_0|^{\bar{\rho}})\geq |h |^{\bar{\rho}}$.
Then the assertion holds for small $c>0$.

Lastly, we suppose $y(t)\in V(\varepsilon_*,4^{-1}\omega_*,\ell_*)\setminus V(\varepsilon_*,2^{-1}\omega_*,\ell_*)$. In view of  Lemmas~\ref{lem:2.6} and \ref{lem:2.7}, for $t$ large enough,
\begin{align*}
h' \leq -c( |G_*(y )-\alpha_0|^{\bar{\rho}}+(|y |^{\omega_*/2})^{\bar{\rho}})\cdot \frac{|\nabla'g(y )|}{|y |}, 
\end{align*}
for some $c>0$ and then the assertion follows. 
\end{proof}
\end{lemma}
\begin{lemma}[control under radial motion]\label{lem:2.9}
There exists $c>0$ such that for $t$ large enough,
\begin{align*}
y(t)\in V(\varepsilon_*,\omega_*/2,\ell_*)\Longrightarrow h'(t)\leq -c|h(t)|^{\bar{\rho}}\cdot \frac{|\nabla'g(y(t))|}{|y(t)|}. 
\end{align*}
\begin{proof}
From \eqref{eqn:F_levol_2} and Lemma~\ref{lem:5.4}, for $t$ large enough,
\begin{equation}\label{equ:L17_1}
\frac{d}{dt}G_*(y )\leq  |y |^{2\omega_*-\ell_*}|\partial_r g(y )|^2\leq  C_{4}|y |^{2\omega_*-1}|\partial_r g(y )|.
\end{equation}
From Proposition~\ref{thm:perturb} (smallness of error), for $t$ large enough, 
\begin{align*}
\frac{d}{dt} |y |^{\frac{\omega_*}{2}}\leq &  2^{-1}\omega_*|y |^{\frac{\omega_*}{2}-1}\left( -|\partial_r g(y )|+b_0|y |^\rho|\nabla g(y )|  \right).
\end{align*}
Since $y(t)\in V(\varepsilon_*,\omega_*/2,\ell_*)\subset V(\varepsilon_*)$, $|\nabla g(y )|\leq (1+ \varepsilon_*^{-1})|\partial_r g(y )|$. Hence for $t$ large enough,
\begin{equation}\label{equ:L17_2}
\begin{split}
\frac{d}{dt} |y |^{\frac{\omega_*}{2}} \leq& -4^{-1}\omega_*|y |^{\frac{\omega_*}{2}-1}|\partial_r g(y )| .
\end{split}
\end{equation} 
Combining \eqref{equ:L17_1} and \eqref{equ:L17_2}, we get $h' \leq -8^{-1}\omega_*|y |^{\frac{\omega_*}{2}-1}|\partial_r g(y )|.$ Since $y(t)\in V(\varepsilon_*,\omega_*/2,\ell_*)$, $|\nabla'g(y )|\leq |y |^{\frac{\omega_*}{2}}|\partial_r g(y )|$. Therefore, when $t$ is large enough such that $|h(t)|\leq 1$, 
\begin{align*}
h'  \leq -8^{-1}\omega_*   \frac{|\nabla'g(y )|}{|y |}\leq -8^{-1}\omega_* |h |^{\bar{\rho}}  \frac{|\nabla'g(y )|}{|y |},
\end{align*}
and the assertion follows.
\end{proof}
\end{lemma}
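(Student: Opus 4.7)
The plan is to exploit the defining property of $V(\varepsilon_*,\omega_*/2,\ell_*)$: in this region $|\nabla' g(y)| \leq |y|^{\omega_*/2}|\partial_r g(y)|$, so the angular part of the gradient is much smaller than the radial part. Consequently the right-hand side $|\nabla' g(y(t))|/|y(t)|$ is small, meaning only a mild decrease of $h(t)$ needs to be established. That decrease will come entirely from the $|y(t)|^{\omega_*/2}$ component of $h$, which shrinks because the radial gradient pushes $y(t)$ toward the origin; the potentially increasing term $G_*(y(t))$ will only grow at a strictly slower rate.

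First I would bound $\tfrac{d}{dt}G_*(y(t))$ using \eqref{eqn:F_levol_2}. The only sign-undetermined term is $|y|^{2\omega_*-\ell_*}|\partial_r g(y)|^2$, and by Lemma~\ref{lem:5.4} one factor of $|\partial_r g(y)|$ is at most $C_4|y|^{\ell_*-1}$, yielding
\[
\tfrac{d}{dt}G_*(y(t)) \leq C_4 |y(t)|^{2\omega_*-1}|\partial_r g(y(t))|.
\]
Next I would compute $\tfrac{d}{dt}|y(t)|^{\omega_*/2} = \tfrac{\omega_*}{2}|y(t)|^{\omega_*/2-1}(-\partial_r g(y(t))+\langle \textup{Err}(t),\partial_r\rangle)$. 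Since $y(t)\in V(\varepsilon_*)$, $|\nabla g(y)| \leq (1+\varepsilon_*^{-1})|\partial_r g(y)|$, and by Proposition~\ref{thm:perturb} the error is controlled by $b_0|y|^\rho|\nabla g(y)|$, which is absorbed because $|y|^\rho\to 0$. Together with Lemma~\ref{lem:5.4} ensuring $\partial_r g(y(t))>0$, this gives
\[
\tfrac{d}{dt}|y(t)|^{\omega_*/2} \leq -\tfrac{1}{4}\omega_* |y(t)|^{\omega_*/2-1}|\partial_r g(y(t))|.
\]

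Since $|y(t)|\to 0$ and $2\omega_*-1 > \omega_*/2-1$ (because $\omega_*>0$), we have $|y(t)|^{2\omega_*-1} \ll |y(t)|^{\omega_*/2-1}$ for large $t$, so the negative contribution dominates, giving $h'(t)\leq -c|y(t)|^{\omega_*/2-1}|\partial_r g(y(t))|$ for some $c>0$. The defining inequality $|\nabla' g(y(t))|\leq |y(t)|^{\omega_*/2}|\partial_r g(y(t))|$ then rewrites this as
\[
h'(t)\leq -c\,\frac{|\nabla' g(y(t))|}{|y(t)|}.
\]
Finally, since $h(t)\to 0$, we have $|h(t)|^{\bar\rho}\leq 1$ for large $t$, and as the right-hand side is negative, $-c|\nabla' g|/|y| \leq -c|h(t)|^{\bar\rho}|\nabla' g|/|y|$, yielding the claim. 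There is no serious obstacle here; the only care needed is absorbing the error from Proposition~\ref{thm:perturb} into the main radial-drift term, which succeeds precisely because $|y|^\rho \to 0$ and $|\nabla g|$ is comparable to $|\partial_r g|$ throughout $V(\varepsilon_*)$.
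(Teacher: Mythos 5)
Your proposal is correct and follows essentially the same route as the paper: bound $\tfrac{d}{dt}G_*$ via \eqref{eqn:F_levol_2} and Lemma~\ref{lem:5.4}, show the radial drift forces $\tfrac{d}{dt}|y|^{\omega_*/2}\leq -\tfrac{1}{4}\omega_*|y|^{\omega_*/2-1}|\partial_r g|$ after absorbing the error, note that $|y|^{2\omega_*-1}=o(|y|^{\omega_*/2-1})$ so the negative term dominates, and finish with the defining inequality of $V(\varepsilon_*,\omega_*/2,\ell_*)$ together with $|h(t)|^{\bar\rho}\leq 1$. No gaps; your explicit remark that $2\omega_*-1>\omega_*/2-1$ is exactly the comparison the paper uses implicitly when combining \eqref{equ:L17_1} and \eqref{equ:L17_2}.
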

{Now, we prove Theorem~\ref{thm:gradflow} (1) and (2) separately.}
\begin{proof}[Proof of Theorem~\ref{thm:gradflow} (1)]
From Proposition~\ref{thm:perturb} (smallness of error), for $t$ large enough,
\begin{align*}
|y|\left|\frac{d}{dt}\left( \frac{y}{|y|} \right)\right|= |y'-\left\langle y',\partial_r \right\rangle\partial_r|
\leq &|\nabla'g(y)|+b_0|y|^\rho|\nabla g(y)|.
\end{align*}
Hence
\begin{align*}
\int_t^\infty \left|\frac{d}{d\tau }\left( \frac{y(\tau )}{|y(\tau)|} \right)\right|\, d\tau\leq \int_t^\infty \frac{|\nabla'g(y(\tau))|}{|y(\tau)|}\, d\tau+ b_0 \int_t^\infty   \frac{|\nabla g(y(\tau))|}{|y(\tau)|^{1-\rho}}\, d\tau.
\end{align*}
Momentarily, we assume the claims both $\int_t^\infty \frac{|\nabla'g(y(\tau))|}{|y(\tau)|}\, d\tau$ and $\int_t^\infty   \frac{|\nabla g(y(\tau))|}{|y(\tau)|^{1-\rho}}\, d\tau$ are finite. Note the claims imply that $y(t)/|y(t)|$, as a trajectory in $\mathbb{S}^{n-1}$, has a finite length, and hence it has a limit $\theta_*\in\mathbb{S}^{n-1}$.  To proceed with the remaining assertion, recall the expansion $g=\sum_{j\geq p}g_j$ and that $\hat{g}_p$ is the restriction of $g_p$ on the unit sphere. Then the first claim implies $\theta_*$ is a critical point of $\hat{g}_p$. From \eqref{def:alpha_0} and \eqref{equ:ellandp},
\begin{align}\label{equ:alpha_0equg}
\hat{g}_p(\theta_*)=\lim_{t\to\infty}  |y(t)|^{\ell_*-p} G_*(y(t))=\lim_{t\to\infty}  |y(t)|^{\ell_*-p} \alpha_0\geq 0.
\end{align}

It remains to prove the claims. From Lemma~\ref{lem:2.8} and Lemma~\ref{lem:2.9}, for $t$ large enough $\frac{|\nabla'g(y(\tau))|}{|y(\tau)|}\leq -c^{-1}(1-\bar{\rho})^{-1}\frac{d}{d\tau}(h(\tau))^{1-\bar{\rho}}.
$ Integrating the above yields the first claim. Next, combining Proposition~\ref{thm:perturb} (smallness of error) and Lemma~\ref{lem:2}, for $\tau$ large enough, 
\begin{align*}
\frac{d}{d\tau} g(y(\tau))=&-|\nabla g(y(\tau))|^2+\left\langle \textup{Err}(\tau),\nabla g(y(\tau)) \right\rangle \leq -\frac{1}{2}|\nabla g(y(\tau))|^2\\
\leq& -c_2|g(y(\tau))|^\rho |\nabla g(y(\tau))|= -c_2|g(y(\tau))|^\rho |y(\tau)|^{1-\rho} \frac{|  \nabla g(y(\tau))|}{|y(\tau)|^{1-\rho}}.
\end{align*}
Since $g(0)=0$, $\nabla g(0)=0$ and $|y(\tau)|\to 0$, $|g(y(\tau))|\leq C|y(\tau)|^2$ for large $\tau$. \begin{align*}
\frac{d}{d\tau} g(y(\tau))\leq -\frac{c_2}{C^{(1-\rho)/2}} |g(y(\tau))|^{(1+\rho)/2}\frac{|  \nabla g(y(\tau))|}{|y(\tau)|^{1-\rho}},
\end{align*}and the second claim follows.
\end{proof}

We prove the remaining statement (2) in a separate subsection. 
\subsection{Rate of convergence -- Proof of Theorem~\ref{thm:gradflow} (2)}\label{sec:rateofconvergence}
 Recall the set of characteristic exponents of $g$, namely $L\subset \mathbb{Q}_{>0}$, is finite (Proposition \ref{pro:1}). Moreover, the set of asymptotic critical values of $G_\ell = g/|x|^{\ell}$ is finite \cite[Proposition 5.1]{KMP}. Define finite set $\mathcal{Z}\subset \mathcal{Q}_{\ge 3}\times \mathbb{R}_{>0}$ by
\[\mathcal{Z} :=\{(\ell ,\alpha )\,:\, \ell \in L, \, \ell \ge 3, \text{and }\alpha >0\text{ is asymptotic critical value of } G_{\ell}\} .\] 
 
 We start by proving $\ell_*>2$. Suppose that $\ell_*\leq 2$. Recalling  $\ell_*\geq p$ in \eqref{equ:ellandp}. Since $p\ge 2$, where $p$ is the vanishing degree of $g$ at the origin, $\ell_*=p=2$. By \eqref{def:alpha_0}, $\hat{g}_2(\theta_*)=\alpha_0>0$, where $\theta_*$ is the limit of $y(t)/|y(t)|$. Using \eqref{equ:Ebound_0}, one readily can check {$|y|'\le -\frac{\alpha_0}{2}|y|$ for large time and hence $|y|$} decays exponentially. This contradicts Assumption \textbf{A1} and hence $\ell_*>2$. Next, by \eqref{equ:alpha_0equg}, $\ell_*=p$ if and only if $\hat{g}_p(\theta_*)>0$ and in that case $\hat{g}_p(\theta_*)=\alpha_0$. It remains to show that 
\begin{equation}\label{equ:decayrate}
\lim_{t\to\infty} t^{-1}|y(t)|^{2-\ell_*}=\alpha_0\ell_*(\ell_*-2).
\end{equation}
In view of Assumption \textbf{A1}, this implies $\ell_*\geq 3$ and hence $(\ell_*,\alpha_0)\in \mathcal{Z}$.

Denoting   ${\kappa}(t):=\frac{|y(t)|^{2-\ell_* }}{\alpha_0 \ell_* { (\ell_*-2)} } $, \eqref{equ:decayrate} is equivalent to $\lim_{t\to\infty}\frac{\kappa(t)}{t}=1$. It  reduces to show that ${\kappa}(t)$ satisfies the assumptions of the following lemma. 
\begin{lemma} \label{lem-crutialconvrate} Let ${\kappa}(t)$ be a positive differentiable function defined on $[0,\infty)$ and $\lim_{t\to \infty} {\kappa}(t)=\infty$. Suppose there exist $M_1,M_2>0$ and {an increasing sequence $ t_i\to\infty $ that satisfies ${\kappa}(t_{i+1})=2{\kappa}(t_i)$} with the following significances: for every given $\eps>0$, for $i$ large enough there hold 
\begin{align}  \label{eq-lemrate5} &(1-M_1\eps){\kappa}(t_i)\le  t_{i+1}-t_i \le (1+M_1\eps){\kappa}(t_i), \text{ and}\\ \label{eq-lemrate6} &\int_{[t_i,t_{i+1}]\cap \{ t\, |\, \vert{\kappa}'(t)-1\vert >\eps \}} |{\kappa}'(t)| dt \le \eps M_2 {\kappa}(t_i).   \end{align} 
Then $\lim_{t\to \infty}   {\kappa}(t)/t  =1 . $

\begin{proof}[Proof of Lemma \ref{lem-crutialconvrate}]
Adding \eqref{eq-lemrate5} for $i=1$ to $n$, 
\be (1-M_1\eps) (2^{n}-1)\kappa(t_1)\le t_{n+1}-t_1 \le (1+M_1\eps) (2^{n}-1)\kappa(t_1).\ee 
Since $2^n\kappa(t_{1})=\kappa(t_{n+1})$, 
one immediately infers that $\lim_{i\to\infty} \kappa(t_i)/t_i=1$. For every other $t\in(t_{j},t_{j+1})$, we may use \eqref{eq-lemrate6} to obtain that there is $C>0$ depending on $M_2$ but uniform in $\eps$ and $j$ such that $|{\kappa}(t)-{\kappa}(t_j) - (t-t_j)|\le C \eps {\kappa}(t_j) .$ 
This shows $\lim_{t\to\infty}\kappa(t)/t=1$.  
\end{proof}
\end{lemma}

{Roughly, two lemmas below show excess motion in the spherical part (and radial motion in reversed direction) is negligible compared to the radial motion toward the limit and they will be used in the proof of  \eqref{equ:decayrate}, presented at the end of this subsection.} Let $\sigma(t)$ be the arclength of the trajectory $y(\tau)$ from time $t$ to infinity 
\begin{align*}
\sigma(t):=\int_t^\infty |y'(\tau)|\, d\tau.
\end{align*}

\begin{lemma}\label{lem:arclength}
For any $\delta>0$, $|y(t)|\le \sigma(t) \le (1+\delta)|y(t)|$ for $t$ large enough.
\begin{proof}
It is clear that $|y(t)|\leq \sigma(t)$ and we will focus on proving $\sigma(t) \le (1+\delta)|y(t)|$. It is shown in the proof of \cite[Corollary 6.5]{KMP} that given $\tilde{C}>\tilde{c}>0$ and $\varepsilon>0$, there exists $r>0$ such that
\begin{align*}
|y|\leq r, \tilde{c}\leq G_*(y)\leq \tilde{C}\Longrightarrow |\nabla g(y)|\geq (1-\varepsilon)\ell_*|y|^{-1} g(y) .
\end{align*}
Fix $\varepsilon\in (0,1)$ such that $(1-\varepsilon)^{-5}=1+\delta$. Since $G_*(y(t))$ converges to $\alpha_0$, for $t$ large enough, $|\nabla g(y(t))|\geq (1-\varepsilon) \ell_* |y(t)|^{-1} g(y(t)).$ In view of Proposition~\ref{thm:perturb} (smallness of error), $\frac{d}{dt}g(y(t))\leq -(1-\varepsilon)|\nabla g(y(t))|^2.$ Since $G_*(y(t))$ converges to $\alpha_0$, for $t$ large enough,
\begin{align*}
(1-\varepsilon) \alpha_0^{1/\ell_*} g(y(t))^{-1/\ell_*} \leq  |y(t)|^{-1}\leq (1-\varepsilon)^{-1} \alpha_0^{1/\ell_*} g(y(t))^{-1/\ell_*}.
\end{align*}
Combining the above, we get
\begin{align*}
\frac{d}{dt}g(y)\leq -(1-\varepsilon)|\nabla g(y)|^2\leq -(1-\varepsilon)^3\alpha_0^{1/\ell_*}\ell_*g(y)^{1-1/\ell_*}|\nabla g(y)|.
\end{align*}
Equivalently, $
|\nabla g(y)|\leq -(1-\varepsilon)^{-3}\alpha_0^{-1/\ell_*} \frac{d}{dt} g(y)^{1/\ell_*}$.
Therefore,
\begin{align*}
\int_t^{\infty}|\nabla g(y(\tau))|\, d\tau\leq (1-\varepsilon)^{-3}\alpha_0^{-1/\ell_*} g(y(t))^{1/\ell_*}\leq (1-\varepsilon)^{-4}|y(t)|.
\end{align*}By Proposition~\ref{thm:perturb} (smallness of error),  $|y'(t)|\leq (1-\varepsilon)^{-1} |\nabla g(y(t))|$ for large $t$. As a result, $\sigma(t)= \int_t^\infty |y'(\tau)|\, d\tau \leq (1-\varepsilon)^{-5}|y(t)|=(1+\delta)|y(t)|.$ 
\end{proof}
\end{lemma}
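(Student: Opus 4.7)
The plan is to establish the two inequalities separately, with the lower bound being immediate. Since $y(\tau)\to 0$ as $\tau\to\infty$, the triangle inequality gives $|y(t)| = |y(t) - \lim_{\tau\to\infty}y(\tau)| \le \int_t^\infty |y'(\tau)|\,d\tau = \sigma(t)$.

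For the upper bound, my strategy rests on three building blocks: (i) Proposition~\ref{thm:perturb} implying $|y'(t)| \le (1+O(|y(t)|^\rho))|\nabla g(y(t))|$; (ii) the convergence $G_*(y(t)) \to \alpha_0 > 0$, giving $g(y(t)) = (\alpha_0 + o(1))|y(t)|^{\ell_*}$; and (iii) a radial-derivative estimate stating that in the good region $V(\varepsilon_*,\ell_*)$ (where $y(t)$ eventually lies by Corollary~\ref{cor:10}) one has $|\nabla g(y)| \ge (1-\varepsilon)\ell_*\, g(y)/|y|$. Given (i), proving $\sigma(t)\le(1+\delta)|y(t)|$ reduces to bounding $\int_t^\infty|\nabla g(y(\tau))|\,d\tau$ by $(1+o(1))|y(t)|$.

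To control this gradient integral, I would differentiate $g\circ y$ and use (i) to get $\frac{d}{dt}g(y(t)) \le -(1-o(1))|\nabla g(y(t))|^2$. Chaining this with (iii) and the identity $|y(t)|^{-1} = (1+o(1))\alpha_0^{1/\ell_*} g(y(t))^{-1/\ell_*}$ coming from (ii), the right-hand side becomes $-(1-o(1))\ell_*\alpha_0^{1/\ell_*}\, g(y(t))^{1-1/\ell_*}|\nabla g(y(t))|$. Rewriting this as
\begin{equation*}
|\nabla g(y(t))| \le -(1+o(1))\alpha_0^{-1/\ell_*}\,\frac{d}{dt}\bigl(g(y(t))^{1/\ell_*}\bigr)
\end{equation*}
and integrating from $t$ to $\infty$ yields $\int_t^\infty|\nabla g(y(\tau))|\,d\tau \le (1+o(1))\alpha_0^{-1/\ell_*} g(y(t))^{1/\ell_*} \le (1+o(1))|y(t)|$. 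Choosing $\varepsilon$ with $(1-\varepsilon)^{-5}=1+\delta$ absorbs all $(1-\varepsilon)^{-1}$ factors accumulated along (i)--(iii).

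The main obstacle is ingredient (iii): a sharp radial comparison between $|\nabla g|$ and $g/|y|$ along the good region $V(\varepsilon_*,\ell_*)$. This is a subanalytic/curve-selection consequence whose proof lives in the KMP toolbox (this is exactly the content of \cite[Cor.~6.5]{KMP}), and it is nontrivial because one must control $|\nabla g|$ by the leading-order homogeneous behavior of $g$ uniformly along $y(t)$; fortunately, once $G_*(y(t))\to\alpha_0>0$ has been established (and $y(t)\in V(\varepsilon_*,\ell_*)$ by Corollary~\ref{cor:10}), the qualitative setup needed to invoke that estimate is in place. The rest of the argument is a \L ojasiewicz-style telescoping exploiting that, to leading order, $g$ resembles the homogeneous potential $\alpha_0|y|^{\ell_*}$ along the trajectory, and it is precisely this homogeneity that makes the bound $\sigma(t)/|y(t)|\to 1$ sharp.
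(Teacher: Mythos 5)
Your proposal is correct and follows essentially the same route as the paper: the lower bound is immediate, and the upper bound comes from the KMP radial estimate $|\nabla g(y)|\ge(1-\varepsilon)\ell_*|y|^{-1}g(y)$, the comparison $|y(t)|^{-1}\asymp\alpha_0^{1/\ell_*}g(y(t))^{-1/\ell_*}$ from $G_*(y(t))\to\alpha_0$, and integration of $|\nabla g(y(t))|\lesssim-\frac{d}{dt}g(y(t))^{1/\ell_*}$, with Proposition~\ref{thm:perturb} converting $|y'|$ into $|\nabla g|$. One small correction: Corollary~\ref{cor:10} does not give that $y(t)$ eventually stays in $V(\varepsilon_*,\ell_*)$ (the trajectory may leave $V(\varepsilon_*)$, which is why Lemma~\ref{lem:bad} exists); the radial estimate you need is instead the one from \cite[Cor.~6.5]{KMP} whose hypothesis is only the two-sided bound $\tilde c\le G_*(y(t))\le\tilde C$ together with $|y(t)|\le r$, and that holds for \emph{all} large $t$ by Lemma~\ref{lem:9}, so the argument is unaffected.
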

\begin{lemma}\label{lem:bad}
For any $\delta>0$, for $t$ large enough,
\begin{align}
\int_{[t,\infty)\cap\{ \tau\, |\,y(\tau)\notin V(\varepsilon_*) \}} &|\nabla g(y(\tau))|\, d\tau\leq \delta |y(t)|.\label{equ:bad1} 
\end{align}
\begin{proof}
From Proposition~\ref{thm:perturb} (smallness of error), $|\nabla g(y(\tau))|$ and $| y'(\tau) |$ are comparable. Hence it suffices to prove 
\begin{equation}\label{equ:bad2}
\int_{[t,\infty)\cap\{ \tau\, |\,y(\tau)\notin V(\varepsilon_*) \}}  | y'(\tau) |\, d\tau\leq \delta |y(t)|.
\end{equation}
Denote
\begin{align*}
&\overline{A}(t)= \int_{[t,\infty)\cap\{  y(\tau)\in V(\varepsilon_*) \}} \left|y'(\tau) \right| d\tau,\ \overline{B}(t)= \int_{[t,\infty)\cap\{y(\tau)\notin V(\varepsilon_*) \}} \left|y'(\tau) \right| d\tau,\\
& {A}(t)= \int_{[t,\infty)\cap\{ y(\tau)\in V(\varepsilon_*) \}} -\frac{d\left| y(\tau) \right|}{d\tau}d\tau,\  {B}(t)= \int_{[t,\infty)\cap\{ y(\tau)\notin V(\varepsilon_*) \}} -\frac{d\left| y(\tau) \right|}{d\tau}  d\tau.
\end{align*}
Then \eqref{equ:bad2} can be restated as $
\overline{B}(t)\leq \delta|y(t)|$.
 Let $\delta'>0$ be a small number to be determined. From Lemma~\ref{lem:arclength}, for $t$ large enough,
\begin{align}\label{equ:A+B}
\overline{A}(t)+\overline{B}(t)\leq (1+\delta')(A(t)+B(t)). 
\end{align}
We claim for a moment that for $t$ large enough,
\begin{equation}\label{equ:BbarB}
\overline{B}(t)\geq (1-\delta')\sqrt{1+\varepsilon_*^{-2}}|B(t)|.
\end{equation} 
By choosing $\delta'>0$ sufficiently small depending on $\eps_*$, for $t$ large enough, there is $c=c(\eps_*)>0$ such that $c \overline{B}(t)\le\overline{B}(t)-|B(t)|$. Together with \eqref{equ:A+B} and $\overline{A}(t)\geq |A(t)|$, this ensures $c\overline{B}(t)\le \overline{B}(t)-|B(t)|\leq \delta'(A(t)+B(t))=\delta'|y(t)|.$ Then \eqref{equ:bad2} follows by taking $\delta'$ small enough.

Now we prove \eqref{equ:BbarB}. It suffices to show that for $\tau$ large enough, 
\begin{equation}\label{equ:xbarx}
y(\tau)\notin V(\varepsilon_*)\Longrightarrow  \left|y'(\tau) \right|\geq (1-\delta')\sqrt{1+\varepsilon_*^{-2}}\left|\frac{d}{d\tau}| y(\tau)| \right|.
\end{equation}
From Proposition~\ref{thm:perturb} (smallness of error),
\begin{align*}
|y'|\geq (1-b_0|y|^\rho) |\nabla g(y)|,\quad\left|\frac{d}{d\tau}| y| \right|\leq |\partial_r g(y)|+b_0|y|^\rho |\nabla g(y)|.
\end{align*}
Since $|\nabla g(y)|\geq \sqrt{1+\varepsilon_*^{-2}}|\partial_r g(y)|$ for $y\notin V(\varepsilon_*)$, \eqref{equ:xbarx} follows.
\end{proof}

\end{lemma}

\begin{proof}[Proof of {\eqref{equ:decayrate}}]
Note that ${\kappa}(t)>0$  and it diverges to $+\infty$ as $t\to \infty$. We set $t_1=1$ and inductively define
\begin{align*}
t_{i+1}=\inf\{t\geq t_{i}\, |\, {\kappa}(t)\geq 2{\kappa}(t_{i})\}.
\end{align*}
Clearly $\kappa(t_{i+1})=2\kappa(t_i)$ and $t_i\to\infty$. Our goal is to show that \eqref{eq-lemrate5} and \eqref{eq-lemrate6} hold for $i$ large enough. For the rest of the proof, we assume $i$ is large so that  the following (i)-(iii) hold for all $t\ge t_i$:
\begin{enumerate}[(i)]
\item  $\alpha_0^{-1} G_*(y(t)) \in [1-\delta,1 +\delta]$ and $\frac{1}{2} |y(t)|^{2\omega_*}\le \delta $, 
\item   $|\textup{Err}(t)|\leq \delta (1+\varepsilon_*^{-2})^{-1/2} |\nabla g(y(t))|$,
\item   Corollary~\ref{cor:10} and Lemma~\ref{lem:bad} hold.
\end{enumerate} 

First, we show that, for $\delta$ small enough,  
\begin{equation}\label{eq-thmrate_1}
y(t)\in V(\varepsilon_*)\Longrightarrow |{\kappa}'(t)-1|<4\delta.
\end{equation}
From a direct computation, 
\begin{align}\label{eq-thmrate_2}
{\kappa}'(t)=\alpha_0^{-1}\ell_*^{-1}|y(t)|^{-\ell_*+1}\left( \partial_rg(y(t))-\left\langle \textup{Err}(t),\partial_r \right\rangle \right)
\end{align}
Suppose $y(t)\in V(\varepsilon_*)$. From Corollary~\ref{cor:10}, $y(t)\in V(\varepsilon_*,\ell_*)$. Combining (i) and \eqref{equ:def_W_4}, $ \frac{1-\delta}{1+\delta}\leq \frac{\partial_r g (y(t))}{\alpha_0 \ell_* |y(t)|^{\ell_*-1}}\leq  \frac{1+\delta}{1-\delta} $. From (ii), $|\textup{Err}(t)|\leq  \delta(1+\varepsilon_*^{-2})^{-1/2} |\nabla g(y(t))|\leq   \delta |\partial_rg(y(t))|$. Then \eqref{eq-thmrate_1} holds by taking $\delta$ small. 

We are ready to prove \eqref{eq-lemrate6}.  From \eqref{eq-thmrate_2} and (ii),
\begin{align*}
|{\kappa}'(t)|\leq (1+\delta)\alpha_0^{-1}\ell_*^{-1}|y(t)|^{-\ell_*+1}|\nabla g(y(t))|.
\end{align*}
From the definition of $t_{i+1}$, $\kappa(t)\leq 2\kappa(t_i)$ for $t\in [t_i,t_{i+1}]$. Equivalently, $|y(t)|^{-\ell_*+1}\leq 2^{\frac{\ell_*-1}{\ell_*-2}}  |y(t_i)|^{-\ell_*+1}$. Therefore,
$$ |{\kappa}'(t)|\leq (1+\delta) 2^{\frac{\ell_*-1}{\ell_*-2}} \alpha_0^{-1}\ell_*^{-1}|y(t_{i})|^{-\ell_*+1}|\nabla g(y(t))| . $$
Combining the above and Lemma~\ref{lem:bad}, 
\begin{align}\label{eq-thmrate_4}
\int_{[t_i,t_{i+1}]\cap \{t \, |\, y(t)\notin V(\varepsilon_*) \}} |{\kappa}'(t)|\, dt\leq \delta M_2 {\kappa}(t_i), 
\end{align} 
for some $M_2>0$. Then for each sufficiently small $\eps>0$, \eqref{eq-lemrate6} follows from \eqref{eq-thmrate_1} and \eqref{eq-thmrate_4} by choosing $\delta = \eps/4$.

Next, we turn to \eqref{eq-lemrate5}. From \eqref{eq-thmrate_1} and \eqref{eq-thmrate_4},
\begin{align*}
{\kappa}(t_i)= \int_{[t_i,t_{i+1}]} {\kappa}'(t)dt & \le  (1+4\delta) (t_{i+1}-t_i)  + \delta M_2 {\kappa}(t_i).
\end{align*}
Then the first inequality of \eqref{eq-lemrate5} follows by rearranging terms and taking $\delta$ small enough. Lastly, we prove the second inequality of \eqref{eq-lemrate5}. Arguing similarly as above,  ${\kappa}(t_i)  \ge  (1-4\delta)(t_{i+1}-t_i)-|[t_i,t_{i+1}] \cap  \{ y(t) \notin V(\eps_*)\}|   - \delta M_2 {\kappa}(t_i).$ It is then sufficient to prove
\begin{equation}\label{eq-thmrate_7}
|[t_i,t_{i+1}] \cap  \{ y(t) \notin V(\eps_*)\}|\leq \delta M{\kappa}(t_i),
\end{equation} for some $M>0$.
From the Bochnak-{\L}ojasiewicz inequality \eqref{equ:BLo} and (i),  $|\nabla g(y(t))|\geq (1-\delta){c_3}\alpha_0 |y(t)|^{\ell_*-1}\geq  (1-\delta) 2^{-\frac{\ell_*-1}{\ell_*-2}}{c_3}\alpha_0 |y(t_i)|^{\ell_*-1}.$ Together with Lemma~\ref{lem:bad}, we deduce
$$\delta |y(t_i)|\geq (1-\delta)2^{-\frac{\ell_*-1}{\ell_*-2}}{c_3}\alpha_0 |y(t_i)|^{\ell_*-1} |[t_i,t_{i+1}] \cap  \{ y(t) \notin V(\eps_*)\}|. $$
Then rearranging terms yields \eqref{eq-thmrate_7}.
\end{proof}

\section{Examples}\label{sec:B}


We explain some notable examples and situations to which the main results apply. As noted in \cite{S0,S}, the minimal surface and harmonic map near isolated singularities can be described by the elliptic equation \eqref{equ:main}. The $G_2$ manifold \cite{Chen} is another example of a geometric PDE that can be expressed by \eqref{equ:main}. The parabolic equation \eqref{equ:parabolic} covers geometric flows such as the harmonic map heat flow \cite{ChoiMantoulidis} and the normalized Yamabe flow \cite{CCR15}. It is well-known among experts that the (rescaled) mean curvature flow near the stationary solutions is also of the form \eqref{equ:parabolic}.
\bigskip 

As the mean curvature flow (MCF) and the rescaled MCF are $L^2$-the gradient flow of the area functional and Gaussian weighted area functional solutions near compact critical points (i.e. minimal submanifolds and shrinking solitons) are written in the form \eqref{equ:parabolic}. If a flow $\Sigma_t^n\subset \mathbb{R}^{n+k}$ is close to a compact critical point $\Sigma$, $\Sigma_t$ can be written as a normal graph over $\Sigma$. i.e. $\Sigma_t= \mathrm{graph}(u(t))$ where $u:\mathbf{V}\to \mathbb{R}$ and $\mathbf{V}$ is the normal bundle of $\Sigma$.
In terms of $g_u$, the metric on $\Sigma$ induced from the $\mathrm{graph}(u)$, the (local) area functional becomes
\begin{align*}\mathcal{F}_\Sigma(u)=\int _{\Sigma} \sqrt{\det g_u} dx^1\wedge\dots\wedge dx^n.
\end{align*}
Then it is straightforward computation that the MCF is of the form \eqref{equ:parabolic}. The rescaled MCF requires an extra term $e^{-|X(u)|^2/4}$ in the integrand of $\mathcal{F}_\Sigma (u)$. Note, however, that for both cases the term $N_2(u)$ on the right hand side is essential for two reasons: a) our choice of representing $\Sigma_t$ as a graph over $\Sigma$ and b) $\mathcal{M}_{\Sigma}$ is $L^2$-gradient flow with respect to the $L^2$-norm on $\Sigma=\textrm{graph}(0)$ while the MCF is the gradient flow with respect to changing base $\Sigma_t$ and $L^2$-norm on it. This formulation also carries to the flows in non-Euclidean, but analytic ambient Riemannian manifolds.

\bigskip

Though Theorems~\ref{thm:general_p}-\ref{thm:general_exponential_e} assumes $\Sigma$ is compact without boundary, it is generally believed that the following principle holds: a large class of PDEs should behave like a gradient flow of an analytic potential. Illustrative examples include the works of Colding-Minicozzi \cite{CM2015}, Chodosh-Schulze \cite{MR4332673} and Lee-Zhao \cite{lee2023uniqueness}, showing the uniqueness of the tangent flow for cylindrical shrinkers and asymptotically conical shrinkers, respectively. See also \L ojasiewicz-Simon inequality for problems with boundary condition and non-analytic energy functional \cite{MR1800136}, for higher order equations \cite{MR4218365}, and for functionals on Banach spaces other than $L^2$ space \cite{MR4129355,MR3926130}. We finish this section with a non-compact example which is relevant to Thom's gradient conjecture and the next-order asymptotics: the ancient MCFs having bubble-sheet tangent flow in $\mathbb{R}^4$ by Du and Haslhofer \cite{MR4666631,MR4635341}. This work later leads to the classification of bubble-sheet ovals in $\mathbb{R}^4$ by Daskalopoulos, Du, Haslhofer, Sesum, and the first author \cite{CDDHS}. We note that backward asymptotic analysis at $-\infty$ often plays a crucial role in the classification of such solutions: see also ancient mean convex MCF in $\mathbb{R}^3$ \cite{ADS,brendle2019uniqueness}, ancient MCF with low entropy in $\mathbb{R}^3$ \cite{MR4448681} and ancient MCF asymptotically to a cylinder \cite{MR4438354}.


\begin{example}[bubble-sheet tangent flow] Suppose an ancient MCF, $\Sigma^3_t\subset \mathbb{R}^{4}$, has bubble-sheet tangent flow: the rescaled flow $\tilde \Sigma_\tau = e^{\frac\tau2} \Sigma_{ -e^{-\tau} }$ converges to $\mathbb{R}^2 \times \sqrt 2 \mathbb{S}^1$ as $\tau \to -\infty$. Representing $\tilde \Sigma_\tau$ as $\textrm{graph}(u(\tau))$ over  $(y_1,y_2,\theta)\in  \mathbb{R}^2 \times \sqrt 2 \mathbb{S}^1$, $u(\tau)$ smoothly converges to $0$ on compact sets of $(y_1,y_2,\theta)$ as $\tau \to -\infty$. \cite[Theorem 1.1]{MR4666631} and \cite[Theorem 1.3]{MR4635341} show, after a rotation in the $y_1y_2$-plane, $\tilde \Sigma_\tau$ shows one of following behaviors: either\begin{equation}\label{equ:ex_cases}
u(\tau)\asymp \tfrac{y_1^2+y_2^2-4}{2\sqrt{2}\tau},\quad u(\tau)\asymp \tfrac{y_1^2 -2}{2\sqrt{2}\tau},\quad \text{or}\quad u(\tau)=o(\tau^{-1}).
\end{equation} In the last case, $u(\tau)$ actually decays exponentially, corresponding to Theorem~\ref{thm:general_exponential_p}. The first two cases correspond to slowly decaying gradient flow. 
 
{Ignoring $\theta$-part as the solution is $\mathbb{S}^1$-symetric}, the kernel of $\mathcal{L}_\Sigma$, the linearized shrinker operator, is {essentially} three-dimensional and it is spanned by $\{y_1^2-2,y_2^2-2,\sqrt{2}y_1y_2\}$. It may be identified with $\mathbb{R}^3$ 
\begin{equation}\label{equ:ex_id}
x=(x_1,x_2,x_3)\mapsto x_1\cdot (y_1^2-2)+x_2\cdot(y_2^2-2)+x_3\cdot \sqrt{2}y_1y_2.
\end{equation}
Let $x(\tau)=(x_1(\tau),x_2(\tau),x_3(\tau))$ be the projection of $u(\tau)$ to $\ker \mathcal{L}_{\Sigma}$. Here, certain delicate truncation is needed as $\tilde{\Sigma}_\tau$ may not be a global graph over  $\mathbb{R}^2\times\sqrt{2}\mathbb{S}^1$. We ignore this so to present a connection in a formal level.  \cite[Proposition 3.1]{MR4666631} shows, up to a small error, $x(\tau)$ is a gradient flow
\begin{equation}\label{equ:ex_grad}
x'(\tau)=-\nabla f(x(\tau)), \text{ where }f(x)=\tfrac{8}{3}x_1^3+\sqrt{2}(x_1+x_2)x_3^2+\tfrac{8}{3}x_2^3. 
\end{equation}Let $\hat{f}$ be the restriction of $f$ on $\mathbb{S}^2$. In view of Theorem \ref{thm:general_p} and Remark \ref{rmk:cases} (1), we are interested in the \textbf{non-positive} critical values of $\hat{f}$.  {All critical values of $\hat {f}$ are non-zero and there are two negative ones.}

\noindent \textbf{Case 1:} $(x_1,x_2,x_3)=-(1/\sqrt{2},1/\sqrt{2},0)$ with a critical value $\hat f = -2/3$. The corresponding solution to \eqref{equ:ex_grad} is given by $x(\tau)= \frac{1}{2\tau}\cdot (1/\sqrt{2},1/\sqrt{2},0)$. Through the identification, this is the first case in \eqref{equ:ex_cases}. 

\noindent \textbf{Case 2:} $(x_1,x_2,x_3)=-(\frac{1+\cos\phi}{2} ,\frac{1-\cos\phi}{2},-\frac{\sqrt{2}}{2}\sin\phi)$, for $\phi \in [0,2\pi)$, with $\hat f=-2\sqrt{2}/3$. This results in $x(\tau)= \frac{1}{2\sqrt{2} \tau}\cdot (\frac{1+\cos\phi}{2} ,\frac{1-\cos\phi}{2},-\frac{\sqrt{2}}{2}\sin\phi)$, and it corresponds to   $u(\tau)\sim \frac{1}{2\sqrt{2}\tau}((y_1\cos\frac{\phi}{2}-y_2\sin\frac{\phi}{2})^2-2)$. This is the second alternatives in \eqref{equ:ex_cases}.

The critical point in \textbf{Case 1} is isolated, but the critical points in \textbf{Case 2} form a circle in $\mathbb{S}^2$. In \cite{MR4666631}, Du and Haslhofer initially remain the convergence of direction in the \textbf{Case 2} unsettled (so the rotation in $y_1y_2$-plane may not be fixed), but resolved this issue in \cite{MR4635341}. This heuristic  also applies in the forward-in-time asymptotics of the rescaled MCF. This is obtained by Sun and Xue \cite[Theorem 1.3]{SunXue} and it is an important step in proving cylindrical singularities are generically isolated.

\end{example}

\appendix \section{Tools}\label{sec:A}
The following ODE lemma was proved in \cite{MZ}.
\begin{lemma}[Merle-Zaag ODE lemma]\label{lem-MZODE}
Let $X_+$, $X_0$, $X_- : [0,\infty) \to [0, \infty)$ be absolutely continuous functions such that $X_+(t) + X_0(t) + X_-(t) > 0$ for all $t\geq  0$ and $		\liminf_{t \to \infty} X_+(t) = 0.$ Suppose there exist $b>0$ and functions $Y_+$, $Y_0$, $Y_-$ on $[0,\infty)$ that satisfy $
|Y_+|^2 + |Y_0 |^2 + |Y_-|^2 = o(1)(|X_+|^2 + |X_0 |^2 + |X_-|^2) $ as $t\to \infty$ and 
\begin{equation}  \label{eq:mz.ode.cor}
				X_+' - bX_+ \geq  Y_+, \quad 
			|X_0'|  \leq Y_0, \quad 
			X_-' + bX_-  \leq   Y_- ,
		\end{equation} for $t\ge0$. 
Then one of the following holds: either
\begin{align}\label{eq:mz.ode.cor.A}
&X_+(t)+X_-(t)=o(1)X_0(t), \text{ or}
\\ \label{eq:mz.ode.cor.B}
&X_+(t)+X_0(t)=o(1)X_-(t).
\end{align}
Moreover, suppose \eqref{eq:mz.ode.cor.B} holds true. Then for all $\varepsilon>0$,
\begin{align}\label{eq:mz.ode.cor.B1}
\limsup_{t\to\infty} e^{(b-\varepsilon)t}(X_+(t) + X_0(t) + X_-(t))=0.
\end{align}
\end{lemma}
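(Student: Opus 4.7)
The plan is to establish the dichotomy in three stages: first eliminate the unstable mode $X_+$, then separate the remaining two behaviors using the spectral gap between the exponential decay of $X_-$ and the neutrality of $X_0$, and finally derive the exponential decay asserted in the moreover part.

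For the first stage, I would examine the ratio $r(t) := X_+(t)/(X_+(t) + X_0(t) + X_-(t))$ and show $r(t) \to 0$. Argue by contradiction: if $\limsup r(t) \geq \varepsilon > 0$, then at times when $X_+$ carries a non-negligible fraction of the total, the bound $|Y_+|^2 = o(X_+^2 + X_0^2 + X_-^2)$ sharpens to $|Y_+| = o(X_+)$, so the inequality $X_+' \geq bX_+ + Y_+$ gives $X_+' \geq (b-o(1))X_+$. A trapping argument (once $r(t) \geq \varepsilon/2$, the exponential growth persists) forces $X_+$ to grow like $e^{(b-o(1))t}$, contradicting $\liminf X_+ = 0$. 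Hence $X_+ = o(X_0 + X_-)$.

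For the dichotomy between \eqref{eq:mz.ode.cor.A} and \eqref{eq:mz.ode.cor.B}, I would track the ratio $s(t) := X_-(t)/(X_-(t) + X_0(t))$. The key dynamical fact: when $s$ is bounded away from both $0$ and $1$ so that $X_0$ and $X_-$ are comparable, the inequality $X_-' \leq -bX_- + Y_-$ combined with $|Y_-| = o(X_0 + X_-) = o(X_-)$ forces $X_-$ to decay exponentially, while $|X_0'| \leq Y_0 = o(X_-)$ makes $X_0'$ integrable and produces a limit $L \geq 0$ for $X_0$. If $L > 0$, then eventually $X_- \ll X_0$ and \eqref{eq:mz.ode.cor.A} holds. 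If $L = 0$, a bootstrap on $|X_0(t)| \leq \int_t^\infty |X_0'(\tau)|\, d\tau = o\big(\int_t^\infty X_-(\tau)\, d\tau\big) = o(X_-(t))$ places us in \eqref{eq:mz.ode.cor.B}. The technical core is ruling out oscillation in which $s(t)$ repeatedly crosses between regimes; this is handled by observing that the exponential decay of $X_-$ against the slow evolution of $X_0$ traps $s(t)$ monotonically once it enters a small neighborhood of $0$ or $1$.

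For the moreover part, assume \eqref{eq:mz.ode.cor.B} holds. Then $Y_- = o(X_-)$, so for every $\varepsilon > 0$ there exists $T_\varepsilon$ with $X_-'(t) \leq -(b-\varepsilon)X_-(t)$ for $t \geq T_\varepsilon$, which integrates to $X_-(t) \leq X_-(T_\varepsilon)\, e^{-(b-\varepsilon)(t-T_\varepsilon)}$; since $X_+ + X_0 = o(X_-)$ by assumption, this exponential decay transfers to the total and gives \eqref{eq:mz.ode.cor.B1}. The main obstacle throughout the argument is the dichotomy step: carefully excluding intermediate or oscillatory asymptotic behavior of $s(t)$, which relies crucially on the gap between the rate $b$ and the neutral mode's slow evolution via a trapping/monotonicity analysis.
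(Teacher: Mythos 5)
The paper does not actually prove this lemma; it is quoted from Merle--Zaag \cite{MZ}, so there is no in-paper argument to compare against. Your proposal reconstructs the standard proof, and its architecture is sound: eliminating $X_+$ by trapping plus exponential growth against $\liminf_{t\to\infty}X_+(t)=0$; the dichotomy between $X_0$ and $X_-$; and the Gronwall integration for \eqref{eq:mz.ode.cor.B1}. The bootstrap $X_0(t)\le\int_t^\infty|X_0'|\,d\tau=o\bigl(\int_t^\infty X_-\,d\tau\bigr)=o(X_-(t))$ is exactly the right mechanism for landing in case \eqref{eq:mz.ode.cor.B}.

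One assertion in your dichotomy step is false, although the proof survives without it: the ratio $s(t)=X_-/(X_-+X_0)$ is \emph{not} trapped near $1$. The region $\{X_0<\epsilon X_-\}$ fails to be forward invariant because the hypotheses give no lower bound on $X_-'$; for instance $Y_+\equiv Y_0\equiv Y_-\equiv 0$, $X_+\equiv 0$, $X_0\equiv c>0$, $X_-(t)=e^{-bt}$ satisfies all the differential inequalities and drifts from $s\approx 1$ all the way to $s\to 0$. What is true, and all you need, is the trapping near $s=0$: writing $\Sigma=X_++X_0+X_-$ and taking $t$ large enough that $|Y_\pm|,|Y_0|\le\delta\Sigma$ with $\delta$ small relative to $b\epsilon$, on the boundary $X_-=\epsilon(X_0+X_+)$ one has $\bigl(X_--\epsilon(X_0+X_+)\bigr)'\le -bX_-+3\delta\Sigma=\bigl(-b+3\delta(1+\epsilon^{-1})\bigr)X_-<0$, so $\{X_-<\epsilon(X_0+X_+)\}$ is forward invariant for large times. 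The dichotomy is then clean: either for every $\epsilon$ the trajectory eventually enters (hence stays in) this region, which gives \eqref{eq:mz.ode.cor.A}; or for some $\epsilon_0$ it never does, so $X_-\ge\epsilon_0(X_0+X_+)$ for all large $t$, and your exponential-decay-plus-integration argument applies (with the limit $L$ of $X_0$ forced to vanish, since $L>0$ would contradict $X_-\ge\epsilon_0' X_0$ and $X_-\to 0$), yielding \eqref{eq:mz.ode.cor.B}. With that correction your argument is complete.
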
 

\begin{lemma}[interpolation]\label{lemma-interpolation} Suppose a function of single variable $g(t)$ has bounds\[ \sup_{t\in[-1,1]} \left |\frac{d^k }{dt^k}g(t) \right | \le C_k \quad  \text{ and } \quad \sup_{t\in [-1,1]} |g(t)| \le C_0.\] Then, for every $0 < \ell < k$, there holds a bound 
\[\left |\frac{d^\ell}{dt ^\ell } g (0)\right | \le  \beta \cdot C_0^{\alpha} C_k^{1-\alpha} \]
for some $\alpha=\alpha(\ell,k)>0$ and $\beta= \beta(\ell,k)>0$. Moreover, $\alpha(\ell,k)$ converges to $1$ as $k \to \infty $ while $\ell$ is fixed. 
\begin{proof} We prove the inequality with $\beta\equiv \sqrt{2}$ for functions which are compactly supported in the interior of $(-1,1)$. This is sufficient as one may multiply a general function by a cut-off and then apply the result. By an argument which uses an induction and the integration by parts, for every $\ell< k$,  $\Vert \partial^\ell _t g \Vert _{L^2} \le \Vert  g \Vert _{L^2}^{1-\frac \ell k} \Vert \partial ^k _t g\Vert_{L^2} ^{\frac{\ell}{k}}   $. Thus, for $\alpha = 1- \frac{\ell+1}k$,\[\sup_{t\in[ -1,1]} |\partial ^{\ell}_t g| \le \Vert \partial^{\ell+1} _t g\Vert_{L^1} \le \sqrt{2}\Vert \partial^{\ell+1} _t g\Vert_{L^2} \le \sqrt{2} C_0^\alpha C_{k}^{1-\alpha}.\]
\end{proof}
\end{lemma}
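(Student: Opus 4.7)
The statement is a standard one-dimensional interpolation inequality of Kolmogorov--Landau / Gagliardo--Nirenberg type. My plan would be to reduce to compactly supported functions, establish an $L^2$ interpolation for intermediate derivatives by an induction based on integration by parts, and then recover the $L^\infty$ bound at the origin by the 1D Sobolev-type inequality $|h(0)|\le \|h'\|_{L^1}\le \sqrt{2}\|h'\|_{L^2}$.

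Concretely, I would first multiply $g$ by a fixed smooth cutoff $\chi$ supported in $(-1,1)$ with $\chi\equiv 1$ near $0$. Then $\tilde g := \chi g$ is compactly supported, $\tilde g^{(\ell)}(0)=g^{(\ell)}(0)$, and $\sup|\tilde g^{(j)}|\le C(\chi,j)\max_{0\le i\le j}C_i$. So it suffices to prove the estimate for compactly supported $g$.

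For such $g$, the core step is the $L^2$ interpolation
\[
\|g^{(\ell)}\|_{L^2}\;\le\;\|g\|_{L^2}^{1-\ell/k}\,\|g^{(k)}\|_{L^2}^{\ell/k}\qquad (0<\ell<k).
\]
I would prove this by induction on $k$. The key case is $\ell=1$, $k=2$: integration by parts and Cauchy--Schwarz give
\[
\|g'\|_{L^2}^{2}=-\int g\,g''\le \|g\|_{L^2}\|g''\|_{L^2},
\]
which is the claim. The general case follows by applying this base inequality to $g^{(\ell-1)}$ and $g^{(\ell+1)}$, yielding $\|g^{(\ell)}\|_{L^2}^2\le \|g^{(\ell-1)}\|_{L^2}\|g^{(\ell+1)}\|_{L^2}$, and iterating/interpolating the resulting log-convexity along the sequence $j\mapsto\log\|g^{(j)}\|_{L^2}$, $0\le j\le k$.

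Finally, to pass from $L^2$ to the pointwise bound at $t=0$, for compactly supported $h=g^{(\ell)}$ I would use
\[
|h(0)|\le \int_{-1}^{1}|h'(t)|\,dt\;\le\;\sqrt{2}\,\|h'\|_{L^2}=\sqrt{2}\,\|g^{(\ell+1)}\|_{L^2},
\]
and plug in the $L^2$ interpolation at level $\ell+1$ to obtain
\[
|g^{(\ell)}(0)|\le \sqrt{2}\,\|g\|_{L^2}^{1-(\ell+1)/k}\|g^{(k)}\|_{L^2}^{(\ell+1)/k}\le \beta\,C_0^{\alpha}C_k^{1-\alpha},
\]
with $\alpha=1-(\ell+1)/k\to 1$ as $k\to\infty$ for fixed $\ell$. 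The only real work is the $L^2$ interpolation; nothing in this argument is deep, but one must be a little careful with the indexing so that $\alpha$ has the stated limit. No step looks like a serious obstacle.
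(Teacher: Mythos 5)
Your proposal is correct and follows essentially the same route as the paper's proof: reduction to compactly supported functions by a cutoff, the $L^2$ interpolation $\Vert g^{(\ell)}\Vert_{L^2}\le \Vert g\Vert_{L^2}^{1-\ell/k}\Vert g^{(k)}\Vert_{L^2}^{\ell/k}$ via integration by parts and induction, and the pointwise bound at $0$ through $\vert h(0)\vert\le \Vert h'\Vert_{L^1}\le\sqrt{2}\Vert h'\Vert_{L^2}$ applied at level $\ell+1$, giving $\alpha=1-(\ell+1)/k$. The only difference is that you spell out the log-convexity step of the induction, which the paper leaves implicit.
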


In the next lemma, we record the elliptic regularity in \cite[(1.13)]{S0}.

\begin{lemma}[elliptic regularity]\label{lem:regularity_e}
There exists $\rho_0>0$ small depending on $\mathcal{M}_\Sigma, m$ and $N_1$ such that the following holds. Let $u$ be a solution to \eqref{equ:main} for $t\in [T-2,T+2]$. Assume $\Vert u \Vert_{C^1}(t)\leq \rho_0$ for $t\in [T-2,T+2]$. Then for any $s\in\mathbb{N}$, there exists a positive constant $C=C(m,\mathcal{M}_\Sigma,N_1,s)$ such that
\begin{align*}
\sup_{t\in [T-1,T+1]}\Vert u \Vert^2_{C^s}(t) \leq C  \int_{T-2}^{T+2} \Vert u \Vert_{L^2}^2(t)\, dt.
\end{align*} 
\end{lemma}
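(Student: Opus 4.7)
The plan is to treat \eqref{equ:main} as a quasilinear second-order elliptic equation on the spacetime cylinder $Q_{T-2,T+2}=\Sigma\times[T-2,T+2]$ and then run a standard $L^2$-based bootstrap. Using \eqref{equ:quasilinear} and \eqref{equ:N}, we rewrite \eqref{equ:main} as
\[
\ddot u-m\dot u+\mathcal{L}_\Sigma u \;=\; E_1(u),\qquad E_1(u)=a_1\cdot D\dot u+a_2\cdot\dot u+\sum_{j=0}^{2}a_{4,j}\cdot\slashed{\nabla}^j u,
\]
where the coefficients $a_i,a_{4,j}$ are smooth in $(\omega,u,Du,D^2u)$ and vanish at the zero section. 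Since $\mathcal{L}_\Sigma$ is Legendre--Hadamard elliptic on $\Sigma$ and the $t$-direction carries the nondegenerate symbol $\tau^2$, the operator $\partial_t^2-m\partial_t+\mathcal{L}_\Sigma$ is a uniformly elliptic second-order linear operator on $Q_{T-2,T+2}$. By choosing $\rho_0=\rho_0(\mathcal{M}_\Sigma,N_1)$ small enough, the $C^1$-smallness assumption on $u$ makes the principal-part perturbation coming from $E_1(u)$ small in $L^\infty$, so the \emph{full} quasilinear operator stays uniformly elliptic with bounded measurable coefficients on $Q_{T-2,T+2}$.

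First I would establish the base-case $H^2$ estimate. Testing the equation against $u$ (together with cutoffs $\chi(t)$ supported in $[T-2,T+2]$ and equal to $1$ on $[T-1.8,T+1.8]$) and using \eqref{equ:H1}, a Caccioppoli-type argument gives
\[
\int_{T-1.8}^{T+1.8}\!\!\left(\|\slashed{\nabla}^2u\|_{L^2}^2+\|\slashed{\nabla}\dot u\|_{L^2}^2+\|\ddot u\|_{L^2}^2\right)(t)\,dt \;\leq\; C\int_{T-2}^{T+2}\|u\|_{L^2}^2(t)\,dt,
\]
with the $E_1(u)$-terms absorbed using the smallness of $\|u\|_{C^1}$ and Cauchy--Schwarz. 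This is essentially \cite[(1.13)]{S0}; the only input needed is that the coefficients $c_j,a_i,a_{4,j}$ are uniformly bounded, which follows from $\|u\|_{C^1}\le\rho_0$.

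Next I would bootstrap. Differentiating the equation in $t$ and in $\slashed{\nabla}$ yields, schematically,
\[
(\partial_t^2-m\partial_t+\mathcal{L}_\Sigma)(\partial_t^{k}\slashed{\nabla}^{\ell}u)\;=\;F_{k,\ell}(\omega,u,Du,D^2u,\dots),
\]
where $F_{k,\ell}$ is polynomial in derivatives of $u$ of order $\leq k+\ell+1$ with smooth coefficients vanishing at $u\equiv0$. Applying the same Caccioppoli argument to each such equation on nested cylinders $[T-2+\delta_k,T+2-\delta_k]$ with $\delta_k\nearrow 1$ and inducting on $k+\ell$, one obtains for every $N\in\mathbb{N}$
\[
\sum_{k+\ell\leq N}\int_{T-1}^{T+1}\|\partial_t^k\slashed{\nabla}^\ell u\|_{L^2}^2(t)\,dt \;\leq\; C_N\int_{T-2}^{T+2}\|u\|_{L^2}^2(t)\,dt,
\]
again using $\|u\|_{C^1}\le\rho_0$ to absorb nonlinear tails. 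Because the $H^\ell$-norm on $\Sigma$ bounds $C^{s}(\Sigma)$ for $\ell>s+n/2$, and the time-Sobolev embedding turns an interior $L^2$-in-$t$ bound into a pointwise-in-$t$ bound after one more differentiation, a further shrinkage of the $t$-interval and one more application of the Sobolev embedding converts the $H^N$-spacetime estimate into the desired
\[
\sup_{t\in[T-1,T+1]}\|u\|_{C^s}^2(t)\;\leq\;C\int_{T-2}^{T+2}\|u\|_{L^2}^2(t)\,dt.
\]

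The only genuine subtlety is making sure the quasilinear perturbation from \eqref{equ:quasilinear} and $N_1$ does not spoil ellipticity at the level of the differentiated equation; this is controlled once and for all by the $C^1$-smallness hypothesis, since each $a_i,a_{4,j}$ vanishes on the zero section and thus $|a_i|+|a_{4,j}|\le C\rho_0$. All other steps are routine interior elliptic regularity on the compact cylinder $Q_{T-2,T+2}$.
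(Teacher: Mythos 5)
Your sketch is correct in outline and is essentially the standard argument: the paper itself does not prove this lemma but simply records it as \cite[(1.13)]{S0}, and the route you describe (uniform ellipticity of $\partial_t^2-m\partial_t+\mathcal{L}_\Sigma$ on the cylinder, Caccioppoli estimates on nested cylinders, bootstrap on differentiated equations, then Sobolev embedding in space and time) is precisely what underlies Simon's estimate. The one point worth stating more carefully is the term $a_2\cdot\dot u$, whose coefficient depends on $D^2u$ and is therefore not pointwise small from $\Vert u\Vert_{C^1}\le\rho_0$ alone; one should note that $a_2=O(|u|+|Du|+|D^2u|)$ so that $a_2\cdot\dot u$ contributes second-order terms only with prefactor $|\dot u|\le\rho_0$, which is exactly what lets it be absorbed into the ellipticity, consistent with the rest of your argument.
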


{The parabolic regularity below is a straightforward minor extension of the one presented in \cite[Section 4]{S0}.} 
\begin{lemma}[parabolic regularity]\label{lem:regularity}
Let $u\in C^\infty(Q_{0,\infty},\widetilde{\mathbf{V}})$ be a solution to \eqref{equ:parabolic}. Assume $\lim_{t\to \infty} \Vert u \Vert_{H^{n+4}}(t)=0$. Then for all $s\in \mathbb{N}$, $\lim_{t\to \infty} \Vert u \Vert_{C^s}(t)=0$. Moreover, if $\Vert u \Vert_{H^{n+4}}(t)=O(e^{\gamma t})$ for some $\gamma<0$, then $\Vert u \Vert_{C^s}(t)=O(e^{\gamma t})$.
\end{lemma}

\bibliography{ChoiHung}
\bibliographystyle{alpha}

\end{document}